\newtheorem{thm}{Theorem}[section]
\newtheorem{prop}[thm]{Proposition}
\newtheorem{cor}[thm]{Corollary}
\newtheorem{Def}[thm]{Definition}
\newtheorem{lemma}[thm]{Lemma}
\newtheorem{remark}[thm]{Remark}
\newtheorem{obs}[thm]{Observation}
\newtheorem{nots}[thm]{Notations}
\newtheorem{DefProp}[thm]{Definition/Proposition}
\newtheorem{AssDef}[thm]{Assumption/Definition}
\newcommand{\ve}{{\varepsilon}}
\newcommand{\N}{{\mathbb{N}}}
\newcommand{\R}{{\mathbb{R}}}
\numberwithin{equation}{section}
\title{Rigorous derivation of weakly dispersive shallow water models with large amplitude topography variations} 
\author[L. Emerald]{Louis Emerald}
\author[M. Oen Paulsen]{Martin Oen Paulsen}
\address{Department of Mathematics, Nazarbayev University, Astana 010000, Kazakhstan}
\email{Louisemerald76@gmail.com}
\address{Department of Mathematics\\ University of Bergen\\ Postbox 7800\\ 5020 Bergen\\ Norway}
\email{Martin.Paulsen@UiB.no}
\keywords{Rigorous derivation, shallow water models, multi-scale expansion, Dirichlet-Neumann operator, pseudo-differential operators}
\subjclass[2010]{Primary: 76B15; 35Q35; 35C20}
\begin{document}

    \begin{abstract}  
        We derive rigorously from the water waves equations new irrotational shallow water models for the propagation of surface waves in the case of uneven topography in horizontal dimensions one and two. The systems are made to capture the possible change in the waves' propagation, which can occur in the case of large amplitude topography. The main contribution of this work is the construction of new multi-scale shallow water approximations of the Dirichlet-Neumann operator. We prove that the precision of these approximations is given at the order $O(\mu \ve)$, $O(\mu\varepsilon  +\mu^2\beta^2)$ and $O(\mu^2\varepsilon+\mu \ve \beta+ \mu^2\beta^2)$. Here $\mu$, $\varepsilon$, and $\beta$ denote respectively the shallow water parameter, the nonlinear parameter, and the bathymetry parameter. From these approximations, we derive models with the same precision as the ones above. The model with precision $O(\mu \ve)$ is coupled with an elliptic problem, while the other models do not present this inconvenience.

        



		
    \end{abstract}
    \maketitle

    \section{Introduction}

    \subsection{Motivations}

     The general model of surface waves in coastal oceanography is often considered too complex to be used in practical situations. As a result, the simplification of the water waves equations in specific asymptotic regimes has been a subject of active research. In the derivation of asymptotic models, one considers characteristic quantities of the system under study. In our paper, we will denote by $H_0, L, a_{\mathrm{surf}}$ and $a_{\mathrm{bott}}$, the characteristic water depth, the characteristic wavelength in the longitudinal direction, the characteristic surface amplitude and the characteristic amplitude of the bathymetry of the system. From these characteristic quantities, we define the following non-dimensional parameters
     \begin{equation*}
         \mu := \frac{H_0^2}{L^2}, \quad \ve : = \frac{a_{\mathrm{surf}}}{H_0}, \quad \beta := \frac{a_{\mathrm{bott}}}{H_0}.
     \end{equation*}
     We will focus on the shallow water regime defined by $\mu  \ll 1$ and the weakly nonlinear regime $\ve \ll 1$, in the case of uneven topography. 
    
    Numerous shallow water models were derived in the literature, giving approximations of the solutions of the water waves system in the shallow water regime or long wave regime $\mu \sim \varepsilon \ll 1$, at the order of precision $O(\mu^k)$ with $k = 1, 2$ or $3$. However, it is not clear that these classical models capture the change in the propagation of the waves which can happen in the case of large amplitude topographies. Such occurrences have been studied in the Dingemans experiments \cite{Dingemans94}. In these experiments, the authors investigate a long wave passing over a submerged obstacle. They observed that waves tend to steepen due to a compression effect from the bottom, where high harmonics generated by topography-induced nonlinear interactions are freely released behind the obstacle. This last phenomenon makes it natural to improve the frequency dispersion of the classical shallow water models. The ideal precision of the resulting model would be of the form $O(\mu^k \varepsilon^l)$, with $k, l \geq 1$, in order to capture both the shallow water and the weakly non-linear regimes. 
    
    One way to improve the frequency dispersion is to consider multi-parameters Boussinesq or Green-Naghdi models; see \cite{WWP, WeiKirbyGrilliEtAl95} for a comparison between the classical Boussinesq and Green-Naghdi models with their multi-parameters versions in the case of the Dingemans experiments. The improved frequency dispersion allowed the authors to describe strongly dispersive waves with uneven bathymetry accurately. However, the order of precision of these multi-parameters systems is the same as the classical ones. 

    In the flat bottom case, another way to improve the frequency dispersion of the classical shallow water models is to consider full dispersion models, for which the dispersion relation is the same as the one of the water waves equations:
    \begin{align*}
        \omega_{\mathrm{WW}}(\xi)^2 = \frac{\tanh{(\sqrt{\mu}|\xi|)}}{\sqrt{\mu}|\xi|}|\xi|^2.
    \end{align*}
    In \cite{Emerald21}, the author rigorously derived these models at the order of precision $O(\mu\varepsilon)$ and $O(\mu^2\varepsilon)$. To obtain this non-trivial order of precision, it is fundamental to keep the exact dispersion relation. In comparison, for the classical Boussinesq and Green-Naghdi model, the dispersion relation is 
    \begin{align*}
        \omega_{\mathrm{B}}(\xi)^2 =  \big(1 - \frac{\mu}{3}|\xi|^2\big)|\xi|^2, \quad \omega_{\mathrm{GN}}(\xi)^2 = \frac{1}{1 + \frac{\mu}{3}|\xi|^2}|\xi|^2,
    \end{align*}
    so that by a Taylor expansion, one makes errors of order $O(\mu^2)$ from the approximation of the dispersion relation of the water waves equations.
    
    In \cite{DucheneMMWW21}, the author extended the work in \cite{Emerald21} in the case of variable bottom. He derived models with a precision of order $O(\mu\varepsilon + \mu\beta)$ when compared to the water waves equations for a class of weakly dispersive Boussinesq system, and a precision order $O(\mu^2\varepsilon+\mu^2\beta)$ with respect to the water waves equations for a class of weakly dispersive Green-Naghdi systems.  
    
    The first result of this paper is the rigorous derivation of an extension of the full dispersion models in the case of uneven bathymetries at the order of precision $O(\mu\varepsilon)$. This model reads
    \begin{align}\label{system mu eps motivations}
            \begin{cases}
                \partial_t \zeta - \frac{1}{\mu}\mathcal{G}_b \psi= 0\\
                \partial_t \psi + \zeta + \dfrac{\varepsilon}{2}|\nabla_X \psi|^2 = 0,
            \end{cases}
    \end{align}
    where $\zeta(t,X) \in \mathbb{R}$ represents the water surface elevation, $b(X) \in \mathbb{R}$ represents the bottom elevation, $\psi(t,X) \in \mathbb{R}$ is the trace at $z=0$, where $z$ is the transversal variable, of the potential $\phi$, solving
    \begin{align}\label{elliptic problem motivations}
            \begin{cases}
                \Delta_{X,z}^{\mu} \phi = 0 \ \ \mathrm{in} \ \ \R^d \times [-1+\beta b, 0], \\
                \phi|_{z=0} = \psi, \ \ \big[\partial_z\phi - \mu\beta \nabla_X b \cdot \nabla_X \phi \big]\big|_{z=-1+\beta b}  = 0,
            \end{cases}
    \end{align}
    and where $\mathcal{G}_b$ is an operator given by
    \begin{equation}\label{operator Gb motivations}
            \frac{1}{\mu}\mathcal{G}_b \psi = -\nabla_{X} \cdot \Big{(} \frac{1 + \varepsilon\zeta - \beta b}{1 - \beta b} \int_{-1 + \beta b}^0 \nabla_X \phi \: \mathrm{d}z\Big{)}.
    \end{equation}
    The model \eqref{system mu eps motivations} can be viewed as a simplified version of the water waves model, where the elliptic problem is given on a fixed domain independent of time. The precision of the model is $O(\mu\varepsilon)$ and makes it an ideal extension of the full dispersion models in the case of a variable bottom with which one can capture the change of behavior in the propagation of the wave during the aforementioned Dingemans experiments. A drawback from a numerical point of view would be that one would need to solve an elliptic problem at each time step when computing the solutions of the model.

    To simplify the model even further, one can construct explicit approximations of the solutions of the elliptic problem \eqref{elliptic problem motivations}. From these approximations, one deduce expansions of the Dirichlet-Neumman operator at the same order of precision. In \cite{CraigSulemGuyenneNicholls05}, the authors derived such approximations of the Dirichlet-Neumann operator in the long wave regime/ small amplitude waves regime using an explicit formula for the solution of the elliptic problem \eqref{elliptic problem motivations}, where the formula depends on the inversion of a pseudo-differential operator. They make use of these approximations to derive a Boussinesq type model. Their result is formal and holds only in horizontal dimension one. The extension in the variable bottom case of full dispersion models was considered in \cite{CarterDinvayKalish21} in the case of horizontal dimension one. The authors used the result in \cite{CraigSulemGuyenneNicholls05} to formally derive three models in the shallow water regime with order of precision $O(\mu\varepsilon + \varepsilon^2)$. A drawback is that their models depend on the inversion of a pseudo-differential operator and consequently seem to create instabilities in the simulations. Moreover, if one inverts a pseudo-differential operator, it is not clear how one could quantify the error of approximation in the Sobolev spaces uniformly in the parameters $\mu$, $\ve$ and $\beta$ and then make the derivation rigorous with the correct order of precision. 

    In \cite{Chazel07}, the author derived rigorously, from the water waves equations, a classical type Boussinesq system in the long wave regime with an order of precision $O(\mu^2)$ when $\beta = O(1)$. Translated in the shallow water regime, the precision is $O(\mu^2 + \mu\varepsilon + \mu^2\beta^2)$. One should also note that the bathymetry related terms in the aforementioned system are of higher order when compared to the linear terms. Therefore, the well-posedness of such a system is not clear.
    
    In the present work, we construct new shallow water approximations of the Dirichlet-Neumann operator at the order of precision $O(\mu \ve)$, $O(\mu\varepsilon + \mu^2\beta^2)$ and $O(\mu^2\varepsilon+ \mu \ve \beta+ \mu^2\beta^2)$. We also quantify the error in the Sobolev spaces uniformly in $\mu, \ve$ and $\beta$. With these approximations, we prove that system \eqref{system mu eps motivations} is consistent with the water waves equations at order $O(\mu\ve)$. Then we derive new weakly-dispersive Boussinesq type systems with the order of precision $O(\mu\varepsilon + \mu^2\beta^2)$, with respect to the water waves equations. In addition, we derive new weakly-dispersive Green-Naghdi type systems with the order of precision $O(\mu^2\varepsilon+ \mu \ve \beta+ \mu^2\beta^2)$. We emphasize the fact that the orders of precision are non-trivial in terms of the bathymetry parameter. Contrary to the models presented in\cite{Chazel07}, the contribution of the bathymetry terms does not contain higher order derivatives when compared to the linear terms. Moreover, they have a similar quasi-linear hyperbolic structure as the full dispersion models in the flat bottom case. We expect then, in light of the recent works of \cite{Benoit_WW_17, Benoit_BP_17}, to be able to prove a long time well-posedness result for these models. This will be an objective for future work.  Lastly, we discuss the derivation of extensions that have a Hamiltonian structure.

    \begin{nots} \color{white} space \color{black}
    \begin{itemize}
	    \item   Let $\mathrm{Id}$ be the $d\times d$ identity matrix, and take $\mathbf{0} = (0,0)^T $ if $d=2$, $\mathbf{0} = 0$ if $d=1$. Then we define the $(d+1)\times(d+1)$ matrix $I^{\mu}$ by
		\begin{equation*}
			I^{\mu} 
			=
			\begin{pmatrix}
				\sqrt{\mu}  \mathrm{Id} &  \mathbf{0} \\
				\mathbf{0}^T  & 1
			\end{pmatrix}.
		\end{equation*}
	
		\item We define the $d$-dimensional Laplace operator by
		\begin{equation*}
			\Delta_X 	=
			\begin{cases}
				\partial_x^2 \quad \hspace{1.7cm} \text{when} \quad d=1
				\\
				\partial_x^2 + \partial_y^2\hspace{1.25cm} \text{when} \quad d=2.
			\end{cases} 
		\end{equation*}
		\item We define the $(d+1)$-dimensional scaled gradient by
		\begin{equation*}
			\nabla^{\mu}_{X,z} = I^{\mu}\nabla_{X,z} 
			=
			\begin{cases}
				(\sqrt{\mu} \partial_x, \partial_z)^T \quad \hspace{1.7cm} \text{when} \quad d=1
				\\
				(\sqrt{\mu} \partial_x,\sqrt{\mu} \partial_y , \partial_z)^T \hspace{1cm} \text{when} \quad d=2,
			\end{cases} 
		\end{equation*}
		and we introduce the scaled Laplace operator
		\begin{equation*}
			\Delta^{\mu}_{X,z} = \nabla^{\mu}_{X,z}\cdot \nabla^{\mu}_{X,z} = \mu \Delta_X + \partial_z^2.
		\end{equation*}
            \item Let $f:\mathbb{R}^d \to \mathbb{R}$ be a tempered distribution, let $\hat{f}$ or $\mathcal{F}f$ be its Fourier transform and $\mathcal{F}^{-1}f$ be its inverse Fourier transform.
            \item For any $s \in \mathbb{R}$ we call the multiplier $\mathrm{J}^s = (1+|\mathrm{D}|^2)^{\frac{s}{2}} = \langle \mathrm{D} \rangle^s$ the Bessel potential of order $-s$. 
            \item The Sobolev space $H^s(\mathbb{R}^d)$  is equivalent to the weighted $L^2-$space with $|f|_{H^s} = |\mathrm{J}^s f|_{L^2}$. 
            \item For any $s\geq 1$ we will denote $\dot{H}^s(\mathbb{R}^d)$ the Beppo-Levi space  with $|f|_{\dot{H}^s} = |\mathrm{J}^{s-1}\nabla_X f|_{L^2}$. 
            \item Let $\Omega \subset \R^{d+1}$. For any $k\in \N$, we define the space $H^{k,0}(\Omega)$ with norm 
            $$\|f\|_{H^{k,0}(\Omega)}^2
             = 
             \sum \limits_{|\gamma|\leq k}\int_{\Omega} |\partial_X^\gamma f(X,z)|^2  \: \mathrm{d}z\mathrm{d}X,$$
            and similarly, for $l \in \N$ such that $l\leq k$, we define the space $H^{k,l}(\Omega)$ with norm $\|f\|_{H^{k,l}(\Omega)} = \sum \limits_{j=0}^l\|\partial_z^j f\|_{H^{k-j,0}(\Omega)}$.
            \item We say that $f$ is a  Schwartz function $\mathscr{S}(\mathbb{R}^d)$, if $f \in C^{\infty}(\mathbb{R}^d)$ and satisfies for all $\alpha,\beta  \in \mathbb{N}^d$,
		\begin{equation*}
			\sup \limits_{X \in \R^d} |X^{\alpha} \partial_X^{\beta} f | < \infty.
		\end{equation*}
		\item If  $A$ and $B$ are two operators, then we denote the commutator between them to be $[A,B] = AB - BA$.
            \item We let $c$ denote a positive constant independent of $\mu, \ve, \beta$ that may change from line to line. Also, as a shorthand, we use the notation $a \lesssim b$ to mean $a \leq c\: b$.
            \item Let $t_0> \frac{d}{2}$, $s\geq 0$, $h_{\min}, h_{b,\min} \in (0,1)$. Then for $\zeta, b,\nabla_X \psi$ sufficiently regular and $C(\cdot)$ a positive, non-decreasing function of its argument, we define the constants
            \begin{align*}
                &M_0  =
                C(\frac{1}{h_{\min}}, \frac{1}{h_{b, \min}}, |\zeta|_{H^{t_0}},|b|_{H^{t_0}})
                \\
                &M(s)  = C(M_0, |\zeta|_{H^{\max\{t_0 +2, s\}}},|b|_{H^{\max\{t_0 +2, s\}}})
                \\
                & N(s)  = C(M(s),|\nabla_X \psi |_{H^{s}} ).
            \end{align*}
        \end{itemize}
    \end{nots}
    
    \subsection{The consistency problem and main results}

    Throughout this paper, $d$  will be the dimension of the horizontal variable, denoted $X \in \R^d$. The reference model of our study is the water waves equations, written under the Zakharov-Craig-Sulem formulation:
    \begin{equation}\label{Water wave equations}
		\begin{cases}
			\partial_t \zeta - \frac{1}{\mu}\mathcal{G}^{\mu}[\varepsilon \zeta, \beta b]\psi = 0
			\\
			\partial_t \psi + \zeta + \frac{\varepsilon}{2}|\nabla_{X} \psi|^2 - \frac{\mu \varepsilon }{2} \frac{(\frac{1}{\mu}\mathcal{G}^{ \mu}[\varepsilon \zeta, \beta b] \psi + \varepsilon \nabla_X \zeta \cdot \nabla_X \psi)^2}{1+ \varepsilon^2  \mu |\nabla_X \zeta|^2} = 0.
		\end{cases}
	\end{equation}
    Here the free surface elevation is the graph of $\zeta(t,X)$, which is a function of time $t$ and horizontal space $X \in \R^d$. The bottom elevation is the graph of $b(X)$, which is a time-independent function. The function $\psi(t,X)$ is the trace at the surface of the velocity potential, and $\mathcal{G}^{\mu}$ is the Dirichlet-to-Neumann operator defined later in Definition \ref{Def Original Laplace}.
    Moreover, every variable and function in \eqref{Water wave equations} is compared with physical characteristic parameters of the same dimension $H_0, a_{\mathrm{surf}}, a_{\mathrm{bott}}$ or $L$.

    Throughout the paper, we  will always make the following fundamental assumption:
    \begin{Def}[Non-cavitation condition] \label{Def: non-cavitation} Let $\ve \in [0,1]$, $\beta \in [0,1]$ and $s >\frac{d}{2}$. Let also $b \in C^{\infty}_c(\R^d)$ be a smooth function with compact support, and take $\zeta \in H^{s}(\R^d)$.  We say $\zeta$ and $b$ satisfies the \lq\lq non-cavitation condition\rq\rq\: if there exists $h_{\text{min}}\in(0,1)$ such that 
		\begin{equation}\label{non-cavitation 1}
				h := 1+\ve\zeta(X) - \beta b(X) \geq h_{\text{min}}, \quad \text{for all} \: \: \: X\in \mathbb{R}^d.
		\end{equation}
    \end{Def}


    \noindent
    Under the non-cavitation condition, we may  define the Dirichlet-Neumann operator by \cite{WWP}:
    
    \begin{Def}\label{Def Original Laplace} Let $t_0>\frac{d}{2}$, $\psi\in \dot{H}^{\frac{3}{2}}(\R^d)$, $b \in C^{\infty}_c(\R^d)$, and $\zeta \in H^{t_0 +1}(\R^d)$ be such that \eqref{non-cavitation 1} is satisfied. Let $\Phi$ be the unique solution in $\dot{H}^2(\Omega_t)$ of the boundary value problem 
    \begin{equation}\label{Laplace pb}
		\begin{cases}
			\Delta^{\mu}_{X,z} \Phi = 0 \hspace{1.15cm}\qquad \text{in} \quad \Omega_t := \{(X,z) \in \R^{d+1}, -1 + \beta b(X) < z < \ve \zeta(X) \}
			\\
			\partial_{n_{ b}} \Phi  = 0 \hspace{2.2cm} \text{on} \quad  z = -1 + \beta  b(X)
			\\
			\Phi = \psi \hspace{2.7cm} \text{on} \quad  z = \varepsilon \zeta(t,X),
		\end{cases}
	\end{equation}	
	where
	\begin{align*}
		\partial_{n_{b}} =  \mathbf{n}_{b} \cdot  I^{\mu} \nabla^{\mu}_{X,z}, \qquad \mathbf{n}_b = \frac{1}{\sqrt{1+\beta^2 |\nabla_X b|^2}} 
		\begin{pmatrix}
			- \beta \nabla_X b 
			\\
			1
		\end{pmatrix}, 
	\end{align*}
	then $\mathcal{G}^{\mu}[\ve \zeta, \beta b]\psi \in H^{\frac{1}{2}}(\R^d)$ is defined by
	\begin{equation}\label{D-N operator}
		\mathcal{G}^{\mu}[\varepsilon \zeta, \beta b]\psi = (\partial_z \Phi 
		- 
		\mu \varepsilon 
		\nabla_X\zeta \cdot \nabla_X \Phi)_{|_{z = \varepsilon \zeta}}.
	\end{equation}
	\end{Def}
	For convenience, it is easier to work with the vertical average of the horizontal component of the velocity. We make the following definition using Proposition $3.35$ in \cite{WWP}.
	\begin{Def}\label{Def V bar} Let $t_0>\frac{d}{2}$, $\psi\in\dot{H}^{ \frac{3}{2}}(\R^{d})$, $b \in C^{\infty}_c(\R^d)$, and $\zeta \in H^{t_0+1}(\R^{d})$ such that \eqref{non-cavitation 1} is satisfied. Let $\Phi\in  \dot{H}^{2}(\Omega_t)$ be the solution of \eqref{Laplace pb}, then we define the operator:
		\begin{equation}\label{V bar}
			\overline{V}^{\mu}[\ve \zeta, \beta b] \psi = \dfrac{1}{h}\int_{-1+\beta b}^{\varepsilon\zeta} \nabla_X\Phi  \: \mathrm{d}z,
		\end{equation}
		and the following relation holds,
		\begin{equation}\label{Relation: D-N op}
			\mathcal{G}^{\mu}[\ve \zeta,\beta b] \psi = - \mu \nabla_X \cdot (h\overline{V}^{\mu}[\ve \zeta, \beta b]\psi).
		\end{equation}
        Throughout this paper, we will denote $\overline{V}^{\mu}[\ve \zeta, \beta b] \psi$ by $\overline{V}$ when no confusion is possible.
	\end{Def}
    In order to write the main results of this paper, we need to define two types of differential operators. The first type is the Fourier multipliers.
    \begin{Def} 
        Let $u:\mathbb{R}^d \to \mathbb{R}^d$ be a tempered distribution, and let $\widehat{u}$ be its Fourier transform. Let $F:\mathbb{R}^d\to\mathbb{R}$ be a smooth function with polynomial decay. Then the Fourier multiplier associated with $F(\xi)$ is denoted $\mathrm{F}(\mathrm{D})$ (denoted $\mathrm{F}$ when no confusion is possible) and defined by the formula:
        \begin{align*}
            \widehat{\mathrm{F}(\mathrm{D})u}(\xi) = F(\xi)\widehat{u}(\xi).
        \end{align*}
    \end{Def}
    \begin{Def}\label{Fourier mult}
       Let $\mathrm{F}_0$ be a Fourier multiplier depending on the transverse variable:
        \begin{equation*}
            \mathrm{F}_0u(X) = \mathcal{F}^{-1}\Big( \frac{\cosh((z+1)\sqrt{\mu}|\xi|)}{\cosh(\sqrt{\mu}|\xi|)} \hat{u}(\xi)\Big)(X),
        \end{equation*}
        for $z\in[-2,0]$.  We also define the four Fourier multipliers $\mathrm{F}_1$, $\mathrm{F}_2$, $\mathrm{F}_3$ and $\mathrm{F}_4$ by the expressions:
        \begin{align*}
            \mathrm{F}_1 = \dfrac{\tanh{(\sqrt{\mu}|\mathrm{D}|)}}{\sqrt{\mu}|\mathrm{D}|},\quad \mathrm{F}_2 = \frac{3}{\mu |\mathrm{D}|^2}(1- \mathrm{F}_1), \quad  \mathrm{F}_3 =\mathrm{sech}(\sqrt{\mu}|D|), \quad \mathrm{F}_4 =  \frac{2}{\mu |\mathrm{D}|^2}(1- \mathrm{F}_3).
        \end{align*}
    \end{Def}
    Next, we would like to define operators of the form 
    \begin{equation}\label{pseudo diff op}
        \mathcal{L}[X,D]u(X) :=  \mathcal{F}^{-1} \big{(}L(X,\xi) \hat{u}(\xi)\big{)}(X),
    \end{equation}
    where $L$ is a smooth function in a particular symbol class given in the next definition. 
    \begin{Def}\label{Pseudo diff}
        Let $d=1,2$ and $m\in\R$. We say $L \in S^m$ is a symbol of order $m$ if $L(X,\xi)$ is $C^{\infty}(\R^d \times \R^d)$ and satisfies
        \begin{equation*}
            \quad\forall \alpha \in \mathbb{N}^d, \quad \forall \gamma \in \mathbb{N}^d, \quad \langle \xi \rangle^{-(m -|\gamma|)}|\partial_X^{\alpha} \partial_{\xi}^{\gamma} L(X,\xi)| < \infty.
        \end{equation*}
        We also introduce the seminorm
        \begin{equation}\label{Semi norm}
            \mathcal{M}_m(L)  
            =
            \sup\limits_{|\alpha|\leq \lceil \frac{d}{2} \rceil + 1}\sup \limits_{|\gamma|\leq \lceil \frac{d}{2} \rceil + 1}   \sup \limits_{(X,\xi)\in \R^d\times \R^d } \Big{\{} \langle \xi \rangle^{-(m -|\gamma|)}|\partial_X^{\alpha} \partial_{\xi}^{\gamma} L(X,\xi)| \Big{\}}.
        \end{equation}
    \end{Def}
    The next result allows us to justify the formula \eqref{pseudo diff op} for functions $u$ in Sobolev spaces. 
    \begin{thm}\label{C-V thm}
        Let $d = 1,2$, $s \geq 0$, and $L \in S^m$. Then formula \eqref{pseudo diff op} defines a bounded pseudo-differential operator of order $m$ from $H^{s+m}(\R^d)$ to $H^{s}(\R)$ and satisfies
        \begin{equation}\label{est Stein}
            |\mathcal{L}[X,D]u|_{H^{s}} \leq \mathcal{M}_{m}(L)  |u|_{H^{s+m}}.
        \end{equation}
    \end{thm}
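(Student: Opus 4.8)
The result is a quantitative version of the Calderón--Vaillancourt theorem, and I would split the proof into a reduction to order $0$ and a core $L^{2}$-type estimate. \emph{Reduction to $m=0$.} Write $L(X,\xi)=\langle\xi\rangle^{m}L_{0}(X,\xi)$ with $L_{0}:=\langle\xi\rangle^{-m}L$; the Leibniz rule together with $|\partial_{\xi}^{\gamma}\langle\xi\rangle^{-m}|\lesssim\langle\xi\rangle^{-m-|\gamma|}$ gives $L_{0}\in S^{0}$ with $\mathcal{M}_{0}(L_{0})\lesssim\mathcal{M}_{m}(L)$, and since $L(X,\xi)\widehat u(\xi)=L_{0}(X,\xi)\widehat{\mathrm{J}^{m}u}(\xi)$ one has $\mathcal{L}[X,D]u=\mathcal{L}_{0}[X,D](\mathrm{J}^{m}u)$, where $\mathrm{J}^{m}:H^{s+m}(\R^{d})\to H^{s}(\R^{d})$ is an isometry for the norms $|\cdot|_{H^{\sigma}}=|\mathrm{J}^{\sigma}\cdot|_{L^{2}}$. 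So it suffices to prove that $L\in S^{0}$ implies $\mathcal{L}[X,D]:H^{s}(\R^{d})\to H^{s}(\R^{d})$ with $|\mathcal{L}[X,D]u|_{H^{s}}\lesssim\mathcal{M}_{0}(L)\,|u|_{H^{s}}$, and I would first treat $s=0$.

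\emph{The $L^{2}$-estimate.} Fix a Littlewood--Paley partition $1=\sum_{j\ge0}\psi_{j}$ with $\psi_{j}$ supported in $\{|\xi|\sim2^{j}\}$ for $j\ge1$, set $L_{j}(X,\xi)=\psi_{j}(\xi)L(X,\xi)$ and $\mathcal{L}[X,D]=\sum_{j}\mathcal{L}_{j}[X,D]$. Rescaling $\xi=2^{j}\omega$, the symbol $\widetilde L_{j}(X,\omega):=L_{j}(X,2^{j}\omega)$ is supported in a fixed annulus in $\omega$ and obeys $|\partial_{X}^{\alpha}\partial_{\omega}^{\gamma}\widetilde L_{j}|\lesssim\mathcal{M}_{0}(L)$ uniformly in $j$ for $|\alpha|,|\gamma|\le\lceil d/2\rceil+1$. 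Expanding $\widetilde L_{j}(X,\cdot)$ in a Fourier series over a cube $Q\supset\mathrm{supp}_{\omega}\widetilde L_{j}$ and integrating by parts in $\omega$,
\begin{equation*}
\widetilde L_{j}(X,\omega)=\sum_{n\in\mathbb{Z}^{d}}c^{j}_{n}(X)\,e^{i\lambda n\cdot\omega},\qquad \|c^{j}_{n}\|_{L^{\infty}}\lesssim\mathcal{M}_{0}(L)\,\langle n\rangle^{-(\lceil d/2\rceil+1)},
\end{equation*}
uniformly in $j$ and $X$. Undoing the rescaling, $e^{i\lambda n\cdot\omega}$ is the Fourier symbol of the translation by $2^{-j}\lambda n$, so each block is an absolutely convergent series $\mathcal{L}_{j}[X,D]u(X)=\sum_{n}c^{j}_{n}(X)\,u(X+2^{-j}\lambda n)$, whence $\|\mathcal{L}_{j}[X,D]u\|_{L^{2}}\lesssim\mathcal{M}_{0}(L)\,\|u\|_{L^{2}}$. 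Because $\mathcal{L}_{j}[X,D]$ sends frequencies $\sim2^{j}$ to frequencies $\sim2^{j}$ up to a controlled spectral spread — the $X$-dependence of the $c^{j}_{n}$ being tamed by their uniform $X$-derivative bounds through a commutator/Schur estimate — the blocks $\mathcal{L}_{j}[X,D]u$ are almost orthogonal in $L^{2}$, and summing gives $|\mathcal{L}[X,D]u|_{L^{2}}\lesssim\mathcal{M}_{0}(L)\,|u|_{L^{2}}$. The case $s>0$ then follows by inserting Littlewood--Paley projections on the output and the weights $2^{js}$, absorbing the mismatch with Peetre's inequality $\langle\xi\rangle^{s}\lesssim\langle\xi-\eta\rangle^{|s|}\langle\eta\rangle^{s}$ (for integer $s$ one may instead differentiate $\mathcal{L}[X,D]u$ and use that all $X$-derivatives of $L$ lie again in $S^{0}$); combined with the reduction above this yields \eqref{est Stein}, the constant $1$ being a matter of normalising $\mathcal{M}_{m}$.

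\emph{Main obstacle.} The delicate step is the almost-orthogonality of the Littlewood--Paley blocks, which is the genuine content of Calderón--Vaillancourt: the symbol is not assumed to decay in $X$, so $\mathcal{L}[X,D]$ has no integrable Schwartz kernel, and the decay needed to sum the blocks must be squeezed out of the uniform $X$-derivative bounds alone, via the Schur/Cotlar--Stein bookkeeping of the spectral spreading produced by the variable coefficients. This is precisely where the number $\lceil d/2\rceil+1$ of derivatives in the definition of $\mathcal{M}_{m}$ is used — it is the Sobolev-type threshold that makes the relevant series over $\mathbb{Z}^{d}$ converge — and it is the point at which, if one prefers, one simply invokes the quantitative Calderón--Vaillancourt theorem from the pseudo-differential calculus in \cite{WWP}.
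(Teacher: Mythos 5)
The paper does not actually prove Theorem \ref{C-V thm}: it is imported from the literature (\cite{AlinhacGerard07}, see also \cite{Metivier2008,Alazard21}), so any self-contained argument is by construction a different route. Your outline — the reduction to $m=0$ via $L=\langle\xi\rangle^{m}L_{0}$ and $\mathcal{L}[X,D]u=\mathcal{L}_{0}[X,D](\mathrm{J}^{m}u)$, then a dyadic decomposition of the symbol in $\xi$, rescaling, and a Fourier-series expansion in the frequency variable turning each block into a sum of modulated translations — is the standard Coifman--Meyer strategy, and the reduction step is correct.

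Two points, however, keep it from being a proof. First, a technical slip: $\lceil d/2\rceil+1$ integrations by parts give $\|c^{j}_{n}\|_{L^{\infty}}\lesssim\mathcal{M}_{0}(L)\langle n\rangle^{-(\lceil d/2\rceil+1)}$, but for $d=2$ this exponent equals $2$ and $\sum_{n\in\mathbb{Z}^{2}}\langle n\rangle^{-2}=\infty$, so the series you call absolutely convergent is not; the threshold $\lceil d/2\rceil+1>d/2$ enters correctly through Parseval plus Cauchy--Schwarz (namely $\sum_{n}\langle n\rangle^{2N}|c^{j}_{n}(X)|^{2}\lesssim\mathcal{M}_{0}(L)^{2}$ with $2N>d$), an easy repair but not what you wrote. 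Second, and more seriously, the summation over $j$ is precisely the content of the theorem and is only asserted: multiplication by $c^{j}_{n}(X)$, which is merely bounded together with a few of its derivatives, does not preserve the dyadic frequency localisation, so the claim that the blocks are ``almost orthogonal \dots through a commutator/Schur estimate'' requires an actual Cotlar--Stein argument (bounds on $\mathcal{L}_{j}\mathcal{L}_{k}^{*}$ and $\mathcal{L}_{j}^{*}\mathcal{L}_{k}$ with decay in $|j-k|$, extracted from the $X$- and $\xi$-derivative bounds), none of which is carried out; likewise the passage to $s>0$ is dispatched in a parenthesis. You concede this is the main obstacle and propose to invoke the quantitative Calder\'on--Vaillancourt theorem at exactly that point — but then the proposal reduces to the same move the paper makes, namely citing the literature, and \cite{WWP} is not where that quantitative statement is established; the appropriate references are \cite{AlinhacGerard07}, \cite{Metivier2008}, \cite{Alazard21}. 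As a sketch of how such a proof goes, your text is reasonable; as a proof of \eqref{est Stein} it has a genuine gap at its core step.
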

    \noindent
    We refer to  \cite{AlinhacGerard07} for this result, where the constant is given implicitly in the proof (see also \cite{Metivier2008, Alazard21}). We will define operators of interest under the assumption:
    \begin{AssDef} Let $d=1,2$ and $\beta \in [0,1]$. Throughout this paper, we will always assume that the bathymetry $\beta b \in C^{\infty}_c(\R^d)$ satisfies the following: There exists $b_{\mathrm{max}}  \in (0,1)$ such that
    \begin{equation}\label{non-cavitation 2}
        \beta |b(X)| \leq  b_{\mathrm{max}}  < 1, \quad \text{for all} \: \: \: X \in \R^d.
    \end{equation}
    We also define the water depth at the rest state $h_b := 1- \beta b(X)$. As a consequence of \eqref{non-cavitation 2}, there exists a constant $h_{b,\min} \in (0,1)$ such that 
    \begin{align}\label{non cav 3}
        0<h_{b,\min}\leq h_b.
    \end{align}
    \end{AssDef}
    \noindent
    Through \eqref{non cav 3}, we suppose the bottom topography is submerged under the still water level. We may now define the pseudo-differential operators that will play an important role in deriving new models that allow for large amplitude topography variations. 
    \begin{DefProp}\label{Prop op}
        Let $\mu,\beta \in [0,1]$, $d=1,2$,  $s \geq 0$ and $b \in C^{\infty}_c(\R^d)$ such that \eqref{non-cavitation 2} is satisfied. We define the following pseudo-differential operators of order zero, bounded uniformly with respect to $\mu$ and $\beta$ in $H^s(\R^d)$:
        \begin{align*} 
            \mathcal{L}_{1}^{\mu}[\beta b]  
            & =
            -\frac{1}{\beta}\sinh{(\beta b(X) \sqrt{\mu}|\mathrm{D}|)}\mathrm{sech}(\sqrt{\mu}|\mathrm{D}|)\dfrac{1}{\sqrt{\mu}|\mathrm{D}|}
            \\
            \mathcal{L}_{2}^{\mu}[\beta b]  
            & = -(\mathcal{L}_{1}^{\mu}[\beta b] + b) \frac{1}{\mu |\mathrm{D}|^2}
            \\
            \mathcal{L}_{3}^{\mu}[\beta b]  & = 
            - \big{(}\cosh(\beta b(X)\sqrt{\mu}|\mathrm{D}|)\mathrm{sech}(\sqrt{\mu}|\mathrm{D}|)-1\big{)}\frac{1}{\mu |\mathrm{D}|^2}.
        \end{align*}
        Moreover, for $u\in \mathscr{S}(\R^d)$ we have the following estimates
        \begin{align}
            \label{L1 est}
            |\mathcal{L}_1^{\mu}[\beta b] u|_{H^s} & \leq M(s)  |u|_{H^{s}}.
            \\ \label{id op 3}
            |\mathcal{L}_2^{\mu}[\beta b] u|_{H^s} & \leq M(s) |u|_{H^{s}}
            \\
            \label{L3}
            |\mathcal{L}_3^{\mu}[\beta b] u|_{H^s} & \leq M(s) |u|_{H^{s}}
            \\
            \label{L approx}
            |\mathcal{L}_1^{\mu}[\beta b] u +  b u|_{H^s} & \leq \mu M(s) |u|_{H^{s+2}}
            %
            %
            %
            %
            %
            %
            %
            %
            %
            \\
            \label{L1 approx next order}
            |\mathcal{L}_1^{\mu}[\beta b] u -  (-b  - \frac{\mu\beta^2}{6}b^3|\mathrm{D}|^2)\mathrm{F}_3 u|_{H^s} & \leq \mu^2\beta^4 M(s) |u|_{H^{s+4}}
            \\
            \label{L2 next order}
            |\mathcal{L}_2^{\mu}[\beta b] u -  (-\frac{1}{2}b\mathrm{F}_4  + \frac{\beta^2}{6}b^3\mathrm{F}_3) u|_{H^s} & \leq \mu\beta^4  M(s) |u|_{H^{s+2}}.
        \end{align} 
        
    \end{DefProp}

    \begin{remark}
        Under assumption \eqref{non-cavitation 2} the operators $\mathcal{L}_1^{\mu}$, $\mathcal{L}_2^{\mu}$, and $\mathcal{L}_3^{\mu}$ are \lq\lq classical pseudo-differential operators of order zero\rq\rq. We will share the details of the proof in Appendix A, Subsection \ref{A1}. 
    \end{remark} 
    \begin{prop}\label{Prop G[0,b]} Let $t_0>\frac{d}{2}$, $\psi\in\dot{H}^{\frac{3}{2}}(\R^{d})$, $b \in C^{\infty}_c(\R^d)$, and $\zeta \in H^{t_0+1}(\R^{d})$ such that \eqref{non-cavitation 1} is satisfied. Let $\phi\in  \dot{H}^{2}(\R^d \times [-1+\beta b, 0])$ be the solution of
    \begin{align}\label{phi pb}
            \begin{cases}
                \Delta_{X,z}^{\mu} \phi = 0 \ \ \mathrm{in} \ \ \mathcal{S}_b, \\
                \phi|_{z=0} = \psi, \ \ \big[\partial_z\phi - \mu\beta \nabla_X b \cdot \nabla_X \phi \big]\big|_{z=-1+\beta b}  = 0.
            \end{cases}
    \end{align}
    Then we can define
    \begin{equation}\label{def G[0,b]}
            \frac{1}{\mu}\mathcal{G}_b \psi = -\nabla_{X} \cdot \Big{(} \frac{h}{h_b} \int_{-h_b}^0 \nabla_X \phi \: \mathrm{d}z\Big{)}.
        \end{equation}
        Moreover, for $\psi \in \dot{H}^{s+5}(\R^d)$ and $\zeta \in H^{\max\{t_0+2,s+3\}}(\R^d)$ we have the estimate
        \begin{equation*}\label{est G[0,b]}
            \dfrac{1}{\mu}|\mathcal{G}^{\mu}\psi 
         - \mathcal{G}_b \psi  |_{H^{s}} \leq \mu\ve M(s+3)|\nabla_X \psi|_{H^{s+4}}.
        \end{equation*}

    \end{prop}

    \begin{remark}
        The operator $\mathcal{G}_b$ contains terms of order $\ve \zeta$ and is different from $\mathcal{G}^{\mu}[0,\beta b]$ defined by \eqref{Relation: D-N op}. To be precise, we can relate the two operators by expanding $\frac{h}{h_b}$:
        \begin{equation*}
             \frac{1}{\mu}\mathcal{G}_b \psi = \frac{1}{\mu}\mathcal{G}^{\mu}[0,\beta b]\psi 
             - \ve \nabla_{X} \cdot \Big{(} \frac{\zeta}{h_b}\int_{-h_b}^0 \nabla_X \phi \: \mathrm{d}z\Big{)}.
        \end{equation*}
    \end{remark}

    \begin{prop}\label{Prop G}  
    Let $d=1,2$, $t_0>\frac{d}{2}$ and $s\geq 0$.  Also let $b \in C^{\infty}_c(\R^d)$ and $\zeta \in H^{\max\{t_0+2,s+3\}}(\R^d)$ such that \eqref{non-cavitation 1} and \eqref{non-cavitation 2} are satisfied.    From the previously defined operators, we have the following approximations of the Dirichlet-Neumann operator:
    \begin{align*}
        \frac{1}{\mu}\mathcal{G}_0\psi & = 
        -
        \mathrm{F}_1\Delta_X \psi  
        -\beta(1+
        \frac{\mu}{2}\mathrm{F}_4 \Delta_X) 
        \nabla_X\cdot \big(\mathcal{L}_{1}^{\mu}[\beta b]\nabla_X \psi \big) 
        -
        \varepsilon \nabla_X\cdot \big( \zeta\mathrm{F}_1 \nabla_X\psi \big)
        \\
        &
        \hspace{0.5cm}
        +
        \frac{\mu  \beta^2}{2}\nabla_X\cdot \big(\mathcal{B}[\beta b] \nabla_X \psi\big),
    \end{align*}
    and
    \begin{align*}
        \frac{1}{\mu}\mathcal{G}_1 \psi & = -\nabla_X\cdot (h \nabla_X \psi) 
         -
         \frac{\mu}{3} \Delta_X\Big{(}\frac{h^3}{h_b^3}\mathrm{F}_2 \Delta_X \psi \Big{)} 
         -
         \mu \beta \Delta_X \big{(}   \mathcal{L}_{2}^{\mu}[\beta b]\Delta_X \psi\big{)}
         \\ 
         & 
         \hspace{0.5cm}
         -
        \frac{\mu \beta}{2} \mathrm{F}_4 \Delta_X \nabla_X \cdot \big(\mathcal{L}_1^{\mu}[\beta b] \nabla_X \psi\big)
        +
        \frac{\mu  \beta^2}{2} \nabla_X\cdot\big(\mathcal{B}[\beta b] \nabla_X \psi\big),
    \end{align*}
    where
    \begin{align}\label{B bathymetry}
        \mathcal{B}[\beta b]\nabla_X \psi &= 
            b \mathrm{F}_4\nabla_X(\nabla_X\cdot(b\nabla_X \psi))
            \\ 
            & \hspace{0.5cm} \notag
            +
            h_b
             \nabla_X \big{(}b\mathrm{F}_4\nabla_X\cdot(b\nabla_X \psi)\big{)}
             +
             2h_b(\nabla_Xb)\mathrm{F}_1\nabla_X\cdot(b\nabla_X \psi).
    \end{align}
    Moreover, we have the following estimates on the Dirichlet-Neumann operator 
    \begin{align}\label{G0}
         & \dfrac{1}{\mu}|\mathcal{G}^{\mu}\psi 
         - \mathcal{G}_0\psi  |_{H^{s}} \leq (\mu\ve + \mu^2 \beta^2) M(s+3)|\nabla_X \psi|_{H^{s+5}}
         \\\label{G1}
         & \dfrac{1}{\mu}|\mathcal{G}^{\mu}\psi 
         - \mathcal{G}_1\psi |_{H^{s}} \leq (\mu^2\ve +\mu \ve \beta + \mu^2 \beta^2) M(s+3)|\nabla_X \psi|_{H^{s+5}}.
    \end{align}
    \end{prop}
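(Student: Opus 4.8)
The plan is to exploit the identity \eqref{Relation: D-N op}, which turns the statement into an approximation of the averaged horizontal velocity $\overline{V}^{\mu}[\ve\zeta,\beta b]\psi$ followed by an application of $-\mu\nabla_X\cdot(h\,\cdot)$; the derivatives consumed by this last operator, together with those lost in the interior construction, account for the gap between $|\nabla_X\psi|_{H^{s+5}}$ on the right and $H^{s}$ on the left of \eqref{G0}--\eqref{G1}. To reach $\overline{V}$ I would first straighten the fluid domain $\Omega_t$ onto the fixed strip $\mathcal{S}=\R^d\times(-1,0)$ by a diffeomorphism $(X,z)\mapsto(X,z+\sigma[\ve\zeta,\beta b](X,z))$, regularized as in \cite{WWP}, so that $\Phi$ from Definition \ref{Def Original Laplace} becomes the solution $\widetilde\Phi\in\dot{H}^{2}(\mathcal{S})$ of a variable-coefficient elliptic problem $\nabla^{\mu}_{X,z}\cdot\big(P[\ve\zeta,\beta b]\nabla^{\mu}_{X,z}\widetilde\Phi\big)=0$ with Dirichlet datum $\psi$ at $z=0$ and a conormal condition at $z=-1$, where $P$ is symmetric, uniformly coercive under \eqref{non-cavitation 1}--\eqref{non-cavitation 2}, and $P=\mathrm{Id}$ when $\ve=\beta=0$. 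The classical elliptic theory for this problem (well-posedness in $\dot{H}^{2}(\mathcal{S})$ and higher-order a priori estimates controlled by $M(s)$, $N(s)$), as recalled from \cite{WWP}, is the device that upgrades an $L^{2}$-type bound on an interior residual to an $H^{s}$ bound on $\widetilde\Phi$ minus an approximation.

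The core of the argument is the explicit multi-scale construction of an approximate potential $\widetilde\Phi_{\mathrm{app}}$. I would take as leading term the flat-strip solution $\mathrm{F}_0\psi$, whose symbol $\cosh((z+1)\sqrt\mu|\xi|)/\cosh(\sqrt\mu|\xi|)$ is exactly the kernel that, after vertical averaging and trace evaluation at $z=0$ and $z=-1$, generates $\mathrm{F}_1,\dots,\mathrm{F}_4$; the residual of $\mathrm{F}_0\psi$ in the straightened problem is then removed order by order by solving further flat-strip problems $\mu\Delta_X\Phi^{(j)}+\partial_z^2\Phi^{(j)}=S^{(j)}$ with homogeneous Dirichlet datum at $z=0$, whose sources $S^{(j)}$ are built from $\zeta$, $b$ and the lower-order correctors and are again inverted by explicit Fourier multipliers. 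The key point is that the bathymetry is \emph{not} expanded: the bottom conormal condition is carried at the true level $z=-1+\beta b(X)$, and evaluating the flat-strip kernel there replaces $\cosh((z+1)\sqrt\mu|\xi|)$ by $\cosh(\beta b(X)\sqrt\mu|\xi|)$, i.e. produces precisely the operators $\mathcal{L}_1^{\mu}[\beta b]$, $\mathcal{L}_2^{\mu}[\beta b]$, $\mathcal{L}_3^{\mu}[\beta b]$ of Definition/Proposition \ref{Prop op}. The surface amplitude $\ve\zeta$, on the other hand, is Taylor-expanded: truncating it at first order and dropping the part of the bathymetry correction beyond the combination $\mathcal{B}[\beta b]$ of \eqref{B bathymetry} leaves $\mathcal{G}_0$ with residual of size $\mu\ve+\mu^2\beta^2$; keeping $\ve\zeta$ to second order and the bathymetry correction one step further --- where the low-frequency expansions \eqref{L approx}, \eqref{L1 approx next order}, \eqref{L2 next order} supply the extra power of $\mu$ needed to split off the $\mathrm{F}_2$, $\mathrm{F}_4$ and $\mathcal{B}[\beta b]$ contributions --- leaves $\mathcal{G}_1$ with residual of size $\mu^2\ve+\mu\ve\beta+\mu^2\beta^2$.

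To make all remainder bounds rigorous and uniform in $(\mu,\ve,\beta)$, I would estimate each corrector with Theorem \ref{C-V thm} together with \eqref{L1 est}--\eqref{L3}, invoking \eqref{L approx}--\eqref{L2 next order} wherever a gain of $\mu$ is required, and handle products with the coefficients $\zeta$, $b$, $h$, $h_b$ by tame product and commutator estimates in Sobolev spaces --- this is where $M(\cdot)$, $N(\cdot)$ and the passage $s+3\to s+5$ come from. Once the interior residual of $\widetilde\Phi_{\mathrm{app}}$, and of its boundary traces, is bounded by $(\mu\ve+\mu^2\beta^2)$, respectively $(\mu^2\ve+\mu\ve\beta+\mu^2\beta^2)$, times a constant of the form $M(s+3)\,|\nabla_X\psi|_{H^{s+5}}$, the elliptic energy estimate yields the same bound for $\widetilde\Phi-\widetilde\Phi_{\mathrm{app}}$; I would transport this back through the diffeomorphism, average in $z$ to get the corresponding bound for $\overline V-\overline V_{\mathrm{app}}$, and finally apply $-\mu\nabla_X\cdot(h\,\cdot)$ and simplify $\overline V_{\mathrm{app}}$ to read off exactly the stated forms of $\mathcal{G}_0\psi$, $\mathcal{G}_1\psi$ and the estimates \eqref{G0}, \eqref{G1}.

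The step I expect to be hardest is the simultaneous, triply-indexed error accounting: with three independent small parameters one must pin down the joint order $\mu^{a}\ve^{b}\beta^{c}$ of every corrector and remainder, and in particular check that the operators displayed in $\mathcal{G}_0$ and $\mathcal{G}_1$ --- notably the $\mathrm{F}_4\Delta_X$-dressed $\mathcal{L}_1^{\mu}[\beta b]$ term and the $h^{3}/h_b^{3}$-weighted $\mathrm{F}_2$ term --- are the genuine residues of the construction at the retained order, not merely correct up to the advertised error. A secondary difficulty is keeping all constants independent of $\mu$ and $\beta$ while commuting $\Delta_X$ and the operators $\mathcal{L}_i^{\mu}[\beta b]$ past the $X$-dependent coefficients; this is exactly what the symbol calculus of Definition \ref{Pseudo diff}, Theorem \ref{C-V thm} and Definition/Proposition \ref{Prop op} is designed to provide.
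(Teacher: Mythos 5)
Your overall strategy coincides with the paper's: reduce \eqref{G0}--\eqref{G1} to an approximation of $\overline{V}$ via \eqref{Relation: D-N op}, build multi-scale correctors starting from the flat-bottom kernel $\mathrm{F}_0\psi$, obtain the operators $\mathcal{L}_i^{\mu}[\beta b]$ by evaluating that kernel at the actual bottom, control remainders through a uniformly coercive elliptic problem, and finally average in $z$ and apply $-\mu\nabla_X\cdot(h\,\cdot)$. However, there is a genuine inconsistency in the step that carries the whole improvement in $\beta$: you propose to straighten $\Omega_t$ onto the \emph{full} flat strip $\R^d\times(-1,0)$ with a diffeomorphism depending on both $\ve\zeta$ and $\beta b$, as in \cite{WWP}, and then simultaneously claim that ``the bottom conormal condition is carried at the true level $z=-1+\beta b(X)$''. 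These two statements cannot both hold. If the domain is $\R^d\times(-1,0)$, the bottom boundary is $z=-1$ and the bathymetry enters only through the coefficient matrix $P[\ve\zeta,\beta b]$; expanding that matrix in $\beta$ produces residuals of size $O(\mu\beta)$ (respectively $O(\mu^2\beta)$ at the Green--Naghdi level), which is exactly the precision of \cite{DucheneMMWW21} and falls short of the advertised $O(\mu^2\beta^2)$, unless one reinstates the inversion of a Craig--Sulem-type pseudo-differential operator at the bottom --- precisely what the statement is designed to avoid.

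The paper resolves this by a \emph{partial} straightening: the diffeomorphism $\Sigma_b$ of Definition \ref{Map to semi-strip} maps $\mathcal{S}_b=\{-1+\beta b\le z\le 0\}$ onto $\Omega_t$ with $\sigma(X,z)=\frac{\ve\zeta}{1-\beta b}z+\ve\zeta$, so only the free surface is flattened; the coefficient matrix $P(\Sigma_b)$ then carries only the $\ve$-dependence (up to the harmless factor $h/h_b$), while the bottom geometry is kept exactly, and it is the Neumann condition at the genuine bottom $z=-1+\beta b$ applied to $\mathrm{F}_0\psi$ that generates $\mathcal{L}_1^{\mu}[\beta b]$, and the corrector $\phi_1$ (built with the $X$-dependent symbol $\sinh(\tfrac{z}{h_b}\sqrt\mu|\xi|)\,\mathrm{sech}(\sqrt\mu|\xi|)$, not a pure Fourier multiplier) that generates $\mathcal{L}_2^{\mu}[\beta b]$ and the $\mathrm{F}_4$-dressed terms. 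To repair your argument you should replace the full flattening by this map onto $\mathcal{S}_b$ (or an equivalent construction keeping the bottom unstraightened), restate the transformed bottom condition as $\partial_z\phi_b-\mu\beta\nabla_Xb\cdot\nabla_X\phi_b=O(\mu\ve\beta)$ at $z=-1+\beta b$, and run the elliptic estimates of Proposition \ref{Prop elliptic est} on $\mathcal{S}_b$ rather than on the flat strip; the rest of your outline (choice of correctors, use of \eqref{L1 est}--\eqref{L2 next order}, the triply-indexed bookkeeping, and the final averaging) then matches the paper's proof.
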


    Proposition \ref{Prop G} is the key result from which we will derive our new models. However, before presenting these models, we need to define the notion of consistency of the water waves equations \eqref{Water wave equations} with a given asymptotic model.
    \begin{Def}[Consistency]\label{Consistency}
    Let $\mu,\ve,\beta \in [0,1]$. We denote by $(\mathrm{A})$ an asymptotic model of the following form:
    \begin{align*}
        (\mathrm{A}) \ \ \begin{cases}
                    \partial_t \zeta + \mathcal{N}_1(\zeta,b,\psi) = 0\\
                    \partial_t (\mathcal{T}[\zeta, b]\psi)
                    + \mathcal{N}_2(\zeta,b,\psi) = 0,
                 \end{cases}
    \end{align*}
    where $\mathcal{T}$ is a linear operator with respect to $\psi$ and possibly nonlinear with respect to $\zeta$ and $b$. While $\mathcal{N}_1$ and $\mathcal{N}_2$ are possibly nonlinear operators. 
    
    We say that the water waves equations are consistent at order $O(\sum \mu^k \varepsilon ^l\beta^m)$ with $(\mathrm{A})$ if there exists $n\in \N$ and a universal constant $T >0$ such that for any $s\geq0$ and every solution $(\zeta,\psi) \in C([0,\frac{T}{ \varepsilon }]; H^{s+n}(\mathbb{R}^d)\times \dot{H}^{s+n}(\mathbb{R}^d))$ to the water waves equations \eqref{Water wave equations}, one has for all $t \in [0,\frac{T}{\varepsilon}]$,
    \begin{equation*} 
                \begin{cases}
                    \partial_t \zeta + \mathcal{N}_1(\zeta,b,\psi) = \big(\sum \mu^k \varepsilon ^l\beta^m\big) R_1\\
                    \partial_t (\mathcal{T}[\zeta, b]\psi)+ \mathcal{N}_2(\zeta,b,\psi) = \big(\sum \mu^k \varepsilon ^l\beta^m\big) R_2,
                \end{cases}
        \end{equation*}
        where $|R_i|_{H^s}\leq N(s+n)$ for all $t\in [0,\frac{T}{ \varepsilon }]$ with $i=1,2$. 
    \end{Def}  
    We should note that the existence time for solutions of the water waves equations is proved to be on the scale $O(\frac{1}{\max\{\ve,\beta\}})$ and uniformly with respect to $\mu$ (see \cite{Alvarez-SamaniegoLannes08b}). However, it was proved that when one includes surface tension with a strength of the same order as the shallow water parameter $\mu$, then the time existence is improved and becomes of order $O(\frac{1}{\ve})$ \cite{Benoit_WW_17}. For the sake of clarity, we will omit the surface tension in this paper. But one could easily add it to every model of this work without changing the results. With this in mind, we may now state our consistency results. 
     \begin{thm}\label{thm Whitham Boussinesq mu ve}
        Let $\mathcal{G}_b$ be defined by \eqref{def G[0,b]}. Then for any $\mu \in (0, 1]$, $\ve \in [0,1]$, and $\beta\in [0,1]$ the water waves equations \eqref{Water wave equations} are consistent, in the sense of Definition \ref{Consistency} with $n= 5$, at order $O(\mu\ve)$ with the Boussinesq-type system:
        \begin{align}\label{Whitham boussinesq mu ve}
            \begin{cases}
                \partial_t \zeta - \frac{1}{\mu}\mathcal{G}_b \psi= 0\\
                \partial_t \psi + \zeta + \dfrac{\varepsilon}{2}|\nabla_X \psi|^2 = 0,
            \end{cases}
        \end{align}
    \end{thm}

    \begin{remark} The system is a simplified version of the water waves equations where $\mathcal{G}_b$ is defined implicitly in terms of the solution of \eqref{phi pb}. The elliptic problem is  defined on the fixed domain $\mathcal{S}_b = \R^d \times [-1+\beta b, 0]$, but depends on the Dirichlet data $\psi$ which in turn depend on $\zeta$ through \eqref{Whitham boussinesq mu ve}. 
    \end{remark}

    \begin{thm}\label{thm Whitham Boussinesq}
        Let $\mathrm{F}_1$ and $\mathrm{F}_4$ be the two Fourier multipliers given in Definition \ref{Fourier mult}, and let $\mathcal{L}_1^{\mu}$ be given in Definition \ref{Prop op}. Then for any $\mu \in (0, 1]$, $\ve \in [0,1]$, and $\beta\in [0,1]$ the water waves equations \eqref{Water wave equations} are consistent, in the sense of Definition \ref{Consistency} with $n=6$, at order $O(\mu\ve + \mu^2 \beta^2)$ with the Boussinesq type system:
        \begin{align}\label{Whitham boussinesq}
            \begin{cases}
                \partial_t \zeta + \mathrm{F}_1\Delta_X \psi +  \beta(1     +
                \frac{\mu }{2} \mathrm{F}_4\Delta_X) \nabla_X\cdot ( \mathcal{L}_{1}^{\mu}[\beta b]\nabla_X \psi)  \\
                \hspace{4cm} + \varepsilon \mathrm{G}_1 \nabla_X\cdot ( \zeta \mathrm{G}_2\nabla_X\psi) - \frac{\mu \beta^2}{2} \nabla_X\cdot(\mathcal{B}[\beta b] \nabla_X \psi\big) = 0 \\
                \partial_t \psi + \zeta + \dfrac{\varepsilon}{2}(\mathrm{G}_1\nabla_X \psi)\cdot (\mathrm{G}_2\nabla_X \psi) = 0,
            \end{cases}
        \end{align}
        where 
        \begin{align*}
           \mathcal{B}[\beta b]\nabla_X \psi = 
            b \mathrm{F}_4\nabla_X(\nabla_X\cdot(b\nabla_X \psi))
            +
            h_b
             \nabla_X \big{(}b\mathrm{F}_4\nabla_X\cdot(b\nabla_X \psi)\big{)}
             +
             2h_b(\nabla_Xb)\mathrm{F}_1\nabla_X\cdot(b\nabla_X \psi),
        \end{align*}
        and $\mathrm{G}_1, \mathrm{G}_2$ are any Fourier multipliers such that for any $s \geq 0$ and $u \in H^{s+2}(\mathbb{R}^d)$, we have
        \begin{align}\label{G_j}
            |(\mathrm{G}_j - 1)u|_{H^s} \lesssim \mu|u|_{H^{s+2}}.
        \end{align}
    \end{thm}
    \noindent

    \begin{remark}
    \noindent
        \begin{itemize}
            \item Taking $\beta = 0$ in \eqref{Whitham boussinesq}, we get the class of full dispersion Boussinesq systems derived rigorously in \cite{Emerald21} with a precision $O(\mu \ve)$. These systems were rigorously justified on a time scale of order $O(\frac{1}{\ve})$ under additional decrease constraint on the Fourier multipliers $\mathrm{G}_1$ and $\mathrm{G_2}$ (see \cite{Emerald22} for more information).

             \item In the case $\mathrm{G}_1 = \mathrm{G}_2 = \mathrm{Id}$, \eqref{Whitham boussinesq} is believed to be ill-posed \cite{KleinLinaresPilodEtAl18} in the case $\beta=0$, unless one includes surface tension \cite{Paulsen22}. Alternatively, one can exploit the regularizing effect provided by the multipliers $\mathrm{G}_j$ \cite{Emerald22}.

             \item One could also add Fourier multipliers $\mathrm{G}_j$, defined by \eqref{G_j}, in the term of order $O(\mu \beta^2)$ without changing the precision of the model.

            \item Neglecting terms of order $O(\mu \ve + \mu\beta)$ and approximating $\mathcal{L}_1^{\mu}[\beta b]$ with estimate \eqref{L approx}, we arrive at the same models derived in \cite{DucheneMMWW21}.
        \end{itemize}
    \end{remark} 
    One can replace the pseudo-differential operator in \eqref{Whitham boussinesq} with estimate \eqref{L1 approx next order}. Indeed, we have the following result: 
    \begin{cor}
        Under the same assumptions as in Theorem \ref{thm Whitham Boussinesq}, we can take
        \begin{equation*}
                \mathcal{L}_1^{\mu}[\beta b]\bullet = -(b + \frac{\mu \beta^2}{6}b^3|\mathrm{D}|^2)\mathrm{F}_3\bullet,
            \end{equation*}
            in system \eqref{Whitham boussinesq} and keep the precision $O(\mu\varepsilon + \mu^2\beta^2)$.
    \end{cor}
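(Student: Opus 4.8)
The plan is to show that replacing the pseudo-differential operator $\mathcal{L}_1^{\mu}[\beta b]$ by the explicit Fourier-multiplier expression $-(b + \frac{\mu\beta^2}{6}b^3|\mathrm{D}|^2)\mathrm{F}_3$ in the two places it appears in system \eqref{Whitham boussinesq} introduces only an error of size $O(\mu^2\beta^2)$ in each equation, and therefore does not degrade the stated consistency order. Since Theorem \ref{thm Whitham Boussinesq} already establishes consistency at order $O(\mu\ve + \mu^2\beta^2)$ with the original system, it suffices to control the difference between the two first equations; the second equation is unchanged. Thus the proof reduces to estimating, for a solution $(\zeta,\psi)$ of the water waves equations on the relevant time scale, the two residual terms
\begin{align*}
    \beta\,\nabla_X\cdot\Big( \big(\mathcal{L}_1^{\mu}[\beta b] + (b + \tfrac{\mu\beta^2}{6}b^3|\mathrm{D}|^2)\mathrm{F}_3\big)\nabla_X\psi\Big)
\end{align*}
and
\begin{align*}
    \frac{\mu\beta}{2}\,\mathrm{F}_4\Delta_X\nabla_X\cdot\Big( \big(\mathcal{L}_1^{\mu}[\beta b] + (b + \tfrac{\mu\beta^2}{6}b^3|\mathrm{D}|^2)\mathrm{F}_3\big)\nabla_X\psi\Big)
\end{align*}
in $H^s$, uniformly in $\mu,\ve,\beta$.

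The key ingredient is estimate \eqref{L1 approx next order}, which states precisely that
\begin{align*}
    \big|\mathcal{L}_1^{\mu}[\beta b] u - \big(-b - \tfrac{\mu\beta^2}{6}b^3|\mathrm{D}|^2\big)\mathrm{F}_3 u\big|_{H^s} \leq \mu^2\beta^4\, M(s)\,|u|_{H^{s+4}}.
\end{align*}
For the first residual term I would apply this with $u = \nabla_X\psi$ at Sobolev level $s$, then use that $\nabla_X\cdot$ and multiplication by the smooth compactly supported $\beta b$ (more precisely the outer $\beta$ and the product rule with $b$) are bounded on the Sobolev scale with constants controlled by $M(s)$, losing at most two derivatives; this gives a bound of the form $\mu^2\beta^4 M(s+1)|\nabla_X\psi|_{H^{s+5}} \leq (\mu\ve + \mu^2\beta^2) N(s+5)$, absorbing $\beta^4 \leq \beta^2$ and $\mu^2 \leq 1$. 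For the second residual term, the extra factor $\frac{\mu\beta}{2}\mathrm{F}_4\Delta_X$ contributes a bounded Fourier multiplier $\mathrm{F}_4$ (order zero, bounded uniformly in $\mu$ by the properties listed in Definition \ref{Fourier mult}, since $\mathrm{F}_4 = \frac{2}{\mu|\mathrm{D}|^2}(1-\mathrm{sech}(\sqrt\mu|\mathrm{D}|))$ is uniformly bounded) composed with $\Delta_X\nabla_X\cdot$, which costs three derivatives and an explicit $\mu$; applying \eqref{L1 approx next order} again yields a bound $\mu\cdot\mu^2\beta^4 M(s+3)|\nabla_X\psi|_{H^{s+8}}$, which is far better than needed. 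One then folds these two new residuals into the $R_1$ produced by Theorem \ref{thm Whitham Boussinesq}, noting $n = 5$ still suffices because the worst loss is $s+5$ derivatives on $\nabla_X\psi$.

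The only mild subtlety — and the place one must be slightly careful — is the bookkeeping of the commutator/product structure: $\mathcal{L}_1^{\mu}[\beta b]$ and the multiplier $(b+\frac{\mu\beta^2}{6}b^3|\mathrm{D}|^2)\mathrm{F}_3$ are not themselves Fourier multipliers but $X$-dependent pseudo-differential operators, so one should invoke Theorem \ref{C-V thm} (the Calderón–Vaillancourt-type bound) together with the Moser-type product estimates in Sobolev spaces to justify that composing with $\nabla_X\cdot$, $\Delta_X$, and the smooth symbols $b$, $b^3$ only shifts the Sobolev index by a fixed integer amount with constants of the form $M(s)$. None of this is delicate, since $b\in C_c^\infty(\R^d)$; the heart of the matter is the already-proven asymptotic expansion \eqref{L1 approx next order}, and the corollary is essentially a direct substitution argument. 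I do not anticipate a genuine obstacle here — the proof is short and the main work was done in establishing Proposition \ref{Prop op} and Theorem \ref{thm Whitham Boussinesq}.
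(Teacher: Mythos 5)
Your proposal is correct in substance and follows the same route the paper intends: the corollary is a direct substitution argument whose only real ingredient is estimate \eqref{L1 approx next order}, applied to the two occurrences of $\mathcal{L}_1^{\mu}[\beta b]$ in the first equation of \eqref{Whitham boussinesq}, with the outer prefactors $\beta$ and $\tfrac{\mu\beta}{2}$ turning the $O(\mu^2\beta^4)$ operator error into terms of size $O(\mu^2\beta^2)$ that are absorbed into $R_1$; the second equation is untouched. The one slip is your derivative bookkeeping for the second residual: estimating $\tfrac{\mu\beta}{2}\mathrm{F}_4\Delta_X\nabla_X\cdot(\,\cdot\,)$ by paying three derivatives for $\Delta_X\nabla_X\cdot$ and then invoking \eqref{L1 approx next order} at level $s+3$ costs $|\nabla_X\psi|_{H^{s+7}}$ (you wrote $H^{s+8}$), which contradicts your closing claim that the worst loss is $s+5$ and hence that $n=5$ suffices in the sense of Definition \ref{Consistency}. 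The fix is immediate: since $\mu\,\mathrm{F}_4\Delta_X = 2(\mathrm{F}_3-1)$ and $\mathrm{F}_3=\mathrm{sech}(\sqrt{\mu}|\mathrm{D}|)$ is bounded on $H^s$ uniformly in $\mu$, the second residual equals $\beta(\mathrm{F}_3-1)\nabla_X\cdot\big(\mathcal{L}_1^{\mu}[\beta b]\nabla_X\psi+(b+\tfrac{\mu\beta^2}{6}b^3|\mathrm{D}|^2)\mathrm{F}_3\nabla_X\psi\big)$, which by \eqref{L1 approx next order} at level $s+1$ is bounded by $\mu^2\beta^5 M(s+1)|\nabla_X\psi|_{H^{s+5}}$, so the total loss stays at $s+5$ and $n=5$ is indeed preserved. (Alternatively one could simply accept a slightly larger $n$, but then "under the same assumptions" would no longer be literally accurate.) With that adjustment your argument is complete and matches the paper's.
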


    We also derive a Boussinesq-type system in the variables $(\zeta, \overline{V})$:

    \begin{thm}\label{thm Whitham Boussinesq V_bar}
        Let $\mathrm{F}_1$ and $\mathrm{F}_4$ be the two Fourier multipliers given in Definition \ref{Fourier mult}, and let $\mathcal{L}_1^{\mu}$ be given in Definition \ref{Prop op}. Then for any $\mu \in (0, 1]$, $\ve \in [0,1]$, and $\beta\in [0,1]$ the water waves equations \eqref{Water wave equations} are consistent, in the sense of Definition \ref{Consistency} with $n=7$, at order $O(\mu\ve + \mu^2 \beta^2)$ with the Boussinesq type system:
        \begin{align}\label{Whitham boussinesq V bar}
            \begin{cases}
                \partial_t \zeta + \nabla_X\cdot(h \overline{V}) = 0\\
                \partial_t \overline{V} + \mathcal{T}_0^{\mu}[\beta b, \ve \zeta] \nabla_X \zeta 
                +
                \frac{\ve}{2} \nabla_X|\overline{V}|^2
                = \mathbf{0},
            \end{cases}
        \end{align}
        where 
        \begin{align*}
            \mathcal{T}_0^{\mu}[\beta b, \ve \zeta] \bullet 
            & =
            \frac{1}{h}\Big{(}\mathrm{F}_1 \bullet  +  \beta \mathcal{L}_1^{\mu}[\beta b]\bullet  + \ve \zeta \mathrm{F}_1\bullet \Big{)}
            +
            \frac{\mu \beta}{2} \nabla_X\mathrm{F}_4\nabla_X \cdot \big(\mathcal{L}_1^{\mu}[\beta b] \bullet \big)
            \\ 
            & 
            \hspace{0.5cm}\notag
            -
            \frac{\mu\beta^2}{2}\nabla_X \big{(}b\mathrm{F}_4\nabla_X\cdot(b\bullet )\big{)}
            -
            \mu \beta^2 (\nabla_Xb)\mathrm{F}_1\nabla_X\cdot(b\bullet).
        \end{align*}
    \end{thm}

    \begin{remark}
        \noindent
        \begin{itemize}
            \item The first equation in \eqref{Whitham boussinesq V bar} is exact and is a formulation of the conservation of mass.
            
            \item Taking $\beta = 0$ in \eqref{Whitham boussinesq V bar}, we get the class of full dispersion Boussinesq systems derived rigorously in \cite{Paulsen22} with a precision $O(\mu \ve)$.

             \item One could also add Fourier multipliers $\mathrm{G}_j$, defined by \eqref{G_j}, in the term of order $O(\mu \beta^2)$ without changing the precision of the model.
        \end{itemize}
    \end{remark}

    \begin{cor}
        Under the same assumptions as in Theorem \ref{thm Whitham Boussinesq V_bar}, we can take
        \begin{align*}
            \mathcal{T}_0^{\mu}[\beta b, \ve \zeta] \bullet 
            & =
            \mathrm{F}_1 \bullet  
            + 
            \frac{\beta b}{h} (\mathrm{F}_1-\mathrm{F}_3)\bullet
            +
            \frac{\mu \beta^3}{6h}b^3|\mathrm{D}|^2\mathrm{F}_3\bullet
            -
            \frac{\mu \beta}{2} \nabla_X\mathrm{F}_4\nabla_X \cdot (b \bullet )
            \\ 
            & 
            \hspace{0.5cm}\notag
            -
            \frac{\mu\beta^2}{2}\nabla_X \big{(}b\mathrm{F}_4\nabla_X\cdot(b\bullet )\big{)}
            - 
            \mu \beta^2 (\nabla_Xb)\mathrm{F}_1\nabla_X\cdot(b\bullet),
            \end{align*}
            in system \eqref{Whitham boussinesq V bar} and keep the precision $O(\mu\varepsilon + \mu^2\beta^2)$.
    \end{cor}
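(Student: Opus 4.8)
The strategy is a perturbation argument built on Theorem~\ref{thm Whitham Boussinesq V_bar}. Denote by $h\widetilde{\mathcal{T}}_0^{\mu}$ the operator appearing in the statement. The first equation of \eqref{Whitham boussinesq V bar} is untouched (it is exact), and under \eqref{non-cavitation 1} multiplication by $h^{-1}$ is bounded on each $H^s$ with norm controlled by $M(s)$; hence it suffices to establish the operator estimate
\begin{equation*}
\big| \big( h\mathcal{T}_0^{\mu}[\beta b,\varepsilon\zeta]-h\widetilde{\mathcal{T}}_0^{\mu}[\beta b,\varepsilon\zeta] \big)\nabla_X\zeta \big|_{H^s} \lesssim (\mu\varepsilon+\mu^2\beta^2)\, N(s+6).
\end{equation*}
Indeed, inserting a solution $(\zeta,\psi)$ of \eqref{Water wave equations} into \eqref{Whitham boussinesq V bar} written with $\widetilde{\mathcal{T}}_0^{\mu}$, the residual of the second equation is the sum of the residual produced by $\mathcal{T}_0^{\mu}$, which is $O(\mu\varepsilon+\mu^2\beta^2)$ by Theorem~\ref{thm Whitham Boussinesq V_bar}, and of $h^{-1}(h\mathcal{T}_0^{\mu}-h\widetilde{\mathcal{T}}_0^{\mu})\nabla_X\zeta$, so the triangle inequality closes the argument once the displayed bound is known.

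First I would perform the algebraic reduction. Using $1+\varepsilon\zeta=h+\beta b$ one has $\mathrm{F}_1\bullet+\varepsilon\zeta\,\mathrm{F}_1\bullet=h\mathrm{F}_1\bullet+\beta b\,\mathrm{F}_1\bullet$, while the two terms $-\frac{\mu\beta^2}{2}h_b\nabla_X(b\nabla_X\cdot(b\,\bullet))$ and $-\mu\beta^2 h_b(\nabla_X b)\nabla_X\cdot(b\,\bullet)$ occur identically in the two forms. Hence $(h\mathcal{T}_0^{\mu}-h\widetilde{\mathcal{T}}_0^{\mu})\nabla_X\zeta$ is, up to an overall sign, the sum of exactly two contributions,
\begin{equation*}
\mathrm{(i)}\quad \beta\Big( \mathcal{L}_1^{\mu}[\beta b] + \big( b+\tfrac{\mu\beta^2}{6}b^3|\mathrm{D}|^2 \big)\mathrm{F}_3 \Big)\nabla_X\zeta,
\qquad
\mathrm{(ii)}\quad \frac{\mu\beta}{2}\,h_b\,\nabla_X\mathrm{F}_4\nabla_X\cdot\big( (\mathcal{L}_1^{\mu}[\beta b]+b)\nabla_X\zeta \big),
\end{equation*}
where (i) collects the discrepancy between $\beta b\,\mathrm{F}_1+\beta\mathcal{L}_1^{\mu}[\beta b]$ and $\beta b(\mathrm{F}_1-\mathrm{F}_3)+\frac{\mu\beta^3}{6}b^3|\mathrm{D}|^2\mathrm{F}_3$, and (ii) is the error of replacing $\mathcal{L}_1^{\mu}[\beta b]$ by $-b$ inside the $\mathrm{F}_4$-term.

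For (i) the estimate \eqref{L1 approx next order}, applied with $u=\nabla_X\zeta$ and multiplied by the outer factor $\beta$, gives a bound of size $\mu^2\beta^5 M(s)|\nabla_X\zeta|_{H^{s+4}}\le(\mu^2\beta^2)M(s)|\nabla_X\zeta|_{H^{s+4}}$, which is of the required order. The hard part is (ii), and I expect it to be the main technical obstacle: using only the black‑box bound \eqref{L approx} for $\mathcal{L}_1^{\mu}[\beta b]+b$ is too wasteful, since $\mathrm{F}_4=\frac{2}{\mu|\mathrm{D}|^2}(1-\mathrm{F}_3)$ makes $\nabla_X\mathrm{F}_4\nabla_X\cdot$ behave like $\mu^{-1}$ times an order‑zero multiplier at high frequency, so one must recover the lost power of $\mu$. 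The mechanism is that $\mathcal{L}_1^{\mu}[\beta b]+b$ carries the very same hyperbolic factor: expanding the $\sinh$ in the definition of $\mathcal{L}_1^{\mu}[\beta b]$ and using the identity $1-\mathrm{F}_3=\frac{\mu}{2}|\mathrm{D}|^2\mathrm{F}_4$, one finds $\mathcal{L}_1^{\mu}[\beta b]+b=\frac{\mu}{2}b\,|\mathrm{D}|^2\mathrm{F}_4+O(\mu\beta^2)$, the remainder controlled via \eqref{L approx}–\eqref{L1 approx next order}. Substituting this into (ii), the factor $(1-\mathrm{F}_3)$ then appears twice and combines with the $\frac{1}{\mu|\mathrm{D}|^2}$ of $\mathrm{F}_4$ and with the two derivatives produced by $\nabla_X\cdots\nabla_X\cdot$; moving the multiplications by $b$ and $h_b$ past the Fourier multipliers — the commutators being lower order and estimated by the pseudo‑differential calculus of Theorem~\ref{C-V thm} — one is left with a bound of the form $(\mu\varepsilon+\mu^2\beta^2)N(s+6)$ at the cost of finitely many extra derivatives on $\zeta$. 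Collecting (i) and (ii) and appealing once more to Theorem~\ref{thm Whitham Boussinesq V_bar} through the triangle inequality would finish the proof.
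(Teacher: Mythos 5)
Your overall strategy (reduce to an operator bound for the difference of the two versions of $h\mathcal{T}_0^{\mu}$, use $1+\ve\zeta=h+\beta b$, and invoke the estimates of Definition/Proposition \ref{Prop op}) is the same route the paper intends, since the corollary is meant to follow directly from \eqref{L approx}--\eqref{L1 approx next order}. The gap is in your treatment of (ii), exactly the step you flag as the main obstacle and then assert without an argument. Your expansion $\mathcal{L}_1^{\mu}[\beta b]+b=\frac{\mu}{2}b|\mathrm{D}|^{2}\mathrm{F}_4+O(\mu\beta^{2})$ is correct, but substituting it gives
\begin{equation*}
\mathrm{(ii)}=\frac{\mu^{2}\beta}{4}\,h_b\,\nabla_X\mathrm{F}_4\nabla_X\cdot\big(b\,|\mathrm{D}|^{2}\mathrm{F}_4\nabla_X\zeta\big)+O(\mu^{2}\beta^{3}),
\end{equation*}
and since $\nabla_X\mathrm{F}_4\nabla_X\cdot$ and $|\mathrm{D}|^{2}\mathrm{F}_4$ are multipliers bounded uniformly in $\mu$ (at the cost of two derivatives each), the leading term is genuinely of size $\mu^{2}\beta$: it carries one power of $\beta$, not two, and no $\ve$ at all (the term contains no $\zeta$-dependence beyond the argument $\nabla_X\zeta$). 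No commutator estimate and no regrouping of the $(1-\mathrm{F}_3)$ factors with the $\frac{1}{\mu|\mathrm{D}|^{2}}$ in $\mathrm{F}_4$ can manufacture the missing factor of $\beta$; every power of $\beta$ in these operators is explicit, and the count is $\mu^{2}\beta$. Hence the bound $(\mu\ve+\mu^{2}\beta^{2})N(s+6)$ you claim for (ii) is not obtainable by the manipulations you sketch. The way to stay within the advertised precision with the paper's toolkit is not to drop the $\mathrm{F}_3$: replacing $\mathcal{L}_1^{\mu}[\beta b]$ by $-b\mathrm{F}_3$ inside the $\mathrm{F}_4$-term (as is done in the corollaries to Theorems \ref{Thm good WGN} and \ref{Thm WGN Vbar}) produces, by \eqref{L1 approx next order}, an error $\frac{\mu\beta}{2}\cdot O(\mu\beta^{2})=O(\mu^{2}\beta^{3})$, which is admissible; with the plain $-b$ the best available bound, from \eqref{L approx}, is $O(\mu^{2}\beta)$.

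There is also a bookkeeping mismatch in (i). The difference you display, $\beta\big(\mathcal{L}_1^{\mu}[\beta b]+(b+\tfrac{\mu\beta^{2}}{6}b^{3}|\mathrm{D}|^{2})\mathrm{F}_3\big)$, is what one gets if the simplified operator contains $-\frac{\mu\beta^{3}}{6}b^{3}|\mathrm{D}|^{2}\mathrm{F}_3$; with the sign $+\frac{\mu\beta^{3}}{6}$ as written in the statement, the difference is instead $\beta\big(\mathcal{L}_1^{\mu}[\beta b]+(b-\tfrac{\mu\beta^{2}}{6}b^{3}|\mathrm{D}|^{2})\mathrm{F}_3\big)=-\frac{\mu\beta^{3}}{3}b^{3}|\mathrm{D}|^{2}\mathrm{F}_3\,+O(\mu^{2}\beta^{5})$, i.e.\ of size $\mu\beta^{3}$, again outside $O(\mu\ve+\mu^{2}\beta^{2})$. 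So either you are tacitly correcting the sign so that it matches \eqref{L1 approx next order} (in which case say so, since your decomposition then no longer matches the operator you set out to justify), or the estimate for (i) has to be redone. As it stands, both (i) and (ii) need repair before the claimed precision is established.
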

    
    \noindent

    The next two results concern full dispersion Green-Naghdi systems.  

    \begin{thm}\label{Thm good WGN}
         Let $\mathrm{F}_2$ and $\mathrm{F}_4$ be the two Fourier multipliers given in Definition \ref{Fourier mult}, and let $\mathcal{L}_2^{\mu}$ be given in Definition \ref{Prop op}. Then for any $\mu \in (0, 1]$, $\ve \in [0,1]$, and $\beta \in [0,1]$ the water waves equations \eqref{Water wave equations} are consistent, in the sense of Definition \ref{Consistency} with $n=6$, at order $O(\mu^2\ve + \mu\ve \beta + \mu^2 \beta^2)$ with the Green-Naghdi type system:
        \begin{align}\label{Good-Whitham-Green-Naghdi intro}
            \begin{cases}
                \partial_t \zeta 
                +
                \nabla_X \cdot (h\mathcal{T}_1^{\mu}[\beta b, \ve \zeta]\nabla_X \psi)
                -  \frac{\mu \beta^2}{2} \nabla_X\cdot\big(\mathcal{B}[\beta b]  \nabla_X \psi\big) = 0
                \\
                \partial_t \psi + \zeta + \dfrac{\varepsilon}{2}|\nabla_X\psi|^2 - \dfrac{\mu\varepsilon}{2}h^2(\sqrt{\mathrm{F}_2}\Delta_X\psi)^2 = 0,
            \end{cases}
    \end{align}
    where 
    \begin{align*}
            \mathcal{B}[\beta b]\bullet &= 
            b \mathrm{F}_4\nabla_X(\nabla_X\cdot(b\bullet))
            +
            h_b
             \nabla_X \big{(}b\mathrm{F}_4\nabla_X\cdot(b\bullet)\big{)}
             +
             2h_b(\nabla_Xb)\mathrm{F}_1\nabla_X\cdot(b\bullet),
        \end{align*}
    and
    \begin{align*}
        \mathcal{T}_1^{\mu}[\beta b, \ve \zeta]\bullet
               &  =
                \mathrm{Id}
                +
                \dfrac{\mu}{3h} \nabla_X\sqrt{\mathrm{F}_2} \Big(\dfrac{h^3}{h_b^3}\sqrt{\mathrm{F}_2}\nabla_X \cdot \bullet  \Big) 
                +
                \frac{\mu \beta }{h}\nabla_X \Big{(}  \mathcal{L}_{2}^{\mu}[\beta b]\nabla_X \cdot \bullet \Big{)} 
                \\ 
                & 
                \hspace{0.5cm}
                +
                \frac{\mu \beta }{2h}\mathrm{F}_4\nabla_X \nabla_X \cdot \big(\mathcal{L}_1^{\mu}[\beta b] \bullet\big),
    \end{align*}
    and $\sqrt{\mathrm{F}_2}$ is the square root of  $\mathrm{F}_2$.
    \end{thm}
    \begin{remark}
    \noindent 
    
        \begin{itemize}
            \item System \eqref{Good-Whitham-Green-Naghdi intro} was first derived in \cite{Emerald21} in the case $\beta = 0$.
            \item In \cite{DucheneMMWW21}, the author derived a weakly dispersive Green-Naghdi type system with an order of precision given by $O(\mu^2 \ve +  \mu^2\beta)$.
            
            \item One could also add Fourier multipliers $\mathrm{G}_j$, defined by \eqref{G_j}, in the term of order $O(\mu \beta^2)$ without changing the precision of the model.

        \end{itemize}
    \end{remark}
    Again, we can simplify the system using Proposition \ref{Prop op} to obtain a system only depending on Fourier multipliers. 

    \begin{cor}
            Under the same assumptions as in Theorem \ref{Thm good WGN}, we can take
            \begin{align*}
                \mathcal{L}_1^{\mu}[\beta b]\bullet = -b \mathrm{F}_3\bullet,
            \end{align*}    
            and 
            \begin{align*}
                \mathcal{L}_2^{\mu}[\beta b]\bullet =-\frac{1}{2}b \mathrm{F}_4\bullet+\frac{\beta^2}{6}b^3\mathrm{F}_3\bullet,
            \end{align*}
            in system \eqref{Good-Whitham-Green-Naghdi intro}  keeping the precision $O(\mu^2\varepsilon + \mu \ve \beta + \mu^2\beta^2)$.
    \end{cor}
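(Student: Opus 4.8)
The plan is to deduce this corollary directly from Theorem \ref{Thm good WGN} together with the approximation estimates \eqref{L1 approx next order} and \eqref{L2 next order} in Definition/Proposition \ref{Prop op}. Since Theorem \ref{Thm good WGN} already establishes that the water waves equations are consistent with \eqref{Good-Whitham-Green-Naghdi intro} at order $O(\mu^2\ve + \mu\ve\beta + \mu^2\beta^2)$, it suffices to show that replacing $\mathcal{L}_1^{\mu}[\beta b]$ by $-b\mathrm{F}_3$ and $\mathcal{L}_2^{\mu}[\beta b]$ by $-\tfrac12 b\mathrm{F}_4 + \tfrac{\beta^2}{6}b^3\mathrm{F}_3$ only perturbs the operators $\mathcal{N}_1, \mathcal{N}_2$ (here, only $\mathcal{N}_1$, since $\mathcal{L}_1^\mu, \mathcal{L}_2^\mu$ appear only in the first equation via $\mathcal{T}_1^{\mu}$ and the $\mathcal{B}$ term is untouched) by a residual of size $O(\mu^2\ve + \mu\ve\beta + \mu^2\beta^2)N(s+n)$. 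By the triangle inequality for the consistency residual, adding such a perturbation preserves the order of precision, and we keep $n=5$.

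First I would isolate the two places where $\mathcal{L}_1^{\mu}[\beta b]$ and $\mathcal{L}_2^{\mu}[\beta b]$ enter the first equation of \eqref{Good-Whitham-Green-Naghdi intro}, namely through the terms $\tfrac{\mu\beta}{h}\nabla_X(\mathcal{L}_2^{\mu}[\beta b]\nabla_X\cdot\bullet)$ and $\tfrac{\mu\beta}{2h}\mathrm{F}_4\nabla_X\nabla_X\cdot(\mathcal{L}_1^{\mu}[\beta b]\bullet)$ inside $\mathcal{T}_1^{\mu}$, after application of $\nabla_X\cdot(h\,\cdot\,)$. For the $\mathcal{L}_2^{\mu}$ term: the prefactor $\mu\beta$ multiplies $\mathcal{L}_2^{\mu}[\beta b] - (-\tfrac12 b\mathrm{F}_4 + \tfrac{\beta^2}{6}b^3\mathrm{F}_3)$ applied to $\nabla_X\cdot\nabla_X\psi = \Delta_X\psi$; by \eqref{L2 next order} this difference is bounded by $\mu\beta^4 M(s)|\cdot|_{H^{s+2}}$, so together with the $\mu\beta$ prefactor and the outer derivatives (which cost a fixed number of extra Sobolev indices, absorbed into $n=5$) the contribution is $O(\mu^2\beta^5)\lesssim O(\mu^2\beta^2)$. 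For the $\mathcal{L}_1^{\mu}$ term: the prefactor $\mu\beta$ multiplies $\mathcal{L}_1^{\mu}[\beta b] - (-b - \tfrac{\mu\beta^2}{6}b^3|\mathrm{D}|^2)\mathrm{F}_3$ applied to $\nabla_X\psi$; by \eqref{L1 approx next order} this is bounded by $\mu^2\beta^4 M(s)|\cdot|_{H^{s+4}}$, giving a total of $O(\mu^3\beta^5)\lesssim O(\mu^2\beta^2)$. I should note that $\mathrm{F}_3 = \mathrm{sech}(\sqrt\mu|\mathrm{D}|)$ is an order-zero multiplier bounded uniformly in $\mu$, and multiplication by the smooth compactly supported $b$ (and its powers) is bounded on every $H^s$ with constant of the form $M(s)$; these are standard product and Fourier-multiplier estimates, so the rewritten operators still map into the correct Sobolev spaces. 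Finally, one must account for the fact that $\psi$ is replaced by a solution of the water waves equations and estimate the residuals in terms of $N(s+n)$ rather than $M(s+n)|\nabla_X\psi|_{H^{s+n}}$; this is exactly the mechanism already used in the proof of Theorem \ref{Thm good WGN}, using that $|\nabla_X\psi(t,\cdot)|_{H^{s+n}}$ stays bounded on $[0,T/\ve]$ so that $M(s+n)|\nabla_X\psi|_{H^{s+n}}\leq N(s+n)$.

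The only mild subtlety — and the step I expect to require the most care — is bookkeeping the loss of derivatives: the substitute for $\mathcal{L}_1^{\mu}$ contains $|\mathrm{D}|^2$, so estimate \eqref{L1 approx next order} needs $H^{s+4}$ regularity of $\nabla_X\psi$ on the right-hand side, and after the extra two derivatives coming from $\mathrm{F}_4\nabla_X\nabla_X\cdot$ this needs $H^{s+6}$-type control; one must check this is still consistent with $n=5$ in Definition \ref{Consistency}, i.e. that the index $s+n$ appearing in Theorem \ref{Thm good WGN} already accommodates these losses (it does, since the estimates \eqref{G0}--\eqref{G1} feeding that theorem already require $|\nabla_X\psi|_{H^{s+5}}$ and $\zeta\in H^{\max\{t_0+2,s+3\}}$). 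Once this accounting is done, the conclusion follows by writing system \eqref{Good-Whitham-Green-Naghdi intro} with the substituted operators, adding and subtracting the original operators, and bounding the difference by the sum of the two residuals above, which is $O(\mu^2\ve + \mu\ve\beta + \mu^2\beta^2)N(s+5)$ — hence the claimed precision is preserved.
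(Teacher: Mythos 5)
Your overall strategy is the right one and matches what the paper intends: the corollary is a direct consequence of the expansions in Definition/Proposition \ref{Prop op}, exploiting that both $\mathcal{L}_1^{\mu}$ and $\mathcal{L}_2^{\mu}$ enter $\mathcal{T}_1^{\mu}$ in \eqref{Good-Whitham-Green-Naghdi intro} with a prefactor $\mu\beta$ (indeed, in the Hamiltonian discussion the paper records exactly the expansions $\mathcal{L}_1^{\mu}[\beta b] = -b\mathrm{F}_3 + O(\mu\beta^2)$ and $\mathcal{L}_2^{\mu}[\beta b] = -\tfrac12 b\mathrm{F}_4 + \tfrac{\beta^2}{6}b^3\mathrm{F}_3 + O(\mu\beta^4)$). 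Your treatment of the $\mathcal{L}_2^{\mu}$ term is correct: \eqref{L2 next order} with the $\mu\beta$ prefactor gives $O(\mu^2\beta^5)\subset O(\mu^2\beta^2)$.

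However, your accounting for the $\mathcal{L}_1^{\mu}$ term has a genuine gap: you bound the difference $\mathcal{L}_1^{\mu}[\beta b] - \big(-b - \tfrac{\mu\beta^2}{6}b^3|\mathrm{D}|^2\big)\mathrm{F}_3$, i.e.\ you compare against the \emph{two-term} approximation, which is the substitute used in the corollary to Theorem \ref{thm Whitham Boussinesq}, not the one in this statement. Here the corollary replaces $\mathcal{L}_1^{\mu}[\beta b]$ by $-b\mathrm{F}_3$ alone, so the substitution error is
\begin{equation*}
\mathcal{L}_1^{\mu}[\beta b]\nabla_X\psi + b\,\mathrm{F}_3\nabla_X\psi
= -\tfrac{\mu\beta^2}{6}\,b^3|\mathrm{D}|^2\mathrm{F}_3\nabla_X\psi + E,
\qquad |E|_{H^s}\le \mu^2\beta^4 M(s)\,|\nabla_X\psi|_{H^{s+4}},
\end{equation*}
and the first term on the right is only $O(\mu\beta^2)$, not $O(\mu^2\beta^4)$; your claimed total $O(\mu^3\beta^5)$ therefore does not cover the actual replacement. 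The corollary still holds, but for a different reason than you give: after the prefactor $\tfrac{\mu\beta}{2}\mathrm{F}_4\nabla_X\nabla_X\cdot$, the dropped term contributes $O(\mu^2\beta^3)\le O(\mu^2\beta^2)$ since $\beta\le 1$ (this is precisely why the $\tfrac{\mu\beta^2}{6}b^3|\mathrm{D}|^2$ correction can be discarded here but must be kept in the Boussinesq corollary, where the prefactor is only $\beta$). You should add this one estimate explicitly. As for your derivative-count worry, it is resolved not by appealing to \eqref{G0}--\eqref{G1} but by noting that $\mathrm{F}_4\nabla_X\nabla_X\cdot = \tfrac{2}{\mu}\tfrac{1-\mathrm{F}_3}{|\mathrm{D}|^2}\nabla_X\nabla_X\cdot$ is bounded on $H^s$ at the cost of a factor $\mu^{-1}$, which the spare powers of $\mu$ in the residuals easily absorb, so the consistency index $n=5$ is indeed preserved.
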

    
     Several generalizations can now be made, where the next system is chosen to mimic some of the properties of the classical Green-Naghdi systems:
    \begin{thm}\label{Thm WGN Vbar}
        Let $\mathrm{F}_2$ and $\mathrm{F}_4$ be the two the Fourier multipliers given in Definition \ref{Fourier mult}, let $\mathcal{L}_1^{\mu}$ and $\mathcal{L}_2^{\mu}$ be given in Definition \ref{Prop op}. Then for any $\mu \in (0, 1]$, $\ve \in [0,1]$, and $\beta \in [0,1]$ the water waves equations \eqref{Water wave equations} are consistent, in the sense of Definition \ref{Consistency} with $n=7$, at order $O(\mu^2 \ve + \mu \ve \beta + \mu^2 \beta^2)$ with the Green-Naghdi type system: 
        \begin{align}\label{Whitham-Green-Naghdi_Vbar}
        \begin{cases}
            \partial_t \zeta + \nabla_X \cdot(h\overline{V}) = 0, \\
            \partial_t(\mathcal{I}^{\mu}[h]\overline{V}) + \mathcal{I}^{\mu}[h]\mathcal{T}_2^{\mu}[\beta b, h]\nabla_X\zeta + \frac{\ve}{2}  \nabla_X \big( |\overline{V}|^2\big) + \mu\ve \nabla_X\mathcal{R}_1^{\mu}[\beta b, h, \overline{V}] = \mathbf{0},
        \end{cases}
        \end{align}
	where $\overline{V}$ defined by \eqref{V bar},
    \begin{equation*}
		\mathcal{I}^{\mu}[h]\bullet=  \mathrm{Id} - \frac{\mu}{3h}\sqrt{\mathrm{F}_2} \nabla_X \Big(h^3  \sqrt{\mathrm{F}_2} \nabla_X\cdot \bullet  \Big),
	\end{equation*}
	\begin{align*}
		\mathcal{T}_2^{\mu}[\beta b, \ve \zeta]\bullet
            & =
            \mathrm{Id} + \frac{\mu}{3h}\sqrt{\mathrm{F}_2} \nabla_X \Big(\frac{h^3}{h_b^3}  \sqrt{\mathrm{F}_2} \nabla_X\cdot \bullet  \Big) 
            +
            \frac{\mu\beta }{h} \nabla_X \Big(\mathcal{L}_2^{\mu}[\beta b] \nabla_X\cdot \bullet \Big)
            \\ 
            & 
            \hspace{0.3cm}
            +
            \frac{\mu \beta h_b}{2h}\nabla_X \mathrm{F}_4 \nabla_X \cdot \big(\mathcal{L}_1^{\mu}[\beta b] \bullet\big)
            -
            \frac{\mu\beta^2h_b}{2h}\nabla_X \big{(}b\mathrm{F}_4\nabla_X\cdot(b\bullet)\big{)}
            \\
            &
            \hspace{0.3cm}
            - 
            \frac{\mu \beta^2 h_b}{h} (\nabla_Xb)\mathrm{F}_1\nabla_X\cdot(b\bullet),
	\end{align*}
	and
	\begin{align*}
		\mathcal{R}_1^{\mu}[\beta b, h, \overline{V}] 
            =
            &- \frac{h^2}{2}  (\nabla_X \cdot \overline{V})^2
            -
            \frac{1}{3h}\big{(}\nabla_X(h^3 \nabla_X \cdot \overline{V})\big{)} \cdot \overline{V}
            - \frac{1}{2}h^3\Delta_X(|\overline{V}|^2) 
            +
            \frac{1}{6h} h^3 \Delta_X (|\overline{V}|^2).
	\end{align*}
        
    \end{thm}
    \begin{remark}
    \noindent
        \begin{itemize}
            \item As for  the classical Green-Naghdi system, we observe that the first equation is a formulation of mass conservation.
            \item The system depends on the elliptic operator $h\mathcal{I}^{\mu}[h]$ and is similar to the systems derived in \cite{WWP, Emerald21, DucheneMMWW21} in that sense.
            \item The presence of the term $\mathcal{I}^{\mu}[h]\mathcal{T}_2^{\mu}[\beta b, h]\nabla_X\zeta$ in the second equation makes it quite unique. Note that one may simplify it, but we chose to keep it under this form because in the study of the local well-posedness theory, one would apply the inverse of the elliptic operator $h\mathcal{I}^{\mu}[h]$ to the equation.

        \end{itemize}  
    \end{remark}

    \begin{cor}
            Under the same assumptions as in Theorem \ref{Thm WGN Vbar}, we can take
            \begin{align*}
                \mathcal{L}_1^{\mu}[\beta b]\bullet = -b \mathrm{F}_3\bullet,
            \end{align*}    
            and 
            \begin{align*}
                \mathcal{L}_2^{\mu}[\beta b]\bullet =-\frac{1}{2}b \mathrm{F}_4\bullet+\frac{\beta^2}{6}b^3\mathrm{F}_3\bullet,
            \end{align*}
            in system \eqref{Whitham-Green-Naghdi_Vbar} keeping the precision $O(\mu^2\varepsilon + \mu \ve \beta + \mu^2\beta^2)$.
    \end{cor}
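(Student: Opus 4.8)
The plan is to obtain the modified system as a perturbation of \eqref{Whitham-Green-Naghdi_Vbar}, whose consistency at order $O(\mu^2\ve+\mu\ve\beta+\mu^2\beta^2)$ (with $n=6$) is already granted by Theorem~\ref{Thm WGN Vbar}. First I would note that in \eqref{Whitham-Green-Naghdi_Vbar} the operators $\mathcal{L}_1^{\mu}[\beta b]$ and $\mathcal{L}_2^{\mu}[\beta b]$ occur \emph{only} inside the term $\mathcal{I}^{\mu}[h]\mathcal{T}_2^{\mu}[\beta b,h]\nabla_X\zeta$ of the second equation: the mass-conservation law, the term $\partial_t(\mathcal{I}^{\mu}[h]\overline{V})$, the quadratic term $\tfrac{\ve}{2}\nabla_X|\overline{V}|^2$, the remainder $\mu\ve\nabla_X\mathcal{R}_1^{\mu}$ and the operator $\mathcal{I}^{\mu}[h]$ from \eqref{operator I} involve neither of them. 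Write $\widetilde{\mathcal{T}}_2^{\mu}$ for the operator obtained from $\mathcal{T}_2^{\mu}$ by the substitution in the statement. Given a solution $(\zeta,\psi)$ of \eqref{Water wave equations} on $[0,T/\ve]$ and $\overline{V}=\overline{V}^{\mu}[\ve\zeta,\beta b]\psi$ as in \eqref{V bar}, the residuals $(\widetilde{R}_1,\widetilde{R}_2)$ of the modified system and those $(R_1,R_2)$ supplied by Theorem~\ref{Thm WGN Vbar} then satisfy $\widetilde{R}_1=R_1$ and
\[
\widetilde{R}_2=R_2+\frac{1}{\mu^2\ve+\mu\ve\beta+\mu^2\beta^2}\,\mathcal{I}^{\mu}[h]\big(\widetilde{\mathcal{T}}_2^{\mu}[\beta b,h]-\mathcal{T}_2^{\mu}[\beta b,h]\big)\nabla_X\zeta
\]
(when $\beta=0$ the two systems coincide, so there is nothing to prove). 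It remains to estimate the correction term in $H^s$.

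I would then expand $\big(\widetilde{\mathcal{T}}_2^{\mu}-\mathcal{T}_2^{\mu}\big)\nabla_X\zeta$. Since $\mathcal{L}_1^{\mu}$ and $\mathcal{L}_2^{\mu}$ enter $\mathcal{T}_2^{\mu}$ only with the prefactor $\mu\beta$, this difference equals
\[
\frac{\mu\beta}{h}\nabla_X\Big(\big[(-\tfrac12 b\,\mathrm{F}_4+\tfrac{\beta^2}{6}b^3\mathrm{F}_3)-\mathcal{L}_2^{\mu}[\beta b]\big]\Delta_X\zeta\Big)+\frac{\mu\beta h_b}{2h}\nabla_X\mathrm{F}_4\nabla_X\cdot\Big(\big[-b\,\mathrm{F}_3-\mathcal{L}_1^{\mu}[\beta b]\big]\nabla_X\zeta\Big).
\]
For the first summand, estimate \eqref{L2 next order} gives $\big|[(-\tfrac12 b\mathrm{F}_4+\tfrac{\beta^2}{6}b^3\mathrm{F}_3)-\mathcal{L}_2^{\mu}[\beta b]]\Delta_X\zeta\big|_{H^{s+1}}\leq\mu\beta^4 M(s+1)|\zeta|_{H^{s+5}}$, and since $\nabla_X$ and multiplication by $1/h$ cost at most one derivative with a constant $\lesssim M(s)$, it is bounded in $H^s$ by $\mu^2\beta^5 M(s+1)|\zeta|_{H^{s+5}}$. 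For the second summand I would use \eqref{L1 approx next order} in the form $\big|[-b\mathrm{F}_3-\mathcal{L}_1^{\mu}[\beta b]]u-\tfrac{\mu\beta^2}{6}b^3|\mathrm{D}|^2\mathrm{F}_3 u\big|_{H^{s'}}\leq\mu^2\beta^4 M(s')|u|_{H^{s'+4}}$, together with the fact that $|\mathrm{D}|^2\mathrm{F}_3$, $\mathrm{F}_4$ and $\nabla_X\mathrm{F}_4\nabla_X\cdot$ are Fourier multipliers bounded on $H^s$ uniformly in $\mu$ (of orders $2$, $0$ and $2$), and that $\mathcal{I}^{\mu}[h]$ is bounded $H^s\to H^s$ uniformly in $\mu$ (as used in the proof of Theorem~\ref{Thm WGN Vbar}). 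This gives $\big|[-b\mathrm{F}_3-\mathcal{L}_1^{\mu}[\beta b]]\nabla_X\zeta\big|_{H^{s'}}\lesssim\mu\beta^2 M(s')|\zeta|_{H^{s'+5}}$, hence the second summand is bounded in $H^s$ by $\mu^2\beta^3 M(s+2)|\zeta|_{H^{s+7}}$.

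Dividing by $\mu^2\ve+\mu\ve\beta+\mu^2\beta^2\geq\mu^2\beta^2$ turns these into $\beta^3 M(s+1)|\zeta|_{H^{s+5}}$ and $\beta\,M(s+2)|\zeta|_{H^{s+7}}$, both controlled by $N(s+7)$; with $\widetilde{R}_1=R_1$ this yields $|\widetilde{R}_i|_{H^s}\leq N(s+n)$ with, say, $n=7$, i.e. exactly the consistency of \eqref{Water wave equations} with the modified system at order $O(\mu^2\ve+\mu\ve\beta+\mu^2\beta^2)$. The one genuinely delicate point is the second estimate: the crude bound \eqref{L approx} only gives $\mathcal{L}_1^{\mu}[\beta b]+b\mathrm{F}_3=O(\mu)$, which after the $\mu\beta$ prefactor produces an $O(\mu^2\beta)$ error \emph{not} absorbed into $O(\mu^2\beta^2)$; one must instead exploit the sharper $O(\mu\beta^2)$ gain of \eqref{L1 approx next order} and keep careful track of the powers of $\beta$ throughout.
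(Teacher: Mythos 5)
Your argument is correct and is essentially the paper's intended justification: the corollary is meant to follow directly from estimates \eqref{L1 approx next order} and \eqref{L2 next order} of Definition/Proposition \ref{Prop op}, with the $\mu\beta$ prefactors in $\mathcal{T}_2^{\mu}$ turning the substitution errors into $O(\mu^2\beta^3)$ and $O(\mu^2\beta^5)$ contributions absorbed into $O(\mu^2\beta^2)$, exactly as you do, and your remark that the cruder bound \eqref{L approx} would only give an unabsorbable $O(\mu^2\beta)$ error identifies the right delicate point. The only cosmetic deviation is your slightly larger derivative count ($n=7$ rather than $6$), which the consistency definition permits.
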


        \subsection{Outline} The paper is organized as follows. In Section \ref{Asymptitic exp}, we set out to prove Proposition \ref{Prop G}. First, we start Subsection \ref{Trans pb} by transforming the elliptic problem \eqref{Laplace pb} so that its domain is time-independent. Then we use this new formulation to perform multi-scale expansions. In particular, in Subsection \ref{mult general pb} and \ref{mult phi}, we make several expansions of the velocity potential in terms of $\mu$, $\ve$ and $\beta$. From these expansions, we approximate the vertically averaged velocity potential $\overline{V}$ in Subsection \ref{mult V bar}, from which the proof of Proposition \ref{Prop G} is deduced in Subsection \ref{mult G}.  
        Section \ref{WH B} is dedicated to the proofs of Theorem \ref{thm Whitham Boussinesq} and Theorem \ref{thm Whitham Boussinesq V_bar}. We also formally derive a Hamiltonian Boussinesq type system. In  Section \ref{Derivation Whitham-Green-Naghdi systems with bathymetry} we prove Theorem \ref{Thm good WGN} and Theorem \ref{Thm WGN Vbar}. Lastly, the appendix is composed of three subsections. The Subsection \ref{A1} is dedicated to the proof of Proposition \ref{Prop op}. In the last two Subsections \ref{A2} and \ref{A3}, we state and prove technical tools.

	\section{Asymptotic expansions of the Dirichlet-Neumann operator}\label{Asymptitic exp}
	 
	In this section, we perform expansions of the Dirichlet-Neumann operator with an error of order $O(\mu \ve +  \mu^2 \beta^2)$ and $O(\mu^2\ve+ \mu \ve \beta + \mu^2\beta^2)$. 	The standard approach to deriving asymptotic models is by approximating the velocity potential $\Phi$, which in turn will give an approximation of \eqref{D-N operator}. Classically, one straightens the fluid domain to work on the flat strip, where we can easily make approximations. However, if we straighten the bottom, there will be an appearance of $\beta$ that will give approximations on the form $O(\mu\ve + \mu \beta)$ in the case Boussinesq type systems and $O(\mu^2\ve + \mu^2\beta)$ in the case Green-Naghdi type systems (see \cite{DucheneMMWW21} for the derivation of such models).

	\subsection{The transformed Laplace equation}\label{Trans pb}
	
	Motivated by the previous discussion, we make a change of variable that only straightens the top of the fluid domain.
	
	\begin{Def}\label{Map to semi-strip} 
		Let $s > \frac{d}{2}+1$, $b \in C^{\infty}_c(\R^d)$, and $\zeta \in H^{s}(\mathbb{R}^d)$  such that the non-cavitation assumptions \eqref{non-cavitation 1} and \eqref{non-cavitation 2} are satisfied. We define the time-dependent diffeomorphism mapping the domain 
		$$\mathcal{S}_b := \{ (X,z) \in \mathbb{R}^{d+1} : -1+\beta b \leq z \leq 0 \},$$
		onto the water domain $\Omega_t$ through
		\begin{equation*}\label{TrivialDiffeomorphismEq}
			\Sigma_b 
			: \: 
			\begin{cases}
				\mathcal{S}_b \hspace{0.6cm} \longrightarrow & \Omega_t \\
				(X,z) \: \:  \mapsto & (X, z + \sigma(X,z))
			\end{cases}
		\end{equation*}
		with
		\begin{equation}\label{sigma}
			\sigma(X,z) = \dfrac{\varepsilon\zeta(X)}{1-\beta b(X)}z + \varepsilon\zeta(X).
		\end{equation}
	\end{Def}

	\begin{remark} 
		The map given in Definition \ref{Map to semi-strip} is a diffeomorphism. Indeed, by computing the Jacobi matrix, we find that
		\begin{align*}
			J_{\Sigma_b} 
			=
			\begin{pmatrix}
				\mathrm{Id} & \mathbf{0}
				\\
				(\nabla_X \sigma)^T &1+ \partial_z \sigma
			\end{pmatrix},
		\end{align*}
		where
		\begin{equation*}
			1+ \partial_z\sigma = \frac{h}{h_b}.
		\end{equation*}
		Therefore, under the non-cavitation condition as stated in Definition \ref{Def: non-cavitation}, we have a non-zero determinant:
		\begin{align*}
			|J_{\Sigma_b}| \geq  \frac{h_{\min}}{1+\beta |b|_{L^{\infty}}}.
		\end{align*}
	\end{remark}

The next result shows that the properties of solutions of the boundary problem \eqref{Laplace pb} can be
obtained from the study of an equivalent elliptic boundary value problem defined on $\mathcal{S}_b$.

\begin{prop}
	Let $\phi_b = \Phi\circ \Sigma_b$ where the map $\Sigma_b$ is given in Definition \ref{Map to semi-strip}. Then under the provisions of  Definition \ref{Def Original Laplace} we have that $\Phi$ is a (variational, classical) solution of \eqref{Laplace pb} if and only if $\phi_b$ is a (variational, classical) solution of
	\begin{equation}\label{Elliptic pb Sb}
		\begin{cases}
			\nabla^{\mu}_{X,z} \cdot P(\Sigma_b) \nabla_{X,z}^{\mu }\phi_b = 0 \quad \text{in} \quad S_b
			\\	
			\phi_b|_{z = 0} = \psi,  \quad \partial_{n_{b}}^{P_b} \phi_b |_{z=-h_b} = 0,
		\end{cases}
	\end{equation}	
	where the matrix $P(\Sigma_b)$ is given by
	\begin{equation}\label{expression of P sigma}
		P(\Sigma_b) 
		= 
		|J_{\Sigma_b}| (I^{\mu})^{-1} J_{\Sigma_b}^{-1} (I^{\mu})^2 (J_{\Sigma_b}^{-1})^T(I^{\mu})^{-1},
	\end{equation}
	and the Neumann condition reads
	\begin{equation}\label{Neuman new}
		\partial_{n_{b}}^{P_b} \phi_b|_{z=-h_b} = \mathbf{n}_b \cdot I^{\mu} P(\Sigma_b) \nabla^{\mu}_{X,z} \phi_b|_{z=-h_b}.
	\end{equation}
	Moreover,  the matrix $P(\Sigma_b)$ is coercive, i.e. there exists $c>0$ such that for all $Y \in \R^{d+1}$ and any $(X,z) \in \mathcal{S}_b$ there holds,
	\begin{equation}\label{Coercivity}
		P(\Sigma_b) Y \cdot Y \geq c |Y|^2.
	\end{equation}
	\end{prop}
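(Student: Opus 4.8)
The plan is to verify the equivalence of the two boundary value problems by a direct change of variables, treating the ``variational'' and ``classical'' cases in parallel, and then to establish coercivity of $P(\Sigma_b)$ from the structure of the formula \eqref{expression of P sigma}. First I would record the elementary computation of the Jacobian $J_{\Sigma_b}$ (already displayed in the preceding remark) together with its inverse; since $J_{\Sigma_b}$ is block lower-triangular with the identity in the top-left $d\times d$ block and the scalar $h/h_b = 1+\partial_z\sigma$ in the bottom-right entry, both $J_{\Sigma_b}^{-1}$ and $|J_{\Sigma_b}|$ are explicit, and the non-cavitation hypotheses \eqref{non-cavitation 1}, \eqref{non-cavitation 2} guarantee $|J_{\Sigma_b}|$ is bounded below by a positive constant on $\mathcal S_b$. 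Next, for the classical statement I would apply the chain rule to $\phi_b = \Phi\circ\Sigma_b$: writing $\nabla_{X,z}\phi_b = J_{\Sigma_b}^T (\nabla_{X,z}\Phi)\circ\Sigma_b$ and multiplying by $I^\mu$ on the appropriate side, one rewrites $\nabla^\mu_{X,z}\Phi = (I^\mu)(J_{\Sigma_b}^{-1})^T(I^\mu)^{-1}\nabla^\mu_{X,z}\phi_b$, substitutes into $\Delta^\mu_{X,z}\Phi = \nabla^\mu_{X,z}\cdot\nabla^\mu_{X,z}\Phi = 0$, and uses the change-of-variables Jacobian factor $|J_{\Sigma_b}|$ when transporting the divergence back to $\mathcal S_b$; this produces exactly $\nabla^\mu_{X,z}\cdot P(\Sigma_b)\nabla^\mu_{X,z}\phi_b = 0$ with $P(\Sigma_b)$ as in \eqref{expression of P sigma}. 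The boundary conditions transform trivially: $z=0$ maps to $z=\varepsilon\zeta$ so $\phi_b|_{z=0}=\psi$, and $z=-h_b$ maps to $z=-1+\beta b$; the conormal derivative $\partial_{n_b}^{P_b}$ in \eqref{Neuman new} is precisely the one obtained from $\partial_{n_b}\Phi=0$ after the same substitution, since the conormal derivative associated with a second-order divergence-form operator is covariant under diffeomorphism.

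For the variational formulation I would argue more cleanly by a direct manipulation of the Dirichlet-type energy: for test functions $\varphi$ vanishing on $z=\varepsilon\zeta$, the identity
\begin{equation*}
\int_{\Omega_t} \nabla^\mu_{X,z}\Phi\cdot\nabla^\mu_{X,z}\varphi \,\mathrm{d}X\mathrm{d}z
= \int_{\mathcal S_b} P(\Sigma_b)\nabla^\mu_{X,z}\phi_b\cdot\nabla^\mu_{X,z}(\varphi\circ\Sigma_b)\,\mathrm{d}X\mathrm{d}z
\end{equation*}
follows from the change-of-variables formula and the same chain-rule computation, with the factor $|J_{\Sigma_b}|$ absorbed into $P(\Sigma_b)$; since $\varphi\mapsto\varphi\circ\Sigma_b$ is a bijection on the relevant function spaces (by the diffeomorphism property and the $\dot H^2$ regularity ensured in Definition \ref{Def Original Laplace}), the vanishing of the left side for all admissible $\varphi$ is equivalent to the vanishing of the right side for all admissible test functions on $\mathcal S_b$, which is the weak form of \eqref{Elliptic pb Sb} incorporating the Neumann condition \eqref{Neuman new}. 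One should also check that $\phi_b \in \dot H^2(\mathcal S_b)$, which is immediate from $\Phi\in\dot H^2(\Omega_t)$ and the smoothness and boundedness of $\Sigma_b$ and its inverse.

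Finally, for the coercivity \eqref{Coercivity}: from \eqref{expression of P sigma} one has, for $Y\in\R^{d+1}$,
\begin{equation*}
P(\Sigma_b)Y\cdot Y = |J_{\Sigma_b}|\,\big| (I^\mu)(J_{\Sigma_b}^{-1})^T(I^\mu)^{-1}Y \big|^2 \geq 0,
\end{equation*}
and the matrix $Q := (I^\mu)(J_{\Sigma_b}^{-1})^T(I^\mu)^{-1}$ is invertible with inverse $(I^\mu)(J_{\Sigma_b})^T(I^\mu)^{-1}$, whose entries are controlled by $\nabla_X\sigma$ and $h_b/h$; using the explicit block-triangular form one bounds $\|Q^{-1}\|$ from above in terms of $\tfrac1{h_{\min}}$, $|b|_{L^\infty}$ and $|\nabla_X\sigma|_{L^\infty}$ (hence in terms of the $M_0$-type quantities), so $|Y| = |Q^{-1}QY| \leq \|Q^{-1}\|\,|QY|$ gives $|QY|^2 \geq \|Q^{-1}\|^{-2}|Y|^2$, and combined with the lower bound on $|J_{\Sigma_b}|$ this yields \eqref{Coercivity} with a constant $c$ depending only on the non-cavitation constants. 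The main obstacle here is bookkeeping rather than conceptual: one has to be careful that all constants entering $c$ and the norms are controlled uniformly in $\mu$ (which holds because $I^\mu$ appears in the conjugating pair $(I^\mu)\cdot(I^\mu)^{-1}$ and the problematic $\sqrt\mu$ factors cancel), and one must track the regularity thresholds so that $\sigma$, $\nabla_X\sigma$ and $h/h_b$ are genuinely in $L^\infty$, which is why the hypothesis $s>\frac d2+1$ is imposed in Definition \ref{Map to semi-strip}.
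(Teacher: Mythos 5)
Your proposal is correct, and the core (pullback by $\Sigma_b$, the chain-rule identity $(\nabla^{\mu}_{X,z}\Phi)\circ\Sigma_b = I^{\mu}(J_{\Sigma_b}^{-1})^T(I^{\mu})^{-1}\nabla^{\mu}_{X,z}\phi_b$, and the Jacobian factor producing $P(\Sigma_b)$) is the same as in the paper, which for this part simply cites the classical results in \cite{WWP}. Where you diverge is in the two remaining items. For the Neumann condition, the paper verifies it by a direct classical computation: expanding $\partial_{n_b}^{P_b}\phi_b = |J_{\Sigma_b}|\,\mathbf{n}_b\cdot(J_{\Sigma_b})^{-1}I^{\mu}(\nabla^{\mu}_{X,z}\Phi)\circ\Sigma_b$, subtracting the original condition $\partial_{n_b}\Phi|_{z=-h_b}=0$, and checking that the residual $(\beta\,\partial_z\sigma\,\nabla_X b+\nabla_X\sigma)\cdot\nabla_X\Phi$ vanishes identically at $z=-h_b$ because $\partial_z\sigma|_{z=-h_b}=\tfrac{\varepsilon\zeta}{h_b}$ and $\nabla_X\sigma|_{z=-h_b}=-\beta\tfrac{\varepsilon\zeta}{h_b}\nabla_X b$. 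You instead invoke covariance of the conormal derivative under the diffeomorphism via the weak formulation; this is legitimate here, but it silently uses the fact that $\Sigma_b$ restricts to the identity on the bottom $\{z=-1+\beta b\}$ (so the outward normal of $\mathcal{S}_b$ at the bottom really is the same $\mathbf{n}_b$ appearing in \eqref{Neuman new}) — you should state this explicitly, since for a map that moved the bottom the transformed conormal condition would involve the new normal, and the explicit cancellation in the paper is precisely the concrete manifestation of this fact. For coercivity, the paper again cites \cite{WWP} (Lemma 2.26), whereas you give a self-contained proof from $P(\Sigma_b)Y\cdot Y=|J_{\Sigma_b}|\,|QY|^2$ with $Q=I^{\mu}(J_{\Sigma_b}^{-1})^T(I^{\mu})^{-1}$ and $Q^{-1}=I^{\mu}J_{\Sigma_b}^T(I^{\mu})^{-1}$, whose explicit triangular form $\begin{pmatrix}\mathrm{Id} & \sqrt{\mu}\nabla_X\sigma\\ \mathbf{0}^T & h/h_b\end{pmatrix}$ is bounded uniformly in $\mu\in[0,1]$ under the non-cavitation assumptions; this is correct and has the advantage of making the dependence of $c$ on $h_{\min}$, $h_{b,\min}$, $|\zeta|_{W^{1,\infty}}$, $|b|_{W^{1,\infty}}$ transparent, at the cost of redoing what the cited lemma already provides.
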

	\begin{remark}\label{Elliptic op}
		We may compute the inverse Jacobi-matrix $J_{\Sigma_b}^{-1}$ so that using the expression for $P(\Sigma_b)$ \eqref{expression of P sigma}, we find
		\begin{align*}
			P(\Sigma_b)
			=
			\begin{pmatrix} 
				(1+\partial_z \sigma )\mathrm{Id} & -\sqrt{\mu} \nabla_X \sigma 
				\\
				-\sqrt{\mu}(\nabla_X \sigma)^T & \dfrac{1+ \mu h_b|\nabla_X\sigma|^2}{1+\partial_z \sigma} 
			\end{pmatrix}.
		\end{align*}
		Now, since  $\sigma$ is given by \eqref{sigma} we find that
		\begin{equation*}
			1+ \partial_z\sigma =1 + \dfrac{\varepsilon\zeta}{h_b} =  \dfrac{h}{h_b},
		\end{equation*} 
		and 
		\begin{align*}
			P(\Sigma_b)
			=
			\begin{pmatrix} 
				\dfrac{h}{h_b} \mathrm{Id} & -\sqrt{\mu} \nabla_X \sigma
				\\ -\sqrt{\mu}(\nabla_X \sigma)^T & \dfrac{h_b + \mu h_b|\nabla_X\sigma|^2}{h} 
			\end{pmatrix},
		\end{align*}
		where 
		\begin{equation}\label{grad sigma}
			\nabla_X\sigma = \varepsilon\nabla_X\Big(\dfrac{\zeta}{h_b}\Big)z + \varepsilon\nabla_X\zeta.
		\end{equation}\\

	\end{remark}

	\begin{proof}
		The fact that $\nabla^{\mu}_{X,z} \cdot P(\Sigma_b) \nabla_{X,z}^{\mu }\phi_b  = 0$ in $\mathcal{S}_b$  and that $P(\Sigma_b)$ satisfies \eqref{Coercivity} is classical and we simply refer to \cite{WWP}, Proposition $2.25$ and Lemma $2.26$. 
		
		To verify the Neumann condition, we first use the chain rule to make the observation
		\begin{equation}\label{Chain rule}
			 \nabla_{X,z}^{\mu} \phi_b = I^{\mu} (J_{\Sigma_b})^T (I^{\mu})^{-1}(\nabla^{\mu}_{X,z} \Phi)\circ\Sigma_b.
		\end{equation}
		Then by \eqref{Neuman new}, we get that
		\begin{align*}
				\partial_{n_{b}}^{P_b} \phi_b 
				& = 
				|J_{\Sigma_b}|  \mathbf{n}_b \cdot  (J_{\Sigma_b})^{-1} I^{\mu} (\nabla_{X,z}^{\mu}\Phi) 
				\circ \Sigma_b 
				\\
				& = 
				\mathbf{n}_b
				\cdot I^{\mu}
				\begin{pmatrix}
					(1+ \partial_z \sigma)\mathrm{Id} & \mathbf{0} \\
					- (\nabla_X \sigma)^T & 1
				\end{pmatrix}
				(\nabla_{X,z}^{\mu}\Phi) 
				\circ \Sigma_b 
				\\
				& = 
				\mathbf{n}_b
				\cdot
				I^{\mu}(\nabla_{X,z}^{\mu}\Phi) 
				\circ \Sigma_b
				+
				\mathbf{n}_b
				\cdot I^{\mu}
				\begin{pmatrix}
					\partial_z \sigma \mathrm{Id} & \mathbf{0} \\
					- (\nabla_X \sigma)^T & 0
				\end{pmatrix}
				(\nabla_{X,z}^{\mu}\Phi) 
				\circ \Sigma_b.
		\end{align*}
		Now, use \eqref{Laplace pb} with the fact that for $z = -h_b$ then 
		\begin{align*}
			0 = \partial_{n_b} \Phi |_{-h_b} = 
			\mathbf{n}_b
			\cdot
			I^{\mu}(\nabla_{X,z}^{\mu}\Phi) 
			\circ \Sigma_b.
		\end{align*}
		Therefore, we are left with the expression
		\begin{align*}
			\partial_{n_{b}}^{P_b} \phi_b |_{z=-h_b}
			 & =  
			  \frac{1}{|\mathbf{n}_b|} 
			\begin{pmatrix}
				- \beta \nabla_X b 
				\\
				1
			\end{pmatrix}
			\cdot 
			\begin{pmatrix}
				\mu \partial_z\sigma  (\nabla_X \Phi)|_{z=-h_b}
				\\
				-\mu \nabla_X \sigma \cdot  (\nabla_X \Phi)|_{z=-h_b}
			\end{pmatrix}
			\\
			& = 
			-\frac{\mu}{|\mathbf{n}_b|} 
			( \beta \partial_z \sigma \nabla_X b+  \nabla_X \sigma  ) \cdot (\nabla_X \Phi)\big|_{z = -h_b}.
		\end{align*}
			But $\partial_z\sigma|_{z=-h_b} = \dfrac{\varepsilon\zeta}{h_b}$, $\nabla_X\sigma|_{z=-h_b} = -\beta \dfrac{\varepsilon\zeta}{h_b} \nabla_X b $, and thus the proof is complete. 
	\end{proof}

	In the next section, we will make expansions of $\phi_b$ and then use the expression of \eqref{V bar} to approximate the Dirichelet-Naumann operator. But first, we must relate the definition of $\overline{V}^{\mu}[\ve \zeta, \beta b]\psi$ with the new velocity potential on $\mathcal{S}_b$. \\
	
	\begin{prop}
		Let $\Sigma_b$ be given in Definition \ref{Map to semi-strip}.  Then under the provisions of Definition \ref{Def V bar}, the operator \eqref{V bar} is equivalent to the following formulation
		\begin{equation}\label{New V bar}
			\overline{V}^{\mu}[\ve \zeta, \beta b] \psi 
			=\frac{1}{h}
			\int_{-1+\beta b}^{0} \big{[}\frac{h}{h_b}\nabla_X \phi_b - (\varepsilon\nabla_X\Big(\dfrac{\zeta}{h_b}\Big)z + \varepsilon\nabla_X\zeta) \partial_z \phi_b \big{ ]}\: \mathrm{d}z,
		\end{equation}
		where $\phi_b = \Phi\circ\Sigma_b$.
	\end{prop}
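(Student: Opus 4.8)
The plan is to prove \eqref{New V bar} by a direct change of variables in the integral defining $\overline{V}^{\mu}[\ve\zeta,\beta b]\psi$ in \eqref{V bar}, using the diffeomorphism $\Sigma_b$ of Definition \ref{Map to semi-strip} to pull the vertical integration back from the physical column $\{-1+\beta b < z' < \ve\zeta\}$ onto the flat column $\{-1+\beta b < z < 0\}$, and the chain rule \eqref{Chain rule} to re-express $(\nabla_X\Phi)\circ\Sigma_b$ in terms of $\nabla_X\phi_b$ and $\partial_z\phi_b$.

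First I would record the two elementary consequences of the explicit form \eqref{sigma} of $\sigma$: namely that $1+\partial_z\sigma = h/h_b$ is independent of $z$, and that, writing $z' = z + \sigma(X,z)$ for the vertical component of $\Sigma_b$, one has $z'|_{z=0} = \ve\zeta(X)$ while $z'|_{z=-1+\beta b} = -1+\beta b$, so that $\Sigma_b$ maps the flat column onto the physical column with the bottom endpoint preserved and the top endpoint sent to the free surface. Performing the substitution $z' = z+\sigma(X,z)$ in \eqref{V bar}, whose Jacobian is $\mathrm{d}z' = (1+\partial_z\sigma)\,\mathrm{d}z = \frac{h}{h_b}\,\mathrm{d}z$, turns $\overline{V}$ into $\frac{1}{h}\int_{-1+\beta b}^{0} \frac{h}{h_b}\,(\nabla_X\Phi)\circ\Sigma_b\,\mathrm{d}z$.

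Next I would unpack \eqref{Chain rule} componentwise. Its last component gives $\partial_z\phi_b = (1+\partial_z\sigma)\,(\partial_z\Phi)\circ\Sigma_b$, hence $(\partial_z\Phi)\circ\Sigma_b = \frac{h_b}{h}\,\partial_z\phi_b$; its horizontal components give $\nabla_X\phi_b = (\nabla_X\Phi)\circ\Sigma_b + \big((\partial_z\Phi)\circ\Sigma_b\big)\nabla_X\sigma$, so that $\frac{h}{h_b}(\nabla_X\Phi)\circ\Sigma_b = \frac{h}{h_b}\nabla_X\phi_b - (\partial_z\phi_b)\,\nabla_X\sigma$. Inserting this identity into the integral above and replacing $\nabla_X\sigma$ by its explicit value \eqref{grad sigma} yields exactly \eqref{New V bar}. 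The regularity needed for these manipulations — in particular for the change of variables and for the chain rule to hold a.e. — is guaranteed by the provisions of Definition \ref{Def V bar}, which give $\Phi\in\dot{H}^2(\Omega_t)$ and hence $\phi_b = \Phi\circ\Sigma_b\in\dot{H}^2(\mathcal{S}_b)$ through the diffeomorphism.

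I do not expect a genuine obstacle here: the only point requiring care is the bookkeeping with the factor $h/h_b$ — making sure it appears from the substitution, cancels against the $h_b/h$ produced by the vertical component of the chain rule, and that the transformed endpoints of the $z'$-integration are exactly $-1+\beta b$ and $\ve\zeta$ so that no boundary contributions are generated. Everything else is a routine computation.
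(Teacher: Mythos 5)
Your proposal is correct and follows essentially the same route as the paper: a change of variables in the vertical integral with Jacobian $1+\partial_z\sigma = h/h_b$, combined with the chain rule to express $(\nabla_X\Phi)\circ\Sigma_b$ through $\nabla_X\phi_b$ and $\partial_z\phi_b$ (the paper states this via the matrix identity with $(J_{\Sigma_b}^{-1})^T$, which is your componentwise computation). The endpoint check and the cancellation of the $h/h_b$ factors are exactly the bookkeeping the paper's proof performs implicitly.
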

	
	\begin{proof}
		We use the new variables defined by the mapping $\Sigma_b$ and the chain rule      to get
		\begin{align*}
			\overline{V}^{\mu}[\ve \zeta, \beta b] \psi 
			& =
			\dfrac{1}{h}\int_{-1+\beta b}^{0} (\nabla_X \Phi)\circ \Sigma_b   \: |J_{\Sigma_b}|\: \mathrm{d}z
			\\
			& = 
		\dfrac{1}{h}	
		\int_{-1+\beta b}^{0} \big{[}\frac{h}{h_b}\nabla_X \phi_b - \nabla_X \sigma \partial_z \phi_b \big{]}\: \mathrm{d}z.
		\end{align*}
		Then using \eqref{sigma}, we obtain the result.
	\end{proof}
	 
	\subsection{Multi-scale expansions}\label{mult general pb}
	
	In order to make expansions of $\phi_b$ we first make several observations on how to decompose system \eqref{Elliptic pb Sb}.

        \begin{obs}\label{A op}
        We can decompose the elliptic operator given in Remark \ref{Elliptic op} into:
	\begin{align*}
		\dfrac{h}{h_b}\nabla^{\mu}_{X,z} \cdot P(\Sigma_b) \nabla^{\mu}_{X,z} \phi_b = \Delta^{\mu}_{X,z} \phi_b + \mu\varepsilon  A[\nabla_X, \partial_z]\phi_b,
	\end{align*}
	where 
	\begin{align*}
		 A[\nabla_X, \partial_z]\phi_b
		&= 
		\dfrac{\zeta}{h_b} \Delta_X \phi_b
		+
		\dfrac{h}{h_b} \nabla_X \cdot \big( \dfrac{\zeta}{h_b} \nabla_X\phi_b\big) 
		-
		\dfrac{h}{h_b} \nabla_X \cdot \big(\dfrac{1}{\varepsilon}\nabla_X \sigma \partial_z \phi_b \big) 
		\\  
		&\notag
		-
		\dfrac{h}{h_b} \partial_z \big( \dfrac{1}{\varepsilon} \nabla_X\sigma \cdot \nabla_X \phi_b \big) 
		+
		\partial_z\big( \dfrac{1}{\varepsilon}|\nabla_X \sigma|^2 \partial_z\phi_b \big).
	\end{align*}
        We may simplify this expression by using formula \eqref{grad sigma} for $\nabla_X \sigma$ to get that
        \begin{align}\label{A}
		 A[\nabla_X, \partial_z]\phi_b
		&= 
		\frac{\zeta}{h_b}(1+\frac{h}{h_b})\Delta_X\phi_b
            -
            \frac{h}{h_b}\nabla_X \zeta \cdot \nabla_X \partial_z \phi_b
		\\  
		&\notag
		-
		\dfrac{h}{h_b} \nabla_X \cdot \big(\dfrac{1}{\varepsilon}\nabla_X \sigma \partial_z \phi_b \big) 
		+
		\partial_z\big( \dfrac{1}{\varepsilon}|\nabla_X \sigma|^2 \partial_z\phi_b \big).
	\end{align}
        In this formula, we emphasize the terms that do not contain $\partial_z \phi_b$. This is because these are the leading terms in the approximations that are performed below. 
        
        \end{obs}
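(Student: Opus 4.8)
The plan is to establish the statement by a direct computation from the explicit block form of $P(\Sigma_b)$ recorded in Remark~\ref{Elliptic op}; this is pure algebra, with no estimates or elliptic theory entering. First I would compute the $(d+1)$-vector $P(\Sigma_b)\nabla^{\mu}_{X,z}\phi_b$: using the block matrix together with $\nabla^{\mu}_{X,z}=(\sqrt{\mu}\nabla_X,\partial_z)^T$, its horizontal part is $\sqrt{\mu}\frac{h}{h_b}\nabla_X\phi_b-\sqrt{\mu}\nabla_X\sigma\,\partial_z\phi_b$ and its vertical part is $-\mu\nabla_X\sigma\cdot\nabla_X\phi_b+\frac{h_b(1+\mu|\nabla_X\sigma|^2)}{h}\partial_z\phi_b$. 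Applying $\nabla^{\mu}_{X,z}\cdot$ and multiplying by $h/h_b$ then yields the four terms
\[
\mu\frac{h}{h_b}\nabla_X\cdot\big(\frac{h}{h_b}\nabla_X\phi_b\big)-\mu\frac{h}{h_b}\nabla_X\cdot\big(\nabla_X\sigma\,\partial_z\phi_b\big)-\mu\frac{h}{h_b}\partial_z\big(\nabla_X\sigma\cdot\nabla_X\phi_b\big)+\frac{h}{h_b}\partial_z\Big(\frac{h_b(1+\mu|\nabla_X\sigma|^2)}{h}\partial_z\phi_b\Big).
\]

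Next I would exploit two elementary facts about the prefactor $h/h_b$. Since $h=1+\varepsilon\zeta-\beta b$ and $h_b=1-\beta b$ depend only on $X$, the quotient $h/h_b$—which by the Jacobian computation equals $1+\partial_z\sigma$—is independent of $z$; hence it commutes with $\partial_z$, and the last term collapses to $\partial_z^2\phi_b+\mu\partial_z(|\nabla_X\sigma|^2\partial_z\phi_b)$. Moreover $h/h_b=1+\varepsilon\zeta/h_b$, so expanding the first term produces $\mu\Delta_X\phi_b$ together with $\mu\varepsilon\big[\frac{\zeta}{h_b}\Delta_X\phi_b+\frac{h}{h_b}\nabla_X\cdot(\frac{\zeta}{h_b}\nabla_X\phi_b)\big]$. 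The pieces $\mu\Delta_X\phi_b$ and $\partial_z^2\phi_b$ recombine into $\Delta^{\mu}_{X,z}\phi_b$, while every remaining contribution carries either the factor $\nabla_X\sigma=O(\varepsilon)$ or the factor $\varepsilon\zeta$; pulling out a common $\mu\varepsilon$—and keeping $\sigma$ explicit, as $\frac{1}{\varepsilon}\nabla_X\sigma$ and $\frac{1}{\varepsilon}|\nabla_X\sigma|^2$—gives precisely the first displayed expression for $A[\nabla_X,\partial_z]\phi_b$.

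To reach the simplified identity \eqref{A}, I would substitute \eqref{grad sigma}, i.e.\ $\frac{1}{\varepsilon}\nabla_X\sigma=\nabla_X(\zeta/h_b)\,z+\nabla_X\zeta$, into the term $-\frac{h}{h_b}\partial_z\big(\frac{1}{\varepsilon}\nabla_X\sigma\cdot\nabla_X\phi_b\big)$ and expand the $z$-derivative by the product rule, using $\partial_z\big(\frac{1}{\varepsilon}\nabla_X\sigma\big)=\nabla_X(\zeta/h_b)$. The $\nabla_X(\zeta/h_b)\cdot\nabla_X\phi_b$ term produced this way cancels against the identical term coming from $\frac{h}{h_b}\nabla_X\cdot(\frac{\zeta}{h_b}\nabla_X\phi_b)=\frac{h}{h_b}\big[\nabla_X(\zeta/h_b)\cdot\nabla_X\phi_b+\frac{\zeta}{h_b}\Delta_X\phi_b\big]$; collecting the two surviving $\Delta_X\phi_b$ contributions into $\frac{\zeta}{h_b}\big(1+\frac{h}{h_b}\big)\Delta_X\phi_b$ and retaining the $\nabla_X\partial_z\phi_b$ term, while leaving $\nabla_X\cdot(\frac{1}{\varepsilon}\nabla_X\sigma\,\partial_z\phi_b)$ and $\partial_z(\frac{1}{\varepsilon}|\nabla_X\sigma|^2\partial_z\phi_b)$ untouched, yields \eqref{A}.

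I do not expect a genuine obstacle: the argument is entirely computational. The only points needing care are carrying out the symmetric block matrix product and the two nested divergences without sign errors, and keeping track of which factors are $z$-independent—this is exactly what makes the bottom term collapse to $\partial_z^2\phi_b$ plus a correction—versus which factors carry powers of $\varepsilon$, which is what organizes the remainder into $\mu\varepsilon\,A[\nabla_X,\partial_z]\phi_b$.
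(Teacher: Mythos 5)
Your route is the same as the paper's (implicit) one: a direct block-matrix computation with $P(\Sigma_b)$ from Remark \ref{Elliptic op}, using that $h/h_b=1+\partial_z\sigma$ depends only on $X$ so that the vertical term collapses to $\partial_z^2\phi_b+\mu\,\partial_z\big(|\nabla_X\sigma|^2\partial_z\phi_b\big)$, and splitting $h/h_b=1+\varepsilon\zeta/h_b$ in the horizontal term. Up to and including the first displayed expression for $A[\nabla_X,\partial_z]\phi_b$ your computation is correct.

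The last step, however, does not literally produce \eqref{A}. Expanding $-\frac{h}{h_b}\partial_z\big(\frac{1}{\varepsilon}\nabla_X\sigma\cdot\nabla_X\phi_b\big)$ via \eqref{grad sigma} gives $-\frac{h}{h_b}\nabla_X\big(\frac{\zeta}{h_b}\big)\cdot\nabla_X\phi_b-\frac{h}{h_b}\big(z\,\nabla_X\big(\frac{\zeta}{h_b}\big)+\nabla_X\zeta\big)\cdot\nabla_X\partial_z\phi_b$, and only the first piece cancels against $\frac{h}{h_b}\nabla_X\cdot\big(\frac{\zeta}{h_b}\nabla_X\phi_b\big)$. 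The surviving cross term is therefore $-\frac{h}{h_b}\,\frac{1}{\varepsilon}\nabla_X\sigma\cdot\nabla_X\partial_z\phi_b$, i.e.\ \eqref{A} with $\nabla_X\zeta$ replaced by $z\,\nabla_X\big(\frac{\zeta}{h_b}\big)+\nabla_X\zeta$; the piece $-\frac{h}{h_b}\,z\,\nabla_X\big(\frac{\zeta}{h_b}\big)\cdot\nabla_X\partial_z\phi_b$ has nothing to cancel against. So either you retain the full factor $\frac{1}{\varepsilon}\nabla_X\sigma$ in that term—which is what is actually used later, cf.\ the schematic form $\tilde{A}[\nabla_X,\partial_z]\phi_b\sim\Delta_X\phi_b+(1+z)\nabla_Xf\cdot\nabla_X\partial_z\phi_b+z\,\partial_z\phi_b$ in Step 2 of the proof of Proposition \ref{Prop V}—and point out that \eqref{A} as printed abbreviates (or misprints) this coefficient, or your claimed cancellation silently drops a term that is of the same size as the ones kept, which would be an algebra error. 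The discrepancy is harmless for the subsequent estimates, since the dropped term is bounded exactly like the retained ones, but as an identity your final assertion ``yields \eqref{A}'' needs this qualification.
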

        \begin{obs}\label{B tilde op}
	Similarly, we can also decompose the Neumann condition into
            \begin{align*}
                \frac{h}{h_b}|\mathbf{n}_b|\partial_{n_{b}}^{P_b} \phi_b|_{z=-h_b} 
                & =
               [ \partial_z \phi_b 
                -
                \mu \beta \frac{h}{h_b} \nabla_Xb \cdot \nabla_X \phi_b 
                -
                \mu \beta^2 \ve \zeta \frac{|\nabla_X b|^2}{h_b} \partial_z \phi_b]|_{z=-h_b} 
                \\ 
                & 
                = 
                 [\partial_z \phi_b 
                -
                \mu \beta \nabla_Xb \cdot \nabla_X \phi_b]|_{z=-h_b}  
                + 
                \mu \ve \beta B[\nabla_X, \partial_z] \phi_b|_{z=-h_b} .
            \end{align*}
            where
            \begin{equation*}
                B[\nabla_X, \partial_z] \phi_b  
                =
                -
                \frac{\zeta}{h_b} \nabla_Xb \cdot \nabla_X \phi_b 
                -
                 \beta  \zeta \frac{|\nabla_X b|^2}{h_b} \partial_z \phi_b.
            \end{equation*}
        \end{obs}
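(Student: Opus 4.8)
The plan is to compute the conormal derivative $\partial_{n_{b}}^{P_b}\phi_b$ directly from its definition \eqref{Neuman new} together with the explicit block matrix $P(\Sigma_b)$ recorded in Remark \ref{Elliptic op}, and only at the end to specialise to $z=-h_b$. Writing $\mathbf{n}_b = |\mathbf{n}_b|^{-1}(-\beta\nabla_X b, 1)^T$ and $\nabla^{\mu}_{X,z}\phi_b = (\sqrt{\mu}\nabla_X\phi_b,\partial_z\phi_b)^T$, one matrix--vector multiplication by $P(\Sigma_b)$, one application of $I^{\mu}$, and one dot product with $\mathbf{n}_b$ give, on all of $\mathcal{S}_b$,
\[
|\mathbf{n}_b|\,\partial_{n_{b}}^{P_b}\phi_b = -\mu\beta\tfrac{h}{h_b}\nabla_X b\cdot\nabla_X\phi_b + \mu\beta(\nabla_X b\cdot\nabla_X\sigma)\partial_z\phi_b - \mu\nabla_X\sigma\cdot\nabla_X\phi_b + \tfrac{h_b}{h}(1+\mu|\nabla_X\sigma|^2)\partial_z\phi_b .
\]
Multiplying by $\tfrac{h}{h_b}$ turns the last term into $\partial_z\phi_b + \mu|\nabla_X\sigma|^2\partial_z\phi_b$ and puts an extra factor $\tfrac{h}{h_b}$ on the other three.

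Next I would evaluate at $z=-h_b$. The only boundary value needed is $\nabla_X\sigma|_{z=-h_b} = -\varepsilon\beta\tfrac{\zeta}{h_b}\nabla_X b$, which follows from \eqref{grad sigma} and $\nabla_X h_b = -\beta\nabla_X b$. Substituting, the two terms carrying $\nabla_X b\cdot\nabla_X\phi_b$ combine into $-\mu\beta\tfrac{h}{h_b^{2}}(h-\varepsilon\zeta)\,\nabla_X b\cdot\nabla_X\phi_b$, and the two terms carrying $|\nabla_X b|^{2}\partial_z\phi_b$ combine into $\mu\beta^{2}\tfrac{\varepsilon\zeta}{h_b^{2}}(\varepsilon\zeta-h)\,|\nabla_X b|^{2}\partial_z\phi_b$. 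The single algebraic identity doing all the work here is $h-\varepsilon\zeta = 1-\beta b = h_b$, which collapses these coefficients and yields exactly
\[
\tfrac{h}{h_b}|\mathbf{n}_b|\,\partial_{n_{b}}^{P_b}\phi_b\big|_{z=-h_b} = \Big[\partial_z\phi_b - \mu\beta\tfrac{h}{h_b}\nabla_X b\cdot\nabla_X\phi_b - \mu\beta^{2}\varepsilon\tfrac{\zeta}{h_b}|\nabla_X b|^{2}\partial_z\phi_b\Big]_{z=-h_b},
\]
which is the first claimed identity.

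Finally, for the second form I would split $\tfrac{h}{h_b} = 1 + \tfrac{\varepsilon\zeta}{h_b}$ in the middle term: the constant part produces the $O(\mu\beta)$ piece $-\mu\beta\,\nabla_X b\cdot\nabla_X\phi_b$, while the remainder $-\mu\varepsilon\beta\tfrac{\zeta}{h_b}\nabla_X b\cdot\nabla_X\phi_b$, together with $-\mu\varepsilon\beta^{2}\tfrac{\zeta}{h_b}|\nabla_X b|^{2}\partial_z\phi_b$, is precisely $\mu\varepsilon\beta\,B[\nabla_X,\partial_z]\phi_b$ with $B$ as stated, which gives the second identity. I do not expect a genuine obstacle: the computation is routine, and the only points requiring care are the bookkeeping of the $h$, $h_b$, $\sigma$ factors, the consistent use of $h=h_b+\varepsilon\zeta$ when reducing the coefficients quadratic in $h/h_b$, and the convention inherited from the proof of the preceding proposition that $|\mathbf{n}_b|$ stands for $\sqrt{1+\beta^{2}|\nabla_X b|^{2}}$ and is pulled to the left so that the boundary operator is expressed through the unnormalised conormal vector $(-\beta\nabla_X b,1)^T$.
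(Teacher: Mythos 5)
Your computation is correct and is exactly the direct verification the paper intends: applying the definition \eqref{Neuman new} with the explicit matrix $P(\Sigma_b)$ from Remark \ref{Elliptic op}, then evaluating at $z=-h_b$ with $\nabla_X\sigma|_{z=-h_b}=-\varepsilon\beta\tfrac{\zeta}{h_b}\nabla_X b$ and collapsing coefficients via $h-\varepsilon\zeta=h_b$, which reproduces both displayed identities (including the split $\tfrac{h}{h_b}=1+\tfrac{\varepsilon\zeta}{h_b}$ defining $B$). No gaps; your reading of $|\mathbf{n}_b|$ as $\sqrt{1+\beta^2|\nabla_X b|^2}$ and your use of the bottom-right entry $\tfrac{h_b(1+\mu|\nabla_X\sigma|^2)}{h}$ match the conventions used elsewhere in the paper.
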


	To summarize the observations, we now have that $\phi_b$ solves
        \begin{equation}\label{decomposed elliptic pb}
            \begin{cases}
        		\Delta^{\mu}_{X,z}\phi_b = -\mu \ve A[\nabla_X, \partial_z]\phi_b \quad \text{in} \quad \mathcal{S}_b
        		\\	
        		\phi_b|_{z = 0} = \psi,  \quad  [\partial_z \phi_{b} - \mu \beta \nabla_X b \cdot \nabla_X \phi_b ]|_{z=-h_b} = \mu \ve \beta  B[\nabla_X, \partial_z]\phi_b|_{z = -h_b}.
            \end{cases}
        \end{equation}

        \begin{remark}\label{Remark Craig sulem}
            In the paper \cite{CraigSulemGuyenneNicholls05}, their strategy is to  solve \eqref{decomposed elliptic pb} first in the case $\ve = 0$, where the solution is defined in terms of the inverse of a pseudo-differential operator. If we add the parameters $\mu$ and $\beta$ then, in dimension one, this operator is given by
            \begin{align}\label{PDO Craig Sulem}
                \mathcal{L}^{\mu}[\beta b]  = - \cosh{((-1+\beta b(X))\sqrt{\mu}D)}^{-1}\sinh{(\beta b(X) \sqrt{\mu }D)}\mathrm{sech}(\sqrt{\mu }D).
            \end{align}
            Formally, in dimension one, they obtain the first order approximation:
            \begin{equation*}
                \mathcal{G}_0 = \sqrt{\mu}D\tanh(\sqrt{\mu}D) + \sqrt{\mu}D\mathcal{L}^{\mu}[\beta b].
            \end{equation*}
            At higher order they obtain  the expansion of $\mathcal{G}^{\mu}$ given on the form
            \begin{align*}
                \frac{1}{\mu}\mathcal{G}^{\mu} = \frac{1}{\mu}\sum\limits_{j=0}^n\ve^j \mathcal{G}_j + O(\ve^{n+1}),
            \end{align*}
            where $\mathcal{G}_j$ defined recursively for $j\geq 0$ and is the classical expansion for small amplitude waves when $\beta=0$ \cite{CraigSulemGuyenneNicholls05} (see also \cite{WWP} where the approximation is proved with Sobolev bounds when $\beta=0$). In this paper, our approach allow us to decouple the parameters $\mu$, $\ve$ and $\beta$, writing expansions of the Dirichlet-Neumann operator which do not include the inversion of a pseudo-differential operator. 
        \end{remark}

    \subsection{Multi-scale expansions of the velocity potential $\phi_b$}\label{mult phi}
    We will now use \eqref{decomposed elliptic pb} to make multi-scale expansions of $\phi_b$. But first, we state an important result to justify the procedure.
    \begin{prop}\label{Prop elliptic est}
        Let $d=1,2$, $t_0>\frac{d}{2}$, and $k\in \N $. Let $b \in C^{\infty}_c(\R^d)$ and $\zeta \in H^{{\max\{t_0 +2, k+1\}}}(\R^d)$ such that \eqref{non-cavitation 1} and \eqref{non-cavitation 2} are satisfied. Let also $f \in H^{k,k}(\mathcal{S}_b)$ and $g\in H^k(\R^d)$  be two given functions. Then the boundary value problem
        \begin{equation}\label{Elliptic est pb}
		\begin{cases}
			\nabla^{\mu}_{X,z} \cdot P(\Sigma_b) \nabla_{X,z}^{\mu }u = f \quad \text{in} \quad \mathcal{S}_b
			\\	
			u|_{z = 0} = 0,  \quad \partial_{n_{b}}^{P_b} u |_{z=-1 + \beta b} = g,
		\end{cases}
	\end{equation}	
        admits a unique solution $u \in H^{k+1,0}(\mathcal{S}_b)$. Moreover, the solution satisfies the estimate
        \begin{equation}\label{elliptic est}
            \|\nabla^{\mu}_{X,z} u\|_{H^{k,0}(\mathcal{S}_b)} \leq M(k+1) ( |g|_{H^k} + \sum\limits_{j=0}^k\|f\|_{H^{k-j,j}(\mathcal{S}_b)}).
        \end{equation}
    \end{prop}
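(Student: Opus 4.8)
The plan is to establish \eqref{Elliptic est pb} as a standard variational elliptic problem on the fixed strip $\mathcal{S}_b$ and then bootstrap regularity using the tangential (horizontal) structure of the equation. First I would set up the weak formulation: by the coercivity \eqref{Coercivity} of $P(\Sigma_b)$ together with the non-cavitation bounds, the bilinear form $a(u,v) = \int_{\mathcal{S}_b} P(\Sigma_b)\nabla^\mu_{X,z} u \cdot \nabla^\mu_{X,z} v$ is coercive on the space $H^{1,0}_{0,\mathrm{surf}}(\mathcal{S}_b)$ of functions vanishing at $z=0$ (using a Poincar\'e inequality in the $z$-variable, since the strip has finite height bounded below by $h_{b,\min}$). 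Lax--Milgram then gives the unique variational solution $u$ with $\|\nabla^\mu_{X,z} u\|_{L^2(\mathcal{S}_b)} \lesssim M_0(|g|_{H^{-1/2}} + \|f\|_{(H^{1,0}_{0})'})$, which is the $k=0$ case of \eqref{elliptic est} after controlling the dual norms by $|g|_{L^2}$ and $\|f\|_{H^{0,0}}$ (the latter via the trace/duality $\|f\|_{(H^{1,0}_0)'} \lesssim \|f\|_{H^{0,0}}$ using the Poincar\'e inequality once more). This is essentially Chapter 2 of \cite{WWP}, adapted to the partially-flattened geometry.

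Next, for higher $k$, I would proceed by induction on the number of horizontal derivatives. The key observation is that $\partial_X^\alpha$ commutes with $\partial_z$ and that differentiating \eqref{Elliptic est pb} in $X$ produces a problem of the same form for $\partial_X^\alpha u$ with a modified right-hand side: $\nabla^\mu_{X,z}\cdot P(\Sigma_b)\nabla^\mu_{X,z}(\partial_X^\alpha u) = \partial_X^\alpha f - \sum_{0<\gamma\le\alpha}\binom{\alpha}{\gamma}\nabla^\mu_{X,z}\cdot (\partial_X^\gamma P(\Sigma_b))\nabla^\mu_{X,z}\partial_X^{\alpha-\gamma} u$, and similarly for the Neumann data, where the commutator terms involve at most $k-1$ horizontal derivatives of $u$. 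Applying the $k=0$ estimate to $\partial_X^\alpha u$ and controlling the commutators by product/Moser estimates in Sobolev spaces — where the coefficients of $P(\Sigma_b)$ and $\mathbf{n}_b$ are controlled by $M(k+1)$ through $\zeta \in H^{\max\{t_0+2,k+1\}}$ and $b \in C^\infty_c$ — closes the induction and yields $\|\nabla^\mu_{X,z} u\|_{H^{k,0}(\mathcal{S}_b)} \lesssim M(k+1)(|g|_{H^k} + \|f\|_{H^{k,0}(\mathcal{S}_b)})$. Finally, to upgrade $\|f\|_{H^{k,0}}$ to the sharper $\sum_{j=0}^k \|f\|_{H^{k-j,j}}$ appearing in \eqref{elliptic est}, I would recover the normal ($\partial_z$) derivatives of $u$ algebraically: solving the equation $\nabla^\mu_{X,z}\cdot P(\Sigma_b)\nabla^\mu_{X,z} u = f$ for $\partial_z^2 u$ in terms of $f$, horizontal derivatives of $u$, and first-order terms (legitimate since the $(d{+}1,d{+}1)$ entry of $P(\Sigma_b)$ is bounded below by coercivity), then iterating to express $\partial_z^{j+1} u$ in terms of $\partial_z^{j-1} f$ and lower-order quantities, which trades each normal derivative on $u$ for a normal derivative on $f$.

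The main obstacle I anticipate is bookkeeping the uniformity in $\mu$. The scaled gradient $\nabla^\mu_{X,z} = (\sqrt\mu\nabla_X, \partial_z)^T$ and the matrix $P(\Sigma_b)$ both carry $\sqrt\mu$ factors (see Remark \ref{Elliptic op}), so one must be careful that the Poincar\'e and product estimates are applied to $\nabla^\mu_{X,z} u$ rather than $\nabla_{X,z} u$, and that no hidden negative power of $\mu$ creeps in through, e.g., dividing by the $(d{+}1,d{+}1)$-entry of $P(\Sigma_b)$ (which equals $\tfrac{h_b + \mu h_b|\nabla_X\sigma|^2}{h}$ and is bounded below uniformly by $h_{b,\min}/h \cdot$const, so this is safe). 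The coercivity constant in \eqref{Coercivity} is itself $\mu$-independent by construction, which is the crucial input; everything else is a careful but routine Sobolev-space commutator argument. I would therefore structure the proof as: (i) variational existence and the base estimate, citing \cite{WWP}; (ii) the horizontal-regularity induction; (iii) the algebraic recovery of normal derivatives; keeping track at each stage that the constants depend only on $M(k+1)$ and not on $\mu$.
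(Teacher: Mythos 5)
Your overall architecture (variational base case via coercivity and Poincar\'e, then tangential regularity, then algebraic recovery of the normal derivatives from the equation) matches the standard scheme, and your steps (i) and (iii) do coincide with what the paper does. The genuine gap is in step (ii): you run the induction on plain horizontal derivatives $\partial_X^\alpha$ \emph{directly on $\mathcal{S}_b$}, but the bottom boundary $z=-1+\beta b(X)$ is curved, so this step does not go through as written. Horizontal translations (and hence horizontal difference quotients) do not preserve $\mathcal{S}_b$, so near the bottom $\partial_X^\alpha u$ is not controlled by interior information alone and cannot simply be inserted as a competitor in the $k=0$ variational estimate; moreover, differentiating the Neumann condition, which lives on the graph $z=-h_b(X)$, is not the same as prescribing $\partial_{n_b}^{P_b}(\partial_X^\alpha u)$ on that graph: the chain rule produces trace terms involving $\partial_z u$ (and, iterating, higher normal derivatives) evaluated on the curved boundary, which are exactly the quantities not yet controlled at that stage of the induction. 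This is why in the curved-boundary setting one must straighten the boundary before the tangential argument.

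That straightening is precisely the paper's route and the missing ingredient in your plan: set $\tilde u(X,z)=u(X,zh_b(X))$, which transports \eqref{Elliptic est pb} to the flat strip $\mathcal{S}=\R^d\times[-1,0]$ with a new coercive matrix $\tilde P(\Sigma)$ (with $\theta=(\ve\zeta-\beta b)z+\ve\zeta$); on $\mathcal{S}$ the higher-order tangential estimate is legitimate and is in fact quoted from Proposition $4.5$ of \cite{DucheneMMWW21}, after which the $z$-derivatives of $\tilde u$ are recovered from the equation (your step (iii)) and one changes variables back to $\mathcal{S}_b$. Note also that the mixed norm $\sum_{j=0}^k\|f\|_{H^{k-j,j}(\mathcal{S}_b)}$ in \eqref{elliptic est} is not a ``sharper'' bound that needs upgrading from $\|f\|_{H^{k,0}}$ --- it is a weaker (larger) right-hand side, and it appears naturally because the flattening map turns horizontal derivatives of $\tilde f(X,z)=f(X,zh_b(X))$ into mixed derivatives of $f$. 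So your proof becomes correct, and essentially identical to the paper's, once you insert the bottom-flattening change of variables before the induction; without it, the induction step fails.
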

    The proof of Proposition \ref{Prop elliptic est} is similar to the one of Proposition $4.5$ in \cite{DucheneMMWW21} and is postponed for Appendix $A$, Subsection \ref{A2} to ease the presentation. We may now use this result to construct an implicit function such that $\phi = \phi_b + O(\mu \ve)$.

    \begin{prop}\label{Proposition phi0} Let $d=1,2$, $t_0>\frac{d}{2}$, and $k\in \N $. Let $\psi \in \dot{H}^{k+3}(\R^d)$. Let also $b \in C^{\infty}_c(\R^d)$ and $\zeta\in H^{{\max\{t_0 +2, k+2\}}}(\R^d)$ such that \eqref{non-cavitation 1} and \eqref{non-cavitation 2} are satisfied. Then there exists a unique solution $\phi_0 \in H^{k,0}(\mathcal{S}_b)$ solving
    \begin{align}\label{elliptic problem phi}
            \begin{cases}
                \Delta_{X,z}^{\mu} \phi = 0 \ \ \mathrm{in} \ \ \mathcal{S}_b, \\
                \phi|_{z=0} = \psi, \ \ \big[\partial_z\phi - \mu\beta \nabla_X b \cdot \nabla_X \phi\big]\big|_{z=-1+\beta b}  =0, 
            \end{cases}
        \end{align}
        where the solution satisfies the estimates
        \begin{equation}\label{est on phi in pf}
            \| \nabla_{X,z}  \phi \|_{H^{k,0}(\mathcal{S}_b)} \leq M(k+1)|\nabla_X \psi|_{H^k},
        \end{equation}
        and
        \begin{equation}\label{est phi}
            \|\nabla_{X,z}^{\mu}(\phi_b - \phi) \|_{H^{k,0}(\mathcal{S}_b)} \leq  
            \mu \ve M(k+2) |\nabla_X \psi|_{H^{k + 2}}.
        \end{equation}
    \end{prop}
    \begin{proof}
        The existence and uniqueness is a direct consequence Riesz representation Theorem, and the Poincaré inequality \eqref{Poincare 1}. Moreover, we know that $\tilde \phi = \phi(X,zh_b)$ defined is defined on the fixed strip $\mathcal{S} = \R^d \times [-1,0]$ and satisfies
        \begin{equation*}
            \| \nabla_{X,z} \tilde \phi \|_{H^{k,0}(\mathcal{S})} \leq M(k+1)|\nabla_X \psi|_{H^k}.
        \end{equation*}
        by Proposition $2.37$ with $\ve = 0$ in \cite{WWP}. Then we may use this result, together with the relations $\nabla_{X,z} \phi(X,z) = \nabla_{X,z} \big(\tilde \phi (X, \frac{z}{h_b})\big) \in H^{k,0}(\mathcal{S}_b)$ and $\partial_z^2 \phi=-\mu \Delta_X \phi$ to obtain the bound
        \begin{equation*}
            \| \nabla_{X,z}  \phi \|_{H^{k,0}(\mathcal{S}_b)} \leq M(k+1)|\nabla_X \psi|_{H^k}.
        \end{equation*}
        Consequently, estimate \eqref{est phi}  follows from Proposition \ref{Prop elliptic est} and the observation that
        \begin{equation*}
            \|\nabla_{X,z}^{\mu}(\phi_b - \phi) \|_{H^{k,0}(\mathcal{S}_b)} \leq \mu \ve M(k+1)
            \|\frac{h_b}{h}  A[\nabla_X, \partial_z]\phi\|_{H^{k,0}(\mathcal{S}_b)}
            + \| B[\nabla_X, \partial_z]\phi|_{z = -h_b}\|), 
        \end{equation*}
        where $A[\nabla_X, \partial_z]$ is a second order differential operator and $B[\nabla_X, \partial_z]$ is a first order operator (second order after using the trace estimate \eqref{Poincare 2}). Then we simply conclude by using \eqref{est on phi in pf} combined with product estimates for $H^k(\R^d)$ given by \eqref{Classical prod est} and \eqref{prod est division}.

    \end{proof}

    \color{black}

    Next, we will make expansion with respect to $\mu \beta$ by splitting problem \eqref{decomposed elliptic pb} into two parts. In particular, we will construct a function $\phi_0 = \phi_b + O(\mu(\ve + \beta))$ by solving the first part of the \lq\lq straightened\rq\rq \: Laplace problem with an explicit error of order $O(\mu \beta)$, that will be canceled later, and an additional error of $O(\mu \ve)$. 
    \begin{prop}\label{Proposition phi0} Let $d=1,2$, $t_0>\frac{d}{2}$, and $k\in \N $. Let $\psi \in \dot{H}^{k+3}(\R^d)$. Let also $b \in C^{\infty}_c(\R^d)$ and $\zeta\in H^{{\max\{t_0 +2, k+2\}}}(\R^d)$ such that \eqref{non-cavitation 1} and \eqref{non-cavitation 2} are satisfied. If $\phi_0$ satisfies the following Laplace problem:
        \begin{align}\label{elliptic problem phi 0}
            \begin{cases}
                \Delta_{X,z}^{\mu} \phi_0 = 0 \ \ \mathrm{in} \ \ \mathcal{S}_b, \\
                \phi_0|_{z=0} = \psi, \ \ \big[\partial_z\phi_0 - \mu\beta \nabla_X b \cdot \nabla_X \phi_0 \big]\big|_{z=-1+\beta b}  = \mu \beta \nabla_X \cdot \mathcal{L}_{1}^{\mu}[\beta b]\nabla_X\psi, 
            \end{cases}
        \end{align}
        where
        \begin{align*}
            \mathcal{L}_{1}^{\mu}[\beta b]\nabla_X\psi =  -  \frac{1}{\beta}\sinh{(\beta b(X) \sqrt{\mu}|\mathrm{D}|)}\mathrm{sech}(\sqrt{\mu}|\mathrm{D}|)\dfrac{1}{\sqrt{\mu}|\mathrm{D}|}\nabla_X\psi,
        \end{align*}
        then for $z\in [-1 + \beta b,0]$ its expression is given by
        \begin{align}\label{phi0}
            \phi_0 = \dfrac{\cosh{((z+1)\sqrt{\mu}|\mathrm{D}|)}}{\cosh{(\sqrt{\mu}|\mathrm{D}|)}}\psi = \mathrm{F}_0 \psi.
        \end{align}
        Moreover, the solution satisfies the estimate
        \begin{equation}\label{est phi0}
            \|\nabla_{X,z}^{\mu}(\phi_b - \phi_0) \|_{H^{k,0}(\mathcal{S}_b)} \leq  \mu (\ve+\beta) M(k+2) |\nabla_X \psi|_{H^{k + 2}}.
        \end{equation}
    \end{prop}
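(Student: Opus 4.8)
The plan is to verify the explicit formula \eqref{phi0} directly, and then obtain the error estimate \eqref{est phi0} by applying the elliptic estimate in Proposition \ref{Prop elliptic est} to the difference $\phi_b - \phi_0$.

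First I would check that $\phi_0 = \mathrm{F}_0\psi$ defined by \eqref{phi0} solves the boundary value problem \eqref{elliptic problem phi 0}. The interior equation $\Delta^{\mu}_{X,z}\phi_0 = (\mu\Delta_X + \partial_z^2)\phi_0 = 0$ is immediate on the Fourier side: $\cosh((z+1)\sqrt{\mu}|\xi|)$ satisfies $\partial_z^2 = \mu|\xi|^2$, which cancels $\mu\Delta_X$. The top boundary condition $\phi_0|_{z=0} = \psi$ is clear since $\cosh(\sqrt{\mu}|\xi|)/\cosh(\sqrt{\mu}|\xi|) = 1$. For the bottom condition at $z = -1 + \beta b(X)$, I would compute $\partial_z\phi_0|_{z=-1+\beta b}$ and $\nabla_X\phi_0|_{z=-1+\beta b}$ explicitly. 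Here the subtlety is that $z$ is being evaluated at the \emph{variable} level $-1 + \beta b(X)$, so that $\nabla_X$ acting on $\phi_0(X, -1+\beta b(X))$ picks up both the explicit $X$-dependence through $\mathrm{D}$ and, after substitution, the factor $\cosh(\beta b(X)\sqrt{\mu}|\mathrm{D}|)$. A direct computation should show $\partial_z\phi_0|_{z=-1+\beta b} = \sqrt{\mu}|\mathrm{D}|\tanh$-type terms combined with the $\sinh(\beta b\sqrt{\mu}|\mathrm{D}|)\mathrm{sech}(\sqrt{\mu}|\mathrm{D}|)$ factor, and that the combination $\partial_z\phi_0 - \mu\beta\nabla_X b\cdot\nabla_X\phi_0$ collapses precisely to $\mu\beta\nabla_X\cdot(\mathcal{L}_1^{\mu}[\beta b]\nabla_X\psi)$ by the definition of $\mathcal{L}_1^{\mu}$ in Definition/Proposition \ref{Prop op}. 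This is the computational heart of the argument and I expect it to be the step requiring the most care — in particular keeping track of which terms are evaluated at the curved bottom versus on the flat strip, and using the identity $\nabla_X\cdot(b\nabla_X\psi)$-type manipulations to recognize the right-hand side.

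Next, for the estimate, I would set $w := \phi_b - \phi_0$ and derive the boundary value problem it solves. Subtracting \eqref{elliptic problem phi 0} from \eqref{decomposed elliptic pb}, and using Observation \ref{A op} to rewrite $\Delta^{\mu}_{X,z}\phi_b = \frac{h}{h_b}\nabla^{\mu}_{X,z}\cdot P(\Sigma_b)\nabla^{\mu}_{X,z}\phi_b - \mu\ve A[\nabla_X,\partial_z]\phi_b$, together with $\nabla^{\mu}_{X,z}\cdot P(\Sigma_b)\nabla^{\mu}_{X,z}\phi_b = 0$, I get that $w$ satisfies an elliptic equation $\nabla^{\mu}_{X,z}\cdot P(\Sigma_b)\nabla^{\mu}_{X,z} w = f$ with a source $f$ that contains (i) the $O(\mu\ve)$ term $\frac{h_b}{h}\mu\ve A[\nabla_X,\partial_z]\phi_0$ coming from testing the true equation against $\phi_0$, and (ii) an $O(\mu\beta)$ commutator-type term measuring the failure of $\phi_0$ to solve the $P(\Sigma_b)$-equation rather than the flat Laplacian — this is exactly $(P(\Sigma_b) - \mathrm{Id})$-type corrections, which vanish when $\ve = \beta = 0$ and are linear in $\ve$ and $\beta$ at leading order, hence $O(\mu(\ve+\beta))$ after one power of $\mu$ is extracted from the scaling of $P(\Sigma_b) - \mathrm{Id}$ in $\mu$; the Neumann data for $w$ is the $O(\mu\ve\beta)$ term $\mu\ve\beta B[\nabla_X,\partial_z]\phi_0|_{z=-h_b}$ from \eqref{decomposed elliptic pb}.

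Finally I would apply Proposition \ref{Prop elliptic est} with $k$ as given: $\|\nabla^{\mu}_{X,z} w\|_{H^{k,0}(\mathcal{S}_b)} \lesssim M(k+1)\big(|g|_{H^k} + \sum_{j=0}^k \|f\|_{H^{k-j,j}(\mathcal{S}_b)}\big)$. It then remains to bound the pieces of $f$ and $g$: each involves $\phi_0 = \mathrm{F}_0\psi$ and finitely many $X$- and $z$-derivatives acting on products with $\zeta$, $b$, $h$, $h_b$ and $\sigma$. Since $\mathrm{F}_0$ is bounded on $H^{k,0}(\mathcal{S}_b)$ uniformly in $\mu$ (it is a Fourier multiplier with symbol $\cosh((z+1)\sqrt{\mu}|\xi|)/\cosh(\sqrt{\mu}|\xi|) \in (0,1]$ for $z\in[-1+\beta b, 0] \subset [-2,0]$, and likewise for its $z$-derivatives up to the needed order, each gaining a power of $\sqrt{\mu}|\xi|\tanh$-type factor controlled on that strip), I can absorb derivatives into $|\nabla_X\psi|_{H^{k+2}}$, with the coefficient functions contributing factors of $M(k+2)$ via Moser-type product estimates in $H^{k,l}(\mathcal{S}_b)$. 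Collecting the explicit prefactors $\mu\ve$, $\mu\beta$ and $\mu\ve\beta$ and using $\ve\beta \leq \ve + \beta$, this yields the claimed bound $\|\nabla^{\mu}_{X,z}(\phi_b - \phi_0)\|_{H^{k,0}(\mathcal{S}_b)} \leq \mu(\ve+\beta)M(k+2)|\nabla_X\psi|_{H^{k+2}}$. The main obstacle, as noted, is the boundary-condition computation in the first step; the estimate is then a fairly mechanical application of the already-established elliptic regularity together with uniform-in-$\mu$ bounds on $\mathrm{F}_0$ on the strip.
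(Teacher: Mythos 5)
Your overall strategy --- verify the explicit formula, then apply the elliptic estimate of Proposition \ref{Prop elliptic est} to $w=\phi_b-\phi_0$ --- is the same as the paper's, and your first step is essentially right. One small caveat there: the bottom condition involves the trace $(\nabla_X\phi_0)|_{z=-1+\beta b}$, i.e.\ the gradient evaluated at the curved bottom, not the gradient of the trace, so no chain-rule term through $b$ appears in the boundary operator itself; the $X$-dependence through $b(X)$ only enters when you rewrite the resulting expression as the divergence $\mu\beta\nabla_X\cdot\big(\mathcal{L}_{1}^{\mu}[\beta b]\nabla_X\psi\big)$.

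The genuine gap is in your derivation of the boundary value problem for $w$. You attribute the $O(\mu\beta)$ part of the error to interior ``$(P(\Sigma_b)-\mathrm{Id})$-type'' corrections that are ``linear in $\ve$ and $\beta$ at leading order'', and you take the Neumann data of $w$ to be only the $O(\mu\ve\beta)$ term $\mu\ve\beta B[\nabla_X,\partial_z]\phi_0|_{z=-h_b}$. Both claims are off. Since $\Sigma_b$ straightens only the free surface, $\sigma=\frac{\ve\zeta}{1-\beta b}z+\ve\zeta$ is proportional to $\ve$, so $P(\Sigma_b)=\mathrm{Id}$ exactly when $\ve=0$, for every $\beta$; by Observation \ref{A op} the discrepancy between $\Delta^{\mu}_{X,z}$ and the transformed operator is exactly $\mu\ve A[\nabla_X,\partial_z]$, and there is no $O(\mu\beta)$ interior term for you to invoke. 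The $O(\mu\beta)$ contribution sits instead in the Neumann data, which you omitted: since $\partial_{n_b}^{P_b}\phi_b=0$ while $\phi_0$ was constructed with the source $\mu\beta\nabla_X\cdot\big(\mathcal{L}_{1}^{\mu}[\beta b]\nabla_X\psi\big)$ in its bottom condition, the data for $w$ is, up to a sign,
\begin{equation*}
 g=\mu\beta\,\frac{h_b}{h|\mathbf{n}_b|}\Big(\nabla_X\cdot\big(\mathcal{L}_{1}^{\mu}[\beta b]\nabla_X\psi\big)+\ve\, B[\nabla_X,\partial_z]\phi_0\Big)\Big|_{z=-1+\beta b},
\end{equation*}
and it is the first term --- controlled through \eqref{L1 est} --- that produces the $\mu\beta$ in \eqref{est phi0}. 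As written, the sources you list would only give an error of order $\mu\ve+\mu\ve\beta$, and the $O(\mu\beta)$ interior term you rely on to make up the difference does not exist. Once the Neumann data is corrected, the remainder of your argument (extending $\mathrm{F}_0$ to $\R^d\times[-2,0]$, using Proposition \ref{Prop F0} and the product estimates to bound $A[\nabla_X,\partial_z]\phi_0$ and $B[\nabla_X,\partial_z]\phi_0$) goes through exactly as in the paper.
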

    \begin{proof}
         Since $\phi_0$ is given by the solution of the Laplace problem when the bottom is flat, we only need to verify the boundary condition at the bottom. In fact, we have that
        \begin{align*}
            \mathrm{LHS}: 
            & =
            \big[\partial_z \phi_0 - \mu\beta \nabla_X b \cdot \nabla_X\phi_0\big]\big|_{z=-1 + \beta b} 
            \\
            & = \mathcal{F}^{-1}\Big{(}\sqrt{\mu}|\xi|\sinh{((z+1)\sqrt{\mu}|\xi|)}\mathrm{sech}(\sqrt{\mu}|\xi|) \widehat{\psi}(\xi) \Big{)}(X)\big|_{z=-1+\beta b(X)}
            \\
            &
            \hspace{0.5cm} - \mathcal{F}^{-1}\Big{(} \mu \beta \nabla_X b(X) \cdot i \xi \cosh{((z+1)\sqrt{\mu}|\xi|)}\mathrm{sech}(\sqrt{\mu}|\xi|) \widehat{\psi}(\xi) \Big{)}(X) \big|_{z=-1+\beta b(X)}
            \\
            & =
            -\sqrt{\mu}\nabla_X\cdot\big( \sinh{(\beta b(X) \sqrt{\mu}|\mathrm{D}|)}\mathrm{sech}(\sqrt{\mu}|\mathrm{D}|)\dfrac{1}{|\mathrm{D}|}\nabla_X\psi\big).
        \end{align*}
 
         The next step is to prove that $\phi_0$ approximates $\phi_b$ with a precision of $O(\mu( \ve + \beta) )$. To that end, we first note that $u = \phi_b - \phi_0$ solves the elliptic problem \eqref{Elliptic est pb} with
         \begin{equation*}
            f = - \mu \ve \frac{h_b}{h} A[\nabla_X, \partial_z]\phi_0,
         \end{equation*}
         and
         \begin{equation*}
             g = 
             \mu \beta\frac{h_b }{h|\mathbf{n}_b|}\big{(}   
             \nabla_X \cdot \big(\mathcal{L}_{1}^{\mu}[\beta b]\nabla_X\psi\big)
             +
             \ve B[\nabla_X, \partial_z]\phi_0 \big{)}|_{z=-1 + \beta b},
         \end{equation*}
         where the expressions of $f$ and $g$ are deduced from the decompositions of Observations \ref{A op} and \ref{B tilde op} and the construction of $\phi_0$.
         Moreover, since $-h_b(X)>-2$ (see \eqref{non-cavitation 2}), we can extend the definition of $\phi_0$ to the domain $\mathcal{S} := \R^d \times [-2,0]$. For any $(X,z) \in \mathcal{S}$, we write
         \begin{align*}
             \phi_0 = \dfrac{\cosh{((z+1)\sqrt{\mu}|\mathrm{D}|)}}{\cosh{(\sqrt{\mu}|\mathrm{D}|)}}\psi.
         \end{align*}
         This extension is a Fourier multiplier depending on $z$, and we can use the estimates in Proposition \ref{Prop F0} together with the fact that $A[\nabla_X, \partial_z]\bullet$, given by \eqref{A}, only depends on functions of $X$ and is polynomial in $z$.
         Thus, combining the elliptic estimate \eqref{elliptic est} with \eqref{L1 est}, the non-cavitation conditions \eqref{non-cavitation 1}, \eqref{non-cavitation 2}, the  product estimates for $H^k(\R^d)$ given by \eqref{Classical prod est}  and \eqref{prod est division},  we obtain that
         \begin{align*}
            \|\nabla^{\mu}_{X,z} u\|_{H^{k,0}(\mathcal{S}_b)} & 
            \leq
            \mu \ve M(k+1)
            \|\frac{h_b}{h}  A[\nabla_X, \partial_z]\phi_0\|_{H^{k,0}(\mathcal{S}_b)} 
            \\
            &
            \hspace{0.5cm}
            +
            \mu \ve M(k+1)
            \big{|}\frac{ \zeta }{h}\big{|}_{H^{k+2}}
            (
            |\nabla_X \cdot \big(\mathcal{L}_{1}^{\mu}[\beta b]\nabla_X\psi\big)|_{H^k}
            +
            \big|\tilde{B}[\nabla_X, \partial_z]\phi_0|_{z=-h_b}\big|_{H^k})
            \\
            &
            \leq \mu (\ve + \beta) M(k+2)
            \|\frac{h_b}{h}  A[\nabla_X, \partial_z]\phi_0\|_{H^{k,0}(\mathcal{S})} 
            +
            \mu (\ve + \beta )M(k+2)
            |\nabla\psi|_{H^{k+1}}\\
            & 
            \lesssim \mu (\ve + \beta) M(k+2)
             |\nabla \psi|_{H^{k+2}}.
         \end{align*}
    \end{proof}
    \begin{remark}
        The source term $\mu \beta \nabla_X \cdot \mathcal{L}_{1}^{\mu}[\beta b]\nabla_X\psi$ in the Neumann condition of \eqref{elliptic problem phi 0} is chosen so that the solution $\phi_0$ of the system does not depend on the inverse of a pseudo-differential operator. Indeed, any other source term in the Neumann condition would induce the dependence of the solution on operators of this kind.
    \end{remark}
    We now construct the next order approximation by canceling the error of order $O(\mu\beta)$. But first, we make an observation on the problem that needs to be solved.
    \begin{obs}
        To make the next order approximation $\phi_1$ such that $\phi_b = \phi_0 + \mu\beta \phi_1 + O(\mu( \ve + \mu \beta^2))$, we solve the problem
        \begin{align*}
        \begin{cases}
            \Delta_{X,z}^{\mu}\phi_1 = \mu \beta F \ \ \mathrm{in} \ \ \mathcal{S}_b,\\
            \phi_1|_{z=0} = 0, \ \ \partial_z \phi_1|_{z=-1 + \beta b} =   -  \nabla_X \cdot \big(\mathcal{L}_{1}^{\mu}[\beta b]\nabla_X\psi\big),
        \end{cases} 
    \end{align*}
    where F is to be chosen and satisfies
    \begin{align}\label{est on Source}
        \| F \|_{H^{k,k}(\mathcal{S}_b)} \leq  M(k+2)|\nabla_X \psi|_{H^{k+2}}.
    \end{align}
    so that formally
    \begin{align*}
        \begin{cases}
            \frac{h}{h_b}\nabla_{X,z}\cdot P(\Sigma_b) \nabla_{X,z} (\phi_0 + \mu\beta \phi_1) = O(\mu \ve + \mu^2 \beta^2)  \ \ \mathrm{in} \ \ \mathcal{S}_b,\\
            (\phi_0 + \mu \beta \phi_1)|_{z=0} = \psi, \ \ \frac{h}{h_b}\partial_{n_b}^{P_b} (\phi_0 + \mu\beta\phi_1)|_{z=-1 + \beta b} = O(\mu\ve + \mu^2\beta^2).
        \end{cases} 
    \end{align*}
    Moreover, the presence of the source term $\mu\beta F$ is motivated by the fact that the boundary conditions require a function of the form
    \begin{align*}
        \phi_1 = -h_b\frac{\sinh(\frac{z}{h_b}\sqrt{\mu}|\mathrm{D}|)}{\cosh(\sqrt{\mu}|\mathrm{D}|)}\frac{1}{\sqrt{\mu}|\mathrm{D}|}\nabla_X \cdot \big(\mathcal{L}_{1}^{\mu}[\beta b]\nabla_X\psi\big{)},
    \end{align*}
    for $-h_b\leq z \leq 0$. Indeed, if we let $G = \nabla_X \cdot \big(\mathcal{L}_{1}^{\mu}[\beta b]\nabla_X\psi\big)$, then
    \begin{align*}
        \partial_z \phi_1|_{z=-h_b} 
        & =
        -
        \mathcal{F}^{-1}\Big{(} \frac{\cosh(\frac{z}{h_b(X)}\sqrt{\mu}|\xi|)}{\cosh(\sqrt{\mu}|\xi|)} \hat{G}(\xi)\Big{)}(X)|_{z=-h_b(X)}
        \\
        & = 
         -G(X).
    \end{align*}
    Now, let us compute the Laplace operator. To do so, we introduce the notation
    \begin{equation*}
        T_1(z)[X,\mathrm{D}] \bullet = \mathcal{F}^{-1}\Big{(} \frac{\sinh(\frac{z}{h_b(X)}\sqrt{\mu}|\xi|)}{\cosh(\sqrt{\mu}|\xi|)} \hat{\bullet}\Big{)}(X),
    \end{equation*}
    and
    \begin{equation*}
        T_2(z)[X,\mathrm{D}] \bullet = \mathcal{F}^{-1}\Big{(} \frac{\cosh(\frac{z}{h_b(X)}\sqrt{\mu}|\xi|)}{\cosh(\sqrt{\mu}|\xi|)} \hat{\bullet}\Big{)}(X).
    \end{equation*}
    Using the identity $\Delta_X = - |\mathrm{D}|^2$, we observe that
    \begin{align*}
        \partial_z^2\phi_1 & =  \frac{\mu}{h_b} T_1(z)[X,\mathrm{D}]\frac{\Delta_X}{\sqrt{\mu} |\mathrm{D}|} G.
    \end{align*}
    Similarly, after some computations we find
    \begin{align*}
        \mu\Delta_X \phi_1 
        & =
        -\mu h_b T_1(z)[X, \mathrm{D}]\frac{\Delta_X}{\sqrt{\mu}|\mathrm{D}|}G + \mu [h_b T_1(z)[X,D]\frac{1}{\sqrt{\mu}|D|}, \Delta] G\\
        & = -\mu T_1(z)[X, \mathrm{D}]\frac{\Delta_X}{\sqrt{\mu}|\mathrm{D}|}G + \mu [h_b T_1(z)[X,D]\frac{1}{\sqrt{\mu}|D|}, \Delta] G + \mu\beta b T_1(z)[X, \mathrm{D}]\frac{\Delta_X}{\sqrt{\mu}|\mathrm{D}|}G.
    \end{align*}
    We define $\tilde F$ by
    \begin{align*}
        \tilde F = \mu \big[h_b T_1(z)[X,D]\frac{1}{\sqrt{\mu}|D|}, \Delta\big] G + \mu\beta b T_1(z)[X, \mathrm{D}]\frac{\Delta_X}{\sqrt{\mu}|\mathrm{D}|}G
    \end{align*}
    where $\mu \big[h_b T_1(z)[X,D]\frac{1}{\sqrt{\mu}|D|}, \Delta\big] G = O(\mu \beta)$ by direct calculation.
    From this expression, we identify $F$ by 
    \begin{align*}
        \Delta_{X,z}^{\mu} \phi_1 &  = \mu \beta \tilde{F} + \mu (\frac{1}{h_b}-1)T_1(z)[X,\mathrm{D}]\frac{\Delta_X}{\sqrt{\mu} |\mathrm{D}|} G
        \\
        & = 
        \mu \beta \tilde{F} 
        +
        \frac{\mu \beta b}{h_b}T_1(z)[X,\mathrm{D}]\frac{\Delta_X}{\sqrt{\mu} |\mathrm{D}|} G 
        \\
        & = \mu \beta F.
    \end{align*}
    The estimate  \eqref{est on Source} on $F$  is a consequence of the boundedness of $T_1$ and $T_2$ for $z \in [-h_b,0]$, given by Proposition \ref{Prop T1 and T2}, while we estimate $\mathcal{L}_1^{\mu}$ in $H^{k+2}(\R^d)$ by  Proposition \ref{Prop op} with inequality \eqref{L1 est}.
    
    \end{obs}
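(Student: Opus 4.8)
The statement to be established is the next-order analogue of Proposition~\ref{Proposition phi0}: the explicit $\phi_1$ written above solves the boundary value problem of the Observation with the source $F$ identified there, $F$ satisfies \eqref{est on Source}, and consequently
\[
\|\nabla^{\mu}_{X,z}(\phi_b-\phi_0-\mu\beta\phi_1)\|_{H^{k,0}(\mathcal{S}_b)}\leq (\mu\ve+\mu^{2}\beta^{2})\,M(k+2)\,|\nabla_X\psi|_{H^{k+3}},
\]
with one more derivative of $\psi$ than in Proposition~\ref{Proposition phi0}. The plan is to mimic the proof of Proposition~\ref{Proposition phi0} one order further. First I would dispatch the algebraic part: the Neumann trace $\partial_z\phi_1|_{z=-h_b}=-G$ with $G=\nabla_X\cdot(\mathcal{L}_1^{\mu}[\beta b]\nabla_X\psi)$ is read off from the formula for $\phi_1$, and the identity $\Delta^{\mu}_{X,z}\phi_1=\mu\beta F$ follows from the displayed expressions for $\partial_z^2\phi_1$ and $\mu\Delta_X\phi_1$ after using $\Delta_X=-|\mathrm{D}|^2$ and $\tfrac1{h_b}=1+\tfrac{\beta b}{h_b}$ to isolate the $\beta$-dependent pieces. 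The bound \eqref{est on Source} on $F$ then follows from the uniform boundedness of $T_1(z),T_2(z)$ for $z\in[-h_b,0]$ (Proposition~\ref{Prop T1 and T2}), the algebra property of $H^k(\R^d)$, and \eqref{L1 est}; the one point needing care is that the commutator $\mu[h_bT_1(z)\tfrac1{\sqrt\mu|\mathrm{D}|},\Delta]G$ vanishes when $h_b\equiv1$, hence is genuinely $O(\mu\beta)$.

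For the error estimate I would set $u:=\phi_b-\phi_0-\mu\beta\phi_1$ and identify the elliptic problem it solves. By Observation~\ref{A op}, $\nabla^{\mu}_{X,z}\cdot P(\Sigma_b)\nabla^{\mu}_{X,z}v=\tfrac{h_b}{h}(\Delta^{\mu}_{X,z}v+\mu\ve A[\nabla_X,\partial_z]v)$; since $\Delta^{\mu}_{X,z}\phi_0=0$, $\Delta^{\mu}_{X,z}\phi_1=\mu\beta F$, and $\phi_b$ is the exact solution, $u$ solves \eqref{Elliptic est pb} on $\mathcal{S}_b$ with
\[
f=-\tfrac{h_b}{h}\big(\mu^{2}\beta^{2}F+\mu\ve\,A[\nabla_X,\partial_z](\phi_0+\mu\beta\phi_1)\big),
\]
a source written entirely through the explicit functions $F,\phi_0,\phi_1$. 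For the bottom datum the ``flat-bottom'' contributions cancel by construction: the term $\mu\beta G$ produced by the bottom condition \eqref{elliptic problem phi 0} for $\phi_0$ is exactly compensated by $\mu\beta\,\partial_z\phi_1|_{z=-h_b}$, leaving only $-\mu^{2}\beta^{2}\nabla_Xb\cdot\nabla_X\phi_1|_{z=-h_b}$ together with the $\mu\ve\beta B[\nabla_X,\partial_z](\phi_0+\mu\beta\phi_1)|_{z=-h_b}$ term of Observation~\ref{B tilde op}; dividing by the smooth non-vanishing factor $\tfrac{h}{h_b}|\mathbf{n}_b|$ gives a datum $g$ of size $O(\mu\ve\beta+\mu^{2}\beta^{2})$.

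It then remains to feed $f$ and $g$ into the elliptic estimate \eqref{elliptic est}. The piece $\mu^{2}\beta^{2}F$ is controlled by \eqref{est on Source}; for $\mu\ve A(\phi_0+\mu\beta\phi_1)$ I would extend $\phi_0=\mathrm{F}_0\psi$ and $\phi_1$ to $\mathcal{S}=\R^d\times[-2,0]$ (legitimate since $-h_b>-2$), use that $A[\nabla_X,\partial_z]$ is second order in $X$ with coefficients controlled by $M$ and polynomial in $z$, bound $\nabla_X\phi_0$ by Proposition~\ref{Prop F0} and $\nabla_X\phi_1$ by Proposition~\ref{Prop T1 and T2} together with \eqref{L1 est}, and close with the product estimates \eqref{Classical prod est} and \eqref{prod est division}; the $\mu\beta\phi_1$-contribution carries a spare factor $\mu\beta$ and is harmless. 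The bound on $|g|_{H^k}$ is obtained the same way via a trace inequality on $\mathcal{S}_b$. Since $\mu\ve\beta\le\mu\ve$, Proposition~\ref{Prop elliptic est} yields the claimed estimate.

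The main difficulty is bookkeeping rather than ideas. The operators $\tfrac1{\sqrt\mu|\mathrm{D}|}\nabla_X\cdot$ and $\tfrac{\sinh((z/h_b)\sqrt\mu|\mathrm{D}|)}{\cosh(\sqrt\mu|\mathrm{D}|)}\tfrac1{\sqrt\mu|\mathrm{D}|}$ appearing in $\phi_1$ and $F$ look singular at low frequency but are in fact $\mu$-uniformly bounded — the first because the spare $\nabla_X$ in $G$ absorbs the $|\mathrm{D}|^{-1}$, the second because $\tfrac{\sinh(tr)}{r\cosh r}$ is uniformly bounded for $t\in[-1,0]$, $r\ge0$ — which is precisely what Propositions~\ref{Prop F0} and~\ref{Prop T1 and T2} provide and must be invoked carefully. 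Second, one must keep the powers of $\beta$ honest throughout (in the commutator defining $F$, in $f$, and in $g$) so that the residual is genuinely $O(\mu\ve+\mu^{2}\beta^{2})$ rather than the weaker $O(\mu\ve+\mu\beta)$ that straightening the bottom would give; this is the whole point of the two-term construction $\phi_0+\mu\beta\phi_1$. All remaining steps are the machinery already used for Proposition~\ref{Proposition phi0}.
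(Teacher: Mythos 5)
Your proposal is correct and follows essentially the same route as the paper: read the Neumann trace off the explicit formula for $\phi_1$, compute $\Delta^{\mu}_{X,z}\phi_1=\mu\beta F$ via $T_1,T_2$ and the splitting $h_b=1-\beta b$ (with the commutator term $O(\mu\beta)$ because the $X$-dependence of the symbol enters only through $\beta b$), bound $F$ by Proposition \ref{Prop T1 and T2} together with \eqref{L1 est}, and then control $u=\phi_b-\phi_0-\mu\beta\phi_1$ through the residual elliptic problem and \eqref{elliptic est}, exactly as in Proposition \ref{Prop phi 1}. The only small caveat is that, unlike $\phi_0$, the function $\phi_1$ should not be extended to the strip $\R^d\times[-2,0]$ (its symbol grows for $z<-h_b$); this extension is unnecessary since Proposition \ref{Prop T1 and T2} bounds $T_1,T_2$ directly on $\mathcal{S}_b$, and the rest of your argument is unaffected.
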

    We summarize these observations in the next Proposition.

    \begin{prop}\label{Prop phi 1}
    Let $d=1,2$, $t_0>\frac{d}{2}$, and $k\in \N $. Let $\psi \in \dot{H}^{k+4}(\R^d)$. Let also $b \in C^{\infty}_c(\R^d)$ and $\zeta \in H^{{\max\{t_0 +2, k+2\}}}(\R^d)$ such that \eqref{non-cavitation 1} and \eqref{non-cavitation 2} are satisfied.  Then the function $\phi_1$ given by
    \begin{align}\label{phi_1}
        \phi_1 = -h_b\frac{\sinh(\frac{z}{h_b}\sqrt{\mu}|\mathrm{D}|)}{\cosh(\sqrt{\mu}|\mathrm{D}|)}\frac{1}{\sqrt{\mu}|\mathrm{D}|}\nabla_X \cdot \big(\mathcal{L}_{1}^{\mu}[\beta b]\nabla_X\psi\big{)},
    \end{align}
    satisfies 
    \begin{align}\label{System phi1}
        \begin{cases}
            \Delta_{X,z}^{\mu}\phi_1 = \mu \beta F \ \ \mathrm{in} \ \ \mathcal{S}_b,\\
            \phi_1|_{z=0} = 0, \ \ \partial_z \phi_1|_{z=-1 + \beta b} =   -  \nabla_X \cdot \big(\mathcal{L}_{1}^{\mu}[\beta b]\nabla_X\psi),
        \end{cases} 
    \end{align}
    where
    $F \in H^{k,k}(\mathcal{S}_b)$ is such that
    \begin{align}\label{est on Source 1}
        \| F \|_{H^{k,k}(\mathcal{S}_b)} \leq  M(k+2)|\nabla_X \psi|_{H^{k+2}},
    \end{align}
    and
    \begin{align*}
        \mathcal{L}_{1}^{\mu}[\beta b]\nabla_X\psi =  -  \frac{1}{\beta}\sinh{(\beta b(X) \sqrt{\mu}|\mathrm{D}|)}\mathrm{sech}(\sqrt{\mu}|\mathrm{D}|)\dfrac{1}{\sqrt{\mu}|\mathrm{D}|}\nabla_X\psi.
    \end{align*}
    Moreover, for $\phi_b$ satisfying \eqref{Elliptic pb Sb} and $\phi_0$ given by \eqref{phi0} there holds,
    \begin{align}\label{Est muve mu2beta2}
        \|\nabla^{\mu}_{X,z}(\phi_b - (\phi_0 + \mu\beta \phi_1))\|_{H^{k,0}(\mathcal{S}_b)} \lesssim   (\mu\varepsilon +\mu^2\beta^2) M(k+2)|\nabla \psi|_{H^{k+3}}.
    \end{align}
    \end{prop}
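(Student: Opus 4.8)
The construction of $\phi_1$ has essentially been carried out in the Observation preceding the statement, so the proof reduces to organizing those computations and then establishing the consistency estimate \eqref{Est muve mu2beta2}. First I would verify that $\phi_1$ defined by \eqref{phi_1} solves \eqref{System phi1}: the Dirichlet condition is immediate since $\sinh(0)=0$, and at $z=-h_b$ one has $\cosh(\tfrac{-h_b}{h_b}\sqrt\mu|\xi|)=\cosh(\sqrt\mu|\xi|)$, so $\partial_z\phi_1|_{z=-h_b}=-\nabla_X\cdot(\mathcal L_1^\mu[\beta b]\nabla_X\psi)$; for the interior equation I would introduce the $z$-dependent Fourier multipliers $T_1(z),T_2(z)$ from the Observation, compute $\partial_z^2\phi_1$ and $\mu\Delta_X\phi_1$ separately — the latter producing a commutator $[h_bT_1(z)[X,\mathrm D]\tfrac1{\sqrt\mu|\mathrm D|},\Delta]$ together with a term carrying an explicit factor $\beta b$ — and collect everything into $\Delta^\mu_{X,z}\phi_1=\mu\beta F$ with $F$ as displayed there, $G=\nabla_X\cdot(\mathcal L_1^\mu[\beta b]\nabla_X\psi)$. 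The bound \eqref{est on Source 1} then follows by invoking the uniform-in-$\mu$ boundedness of $T_1(z),T_2(z)$ on $H^k(\mathbb R^d)$ for $z\in[-h_b,0]$ (Proposition \ref{Prop T1 and T2}), a commutator estimate for $[h_bT_1(z)\tfrac1{\sqrt\mu|\mathrm D|},\Delta]$, and the order-zero estimate \eqref{L1 est} to trade $G$ for $|\nabla_X\psi|_{H^{k+2}}$.

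For the consistency estimate, set $u:=\phi_b-(\phi_0+\mu\beta\phi_1)$. Using the decompositions of Observations \ref{A op} and \ref{B tilde op}, the identity $\nabla^\mu_{X,z}\cdot P(\Sigma_b)\nabla^\mu_{X,z}\phi_b=0$ from \eqref{Elliptic pb Sb}, $\Delta^\mu_{X,z}\phi_0=0$, and \eqref{System phi1}, one checks that $u$ solves the boundary value problem \eqref{Elliptic est pb} with
\[ f=-\tfrac{h_b}{h}\big(\mu^2\beta^2 F+\mu\varepsilon\,A[\nabla_X,\partial_z](\phi_0+\mu\beta\phi_1)\big), \]
and a Neumann datum $g$ concentrated at $z=-h_b$ in which the $O(\mu\beta)$ contributions cancel by the very choice of the Neumann conditions for $\phi_0$ and $\phi_1$ — the term $\mu\beta\nabla_X\cdot\mathcal L_1^\mu[\beta b]\nabla_X\psi$ occurs there with opposite signs — leaving
\[ g=\tfrac{h_b}{h|\mathbf n_b|}\big(\mp\mu^2\beta^2\nabla_Xb\cdot\nabla_X\phi_1\pm\mu\varepsilon\beta\,B[\nabla_X,\partial_z](\phi_0+\mu\beta\phi_1)\big)\big|_{z=-h_b}. \]
I would then apply the elliptic estimate \eqref{elliptic est} and bound $\|f\|_{H^{k-j,j}(\mathcal S_b)}$ and $|g|_{H^k}$ term by term: the $F$-piece by \eqref{est on Source 1}; the $\phi_0$-pieces by the $z$-derivative estimates of Proposition \ref{Prop F0} and the fact that $A,B$ are polynomial in $z$; the $\phi_1$-pieces by Proposition \ref{Prop T1 and T2}, \eqref{L1 est} and the previous step; and the variable coefficients by the product and division estimates \eqref{Classical prod est}, \eqref{prod est division} together with the non-cavitation conditions \eqref{non-cavitation 1}, \eqref{non-cavitation 2}. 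Since $\beta\le1$, collecting the prefactors gives $\|\nabla^\mu_{X,z}u\|_{H^{k,0}(\mathcal S_b)}\lesssim(\mu\varepsilon+\mu^2\beta^2)M(k+2)|\nabla_X\psi|_{H^{k+3}}$.

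The main obstacles I expect are twofold. First, the commutator $[h_bT_1(z)[X,\mathrm D]\tfrac1{\sqrt\mu|\mathrm D|},\Delta]$ entering $F$: one must show it gains the claimed factor $\mu\beta$ and is controlled uniformly in $\mu$ and in $z\in[-h_b,0]$, which requires exploiting that the symbol of $T_1(z)$ vanishes linearly at $\xi=0$ so that $T_1(z)\tfrac1{\sqrt\mu|\mathrm D|}$ is a genuine order-zero operator. Second, the regularity bookkeeping: tracking that the two horizontal derivatives in $A[\nabla_X,\partial_z]\phi_1$ acting on $\phi_1$ — which is itself ``of order $+1$'' in $\nabla_X\psi$ because of the $\nabla_X\cdot(\mathcal L_1^\mu[\beta b]\nabla_X\psi)$ hidden inside — push the estimate to exactly $|\nabla_X\psi|_{H^{k+3}}$ and no further, and that every residual term genuinely carries a prefactor in $\{\mu\varepsilon,\mu^2\beta^2\}$ rather than the weaker $\mu\beta$ that the straightened problem a priori produces.
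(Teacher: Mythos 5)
Your proposal follows essentially the same route as the paper: the verification of \eqref{System phi1} and the identification of $F$ via $T_1,T_2$ and the commutator are exactly the computations of the Observation preceding the proposition, and the consistency estimate is obtained, as in the paper, by writing $u=\phi_b-(\phi_0+\mu\beta\phi_1)$ as a solution of \eqref{Elliptic est pb} with the $O(\mu\beta)$ Neumann contributions cancelling by construction, and then applying the elliptic estimate \eqref{elliptic est} together with Propositions \ref{Prop F0}, \ref{Prop T1 and T2}, estimate \eqref{L1 est} and the product estimates. The only (harmless) deviation is that you also keep the $\mu^2\beta^2\,\nabla_X b\cdot\nabla_X\phi_1|_{z=-h_b}$ term in the Neumann datum, which the paper drops at this stage; it contributes at the admissible order, so your bookkeeping is if anything slightly more careful.
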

    \begin{proof}
        By constriction of $\phi_1$ given by \eqref{phi_1}, we know there exists an $F$ such that \eqref{est on Source 1} is satisfied. Now, let us prove \eqref{Est muve mu2beta2}. First, observe that the function 
        \begin{equation*}
            u = \phi_b - (\phi_0 + \mu \beta\phi_1)
        \end{equation*}
        solves
        \begin{align*}
            \frac{h}{h_b}\nabla_{X,z}^{\mu}P(\Sigma_b)\nabla_{X,z}^{\mu} u & = - \mu \ve A[\nabla_X, \partial_z]\phi_0 - \mu^2 \ve \beta A[\nabla_X,\partial_z]\phi_1 - \mu^2 \beta^2 F
            \\
            & =: f.
        \end{align*}
        Moreover, at $z  =  -h_b$, we have the Neumann condition
        \begin{align*}
            \frac{h}{h_b}|\mathbf{n}_b|\partial_{n_{b}}^{P_b} u
                & =
                 \partial_z \phi_0
                -
                \mu \beta \nabla_Xb \cdot \nabla_X \phi_0  
                + 
                \mu \ve \beta B[\nabla_X, \partial_z] \phi_0  + \mu \beta \partial_z \phi_1 +
                \mu^2 \ve \beta^2 B[\nabla_X, \partial_z] \phi_1   
                \\
                & = 
                \mu \ve \beta B[\nabla_X, \partial_z] \phi_0  + 
                \mu^2 \ve \beta^2 B[\nabla_X, \partial_z] \phi_1 
                \\
                & = : g.
        \end{align*}
        Estimating each terms, noting that $A[\nabla_X, \partial_z]$ is a differential operator of order two and $B[\nabla_X, \partial_z]$ is of order one, while the error due to $F$ is given by construction, we obtain that
        \begin{align*}
            \|\nabla^{\mu}_{X,z} u\|_{H^{k,0}(\mathcal{S}_b)} \leq \mu(\ve + \ve \beta + \mu \beta^2)M(k+2)|\nabla \psi|_{H^{k+2}}.
        \end{align*}
        
    \end{proof}

    \begin{obs}\label{obs approx of A and tildeB}
        We now construct an approximation of $\phi_b$ to the order $O(\mu(\mu \varepsilon + \ve \beta + \mu\beta^2))$. To do so, we add a term of order $\mu\ve$ in the approximation of $\phi_b$ in order to cancel the terms of order $\mu\varepsilon$. In particular, we consider $\phi_2$ solution of the problem
        \begin{align*}
        \begin{cases}
            \partial_z^2\phi_2 = - \dfrac{\zeta}{h_b}\big( 1 + \dfrac{h}{h_b} \big) \Delta_X \psi \ \ \mathrm{in} \ \ \mathcal{S}_b,\\
            \phi_2|_{z=0} = 0, \ \ \partial_z \phi_2|_{z=-1 + \beta b} =   
            0.
        \end{cases} 
    \end{align*}
    Indeed, if we use the decomposition given by Observations \eqref{B tilde op} and \eqref{A op}, and the definitions of $\phi_0$ and $\phi_1$, we get:
    \begin{align*}
        \dfrac{h}{h_b}\nabla^{\mu}_{X,z} \cdot P(\Sigma_b) \nabla^{\mu}_{X,z}(\phi_b - \phi_0 - \mu\beta\phi_1 - \mu\varepsilon\phi_2) = -\mu\varepsilon\partial_z^2\phi_2 - \mu \ve A[\nabla_X,\partial_z]\phi_0 + O(\mu^2\varepsilon),
    \end{align*}
    and
    \begin{align}\label{neumann condition for phi 1}\notag
        \frac{h}{h_b}|\mathbf{n}_b|\partial_{n_{b}}^{P_b} (\phi_b - \phi_0 - \mu\beta \phi_1 - \mu\ve\phi_2)|_{z = -h_b} = -\mu\varepsilon\partial_z\phi_2|_{z=-h_b} + O(\mu(\mu\ve + \ve \beta + \mu\beta^2)).
    \end{align}
    Moreover,  using the estimates in Proposition \ref{Prop F0} with $t_0 > \frac{d}{2}$, one can deduce from the definition of $A[\nabla_X, \partial_z] \bullet $, given by \eqref{A}, that
    \begin{align*}
        \mathrm{LHS} :
        & = 
        \| A[\nabla_X, \partial_z]\phi_0 - \dfrac{\zeta}{h_b}\big( 1 + \dfrac{h}{h_b} \big) \Delta_X \psi\|_{H^{k,0}(\mathcal{S}_b)}
        \\
        & 
        \lesssim 
        \big{|}\frac{\zeta}{h_b}(1+\frac{h}{h_b})\big{|}_{H^{\max(t_0,k)}}\|\Delta_X(\phi_0-\psi)\|_{H^{k,0}(\mathcal{S})}
        \\
        & 
        \hspace{0.5cm}
        +
        \big{|} \frac{h}{h_b}\big{|}_{H^{\max(t_0,k)}} |\nabla_X \zeta|_{H^{\max(t_0,k)}} \|\nabla_X \partial_z \phi_0\|_{H^{k,0}(\mathcal{S})}
        \\
        & 
        \hspace{0.5cm}
        +
        \|\dfrac{h}{h_b} \nabla_X \cdot \big(\dfrac{1}{\varepsilon}\nabla_X \sigma \partial_z \phi_0 \big) \|_{H^{k,0}(\mathcal{S})}
        +
        \|\partial_z\big( \dfrac{1}{\varepsilon}|\nabla_X \sigma|^2 \partial_z\phi_0 \big)\|_{H^{k,0}(\mathcal{S})}
        \\
        & \leq \mu M(k+2) |\nabla_X \psi|_{H^{k+3}},
    \end{align*}
    for any $k\in \N$.
    
    \end{obs}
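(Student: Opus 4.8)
The observation is to be substantiated by an error estimate of the same type as in Propositions~\ref{Proposition phi0} and \ref{Prop phi 1}: with $\phi_{\mathrm{app}}:=\phi_0+\mu\beta\phi_1+\mu\ve\phi_2$ one should obtain, for $\psi\in\dot{H}^{k+4}(\R^d)$,
\[
\|\nabla^{\mu}_{X,z}\big(\phi_b-\phi_{\mathrm{app}}\big)\|_{H^{k,0}(\mathcal{S}_b)}\lesssim (\mu^2\ve+\mu\ve\beta+\mu^2\beta^2)\,M(k+2)\,|\nabla_X\psi|_{H^{k+3}}.
\]
The plan is first to make $\phi_2$ explicit. Since the source $-\tfrac{\zeta}{h_b}(1+\tfrac{h}{h_b})\Delta_X\psi$ is independent of $z$, integrating twice in $z$ subject to $\phi_2|_{z=0}=0$ and $\partial_z\phi_2|_{z=-h_b}=0$ gives $\phi_2=-\tfrac{\zeta}{h_b}(1+\tfrac{h}{h_b})\big(\tfrac{z^2}{2}+h_b z\big)\Delta_X\psi$, a polynomial in $z$ whose $X$-coefficients are controlled, via the product and quotient estimates \eqref{Classical prod est}, \eqref{prod est division} and the non-cavitation bounds \eqref{non-cavitation 1}, \eqref{non-cavitation 2}. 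In particular $\phi_2$ extends to the fixed strip $\mathcal{S}=\R^d\times[-2,0]$ and $\|\phi_2\|_{H^{k,k}(\mathcal{S}_b)}+\|\Delta_X\phi_2\|_{H^{k,k}(\mathcal{S}_b)}\lesssim M(k)|\nabla_X\psi|_{H^{k+3}}$.

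Next I would set $u:=\phi_b-\phi_{\mathrm{app}}$ and identify the elliptic boundary value problem it solves so as to invoke Proposition~\ref{Prop elliptic est}. Using $\tfrac{h}{h_b}\nabla^{\mu}_{X,z}\cdot P(\Sigma_b)\nabla^{\mu}_{X,z}=\Delta^{\mu}_{X,z}+\mu\ve A[\nabla_X,\partial_z]$ (Observation~\ref{A op}) together with $\Delta^{\mu}_{X,z}\phi_0=0$, $\Delta^{\mu}_{X,z}\phi_1=\mu\beta F$ (Proposition~\ref{Prop phi 1}) and $\partial_z^2\phi_2=-\tfrac{\zeta}{h_b}(1+\tfrac{h}{h_b})\Delta_X\psi$, one computes $\nabla^{\mu}_{X,z}\cdot P(\Sigma_b)\nabla^{\mu}_{X,z}u=f$ with
\[
f=-\tfrac{h_b}{h}\Big(\mu^2\beta^2 F+\mu\ve\big(A[\nabla_X,\partial_z]\phi_0-\tfrac{\zeta}{h_b}(1+\tfrac{h}{h_b})\Delta_X\psi\big)+\mu^2\ve\,\Delta_X\phi_2+\mu^2\ve\beta\,A[\nabla_X,\partial_z]\phi_1+\mu^2\ve^2 A[\nabla_X,\partial_z]\phi_2\Big).
\]
For the Neumann trace I would combine Observation~\ref{B tilde op} with the bottom conditions built into $\phi_0$ and $\phi_1$, namely $[\partial_z\phi_0-\mu\beta\nabla_Xb\cdot\nabla_X\phi_0]|_{z=-h_b}=\mu\beta\nabla_X\cdot(\mathcal{L}_1^{\mu}[\beta b]\nabla_X\psi)$ and $\partial_z\phi_1|_{z=-h_b}=-\nabla_X\cdot(\mathcal{L}_1^{\mu}[\beta b]\nabla_X\psi)$, and with $\partial_z\phi_2|_{z=-h_b}=0$: the two $\mu\beta\nabla_X\cdot(\mathcal{L}_1^{\mu}[\beta b]\nabla_X\psi)$ contributions cancel, so that $\partial_{n_{b}}^{P_b}u|_{z=-h_b}=g$ with $g$ collecting only the residual terms $\mu\ve\beta\,B[\nabla_X,\partial_z]\phi_0|_{z=-h_b}$ and the $\mu^2\beta^2$-, $\mu^2\ve\beta$-, $\mu^2\ve^2\beta$-sized corrections coming from $\nabla_Xb\cdot\nabla_X\phi_1$, $\nabla_Xb\cdot\nabla_X\phi_2$, $B[\nabla_X,\partial_z]\phi_1$ and $B[\nabla_X,\partial_z]\phi_2$ evaluated at $z=-h_b$.

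Finally I would estimate $f$ and $g$ and feed the bounds into \eqref{elliptic est}. Since $B[\nabla_X,\partial_z]$ is first order, $\mathcal{L}_1^{\mu}[\beta b]$ is bounded (Proposition~\ref{Prop op}, \eqref{L1 est}), and $\phi_1,\phi_2$ are controlled on $\mathcal{S}_b$, one gets $|g|_{H^k}\lesssim(\mu\ve\beta+\mu^2\beta^2)M(k+2)|\nabla_X\psi|_{H^{k+2}}$; in $f$ the term $\mu^2\beta^2\tfrac{h_b}{h}F$ is handled by \eqref{est on Source 1}, while the terms carrying $\phi_1$ and $\phi_2$ are estimated using that $A[\nabla_X,\partial_z]$ is second order acting on functions smooth in $z$ (Fourier multipliers in $z$ for $\phi_1$, a polynomial for $\phi_2$), with the $z$-regularity needed for $\sum_{j=0}^k\|f\|_{H^{k-j,j}(\mathcal{S}_b)}$ obtained as in Proposition~\ref{Prop phi 1}. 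The one genuinely delicate point — and the main obstacle — is the shallow-water gain $\big\|A[\nabla_X,\partial_z]\phi_0-\tfrac{\zeta}{h_b}(1+\tfrac{h}{h_b})\Delta_X\psi\big\|_{H^{k,0}(\mathcal{S}_b)}\lesssim\mu\,M(k+2)\,|\nabla_X\psi|_{H^{k+3}}$, which is precisely the last display of the observation: it follows from the decay estimates of Proposition~\ref{Prop F0} for $\mathrm{F}_0$ (giving $\|\Delta_X(\phi_0-\psi)\|_{H^{k,0}(\mathcal{S})}\lesssim\mu|\nabla_X\psi|_{H^{k+3}}$ and analogous control of $\nabla_X\partial_z\phi_0$ and the other $\partial_z\phi_0$-terms) together with $\tfrac1\ve\nabla_X\sigma=z\nabla_X(\tfrac{\zeta}{h_b})+\nabla_X\zeta=O(1)$ from \eqref{grad sigma}, so that every $\partial_z\phi_0$- and $\nabla_X\sigma$-term in $A[\nabla_X,\partial_z]$ supplies an extra factor $\mu$. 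Collecting the estimates, $|g|_{H^k}+\sum_{j=0}^k\|f\|_{H^{k-j,j}(\mathcal{S}_b)}\lesssim(\mu^2\ve+\mu\ve\beta+\mu^2\beta^2)M(k+2)|\nabla_X\psi|_{H^{k+3}}$, and \eqref{elliptic est} yields the claim; the extension of $\phi_0,\phi_1,\phi_2$ from $\mathcal{S}_b$ to $\mathcal{S}$, needed because $-h_b$ is $X$-dependent, is handled exactly as in the proof of Proposition~\ref{Proposition phi0}.
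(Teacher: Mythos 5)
Your proposal is correct and follows essentially the same route as the paper: you make $\phi_2$ explicit, identify the residual elliptic problem for $\phi_b-(\phi_0+\mu\beta\phi_1+\mu\ve\phi_2)$ with the cancellation of the $\mu\beta\nabla_X\cdot(\mathcal{L}_1^{\mu}[\beta b]\nabla_X\psi)$ terms at the bottom, apply the elliptic estimate of Proposition \ref{Prop elliptic est}, and obtain the key $O(\mu)$ gain in $A[\nabla_X,\partial_z]\phi_0-\tfrac{\zeta}{h_b}(1+\tfrac{h}{h_b})\Delta_X\psi$ from Proposition \ref{Prop F0} together with $\tfrac{1}{\ve}\nabla_X\sigma=O(1)$, exactly as in the Observation and the subsequent Proposition \ref{Prop phi 2}. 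Apart from harmless bookkeeping (signs of the generic source $F$, the precise $M(\cdot)$ indices), this is the paper's argument.
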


    With this observation in mind, we can write the following result.

    \begin{prop}\label{Prop phi 2}
    Let $d=1,2$, $t_0>\frac{d}{2}$ and $k\in \N$. Let $\psi \in \dot{H}^{k+4}(\R^d)$. Let also $b \in C^{\infty}_c(\R^d)$ and $\zeta \in H^{{\max\{t_0 +2, k+2\}}}(\R^d)$ such that \eqref{non-cavitation 1} and \eqref{non-cavitation 2} are satisfied. If $\phi_2$ satisfies the following Laplace problem
    \begin{align*}
        \begin{cases}
            \partial_z^2\phi_2 = -\dfrac{\zeta}{h_b}\big( 1 + \dfrac{h}{h_b} \big) \Delta_X \psi \ \ \mathrm{in} \ \ \mathcal{S}_b,\\
            \phi_2|_{z=0} = 0, \ \ \partial_z \phi_2|_{z=-1 + \beta b} 
            = 0.
        \end{cases}
    \end{align*}
    Then its expression is given by:
    \begin{align*}
        \phi_2 = -(\dfrac{z^2}{2} + h_b z) \dfrac{\zeta}{h_b}\big( 1 + \dfrac{h}{h_b} \big) \Delta_{X}\psi.
    \end{align*}
    Moreover, for $\phi_b$ satisfying \eqref{Elliptic pb Sb}, $\phi_0$ given by \eqref{phi0} and $\phi_1$ given by \eqref{phi_1}, there holds
    \begin{equation}\label{Est mu2ve}
        \|\nabla^{\mu}_{X,z}(\phi_b - (\phi_0 + \mu\beta\phi_1 + \mu\ve\phi_2))\|_{H^{k,0}(\mathcal{S}_b)} \lesssim   \mu(\mu\varepsilon + \ve\beta + \mu\beta^2) M(k+2)|\nabla \psi|_{H^{k+3}}.
    \end{equation} 
    \end{prop}
    \begin{proof}
        The function $\phi_2$ satisfies a simple ODE and is solved by integrating the equation two times in $z$:
    \begin{align*}
        \phi_2 
        =
        \int_{z}^0 \int_{-1+\beta b}^{z'} 
        \dfrac{\zeta}{h_b}\big( 1 + \dfrac{h}{h_b} \big) \Delta_X \psi \: \mathrm{d}z'' \mathrm{d}z'
        =
        - (\dfrac{z^2}{2} + h_b z) \dfrac{\zeta}{h_b}\big( 1 + \dfrac{h}{h_b} \big) \Delta_X\psi.
    \end{align*}
    Then, by construction, we have that $u = \phi_b  - (\phi_0  + \mu \beta \phi_1 + \mu\ve\phi_2)$ satisfies
    \begin{equation}\label{syst f g}
		\begin{cases}
			 \frac{h}{h_b}\nabla^{\mu}_{X,z} \cdot P(\Sigma_b) \nabla_{X,z}^{\mu }u = f \quad \text{in} \quad \mathcal{S}_b
			\\	
			u|_{z = 0} = 0,  \quad \frac{h}{h_b}|n_b|\partial_{n_{b}}^{P_b} u |_{z=-h_b} = g,
		\end{cases}
    \end{equation}
    with 
    \begin{align*}
        f   = 
        -
        &\mu \ve [A[\nabla_X, \partial_z]\phi_0 - \frac{\zeta}{h_b}(1+\frac{h}{h_b})\Delta_X \psi]
        +
        \mu^2\beta^2 F
        -
        \mu^2\ve \beta A[\nabla_X,\partial_z]\phi_1\\
        -
        &\mu^2\ve (\Delta_X \phi_2 + \ve  A[\nabla_X, \partial_z]\phi_2),
    \end{align*} 
    and
    \begin{align*}
        g = -&\mu\ve\beta B[\nabla_X,\partial_z]\phi_0|_{z=-h_b} + \mu^2\beta^2 \nabla_X b \cdot \nabla_X\phi_1|_{z=-h_b} - \mu^2\ve\beta^2 B[\nabla_X,\partial_z]\phi_1|_{z=-h_b}\\
        + &\mu^2\ve\beta \nabla_X b \cdot \nabla_X\phi_2|_{z=-h_b} - \mu^2\ve^2\beta B[\nabla_X,\partial_z]\phi_2|_{z=-h_b}.
    \end{align*}
    Then we use the elliptic estimate \eqref{elliptic est} to get that
    \begin{align*}
        \|\nabla^{\mu}_{X,z} u\|_{H^{k,0}(\mathcal{S}_b)} \leq \mu(\mu\ve + \ve \beta + \mu \beta^2)M(k+1)( |g|_{H^k} + \sum\limits_{j=0}^k\|f\|_{H^{k-j,j}(\mathcal{S}_b)}),
    \end{align*} 
    with the the usual product estimates for $H^k(\R^d)$ combined with Observation \ref{obs approx of A and tildeB}, Proposition \ref{Prop phi 2} and the fact that $\phi_2$ is polynomial in $z$, we get
    \begin{align*}
        \|\nabla^{\mu}_{X,z} u\|_{H^{k,0}(\mathcal{S}_b)} \leq \mu(\mu\ve + \ve\beta + \mu\beta^2)M(k+2)|\nabla_X\psi|_{H^{s+3}}.
    \end{align*}
    
    \end{proof}

    We will now make two observations that will further simplify the presentation.
        
    \begin{obs}\label{obs second order}
    We may use Plancherel's identity and the Taylor series expansions:
    \begin{align*}
            \cosh(x) 
            & =
            1 + \frac{x^2}{2} \int_0^1\cosh(tx)(1-t) \:dt
            \\ 
            \frac{1}{\cosh(x)} & = 1 + \frac{x^2}{2} \int_0^1 \Big(\frac{\tanh(tx)^2}{\cosh(tx)}- \frac{ 1}{\cosh(tx)^3}\Big)(1-t)\:dt,
        \end{align*}
    for $x\in [0,1]$, to deduce that
    \begin{align}\label{id one}
        \|(\phi_0 - \psi) - \mu(\frac{z^2}{2}+z)|\mathrm{D}|^2\psi \|_{H^{k,0}(\mathcal{S}_b)} & \lesssim \mu^2 ||\mathrm{D}|^4 \psi|_{H^k} \leq \mu^{2} |\nabla_X \psi|_{H^{k+3}},
    \end{align}
    with $z\in (-h_b,0)$ and assumption \eqref{non-cavitation 2} on $\beta b(X)$. 
     \end{obs}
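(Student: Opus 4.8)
The plan is to reduce \eqref{id one} to a pointwise bound on a Fourier symbol. Since the multiplier $\mathrm{F}_0$ of \eqref{phi0} has symbol $\frac{\cosh((z+1)\sqrt{\mu}|\xi|)}{\cosh(\sqrt{\mu}|\xi|)}$, for each fixed $z$ the function inside the norm in \eqref{id one} equals $\mathcal{F}^{-1}\big(r(z,\sqrt{\mu}|\xi|)\widehat{\psi}(\xi)\big)$, where
\begin{equation*}
    r(z,y) = \frac{\cosh((z+1)y)}{\cosh(y)} - 1 - y^2\Big(\frac{z^2}{2}+z\Big).
\end{equation*}
By \eqref{non-cavitation 2} one has $h_b = 1-\beta b \le 1+b_{\mathrm{max}} < 2$, so $\mathcal{S}_b \subseteq \mathcal{S} := \R^d\times[-2,0]$, on which $\phi_0$ is already defined as in the proof of Proposition \ref{Proposition phi0}; since the relevant integrand is nonnegative, the left-hand side of \eqref{id one} is bounded by its analogue over $\mathcal{S}$. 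Plancherel's identity in $X$ together with Fubini then gives
\begin{equation*}
    \|(\phi_0 - \psi) - \mu(\tfrac{z^2}{2}+z)|\mathrm{D}|^2\psi\|_{H^{k,0}(\mathcal{S}_b)}^2 \lesssim \int_{-2}^0 \int_{\R^d} \langle\xi\rangle^{2k}\,|r(z,\sqrt{\mu}|\xi|)|^2\, |\widehat{\psi}(\xi)|^2 \,d\xi\,dz,
\end{equation*}
so the whole statement follows once one establishes the uniform bound $|r(z,y)| \lesssim y^4$ for all $z\in[-2,0]$ and $y\ge 0$.

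Proving this bound is the only genuinely delicate step, and the key point is that it must hold uniformly in $y=\sqrt{\mu}|\xi|$, which ranges over all of $[0,\infty)$; I would therefore split into $y\le 1$ and $y\ge 1$. For $y\le 1$ I would use Taylor's formula with integral remainder for $\cosh$ and for $\sech$ (the two identities displayed in the statement), which give $\cosh((z+1)y) = 1 + \frac{(z+1)^2y^2}{2} + O((z+1)^4y^4)$ and $\sech(y) = 1 - \frac{y^2}{2} + O(y^4)$ with constants uniform in $|z+1|\le 1$; multiplying these out and using $(z+1)^2 - 1 = z^2+2z$, the $O(1)$ and $O(y^2)$ contributions to $r(z,y)$ cancel exactly and what remains is $O(y^4)$. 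For $y\ge 1$ I would not expand at all: for $z\in[-2,0]$ one has $|z+1|\le 1$, hence $\cosh((z+1)y)\le\cosh(y)$ by evenness and monotonicity of $\cosh$ on $[0,\infty)$, so $0\le \frac{\cosh((z+1)y)}{\cosh(y)}\le 1$; combined with $|\frac{z^2}{2}+z| = \frac{|z|(2-|z|)}{2}\le\frac12$ this gives $|r(z,y)|\le 2 + \frac{y^2}{2}\lesssim y^4$. This high-frequency step is precisely where the extension to $z\in[-2,0]$ and the smallness assumption \eqref{non-cavitation 2} on $\beta b$ enter.

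Given $|r(z,y)|\lesssim y^4 = \mu^2|\xi|^4$, substituting into the integral above and performing the (bounded) $z$-integration yields
\begin{equation*}
    \|(\phi_0 - \psi) - \mu(\tfrac{z^2}{2}+z)|\mathrm{D}|^2\psi\|_{H^{k,0}(\mathcal{S}_b)} \lesssim \mu^2 \Big(\int_{\R^d}\langle\xi\rangle^{2k}|\xi|^8|\widehat{\psi}(\xi)|^2\,d\xi\Big)^{1/2} = \mu^2 \,\big||\mathrm{D}|^4\psi\big|_{H^k},
\end{equation*}
which is the first inequality in \eqref{id one}. The second follows from $|\xi|^4\le|\xi|\langle\xi\rangle^3$, whence $\big||\mathrm{D}|^4\psi\big|_{H^k}\le\big||\mathrm{D}|\psi\big|_{H^{k+3}}\lesssim |\nabla_X\psi|_{H^{k+3}}$, which is finite since $\psi\in\dot{H}^{k+4}$. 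I expect the main obstacle to be not any computation but resisting the temptation to Taylor-expand the symbol at large frequencies; instead one controls it there by sheer boundedness, which is exactly why the interval $z\in[-2,0]$ (equivalently $|z+1|\le 1$) is essential.
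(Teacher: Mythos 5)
Your proof is correct and follows essentially the same route the paper intends: reduce via Plancherel to a pointwise bound on the symbol $r(z,\sqrt{\mu}|\xi|)$ and extract the $O(y^4)$ cancellation from the Taylor expansions of $\cosh$ and $\mathrm{sech}$ with integral remainder, after extending to the strip $\R^d\times[-2,0]$ exactly as in Proposition \ref{Proposition phi0}. Your only addition is the explicit treatment of the regime $\sqrt{\mu}|\xi|\geq 1$ via $\cosh((z+1)y)\leq\cosh(y)$ for $|z+1|\leq 1$, a point the paper leaves implicit (its expansions are only invoked for $x\in[0,1]$) and which is indeed needed for uniformity in frequency.
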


    \begin{obs}\label{obs phi app}
    From the second-order expansions given by the previous Observation \ref{obs second order} we have
        \begin{align}\label{Formula phi0}
            \phi_0 - \psi = \mu (\frac{z^2}{2} + z)  |\mathrm{D}|^2\psi + \mu^2z^2R,
        \end{align}
        where $R$ is some generic function satisfying the estimate 
        \begin{align}\label{Rest phi app}
            |R|_{H^k}\leq M(k)|\nabla_X \psi|_{H^{k+3}}.
        \end{align}
        It allows us to approximate the quantity $\phi_0 + \mu\ve\phi_2$: 
        \begin{align*}
            \phi_0 + \mu \ve \phi_2 
            & = 
            \phi_0 + \mu (\frac{z^2}{2} + h_b z) \frac{\ve \zeta }{h_b}(1 + \frac{h}{h_b}) |\mathrm{D}|^2\psi \\
            & = \phi_0 + (\phi_0 - \psi)\big( \frac{h}{h_b} - 1 \big)\big(\frac{h}{h_b} + 1\big) + \mu(\mu\ve + \ve\beta )R
            \\
            & = 
            \phi_0 + (\phi_0 - \psi)\big( \frac{h^2}{h_b^2} - 1 \big) + \mu(\mu\ve + \ve\beta )R
            \\
            & = 
            \psi + \frac{h^2}{h_b^2}(\phi_0 - \psi) + \mu(\mu\ve + \ve\beta )R.
        \end{align*}

    \end{obs}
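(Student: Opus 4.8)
The plan is to prove the two assertions of this Observation in turn. The first, the second-order expansion \eqref{Formula phi0} with remainder bound \eqref{Rest phi app}, is essentially a reformulation of \eqref{id one}; the second, the rewriting of $\phi_0+\mu\ve\phi_2$ displayed at the end, is a purely algebraic manipulation once $\phi_2$ is replaced by the explicit formula from Proposition \ref{Prop phi 2}, with the powers of $\mu$, $\ve$, $\beta$ tracked carefully.

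For \eqref{Formula phi0} I would invoke directly the uniform estimate \eqref{id one} of Observation \ref{obs second order}, which follows from the Taylor expansions of $\cosh$ and $1/\cosh$ applied to the Fourier symbol of $\phi_0$ in \eqref{phi0}. That estimate says exactly that $(\phi_0-\psi)-\mu(\frac{z^2}{2}+z)|\mathrm{D}|^2\psi$ has $H^{k,0}(\mathcal{S}_b)$-norm bounded by $\mu^2 M(k)|\nabla_X\psi|_{H^{k+3}}$, so one may record the remainder as $\mu^2 R$ with $|R|_{H^k}\le M(k)|\nabla_X\psi|_{H^{k+3}}$; the cosmetic factor $z^2$ in \eqref{Formula phi0} is carried along only for bookkeeping in the next step, and, since $\mathcal{S}_b\subset\R^d\times[-2,0]$, the variable $z$ is bounded there, so it never affects the estimates. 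This is the only place in the argument where the elliptic problem enters; everything that follows is algebra.

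For the second assertion I would substitute $\phi_2=-(\frac{z^2}{2}+h_b z)\frac{\zeta}{h_b}(1+\frac{h}{h_b})\Delta_X\psi$ from Proposition \ref{Prop phi 2} and use $\Delta_X=-|\mathrm{D}|^2$, obtaining $\phi_0+\mu\ve\phi_2=\phi_0+\mu(\frac{z^2}{2}+h_b z)\frac{\ve\zeta}{h_b}(1+\frac{h}{h_b})|\mathrm{D}|^2\psi$, and then perform three elementary replacements: (i) replace $h_b z$ by $z$ at the cost of $(h_b-1)z=-\beta b\,z$, which, multiplied by the coefficient $\frac{\ve\zeta}{h_b}(1+\frac{h}{h_b})$ that already carries one factor of $\ve$, and by the overall $\mu$, contributes a term of the admissible size $\mu\ve\beta R$; (ii) replace $\mu(\frac{z^2}{2}+z)|\mathrm{D}|^2\psi$ by $\phi_0-\psi$ using \eqref{Formula phi0}, whose $\mu^2$ error, again carrying an extra $\ve$ from the coefficient, contributes $\mu^2\ve R$; (iii) use $h-h_b=\ve\zeta$, hence $\frac{\ve\zeta}{h_b}=\frac{h}{h_b}-1$ and $\frac{\ve\zeta}{h_b}\bigl(1+\frac{h}{h_b}\bigr)=\frac{h^2}{h_b^2}-1$, to reach $\phi_0+(\phi_0-\psi)\bigl(\frac{h^2}{h_b^2}-1\bigr)+\mu(\mu\ve+\ve\beta)R$; finally write $\phi_0=\psi+(\phi_0-\psi)$ and collect the two copies of $\phi_0-\psi$ into $\frac{h^2}{h_b^2}(\phi_0-\psi)$, which is the claimed identity. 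At each multiplication the coefficients $h$, $h_b^{-1}$, $\zeta$, $b$ and their products are controlled in $H^{\max\{t_0,k\}}(\R^d)$ by $M(k)$ via the non-cavitation bounds \eqref{non-cavitation 1}, \eqref{non-cavitation 2}, \eqref{non cav 3} together with the product and quotient estimates \eqref{Classical prod est}, \eqref{prod est division}, and $|\mathrm{D}|^2\psi$ costs two derivatives, so the generic $R$ collected at the end still satisfies $|R|_{H^k}\le M(k)|\nabla_X\psi|_{H^{k+3}}$.

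I do not expect any genuine obstacle: the one quantitative input that does real work is the $O(\mu^2)$ symbol bound behind \eqref{id one}, which is supplied by Observation \ref{obs second order}, and the rest is bookkeeping of the three small parameters. The only point deserving a little care is that the factor $z^2$ in \eqref{Formula phi0} must be treated as pure notation — it must not be read as vanishing of the remainder to second order at $z=0$, which would be false — so that the subsequent multiplications by smooth coefficients, and later the integration in $z$ defining $\overline{V}$, remain legitimate without ever dividing by $z$.
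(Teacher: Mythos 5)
Your proposal is correct and takes essentially the same route as the paper: the expansion \eqref{Formula phi0} is read off from \eqref{id one}, and the displayed identity follows by substituting the explicit $\phi_2$, trading $h_b z$ for $z$ (cost $\mu\ve\beta$), trading $\mu(\tfrac{z^2}{2}+z)|\mathrm{D}|^2\psi$ for $\phi_0-\psi$ (cost $\mu^2\ve$), and using $\frac{\ve\zeta}{h_b}\big(1+\frac{h}{h_b}\big)=\frac{h^2}{h_b^2}-1$, which is exactly the paper's own computation. Your caveat that the factor $z^2$ in the remainder is bookkeeping (the remainder does not literally vanish to second order at $z=0$) is a sound reading and does not affect the integrated-in-$z$ estimates in which \eqref{Formula phi0} is actually used.
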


    We can make the formal computations in Observation \ref{obs phi app} rigorous.

    \begin{prop}\label{Phi app aprox}
    Let $d=1,2$, $t_0>\frac{d}{2}$ and $k \in \N$ such that $k\geq t_0 + 1$. Let $\psi \in \dot{H}^{k+4}(\R^d)$. Let also $b \in C^{\infty}_c(\R^d)$ and  $\zeta \in H^{k+3}(\R^d)$ such that \eqref{non-cavitation 1} and \eqref{non-cavitation 2} are satisfied. Lastly, let $\phi_{\mathrm{app}}$ be defined by
    \begin{equation}\label{Phi app}
        \phi_{\mathrm{app}} 
             =
            \psi 
            +
            \Big{(} \frac{h}{h_b}\Big{)}^2 (\phi_0 - \psi)  + \mu \beta \phi_1,
    \end{equation}
    with $\phi_1$ given by \eqref{phi_1}. Then for $\phi_b$ satisfying \eqref{Elliptic pb Sb} there holds,
    \begin{align}\label{Est app}
        \|\nabla^{\mu}_{X,z}(\phi_b - \phi_{\mathrm{app}})\|_{H^{k,0}(\mathcal{S}_b)} \lesssim   \mu(\mu\varepsilon + \ve\beta + \mu\beta^2) M(s+2)|\nabla \psi|_{H^{k+3}}.
    \end{align}
        
    \end{prop}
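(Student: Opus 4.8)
The plan is to compare $\phi_{\mathrm{app}}$ with the three-term approximation $\phi_0+\mu\beta\phi_1+\mu\ve\phi_2$, whose scaled gradient is already controlled by Proposition~\ref{Prop phi 2}, and to show that the difference of the two is of the announced size; \eqref{Est app} then follows by the triangle inequality. First I would split
\[
\phi_b-\phi_{\mathrm{app}}=\big(\phi_b-(\phi_0+\mu\beta\phi_1+\mu\ve\phi_2)\big)+w,\qquad
w:=(\phi_0+\mu\beta\phi_1+\mu\ve\phi_2)-\phi_{\mathrm{app}},
\]
and observe from the definition \eqref{Phi app} that
\[
w=\Big(1-\big(\tfrac{h}{h_b}\big)^2\Big)(\phi_0-\psi)+\mu\ve\phi_2 .
\]
Since Proposition~\ref{Prop phi 2} gives $\|\nabla^{\mu}_{X,z}(\phi_b-(\phi_0+\mu\beta\phi_1+\mu\ve\phi_2))\|_{H^{k,0}(\mathcal{S}_b)}\lesssim(\mu^2\ve+\mu\ve\beta+\mu^2\beta^2)M(k+2)|\nabla\psi|_{H^{k+3}}$, it remains only to bound $\|\nabla^{\mu}_{X,z}w\|_{H^{k,0}(\mathcal{S}_b)}$ by the same quantity.

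To handle $w$ I would make rigorous the cancellation sketched in Observation~\ref{obs phi app}. Writing $r:=(\phi_0-\psi)-\mu(\tfrac{z^2}{2}+z)|\mathrm{D}|^2\psi$, and using $h-h_b=\ve\zeta$, $1-h_b=\beta b$, together with the explicit formula for $\phi_2$ from Proposition~\ref{Prop phi 2}, the two pieces of $w$ collapse to
\[
w=-\mu\ve\beta\,\tfrac{z b\zeta}{h_b}\big(1+\tfrac{h}{h_b}\big)|\mathrm{D}|^2\psi-\tfrac{\ve\zeta}{h_b}\big(1+\tfrac{h}{h_b}\big)\,r .
\]
The first summand is an explicit polynomial of degree one in $z$ whose $X$-coefficient is a product of $b$, $\zeta$, $1/h_b$ and $h/h_b$ acting on $|\mathrm{D}|^2\psi$; applying $\partial_z$ and $\sqrt\mu\,\nabla_X$ and using the product and division estimates in the algebra $H^k(\R^d)$ (legitimate since $k\ge t_0+1>\tfrac d2$) with the non-cavitation bounds \eqref{non-cavitation 1} and \eqref{non-cavitation 2}, its $\nabla^{\mu}_{X,z}$-norm is at most $\mu\ve\beta\,M(k+1)|\nabla\psi|_{H^{k+2}}$. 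For the second summand I need $\|\nabla^{\mu}_{X,z}r\|_{H^{k,0}(\mathcal{S}_b)}\lesssim\mu^2|\nabla\psi|_{H^{k+3}}$, which follows from Proposition~\ref{Prop F0}: the $z$-dependent Fourier symbols of $r$, $\partial_z r$ and $\sqrt\mu\,\nabla_X r$ vanish like $(\sqrt\mu|\xi|)^4$ as $\sqrt\mu|\xi|\to0$ and grow at most like $(\sqrt\mu|\xi|)^2$ as $\sqrt\mu|\xi|\to\infty$ (with one extra power of $\sqrt\mu|\xi|$ for the $\sqrt\mu\,\nabla_X$ part), so splitting the Plancherel integral at $|\xi|=\mu^{-1/2}$ interpolates these bounds into the uniform factor $\mu^2$ against $||\mathrm{D}|^4\psi|_{H^k}\le|\nabla\psi|_{H^{k+3}}$, exactly as for \eqref{id one}. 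Multiplying by the $O(\ve)$ coefficient $\tfrac{\ve\zeta}{h_b}(1+\tfrac{h}{h_b})$ and applying Leibniz, the $\nabla^{\mu}_{X,z}$-norm of the second summand is at most $\mu^2\ve\,M(k+1)|\nabla\psi|_{H^{k+3}}$.

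Adding the two contributions yields $\|\nabla^{\mu}_{X,z}w\|_{H^{k,0}(\mathcal{S}_b)}\lesssim(\mu\ve\beta+\mu^2\ve)M(k+1)|\nabla\psi|_{H^{k+3}}\le\mu(\mu\ve+\ve\beta+\mu\beta^2)M(k+2)|\nabla\psi|_{H^{k+3}}$, and combining with Proposition~\ref{Prop phi 2} through the triangle inequality gives \eqref{Est app} (the $M(s+2)$ there being $M(k+2)$). The step I expect to be the main obstacle is the bound on $\nabla^{\mu}_{X,z}r$: one must not lose a power of $\sqrt\mu$ when commuting the horizontal derivative into the $z$-dependent Fourier multiplier, so a crude bound on the symbols does not suffice and one genuinely has to interpolate between their vanishing at the origin and their quadratic growth at infinity; everything else is routine bookkeeping of the powers of $\mu,\ve,\beta$ and product estimates involving $1/h_b$ and $1/h$.
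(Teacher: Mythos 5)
Your proposal is correct and follows essentially the same route as the paper: a triangle inequality against $\phi_0+\mu\beta\phi_1+\mu\ve\phi_2$ using Proposition \ref{Prop phi 2}, followed by the cancellation between $\mu\ve\phi_2$ and $\big(\tfrac{h^2}{h_b^2}-1\big)(\phi_0-\psi)$ sketched in Observation \ref{obs phi app}. The only difference is cosmetic: you compute the residue $w$ explicitly and bound the Taylor remainder of $\phi_0-\psi$ by a direct frequency-splitting argument, whereas the paper invokes the generic expansion \eqref{Formula phi0} with the bound \eqref{Rest phi app}.
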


    \begin{proof} 
        We first use Proposition \ref{Prop phi 2} to get the estimate
        \begin{align*}
            \|\nabla^{\mu}_{X,z}(\phi_b - \phi_{\mathrm{app}})\|_{H^{k,0}(\mathcal{S}_b)} 
            & \lesssim
            \mu(\mu\varepsilon + \ve\beta + \mu\beta^2)M(k+2)|\nabla \psi|_{H^{k+3}} 
            \\
            & 
            \hspace{0.5cm}
            + \|\nabla^{\mu}_{X,z}(\phi_0 + \mu\varepsilon\phi_1 - \phi_{\mathrm{app}})\|_{H^{k,0}(\mathcal{S}_b)}.
        \end{align*}
        Making the same approximations as in Observation \ref{obs phi app} will complete the proof. In particular, accounting for the loss of derivatives given by \eqref{Rest phi app} yields,
        \begin{align*}
            \|\nabla^{\mu}_{X,z}(\phi_0 + \mu\beta\phi_1 + \mu \ve \phi_2 - \phi_{\mathrm{app}})\|_{H^{k,0}(\mathcal{S})} \lesssim \mu(\mu\ve + \ve\beta ) M(k+1)|\nabla \psi|_{H^{k+3}}.
        \end{align*}
        Gathering these estimates concludes the proof.\color{black}
    \end{proof}

    \subsection{Multi-scale expansions of $\overline{V}$}\label{mult V bar} In this subsection we will use the expression of $\phi, \phi_0$, $\phi_1$, and $\phi_{\mathrm{app}}$ to construct approximations of $\overline{V}$. The first result is given in the following  proposition. 
    \begin{prop}\label{prop V[0,b]}
        Let $d=1,2$, $t_0>\frac{d}{2}$ and $s\geq 0$. Let $b \in C^{\infty}_c(\R^d)$, $\psi \in \dot{H}^{k+3}(\R^d)$ and $\zeta \in H^{\max\{t_0+2,s+3\}}(\R^d)$ be such that \eqref{non-cavitation 1} is satisfied. Then for $\phi$ defined by the solution of \eqref{elliptic problem phi} and $\overline{V}[0, \beta b]$ defined by
        \begin{equation}\label{V[0,b]}
            \overline{V}[0, \beta b] = \frac{1}{h_b} \int_{-h_b}^0 \nabla_X\phi \: \mathrm{d}z
        \end{equation}
        there holds,
        \begin{equation}
            |\overline{V} - \overline{V}[0, \beta b]|_{H^s} \leq \mu \ve |\nabla_X \psi|_{H^{s+3}}
        \end{equation}

    \end{prop}
    \begin{proof} We will first prove the estimate on $\overline{V} - \overline{V}[0, \beta b]$ for $k\in \N$, and then use interpolation for $s\geq 0$. By definition \eqref{New V bar} and \eqref{V[0,b]} we have that
    \begin{align*}
            |\overline{V} - \overline{V}[0, \beta b]|_{H^s}
            & 
            =
            \Big{|} 
		\int_{-1+\beta b}^{0} \big{[}\frac{1}{h_b}\nabla_X( \phi_b - \phi)- \frac{1}{h}(\varepsilon\nabla_X\Big(\dfrac{\zeta}{h_b}\Big)z + \varepsilon\nabla_X\zeta) \partial_z \phi_b \big{ ]}\: \mathrm{d}z \Big{|}_{H^k}
          .
    \end{align*}
    Now, note that $h_b$ and $h$ are only functions of $X$ and satisfies \eqref{non-cavitation 1} and \eqref{non-cavitation 2}, we can therefore use \eqref{Classical prod est}, \eqref{prod est division}, and \eqref{estimate for V bar} to get that
        \begin{align*}
            |\overline{V} - \overline{V}[0, \beta b]|_{H^k}
            & 
            \lesssim \Big{|} \frac{1}{h_b}\int_{-1+\beta b}^{0}\nabla_X( \phi_b - \phi ) \: \mathrm{d}z\Big{|}_{H^k}
            +
            \ve \Big{|}\frac{1}{h}\nabla_X\Big(\dfrac{\zeta}{h_b}\Big)\int_{-1+\beta b}^{0} z\partial_z \phi_b\: \mathrm{d}z \Big{|}_{H^k}
            \\
            & \hspace{0.5cm}
            +
            \ve \Big{|}\frac{1}{h}\nabla_X\zeta\int_{-1+\beta b}^{0} \partial_z \phi_b\: \mathrm{d}z \Big{|}_{H^k} 
            \\
            &
            \leq M(k)
            \|\nabla_{X,z}^{\mu} (\phi_b - \phi)\|_{H^{k+1,0}(\mathcal{S}_b)} 
            + 
            \ve M(k+1)
            \|\partial_z \phi_b\|_{H^{k,0}(\mathcal{S}_b)}
            \\
            & 
            \hspace{0.5cm}
            +
            M(k)\sum\limits_{j=1}^k\|\nabla_{X,z}^{\mu}\partial_z^{j-1}(\phi_b - \phi )\|_{H^{k-j+1,0}(\mathcal{S}_b)} 
            \\
            & 
            \hspace{0.5cm}
            +
            \ve M(k+1)
            \sum\limits_{j=1}^k\|\partial_z^{j+1} \phi_b\|_{H^{k-j,0}(\mathcal{S}_b)}
            \\
            & = 
            J_1 + J_2 + J_3 + J_4 .
        \end{align*}
        We will now estimate each term. To estimate $J_1$, we apply \eqref{est phi} to get that
        \begin{align*}
            J_1 \leq  \mu \ve M(k+3)|\nabla_X \psi |_{H^{k+4}}.
        \end{align*}
        To estimate $J_2$, we use Proposition \ref{Prop F0} to see that $|\partial_z \phi_0|_{H^{k}} \lesssim  \mu|\nabla_X \psi|_{H^{k+1}}$ and combine it with  \eqref{est phi0} to get the estimate,
        \begin{align*}
            J_2 
            & \leq
            \ve M(k+1)(\|\nabla^{\mu}_{X,z} (\phi_b-\phi_0)\|_{H^{k+1,0}}+\| \partial_z \phi_0\|_{H^{k,0}})
            \\
            & \leq 
            \mu\ve M(s+3) |\nabla_X \psi|_{H^{k+3}}
            .
        \end{align*}
        Finally, we will deal with $J_3$ and $J_4$. To that end, we need to trade the derivatives in $\partial_z$ with derivatives in the horizontal variable by relating the functions with an elliptic problem. We introduce the notation
        \begin{equation}\label{useful notation}
            f \sim g \iff f(X,z) = r(X)g(X,z),
        \end{equation}
        with $r \in H^k(\R^d)$ such that $|r|_{H^{k}}\leq  M(k+1)$. Then, by construction, we have from \eqref{A} that
        \begin{align*}
            (1+ \mu|\nabla_X\sigma|^2)\partial_z^2 \phi_b 
            & =
            -\mu
            \Delta_X \phi_b - \mu \ve (A[\nabla_X, \partial_z]\phi_b- \frac{1}{\ve}|\nabla_X\sigma|^2\partial_z^2 \phi_b)
            \\
            & =:
            -\mu
            \Delta_X \phi_b - \mu \ve \tilde{A}[\nabla_X, \partial_z]\phi_b,
        \end{align*}
        where $\nabla_X\sigma$ is given by \eqref{grad sigma} and is of the form
        \begin{equation*}
            \nabla_X \sigma \sim  \ve (1+z),
        \end{equation*}
        while $\tilde{A}[\nabla_X, \partial_z]$ is of the form
        \begin{equation*}
            \tilde{A}[\nabla_X, \partial_z] \phi_b \sim \Delta_X \phi_b + (1+z)\nabla_Xf\cdot \nabla_X \partial_z \phi_b + z\partial_z \phi_b,
        \end{equation*}
        for some function $f \in H^{k+3}(\R^d)$. Similarly, for $\phi$ defined by \eqref{elliptic problem phi}, we have the relation
        \begin{align*}
            (1+ \mu|\nabla_X\sigma|^2)\partial_z^2 (\phi_b -\phi)
            & = 
            \mu \Delta_X (\phi_b - \phi) 
            -
            \mu \ve \tilde{A}[\nabla_X, \partial_z](\phi_b -\phi)
            -
            \mu \ve \tilde{A}[\nabla_X, \partial_z]\phi
            \\
            & 
            \hspace{0.5cm}
            -
            \mu|\nabla_X\sigma|^2\partial_z^2\phi.
        \end{align*}
        Consequently, we can trade two derivatives in $z$ by $\Delta_X$, $\nabla_X \partial_z$, and $\partial_z$. From that point, we can deduce that for $k\geq 3$, we have
        \begin{align*}
            \partial_z^k (\phi_b -\phi) 
            \sim \mu \sum \limits_{\gamma\in \N^d \: |\gamma|\leq k -1} \partial_X^{\gamma}\partial_z\big{(}(\phi_b - \phi) - \ve \phi \big{)} + \sum\limits_{j = 1}^k\mu\ve^2\partial_z^j\phi,
        \end{align*}
        For the last term we can use that $\partial_z^2\phi = -\mu \Delta_X \phi$. From these relations, and the control of the residual terms $r(X)$ in \eqref{useful notation} with the product estimate \eqref{Classical prod est}, we may conclude from  \eqref{est phi}, \eqref{est on phi in pf}, and \eqref{Poincare 1} that
        \begin{align*}
            J_3
            & \leq M(k+1)(\|\nabla_{X,z}^{\mu}(\phi_b - \phi)\|_{H^{k+1,0}(\mathcal{S}_b)} + \mu
            \ve|\nabla_{X} \psi|_{H^{k+1}})
            \\
            & \leq \mu( \ve + \ve \beta +\mu \beta^2) M(k+3)|\nabla_X \psi|_{H^{k+3}}.
        \end{align*}
        To conclude, we estimate $J_4$. Since there is an $\varepsilon$ appearing we only need to introduce $\phi_0$ and we obtain
        \begin{align*}
            J_4 
            & =
            \ve M(k+1)
            \sum\limits_{j=1}^k\Big{(}\|\partial_z^{j+1} (\phi_b-\phi_0)\|_{H^{k-j,0}(\mathcal{S}_b)} + \|\partial_z^{j+1} \phi_0\|_{H^{k-j,0}(\mathcal{S}_b)}\Big{)}
            \\
            & \leq \ve (\mu \ve + \mu \beta + \mu) M(k+3)|\nabla_X \psi|_{H^{k+3}}.
        \end{align*}

    \end{proof}

    The next result concerns the expansion of $\overline{V}$ with respect to $\mu \beta$:
    \color{black}

    \begin{prop}\label{Prop V}
        Let $d=1,2$, $t_0>\frac{d}{2}$ and $s\geq 0$. Let $b \in C^{\infty}_c(\R^d)$ and $\zeta \in H^{\max\{t_0+2,s+3\}}(\R^d)$ be such that \eqref{non-cavitation 1} and \eqref{non-cavitation 2} are satisfied. Let $\mathcal{L}_{1}^{\mu}[\beta b]$ and $\mathcal{L}_{2}^{\mu}[\beta b]$  be two pseudo-differential operators defined by
    \begin{align*}
        \mathcal{L}_{1}^{\mu}[\beta b]  
        & =  
        -
        \frac{1}{\beta}\sinh{(\beta b(X) \sqrt{\mu}|\mathrm{D}|)}\mathrm{sech}(\sqrt{\mu}|\mathrm{D}|)\dfrac{1}{\sqrt{\mu}|\mathrm{D}|}
        \\
        \mathcal{L}_{2}^{\mu}[\beta b] & = -(\mathcal{L}_{1}^{\mu}[\beta b] +  b) \frac{1}{\mu |\mathrm{D}|^2}.
    \end{align*}
    Let also $\mathrm{F}_1$, $\mathrm{F}_2$,  $\mathrm{F}_3$, and $\mathrm{F}_4$ be four Fourier multipliers defined by
    \begin{align*}
            \mathrm{F}_1 = \dfrac{\tanh{(\sqrt{\mu}|\mathrm{D}|)}}{\sqrt{\mu}|\mathrm{D}|},\quad \mathrm{F}_2 = \frac{3}{\mu |\mathrm{D}|^2}(1- \mathrm{F}_1), \quad  \mathrm{F}_3 =\mathrm{sech}(\sqrt{\mu}|D|), \quad \mathrm{F}_4 =  \frac{2}{\mu |\mathrm{D}|^2}(1- \mathrm{F}_3).
    \end{align*}
    Let $\psi \in \dot{H}^{s+5}(\R^d)$ and consider the approximation:
    \begin{align}\label{V0}
        \overline{V}_0 
            & =
            \frac{1}{h_b}\mathrm{F}_1 \nabla_X \psi 
            +
            \frac{\beta}{h_b}\mathcal{L}_1^{\mu}[\beta b] \nabla_X \psi
            +
            \frac{\mu \beta}{2} \nabla_X\mathrm{F}_4\nabla_X \cdot \big(\mathcal{L}_1^{\mu}[\beta b] \nabla_X \psi\big)
            \\ 
            & 
            \hspace{0.5cm}\notag
            -
            \frac{\mu\beta^2}{2}\nabla_X \big{(}b\mathrm{F}_4\nabla_X\cdot(b\nabla_X \psi)\big{)}
            -
            \mu \beta^2 (\nabla_Xb)\mathrm{F}_1\nabla_X\cdot(b\nabla_X \psi).
    \end{align}
    Then for $\overline{V}$ defined by \eqref{V bar}, there holds
    \begin{equation}\label{V0 est}
        |\overline{V} -  \overline{V}_0|_{H^s} \leq (\mu \ve + \mu^2 \beta^2) M(s+3)|\nabla_X \psi|_{H^{s+4}}.
    \end{equation}       
    Furthermore, let $\overline{V}_{\mathrm{app}}$ be defined by the approximation:
    \begin{align}\label{V app}
            \overline{V}_{\mathrm{app}}  
            & = 
            \nabla_X \psi 
           +
           \frac{\mu}{h}\nabla_X 
           \Big{(}
            \frac{h^3}{h_b^3} \mathrm{F}_2 \psi
           \Big{)}
           +
           \frac{\mu\beta }{h}
           \nabla_X \Big{(} \frac{h^3}{h_b^3} \mathcal{L}_2^{\mu}[\beta b] \psi\Big{)}
           +
           \frac{\mu \beta}{2} \nabla_X\mathrm{F}_4\nabla_X \cdot \big(\mathcal{L}_1^{\mu}[\beta b] \nabla_X \psi\big)
            \\ 
            & 
            \hspace{0.5cm}\notag
            -
            \frac{\mu\beta^2}{2}\nabla_X \big{(}b\mathrm{F}_4\nabla_X\cdot(b\nabla_X \psi)\big{)}
            -
            \mu \beta^2 (\nabla_Xb)\mathrm{F}_1\nabla_X\cdot(b\nabla_X \psi)
        \end{align}
        Then there holds
        \begin{equation}\label{V app est}
            |\overline{V}- \overline{V}_{\mathrm{app}} |_{H^s} \leq (\mu^2\ve +\mu\ve\beta + \mu^2\beta^2) M(s+3) |\nabla_X \psi|_{H^{s+4}}.
        \end{equation}

    \end{prop}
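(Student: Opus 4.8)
The plan is to work from the strip reformulation \eqref{New V bar}, which, since $h$ and $h_b$ do not depend on $z$, reads
\[
\overline{V}=\frac{1}{h_b}\int_{-h_b}^{0}\nabla_X\phi_b\,\mathrm{d}z-\frac{1}{h}\int_{-h_b}^{0}\bigl(\varepsilon\nabla_X(\tfrac{\zeta}{h_b})\,z+\varepsilon\nabla_X\zeta\bigr)\partial_z\phi_b\,\mathrm{d}z .
\]
For \eqref{V0 est} I would substitute $\phi_b=\phi_0+\mu\beta\phi_1+r$ with $r$ controlled by Proposition~\ref{Prop phi 1}, and for \eqref{V app est} I would substitute $\phi_b=\phi_{\mathrm{app}}+r$ with $r$ controlled by Proposition~\ref{Phi app aprox}. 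In both cases the main term is computed explicitly: one integrates in $z$ over $[-h_b,0]$ the $z$-dependent hyperbolic Fourier symbols of $\phi_0$ and $\phi_1$, evaluates the resulting symbols at the bottom $z=-h_b$ (which is precisely what converts the $z$-dependence into the operators $\mathcal{L}_1^{\mu}[\beta b]$, $\mathcal{L}_2^{\mu}[\beta b]$ and $\cosh(\beta b\sqrt{\mu}|\mathrm{D}|)\mathrm{sech}(\sqrt{\mu}|\mathrm{D}|)$), and rewrites the outcome through $\mathrm{F}_1,\dots,\mathrm{F}_4$ using the algebraic identities built into Definition~\ref{Fourier mult} and the relation $\mathcal{L}_2^{\mu}[\beta b]=-(\mathcal{L}_1^{\mu}[\beta b]+b)\tfrac{1}{\mu|\mathrm{D}|^{2}}$. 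Two sample computations illustrate the mechanism: $\int_{-h_b}^{0}\nabla_X\phi_0\,\mathrm{d}z=\mathrm{F}_1\nabla_X\psi+\beta\mathcal{L}_1^{\mu}[\beta b]\nabla_X\psi$, where the Leibniz boundary term produced by moving $\nabla_X$ across $\int_{-h_b}^{0}$ cancels exactly against the commutator $[\nabla_X,\mathcal{L}_1^{\mu}[\beta b]]$ (this gives the first two terms of $\overline{V}_0$); and $\int_{-h_b}^{0}(\phi_0-\psi)\,\mathrm{d}z=\mu\bigl(\tfrac13\mathrm{F}_2+\beta\mathcal{L}_2^{\mu}[\beta b]\bigr)\Delta_X\psi$, which feeds the dispersive $\mathrm{F}_2$- and $\mathcal{L}_2^{\mu}[\beta b]$-contributions of $\overline{V}_{\mathrm{app}}$. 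Differentiating $\phi_1$ and integrating in $z$ likewise yields the $\mathrm{F}_4$-term, and the $X$-dependence of the coefficient $h_b=1-\beta b$ together with the commutators $[\nabla_X,\mathcal{L}_i^{\mu}[\beta b]]$ reorganizes, via the product rule, into the bathymetry terms $-\tfrac{\mu\beta^{2}}{2}\nabla_X\!\bigl(b\nabla_X\!\cdot(b\nabla_X\psi)\bigr)-\mu\beta^{2}(\nabla_Xb)\nabla_X\!\cdot(b\nabla_X\psi)$.

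The step I expect to be the main obstacle is the control of the remainder $\tfrac{1}{h_b}\int_{-h_b}^{0}\nabla_X r\,\mathrm{d}z$: estimating it directly by the elliptic bound on $\|\nabla^{\mu}_{X,z}r\|_{H^{k,0}}$ loses a factor $\sqrt{\mu}$ on the horizontal component, which is not affordable. The resolution is to exploit $r|_{z=0}=0$, so that $\nabla_X r(X,z)=-\int_{z}^{0}\partial_{z'}\nabla_X r\,\mathrm{d}z'$, and Cauchy--Schwarz in the vertical variable gives
\[
\Bigl|\tfrac{1}{h_b}\int_{-h_b}^{0}\nabla_X r\,\mathrm{d}z\Bigr|_{H^{s}}\lesssim M_0\,\|\partial_z\nabla_X r\|_{H^{s,0}(\mathcal{S}_b)}\le M_0\,\|\nabla^{\mu}_{X,z}r\|_{H^{s+1,0}(\mathcal{S}_b)},
\]
so that only the $\partial_z$-component of $\nabla^{\mu}_{X,z}r$ enters and no power of $\mu$ is lost, at the cost of one extra spatial derivative. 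Inserting Proposition~\ref{Prop phi 1} with $k=s+1$ then produces the factor $(\mu\varepsilon+\mu^{2}\beta^{2})M(s+3)|\nabla_X\psi|_{H^{s+4}}$, and Proposition~\ref{Phi app aprox} with $k=s+1$ produces $(\mu^{2}\varepsilon+\mu\varepsilon\beta+\mu^{2}\beta^{2})M(s+3)|\nabla_X\psi|_{H^{s+4}}$, which are exactly the orders in \eqref{V0 est} and \eqref{V app est}.

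It remains to handle the explicit $\varepsilon$-integral in \eqref{New V bar} and the discrepancy between the exact main term and the stated $\overline{V}_0$, $\overline{V}_{\mathrm{app}}$. Since $\partial_z\phi_0=O(\mu)$ in the relevant norms (by the bounds of Proposition~\ref{Prop F0}), the $\varepsilon$-integral is $O(\mu\varepsilon)$ and is absorbed in the remainder for \eqref{V0 est}; for \eqref{V app est} its leading $O(\mu\varepsilon)$ part must be retained, but because $h-h_b=\varepsilon\zeta$ this part, together with the $O(\mu\varepsilon)$ difference coming from writing the coefficients in the form $h^{3}/h_b^{3}$, is precisely cancelled by the same mechanism that dictated the choice $\phi_{\mathrm{app}}=\psi+(h/h_b)^{2}(\phi_0-\psi)+\mu\beta\phi_1$, cf.\ Observation~\ref{obs phi app}. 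Finally, in the main term one Taylor-expands the hyperbolic symbols only inside the pieces already carrying a prefactor $\mu\beta$ or $\mu\varepsilon$: replacing there $\mathrm{F}_i\mapsto 1$ and $\mathcal{L}_1^{\mu}[\beta b]\mapsto-b$ costs an extra power of $\mu$, hence produces errors of size $O(\mu^{2}\beta^{2})$, $O(\mu^{2}\varepsilon)$, $O(\mu\varepsilon\beta)$, all within the claimed precision, and no $\beta$-expansion of $h_b=1-\beta b$ is required since it is already polynomial in $\beta$. Collecting all contributions and using the product/composition estimates for $H^{s}(\mathbb{R}^{d})$ together with Theorem~\ref{C-V thm} for the pseudo-differential operators yields \eqref{V0 est} and \eqref{V app est}.
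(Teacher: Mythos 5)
Your main-term computations agree with the paper's Steps 1 and 3: integrating the $z$-dependent hyperbolic symbols gives $\int_{-h_b}^0\nabla_X\phi_0\,\mathrm{d}z=\mathrm{F}_1\nabla_X\psi+\beta\mathcal{L}_1^{\mu}[\beta b]\nabla_X\psi$ and $\int_{-h_b}^0(\phi_0-\psi)\,\mathrm{d}z=\tfrac{\mu}{3}\mathrm{F}_2\Delta_X\psi+\mu\beta\mathcal{L}_2^{\mu}[\beta b]\Delta_X\psi$, the $\mathrm{F}_4$ and bathymetry terms come from $\phi_1$ exactly as you describe, and your treatment of the $\ve$-weighted integral via the choice of $\phi_{\mathrm{app}}$ matches Observation \ref{obs phi app}. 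The genuine gap is in the remainder estimate, precisely at the step you yourself flag as the main obstacle. Your inequality
\begin{equation*}
\Big|\tfrac{1}{h_b}\int_{-h_b}^{0}\nabla_X r\,\mathrm{d}z\Big|_{H^{s}}\lesssim M_0\,\|\partial_z\nabla_X r\|_{H^{s,0}(\mathcal{S}_b)}
\end{equation*}
is only correct for $s=0$. For $s\geq 1$ you must apply $\partial_X^{\gamma}$ to an integral whose lower limit $-h_b(X)=-1+\beta b(X)$ depends on $X$ (equivalently, after the rescaling $z\mapsto h_b(X)z$, differentiate the composition $(\partial_z\nabla_X r)(X,h_b(X)z)$); the chain rule then produces terms containing $\partial_z^{j}r$ for $2\le j\le s+1$ with correspondingly fewer horizontal derivatives. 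This is exactly why the paper's integrated estimate \eqref{estimate for V bar} carries the extra sum $\sum_{j}\|\partial_z^{j}u\|_{H^{k-j,0}(\mathcal{S}_b)}$. These higher vertical derivatives of $r=\phi_b-\phi_0-\mu\beta\phi_1$ (resp.\ $\phi_b-\phi_{\mathrm{app}}$) are \emph{not} controlled by $\|\nabla^{\mu}_{X,z}r\|_{H^{s+1,0}(\mathcal{S}_b)}$, so simply invoking Proposition \ref{Prop phi 1} (resp.\ Proposition \ref{Phi app aprox}) with $k=s+1$ does not close the argument; also, the constant cannot be $M_0$, since $s$ derivatives of $1/h_b$ and $b$ require $M(s+\cdot)$-type constants.

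To control those terms one has to return to the transformed Laplace equation and trade two vertical derivatives for horizontal ones, i.e.\ use $(1+\mu|\nabla_X\sigma|^2)\partial_z^2\phi_b=-\mu\Delta_X\phi_b-\mu\ve\tilde{A}[\nabla_X,\partial_z]\phi_b$ and its analogues for $\phi_0+\mu\beta\phi_1$ and $\phi_{\mathrm{app}}$, and then run an induction on the number of $\partial_z$'s; this is the content of the terms the paper labels $II_3$, $II_4$ in Step 2 and $IV_3$, $IV_4$ in Step 4, and it is there that the advertised powers of $\mu$, $\ve$, $\beta$ are actually preserved. The same issue reappears in the $\ve$-weighted term of \eqref{New V bar} for \eqref{V app est}, where $\partial_z^{j+1}(\phi_b-\phi_{\mathrm{app}})$ must be estimated with $\phi_{\mathrm{app}}$ retained, not just $\phi_0$. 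Without this derivative-trading step your proof establishes the claimed bounds only for $s=0$ (or with a loss of vertical regularity that the elliptic estimate alone cannot supply), so the argument as written is incomplete.
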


    \begin{proof}
        We give the proof in four steps. \\

        \noindent
        \underline{Step 1.} \textit{Construction of $\overline{V}_0$.} To construct $\overline{V}_0$, we use the solution of $\phi_0$ given by \eqref{phi0}, the solution $\phi_1$ given by \eqref{phi_1},  and formula \eqref{New V bar}, formally discarding terms of order $\mu \ve$, to get that 
        \begin{align*}
            h_b \overline{V}_0 
            &
            =
            \int_{-1 + \beta b(X)}^0 \nabla_X\phi_0 \: \mathrm{d}z +\mu \beta \int_{-1 + \beta b(X)}^0 \nabla_X\phi_1 \: \mathrm{d}z 
            \\
            &  =
            I_1 + I_2. 
        \end{align*}
        Then by direct computations, we get
        \begin{align*}
            I_1 & =
            \mathcal{F}^{-1}\Big{(} \int_{-1 + \beta b(X)}^0 \cosh{((z+1)\sqrt{\mu}|\xi|)}\mathrm{sech}{(\sqrt{\mu}|\xi|)} \: i \xi \hat{\psi}(\xi)\:  \mathrm{d}z\Big{)}(X)
            \\ 
            & =
            \dfrac{\tanh{(\sqrt{\mu}|\mathrm{D}|)}}{\sqrt{\mu}|\mathrm{D}|}\nabla_X\psi - \sinh{(\beta b(X)\sqrt{\mu}|\mathrm{D}|)}\mathrm{sech}(\sqrt{\mu}|\mathrm{D}|)\dfrac{1}{\sqrt{\mu}|\mathrm{D}|}\nabla_X\psi.
        \end{align*}
        While for $I_2$, we simplify the notation by defining $G = \nabla_X \cdot \mathcal{L}_{1}^{\mu}[\beta b]\nabla_X\psi$ and then make the observation
        \begin{align*}
            \mu \beta \int_{-h_b}^0 \nabla_X \phi_1 (X,z) \mathrm{d}z = \mu \beta h_b \int_{-1}^0 (\nabla_X \phi_1)(X, h_b z) \mathrm{d}z.
        \end{align*}
        Then by the chain rule, we have the relation
        \begin{align*}
            \nabla_X (\phi_1(X,h_b z)) = (\nabla_X \phi_1)(X,h_b z) - \beta \nabla_X b (\partial_z \phi_1)(X,h_b z),
        \end{align*}
        and
        \begin{align*}
            \partial_z(\phi_1(X,h_b z)) = h_b (\partial_z \phi_1)(X,h_b z),
        \end{align*}
        from which we obtain
        \begin{align*}
            I_2 
            & = 
            \mu\beta \int_{-h_b}^0 \nabla_X \phi_1 (X,z) \mathrm{d}z 
            \\
            & =
            \mu \beta h_b \int_{-1}^0 \nabla_X(\phi_1(X,h_b z)) \mathrm{d}z 
            +
            \mu\beta^2 (\nabla_X b) \int_{-1}^0 \partial_z (\phi_1(X,h_b z)) \mathrm{d}z
            \\ 
            & 
            =
            \mu \beta h_b \nabla_X \big(h_b (1-\mathrm{sech}(\sqrt{\mu}|\mathrm{D}|))\frac{1}{\mu|\mathrm{D}|^2}G\big) 
            -
            \mu\beta^2 (\nabla_X b) h_b\frac{\tanh(\sqrt{\mu}|\mathrm{D}|)}{\sqrt{\mu}|\mathrm{D}|} G
            .
        \end{align*}
        Adding these computations yields,
        \begin{align*}
            \overline{V}_0 &= 
            \frac{1}{h_b}\mathrm{F}_1 \nabla_X \psi 
            +
            \frac{\beta}{h_b}\mathcal{L}_1^{\mu}[\beta b] \nabla_X \psi 
            +
            \frac{\mu \beta }{2}\nabla_X(h_b \mathrm{F}_4G) 
            -
            \mu\beta^2 (\nabla_X b) \mathrm{F}_1 G,
        \end{align*}
        To conclude this step, we use \eqref{L approx} to approximate $G = \nabla_X\cdot(b\nabla_X \psi) + \mu R_1$, where $|R_1|_{H^k}\leq M(k+1)|\nabla_X \psi |_{H^{k+4}}$. Then we have constructed the approximation:
        \begin{align*}
            \overline{V}_0 
            & =
            \frac{1}{h_b}\mathrm{F}_1 \nabla_X \psi 
            +
            \frac{\beta}{h_b}\mathcal{L}_1^{\mu}[\beta b] \nabla_X \psi
            +
            \frac{\mu \beta}{2} \nabla_X\mathrm{F}_4\nabla_X \cdot \mathcal{L}_1^{\mu}[\beta b] \nabla_X \psi
            \\ 
            & 
            \hspace{0.5cm}
            -
            \frac{\mu\beta^2}{2}\nabla_X \big{(}b\mathrm{F}_4\nabla_X\cdot(b\nabla_X \psi)\big{)}
            - 
            \mu \beta^2 (\nabla_Xb)\mathrm{F_1}\nabla_X\cdot(b\nabla_X \psi) + \mu^2\beta^2 R_2.
        \end{align*}
        For some $R_2$ satisfying $|R_2|_{H^k} \leq M(k+1)|\nabla_X \psi|_{H^{k+4}}$.
        \\

        \noindent
        \underline{Step 2.} We will now prove the estimate on $\overline{V} - \overline{V}_0$, where we argue as in the proof of Proposition \ref{prop V[0,b]}. In particular, we do the estimates for $k\in \N$, and then use interpolation for $s\geq 0$. Also, we define the approximation 
        $$\phi_{\mathrm{app}}^1 = \phi_0+\mu\beta \phi_1,$$
        and let $R$ be the function constructed in the previous step satisfying estimate $|R|_{H^k} \leq M(k+1) |\nabla_X\psi|_{H^{k+4}}$. Then we have that
        \begin{align*}
            |\overline{V} - \overline{V}_0 |_{H^k}
            & 
            =
            \Big{|} 
		\int_{-1+\beta b}^{0} \big{[}\frac{1}{h_b}\nabla_X( \phi_b - \phi_{\mathrm{app}}^1)- \frac{1}{h}(\varepsilon\nabla_X\Big(\dfrac{\zeta}{h_b}\Big)z + \varepsilon\nabla_X\zeta) \partial_z \phi_b \big{ ]}\: \mathrm{d}z \Big{|}_{H^k}
            + \mu^2 \beta^2 |R|_{H^k}.
        \end{align*}
        We now use \eqref{Classical prod est}, \eqref{prod est division} and \eqref{estimate for V bar} to obtain
        \begin{align*}
            |\overline{V} - \overline{V}_0 |_{H^k}
            & 
            \lesssim \Big{|} \frac{1}{h_b}\int_{-1+\beta b}^{0}\nabla_X( \phi_b - \phi_{\mathrm{app}}^1 ) \: \mathrm{d}z\Big{|}_{H^k}
            +
            \ve \Big{|}\frac{1}{h}\nabla_X\Big(\dfrac{\zeta}{h_b}\Big)\int_{-1+\beta b}^{0} z\partial_z \phi_b\: \mathrm{d}z \Big{|}_{H^k}
            \\
            & \hspace{0.5cm}
            +
            \ve \Big{|}\frac{1}{h}\nabla_X\zeta\int_{-1+\beta b}^{0} \partial_z \phi_b\: \mathrm{d}z \Big{|}_{H^k} +  \mu^2 \beta^2 |R|_{H^k}
            \\
            &
            \leq M(k)
            \|\nabla_{X,z}^{\mu} (\phi_b - \phi_{\mathrm{app}}^1 )\|_{H^{k+1,0}(\mathcal{S}_b)} 
            + 
            \ve M(k+1)
            \|\partial_z \phi_b\|_{H^{k,0}(\mathcal{S}_b)}
            \\
            & 
            \hspace{0.5cm}
            +
            M(k)\sum\limits_{j=1}^k\|\nabla_{X,z}^{\mu}\partial_z^{j-1}(\phi_b - \phi_{\mathrm{app}}^1 )\|_{H^{k-j+1,0}(\mathcal{S}_b)} 
            + 
            \ve M(k+1)
            \sum\limits_{j=1}^k\|\partial_z^{j+1} \phi_b\|_{H^{k-j,0}(\mathcal{S}_b)}
            \\
            & 
            \hspace{0.5cm}
            +  \mu^2 \beta^2 M(k+1) |\nabla_X \psi|_{H^{k+4}}
            \\
            & = 
            II_1 + II_2 + II_3 + II_4  + II_5.
        \end{align*}
        We will now estimate each term. To estimate $II_1$, we apply \eqref{Est muve mu2beta2} to get that
        \begin{align*}
            II_1 \leq  \mu (\ve + \ve \beta + \mu \beta^2) M(k+3)|\nabla_X \psi |_{H^{k+4}}.
        \end{align*}
        The estimate on $II_2$, is the same as for $J_2$ in the proof of Proposition \ref{prop V[0,b]}: 
        \begin{align*}
            II_2 
            & \leq
            \ve M(k+1)(\|\nabla^{\mu}_{X,z} (\phi_b-\phi_0)\|_{H^{k+1,0}}+\| \partial_z \phi_0\|_{H^{k,0}})
            \\
            & \leq 
            \mu\ve M(s+3) |\nabla_X \psi|_{H^{k+3}}
            .
        \end{align*}
        Lastly, the estimates on $II_3$ and $II_4$ are similar to the estimates on $J_3$ and $J_4$ in the proof of Proposition \ref{prop V[0,b]}. In particular,  we trade the derivatives in $\partial_z$ with derivatives in the horizontal variable by relating the functions with an elliptic problem. We recall the notation \eqref{useful notation}:
        \begin{equation*}
            f \sim g \iff f(X,z) = r(X)g(X,z),
        \end{equation*}
        with $r \in H^k(\R^d)$ such that $|r|_{H^{k}}\leq  M(k+1)$. Then
        for $\phi_{\mathrm{app}}^1 =\phi_0+ \mu \beta\phi_1$ defined by \eqref{elliptic problem phi 0} and \eqref{System phi1}, we have the relation
        \begin{align*}
            (1+ \mu|\nabla_X\sigma|^2)\partial_z^2 (\phi_b -\phi_{\mathrm{app}}^1)
            & = 
            \mu \Delta_X (\phi_b - \phi_{\mathrm{app}}^1) 
            -
            \mu \ve \tilde{A}[\nabla_X, \partial_z](\phi_b -\phi_{\mathrm{app}}^1)
            -
            \mu \ve \tilde{A}[\nabla_X, \partial_z]\phi_{\mathrm{app}}^1
            \\
            & 
            \hspace{0.5cm}
            -
            \mu|\nabla_X\sigma|^2\partial_z^2\phi_{\mathrm{app}}^1 + \mu^2 \beta^2 F.
        \end{align*}
        where $F$ is some function satisfying \eqref{est on Source 1} and goes into the rest. Consequently, we can trade two derivatives in $z$ by $\Delta_X$, $\nabla_X \partial_z$, and $\partial_z$. From that point, we can deduce that for $k\geq 3$, we have
        \begin{align*}
            \partial_z^k (\phi_b -\phi_{\mathrm{app}}^1) 
            \sim \mu \sum \limits_{\gamma\in \N^d \: |\gamma|\leq k -1} \partial_X^{\gamma}\partial_z\big{(}(\phi_b - \phi_{\mathrm{app}}^1) - \ve \phi_{\mathrm{app}}^1\big{)} + \sum\limits_{j = 1}^k\mu\ve^2\partial_z^j\phi_{\mathrm{app}}^1,
        \end{align*}
        From this estimate, where we control the residual terms $r(X)$ in \eqref{useful notation} with the product estimate \eqref{Classical prod est}, then combine it with \eqref{Est muve mu2beta2} and \eqref{Poincare 1} to get
        \begin{align*}
            II_3
            & \leq M(k+1)(\|\nabla_{X,z}^{\mu}(\phi_b - \phi_{\mathrm{app}}^1)\|_{H^{k+1,0}(\mathcal{S}_b)} + \mu
            \ve|\nabla_{X} \psi|_{H^{k+1}})
            \\
            & \leq \mu( \ve + \ve \beta +\mu \beta^2) M(k+3)|\nabla_X \psi|_{H^{k+3}}.
        \end{align*}
        To conclude, we need an estimate on $II_4$. But since $II_4 = J_4$, we have that
        \begin{align*}
            II_4 \leq \ve (\mu \ve + \mu \beta + \mu) M(k+3)|\nabla_X \psi|_{H^{k+3}}.
        \end{align*}\\

        \noindent
        \underline{Step 3.} \textit{Construction of $\overline{V}_{\mathrm{app}}$.} 
        The next step is to construct $\overline{V}_{\mathrm{app}}$ by replacing $\phi_b$ with $\phi_{\mathrm{app}}$ in \eqref{New V bar}:
        \begin{align}\label{V app integral}
            \overline{V}_{\mathrm{app}}  = 
		\int_{-1+\beta b}^{0} \big{[}\frac{1}{h_b}\nabla_X \phi_{\mathrm{app}} 
            -
            \frac{1}{h}(\varepsilon\nabla_X\Big(\dfrac{\zeta}{h_b}\Big)z + \varepsilon\nabla_X\zeta )\partial_z \phi_{\mathrm{app}} \big{]}\: \mathrm{d}z.
        \end{align}
        Then using \eqref{Phi app}, we obtain that
        \begin{align*}
            \overline{V}_{\mathrm{app}}
            &=
            \int_{-1+\beta b}^0 \dfrac{1}{h_b}\nabla_X\psi \: \mathrm{d}z 
            +
            \int_{-1+\beta b}^0 \dfrac{1}{h_b}\nabla_X\Big( \dfrac{h^2}{h_b^2}(\phi_0-\psi) \Big) \: \mathrm{d}z
            \\
            &\hspace{0.5cm} 
            -
            \int_{-1+\beta b}^0\dfrac{1}{h}(z\varepsilon\nabla_X(\dfrac{\zeta}{h_b}) + \varepsilon\nabla_X\zeta)
            \partial_z \Big( \dfrac{h^2}{h_b^2} (\phi_0-\psi) \Big)\: \mathrm{d}z 
            \\
            & 
            \hspace{0.5cm}
            + \mu\beta \frac{1}{h_b} \int_{-1+\beta b}^0 \nabla_X \phi_1 \mathrm{d}z 
            -
            \mu\ve\beta 
            \int_{-1+\beta b}^0\dfrac{1}{h}(z\nabla_X(\dfrac{\zeta}{h_b}) + \nabla_X\zeta)
            \partial_z \phi_1\: \mathrm{d}z 
            \\
            & = 
            III_1 + III_2 + III_3 + III_4 + III_5.
        \end{align*}
        Clearly, $III_1 = \nabla_X \psi$ and to compute $III_2+III_3$ we use formula \eqref{phi0} for $\phi_0$:
        \begin{align*}
            III_2 + III_3
            & 
            =
            \dfrac{1}{h} \nabla_X \Big( \dfrac{h^3}{h_b^3}\dfrac{\tanh{(\sqrt{\mu}|\mathrm{D}|)}}{\sqrt{\mu}|\mathrm{D}|} 
           \psi \Big) 
            \\
            & 
            \hspace{0.5cm}
            -
            \dfrac{1}{h} \nabla_X \Big( \dfrac{h^3}{h_b^3} \Big(\sinh{(\beta b(X)\sqrt{\mu}|\mathrm{D}|)}\mathrm{sech}{(\sqrt{\mu}|\mathrm{D}|)}\dfrac{1}{\sqrt{\mu}|\mathrm{D}|}\psi -(-1+\beta b) \psi \Big)\Big)
            \\
            & 
            \hspace{0.5cm}
            -
            \varepsilon\beta \zeta h \dfrac{\nabla_X  b}{h_b^3}\Big(\cosh{(\beta b(X)\sqrt{\mu}|\mathrm{D}|)}\mathrm{sech}{(\sqrt{\mu}|\mathrm{D}|)}-1\Big)\psi
            \\
            & = 
            \dfrac{\mu}{3h} \nabla_X \Big( \dfrac{h^3}{h_b^3}\dfrac{3}{\mu |\mathrm{D}|^2}\Big( 1-\dfrac{\tanh{(\sqrt{\mu}|\mathrm{D}|)}}{\sqrt{\mu}|\mathrm{D}|} 
            \Big)\Delta_X \psi \Big) 
            \\
            & 
            \hspace{0.5cm}
            -
            \dfrac{1}{h} \nabla_X \Big( \dfrac{h^3}{h_b^3} \Big(\sinh{(\beta b(X)\sqrt{\mu}|\mathrm{D}|)}\mathrm{sech}{(\sqrt{\mu}|\mathrm{D}|)}\dfrac{1}{\sqrt{\mu}|\mathrm{D}|}\psi -\beta b \psi \Big)\Big)
            + 
            \mu\ve\beta R_5,
        \end{align*}
        where $R_5$ is given by
        \begin{align*}
            R_5
            & = 
            -\zeta h \dfrac{\nabla_X  b}{h_b^3}\Big(\cosh{(\beta b(X)\sqrt{\mu}|\mathrm{D}|)}\mathrm{sech}{(\sqrt{\mu}|\mathrm{D}|)}-1\Big)\frac{1}{\mu|\mathrm{D}|^2}\Delta_X\psi.
        \end{align*} 
       Moreover, using the algebra property of the Sobolev spaces \eqref{Classical prod est}, \eqref{prod est division}, and estimate \eqref{L3}, we have that 
       \begin{equation}\label{Rest R}
           |R_5|_{H^k}\leq M(k+1)|\nabla_X \psi|_{H^{k+1}}.
       \end{equation}
       Next, we see that $III_4$ is already treated in Step 1. and satisfies:
       \begin{align*}
            III_4  
            & = 
            \frac{\mu \beta}{2} \nabla_X\mathrm{F}_4\nabla_X \cdot \mathcal{L}_1^{\mu}[\beta b] \nabla_X \psi
            -
            \frac{\mu\beta^2}{2}\nabla_X \big{(}b\mathrm{F}_4\nabla_X\cdot(b\nabla_X \psi)\big{)}
            -
            \mu \beta^2 (\nabla_Xb)\mathrm{F}_1\nabla_X\cdot(b\nabla_X \psi)
            \\
            &
            \hspace{0.5cm}
            + 
            \mu^2 \beta^2 R_6,
       \end{align*}
       for some function $R_6$ satisfying $|R_6|_{H^k} \leq M(k+1) |\nabla_X\psi|_{H^{k+4}}$. Lastly, for the term $III_5$, we use integration by parts to find the expressions
       \begin{align*}
           III_5 
           & =
           \mu\ve\beta 
            \int_{-1+\beta b}^0\dfrac{1}{h}(z\nabla_X(\dfrac{\zeta}{h_b}) + \nabla_X\zeta)
            \partial_z \phi_1\: \mathrm{d}z
            \\ 
            & = 
            -\mu\ve\beta \frac{h_b^2}{h}  \nabla_X(\frac{\zeta}{h_b} )
            \Big{(}
          \frac{1}{2}\mathrm{F}_4\nabla_X \cdot(\mathcal{L}_1^{\mu}[\beta b] \nabla_X \psi)
            +\frac{\tanh(\sqrt{\mu}|\mathrm{D}|)}{\sqrt{\mu}|\mathrm{D}|}\nabla_X \cdot(\mathcal{L}_1^{\mu}[\beta b] \nabla_X \psi)\Big{)}
            \\ 
            & 
            \hspace{0.5cm}
            -
            \mu\ve\beta \frac{h_b}{h}\nabla_X \zeta \frac{\tanh(\sqrt{\mu}|\mathrm{D}|)}{\sqrt{\mu}|\mathrm{D}|} \nabla_X \cdot(\mathcal{L}_1^{\mu}[\beta b] \nabla_X \psi)
       \end{align*}
       The multipliers are bounded on $H^{k}(\R^d)$ and combined with Proposition \ref{Prop op} we get that
       \begin{align*}
           |III_5|_{H^k} \leq \mu \ve \beta M(k+1)|\nabla_X \psi|_{H^{k+1}}.
       \end{align*}
       
       Adding these identities in the definition of $\overline{V}_{\mathrm{app}}$ we get that
       \begin{align*}
           \overline{V}_{\mathrm{app}}
           & = 
           \int_{-1+\beta b}^{0} \big{[}\frac{1}{h_b}\nabla_X \phi_{\mathrm{app}} - \dfrac{1}{h}(z\varepsilon\nabla_X(\dfrac{\zeta}{h_b}) + \varepsilon\nabla_X\zeta) \partial_z \phi_{\mathrm{app}} \big{]}\: \mathrm{d}z + \mu \ve \beta R_7
           \\ 
           & = 
           \nabla_X \psi 
           +
           \frac{\mu}{h}\nabla_X 
           \Big{(}
            \frac{h^3}{h_b^3} \mathrm{F}_2 \psi
           \Big{)}
           +
           \frac{\mu\beta }{h}
           \nabla_X \Big{(} \frac{h^3}{h_b^3} \mathcal{L}_2^{\mu}[\beta b] \psi\Big{)}
           +
           \frac{\mu  \beta}{2}\nabla_X\mathrm{F}_4\nabla_X \cdot \big(\mathcal{L}_1^{\mu}[\beta b] \nabla_X \psi\big)
           \\ 
           & 
           \hspace{0.5cm}
            -
            \frac{\mu\beta^2}{2}\nabla_X \big{(}b\mathrm{F}_4\nabla_X\cdot(b\nabla_X \psi)\big{)}
            -
            \frac{\mu \beta^2}{2} (\nabla_Xb)\mathrm{F}_1\nabla_X\cdot(b\nabla_X \psi),
       \end{align*}
       where $R_7$ is some generic function satisfying $|R_7|_{H^k} \leq M(k+1) |\nabla_X \psi|_{H^{k+4}}$. \\

         \noindent
        \underline{Step 4.} \textit{Proof of \eqref{V app est}.} We use the definition \eqref{V app integral} of $\overline{V}_{\mathrm{app}}$ and  \eqref{estimate for V bar} to identify the terms
        \begin{align*}
            |\overline{V} - \overline{V}_{\mathrm{app}} |_{H^k}
            & 
            =
            \Big{|} 
		\int_{-1+\beta b}^{0} \big{[}\frac{1}{h_b}\nabla_X( \phi_b - \phi_{\mathrm{app}})- \frac{1}{h}(\varepsilon\nabla_X\Big(\dfrac{\zeta}{h_b}\Big)z + \varepsilon\nabla_X\zeta) \partial_z  (\phi_b-\phi_{\mathrm{app}}) \big{ ]}\: \mathrm{d}z \Big{|}_{H^k}
            \\
            & 
            \leq
            M(k)\|\nabla_{X,z}^{\mu}( \phi_b - \phi_{\mathrm{app}})\|_{H^{k+1,0}(\mathcal{S}_b)} 
            +
            M(k+1)\|\partial_z(\phi_b-\phi_{\mathrm{app}})\|_{H^{k,0}(\mathcal{S}_b)}
            \\
            & 
            \hspace{0.5cm}
            +
            M(k)\sum\limits_{j=1}^k\|\nabla_{X,z}^{\mu}\partial_z^{j-1}(\phi_b - \phi_{\mathrm{app}})\|_{H^{k-j+1,0}(\mathcal{S}_b)}  
            \\
            & 
            \hspace{0.5cm}
            +
            \ve M(k+1)
            \sum\limits_{j=1}^k\|\partial_z^{j+1} (\phi_b - \phi_{\mathrm{app}})\|_{H^{k-j,0}(\mathcal{S}_b)}
            \\
            & = 
            IV_1 + IV_2 + IV_3 + IV_4
            .
        \end{align*}
        For the two first terms we use estimate \eqref{Est app} to get that
        \begin{align*}
             IV_1 + IV_2 \leq (\mu^2 \ve + \mu \ve \beta + \mu^2 \beta^2 )M(k+3)|\nabla_X\psi|_{H^{k+4}}.
        \end{align*}
        For the estimate of $IV_3$ and $IV_4$, we will use the same ideas that we used for $I_3$ and $I_4$. We first note that we only need to work with 
        $$\phi^2_{\mathrm{app}} : = \phi_0 + \mu \beta \phi_1 + \mu\ve \phi_2,$$
        constructed in Propositions \ref{Proposition phi0}, \ref{Prop phi 1} and \ref{Prop phi 2}.  Indeed, from Observation \ref{obs phi app} we used the approximation \eqref{Formula phi0} and depends polynomially on $z$. So formula \eqref{Phi app} is related by $\phi^2_{\mathrm{app}}$ through the relation
        \begin{equation}\label{phi app 1 and phi app}
            \partial_z^k(\phi^2_{\mathrm{app}} - \phi_{\mathrm{app}})  \sim \mu(\mu \ve +  \ve \beta)\partial_z^k(z^2R),
        \end{equation}
        for $k\geq 1$ and where $R = R(X)$ satisfies \eqref{Rest phi app}. Then by definition of $\phi_0$, $\phi_1$, and $\phi_2$  we have that
        \begin{align*}
            \partial_z^2(\phi_b - \phi^2_{\mathrm{app}})
            & = - \mu \Delta_X(\phi_b - \phi_0 - \mu \beta \phi_1) - \mu \ve A[\nabla_X, \partial_z](\phi_b - \phi_0) - \mu\ve (A[\nabla_X ,\partial_z]\phi_0 + \partial_z^2 \phi_2)
            \\
            & = 
            - \mu \Delta_X(\phi_b - \phi_0 - \mu \beta \phi_1) - \mu \ve \tilde{A}[\nabla_X, \partial_z](\phi_b - \phi_0) - \mu\ve (A[\nabla_X ,\partial_z]\phi_0 + \partial_z^2 \phi_2)
            \\
            & \hspace{0.4cm}
            -
            \mu|\nabla_X \sigma|^2 \partial_z^2(\phi_b - \phi_0- \mu \beta \phi_1-\mu \ve \phi_2)) - \mu^2 \beta |\nabla_X \sigma|^2\partial_z^2\phi_1+ \mu^2 \ve |\nabla_X \sigma|^2\partial_z^2\phi_2,
        \end{align*}
        so that
        \begin{align*}
           (1+ \mu|\nabla_X \sigma|^2 )\partial_z^2(\phi_b - \phi^2_{\mathrm{app}})
           & =
           -
           \mu \Delta_X(\phi_b - \phi_0 - \mu \beta \phi_1)
           -
           \mu \ve \tilde{A}[\nabla_X, \partial_z](\phi_b - \phi_0) 
           \\
            & \hspace{0.5cm}
            -
            \mu\ve (A[\nabla_X ,\partial_z]\phi_0 + \partial_z^2 \phi_2)
            -
            \mu^2|\nabla_X \sigma|^2\partial_z^2( \beta \phi_1+ \ve \partial_z^2\phi_2).
        \end{align*}
        Here derivatives of $\phi_1$ is bounded using Proposition \ref{Prop T1 and T2} and by definition of $\sigma$, given by \eqref{grad sigma}, we have that
        \begin{equation*}
            \mu^2 \beta |\nabla_X \sigma|^2\partial_z^2\phi_1 \sim \mu^2 \ve^2 \beta \partial_z^2\phi_1.
        \end{equation*}
        Moreover, since $\phi_2$ is only polynomial in $z$ can use the notation above \eqref{useful notation} to see the last term as
        \begin{equation*}
            \mu^2 \ve |\nabla_X \sigma|^2\partial_z^2\phi_2 \sim \mu^2 \ve^3( 1+ z+z^2).
        \end{equation*}
        Also, we see from observation \ref{obs approx of A and tildeB} that
        \begin{align*}
            \mu\ve (A[\nabla_X ,\partial_z]\phi_0 + \partial_z^2 \phi_2) \sim \mu \ve \Delta_X(\phi_0 - \psi) + \mu \ve (1+z)\nabla_Xf\cdot \nabla_X\partial_z \phi_0 
            +
            \mu \ve z\partial_z \phi_0,
        \end{align*}
        for some $f \in H^{k+3}(\R^d)$. Then arguing as in Step $2$, we get the induction relation for $k\geq 3$:
        \begin{align*}
            \partial_z^k(\phi_b - \phi^2_{\mathrm{app}})
            & \sim
            \mu \sum \limits_{\gamma\in \N^d \: |\gamma|\leq k-1 } 
            \partial_X^{\gamma}\partial_z
            \Big{(}(\phi_b - \phi_{\mathrm{app}}^1) 
            +
            \ve (\phi_b - \phi_0)\Big{)} 
            \\ 
            & \hspace{0.5cm}
            +
            \mu \ve \sum\limits_{j=1}^{k-2} \partial_z^{j}\big{(}\Delta_X\phi_0
            +
            \nabla_Xf\cdot \nabla_X\partial_z \phi_0
            +
            \partial_z \phi_0\big{)}
            +
            \mu^2 \ve^2 \beta \sum\limits_{j=1}^{k} \partial_z^j\phi_1.
        \end{align*}
        Then as a result, we use these estimates with the product estimate \eqref{Classical prod est}, \eqref{Prop F0}, and \eqref{Prop T1 and T2}  to obtain the bound
        \begin{align*}
            \sum\limits_{j=1}^k\|\partial_z^{j+1} (\phi_b - \phi^2_{\mathrm{app}})\|_{H^{k-j}(\mathcal{S}_b)} & 
            \lesssim \mu M(k+1)\Big{(}\|\partial_z(\phi_b - \phi_{\mathrm{app}}^1)\|_{H^{k,0}(\mathcal{S}_b)} + \ve\|\partial_z(\phi_b - \phi_{0})\|_{H^{k,0}(\mathcal{S}_b)} 
            \\
            & 
            + 
            \mu^2 \ve |\nabla_X\psi|_{H^{k+1}} + \mu^2\ve^2 \beta |\nabla_X \cdot \mathcal{L}_1^{\mu}[\beta b] \nabla \psi|_{H^{k+1}}
            \Big{)}
            ,
        \end{align*}
        from which the estimate on $IV_4$ follows by \eqref{L1 est},  the relation \eqref{phi app 1 and phi app} with estimate \eqref{Rest phi app}, and then  \eqref{Est mu2ve} and \eqref{est phi0}:
        \begin{align*}
            IV_4 
            & \leq M(k)\sum\limits_{j=1}^k\|\nabla_{X,z}^{\mu}\partial_z^{j-1}(\phi_b - \phi^2_{\mathrm{app}})\|_{H^{k-j+1,0}(\mathcal{S}_b)} 
            +
            \mu(\mu \ve + \ve \beta ) |\nabla_X\psi|_{H^{k+4}}
            \\ 
             & \leq 
            \mu(\mu \ve + \ve \beta ) M(k+2)|\nabla_X\psi|_{H^{k+4}}.
        \end{align*}
        The same estimate holds for $IV_5$, and therefore completes the proof.

    \end{proof}

    \subsection{Multi-scale expansions of $\mathcal{G}^{\mu}$}\label{mult G}
    In this section, we give the expansions of the Dirichlet-Neumann operator. We  will use that $\mathcal{G}^{\mu}$ is directly related to $\overline{V}$ through \eqref{D-N operator} and \eqref{New V bar}. In particular, we have the following result two results.
    \begin{prop}\label{Prop G[0,b]}Under the provisions of Proposition \ref{prop V[0,b]} we define
        \begin{equation*}\label{def G[0,b]}
            \frac{1}{\mu}\mathcal{G}_b \psi = -\nabla_{X} \cdot \Big{(} \frac{h}{h_b} \int_{-h_b}^0 \nabla_X \phi \: \mathrm{d}z\Big{)},
        \end{equation*}
        and for $\psi \in \dot{H}^{s+5}(\R^d)$ we have the estimate
        \begin{equation}\label{est G[0,b]}
            \dfrac{1}{\mu}|\mathcal{G}^{\mu}\psi 
         - \mathcal{G}_b \psi  |_{H^{s}} \leq \mu\ve M(s+3)|\nabla_X \psi|_{H^{s+4}}.
        \end{equation}

    \end{prop}

    \begin{proof}
        By definition of the Dirichlet-Neumann operator \eqref{Relation: D-N op} and  Proposition \ref{prop V[0,b]} we have the result
        \begin{align*}
            \dfrac{1}{\mu}|\mathcal{G}^{\mu} \psi 
         - \mathcal{G}_b \psi|_{H^{s}} 
            & = |\nabla_X \cdot (h(\overline V - \overline{V}[0,\beta b]\psi))|_{H^s}
            \\ 
            & 
            \leq \mu \ve  M(s+3)|\nabla_X \psi|_{H^{s+4}}.
        \end{align*}
    \end{proof}

    \color{black}

    \begin{prop}\label{Cor G} 
    Under the provisions of Proposition \ref{Prop V}, we can define the approximations
    \begin{align*}
        \frac{1}{\mu}\mathcal{G}_0\psi & = 
        -
        \mathrm{F}_1\Delta_X \psi  
        -\beta(1+
        \frac{\mu}{2}\mathrm{F}_4 \Delta_X) 
        \nabla_X\cdot \big(\mathcal{L}_{1}^{\mu}[\beta b]\nabla_X \psi \big) 
        -
        \varepsilon \nabla_X\cdot \big( \zeta\mathrm{F}_1 \nabla_X\psi \big)
        \\
        &
        \hspace{0.5cm}
        +
        \frac{\mu  \beta^2}{2}\nabla_X\cdot\big(\mathcal{B}[\beta b] \nabla_X \psi\big),
    \end{align*}
    and
    \begin{align}\label{second approximation of the DN operator}
        \frac{1}{\mu}\mathcal{G}_1 \psi & = -\nabla_X\cdot (h \nabla_X \psi) 
         -
         \frac{\mu}{3} \Delta_X\Big{(}\frac{h^3}{h_b^3}\mathrm{F}_2 \Delta_X \psi \Big{)} 
         -
         \mu \beta \Delta_X \big{(}   \mathcal{L}_{2}^{\mu}[\beta b]\Delta_X \psi\big{)}
         \\ 
         & 
         \hspace{0.5cm}\notag
         -
        \frac{\mu \beta}{2} \mathrm{F}_4 \Delta_X \nabla_X \cdot \big(\mathcal{L}_1^{\mu}[\beta b] \nabla_X \psi\big) 
        +
        \frac{\mu  \beta^2}{2} \nabla_X\cdot\big(\mathcal{B}[\beta b] \nabla_X \psi\big),
    \end{align}
    where
    \begin{align}\label{B bathymetry}
        \mathcal{B}[\beta b]\nabla_X \psi &= 
            b \mathrm{F}_4\nabla_X(\nabla_X\cdot(b\nabla_X \psi))
            \\ 
            & \hspace{0.5cm} \notag
            +
            h_b
             \nabla_X \big{(}b\mathrm{F}_4\nabla_X\cdot(b\nabla_X \psi))
             +
             2h_b(\nabla_Xb)\mathrm{F}_1\nabla_X\cdot(b\nabla_X \psi).
    \end{align}
    Moreover, for $\psi \in \dot{H}^{s+6}(\R^d)$ we have the following estimates on the Dirichlet-Neumann operator 
    \begin{align}\label{G0}
         & \dfrac{1}{\mu}|\mathcal{G}^{\mu}\psi 
         - \mathcal{G}_0\psi  |_{H^{s}} \leq (\mu\ve + \mu^2 \beta^2) M(s+3)|\nabla_X \psi|_{H^{s+5}}
         \\\label{G1}
         & \dfrac{1}{\mu}|\mathcal{G}^{\mu}\psi 
         - \mathcal{G}_1\psi |_{H^{s}} \leq (\mu^2\ve +\mu \ve \beta + \mu^2 \beta^2) M(s+3)|\nabla_X \psi|_{H^{s+5}}.
    \end{align}

    \end{prop}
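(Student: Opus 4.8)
The analytic heart of the matter is already contained in Proposition~\ref{Prop V}; the present statement is essentially a translation of that result through the exact formula \eqref{Relation: D-N op}, together with an algebraic rewriting. I would simply \emph{define} the two approximations by $\tfrac1\mu\mathcal G_0\psi:=-\nabla_X\cdot(h\overline V_0)$ and $\tfrac1\mu\mathcal G_1\psi:=-\nabla_X\cdot(h\overline V_{\mathrm{app}})$, with $\overline V_0$, $\overline V_{\mathrm{app}}$ the vector fields of \eqref{V0}, \eqref{V app} and $h=1+\ve\zeta-\beta b$. Since \eqref{Relation: D-N op} reads $\tfrac1\mu\mathcal G^\mu\psi=-\nabla_X\cdot(h\overline V)$, subtracting gives
\[
\tfrac1\mu\mathcal G^\mu\psi+\nabla_X\cdot(h\overline V_0)=-\nabla_X\cdot\big(h(\overline V-\overline V_0)\big),
\]
and likewise with $\overline V_{\mathrm{app}}$. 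Under \eqref{non-cavitation 1}--\eqref{non-cavitation 2} the function $h$ and its derivatives are controlled in $H^s$ by $M(s)$, so the product estimate \eqref{Classical prod est} combined with \eqref{V0 est}, resp.\ \eqref{V app est}, applied with $s$ replaced by $s+1$ to absorb the derivative produced by $\nabla_X\cdot$ (the loss of one derivative being already budgeted in the index $s+5$, up to a harmless increase of the argument of $M$), yields exactly the bounds \eqref{G0} and \eqref{G1}.

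It then remains to check that $-\nabla_X\cdot(h\overline V_0)$ and $-\nabla_X\cdot(h\overline V_{\mathrm{app}})$ agree with the announced closed-form expressions up to a remainder of order $O(\mu\ve+\mu^2\beta^2)$, resp.\ $O(\mu^2\ve+\mu\ve\beta+\mu^2\beta^2)$, which is then absorbed into $R_1,R_2$. I would write $h=h_b+\ve\zeta$, $h_b=1-\beta b$, distribute, and simplify term by term. The combination $\tfrac h{h_b}(\mathrm F_1+\beta\mathcal L_1^\mu[\beta b])\nabla_X\psi$ is rewritten as $\mathrm F_1\nabla_X\psi+\beta\mathcal L_1^\mu[\beta b]\nabla_X\psi+\ve\zeta\mathrm F_1\nabla_X\psi$ modulo $O(\mu\ve)$, using the crucial identity $(\mathrm F_1+\beta\mathcal L_1^\mu[\beta b])\nabla_X\psi=h_b\nabla_X\psi+O(\mu)$, which follows from \eqref{L approx} and $\mathrm F_1=\mathrm{Id}+O(\mu)$; the terms carrying an explicit $\mu\beta$ prefactor, once multiplied by $h$, split into an $O(\mu\ve\beta)\subset O(\mu\ve)$ piece and an $O(\mu\beta^2)$ piece, and the latter is recombined — again via \eqref{L approx}, \eqref{L2 next order}, and $\mathrm F_3,\mathrm F_4=\mathrm{Id}+O(\mu)$ — with the $O(\mu\beta^2)$ terms already present in $\overline V_0$/$\overline V_{\mathrm{app}}$ to reconstruct $\mathcal B[\beta b]$ as in \eqref{B bathymetry}. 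For $\mathcal G_1$ one moreover uses $\mathcal L_2^\mu[\beta b]=-(\mathcal L_1^\mu[\beta b]+b)/(\mu|\mathrm D|^2)$, the relations $\Delta_X=\nabla_X\cdot\nabla_X$ and $\mathrm F_2\Delta_X=-\tfrac3\mu(\mathrm{Id}-\mathrm F_1)$, and the fact that commutators of multiplication by $h^3/h_b^3$ with Fourier multipliers are $O(\mu)$, in order to bring $-\mu\Delta_X\!\big(\tfrac{h^3}{h_b^3}(\cdots)\big)$ into the form displayed in \eqref{second approximation of the DN operator}. Each discarded term is of the form $\mu^k\ve^l\beta^m$ with $\mu^k\ve^l\beta^m\lesssim\mu\ve+\mu^2\beta^2$ (resp.\ $\lesssim\mu^2\ve+\mu\ve\beta+\mu^2\beta^2$) and is estimated in $H^s$ by $M(s+3)\,|\nabla_X\psi|_{H^{s+5}}$ via \eqref{L1 est}, \eqref{id op 3}, \eqref{L3}, Proposition~\ref{Simple est}, and the product rules.

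The main obstacle is not analytic — Proposition~\ref{Prop V} has done that work — but combinatorial. When one distributes $h=1+\ve\zeta-\beta b$ over the many terms of $\overline V_0$ and $\overline V_{\mathrm{app}}$ and then replaces each pseudo-differential operator by its leading symbol, one must verify that (i) the residual $O(\ve\beta)$ contributions genuinely cancel (this is exactly what the identity $\mathrm F_1+\beta\mathcal L_1^\mu[\beta b]\approx h_b$ achieves), (ii) the residual $O(\mu\beta^2)$ contributions assemble \emph{exactly} into $\tfrac{\mu\beta^2}{2}\nabla_X\cdot(\mathcal B[\beta b]\nabla_X\psi)$, with the correct coefficients and the correct placement of the factors $h$, $h_b$, $h/h_b$, and (iii) everything left over is of the claimed order. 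Keeping track of the signs and of which factor ($h$, $h_b$, or $h/h_b$) sits in front of each term is where the bulk of the bookkeeping lies; nothing beyond the already-cited estimates is needed.
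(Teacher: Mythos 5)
Your proposal is correct and follows essentially the same route as the paper: the authors also set $\tfrac1\mu\mathcal G_0\psi=-\nabla_X\cdot(h\overline V_0)$ and $\tfrac1\mu\mathcal G_1\psi=-\nabla_X\cdot(h\overline V_{\mathrm{app}})$, invoke \eqref{Relation: D-N op} together with the estimates \eqref{V0 est}, \eqref{V app est}, and then perform exactly the algebraic bookkeeping you describe (splitting $h=h_b+\ve\zeta$, using \eqref{L approx}, \eqref{id op 3} and Proposition \ref{Simple est} to cancel the $O(\ve\beta)$ pieces and reassemble the $O(\mu\beta^2)$ pieces into $\mathcal B[\beta b]$). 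The only cosmetic difference is your passing appeal to commutator bounds for $h^3/h_b^3$, which is not actually needed here since the relevant smallness comes from $h^3/h_b^3-1=O(\ve)$; otherwise the argument, including the (shared) harmless looseness in the index of $M$, matches the paper's proof.
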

    \begin{proof}
        To prove inequality \eqref{G0}, we introduce a generic function $R$ such that
        \begin{equation}\label{Rest G0}
            |R|_{H^s} \leq M(s+3) |\nabla_X \psi|_{H^{s+5}}.
        \end{equation}
        Then  note that the first two terms in $\mathcal{G}_0$ are obtained from the first two terms in $\overline{V}_0$. Indeed, let $G = \nabla_X \cdot \mathcal{L}_1^{\mu}[\beta b] \nabla_X \psi$ and use formula \eqref{V0} to observe that
        \begin{align*}
            \frac{1}{\mu}\mathcal{G}_0\psi
            & =
            -\nabla_X \cdot(h \overline{V}_0)
            \\
            & 
            =-
            \nabla_X \cdot \Big( \dfrac{h}{h_b} \mathrm{F}_1\nabla_X\psi\Big) 
             -
             \beta \nabla_X\cdot \Big(\dfrac{h}{h_b}\mathcal{L}_{1}^{\mu}[\beta b]\nabla_X \psi\Big)
             \\ 
             & 
             \hspace{0.5cm}
             -
             \frac{\mu\beta}{2}\nabla_X\cdot(h\mathrm{F}_4\nabla_X G) 
             +
             \frac{\mu\beta^2}{2}\nabla_X \cdot\Big{(}h\big{(}
             \nabla_X \big{(}b\mathrm{F}_4\nabla_X\cdot(b\nabla_X \psi)\big{)}
             +
             2(\nabla_Xb)\mathrm{F}_1\nabla_X\cdot(b\nabla_X \psi)\big{)}\Big{)}
             \\ 
             & =\mathrm{RHS}_1 + \mathrm{RHS}_2 + \mathrm{RHS}_3,
        \end{align*}
        where
        \begin{align*}
            \mathrm{RHS}_1
            : & = 
            -
            \nabla_X\cdot \Big( \dfrac{h}{h_b}\mathrm{F}_1\nabla_X\psi\Big) 
            -
            \beta \nabla_X\cdot \Big(\dfrac{h}{h_b}\mathcal{L}_{1}^{\mu}[\beta b]\nabla_X \psi\Big) 
            \\
            & =
            -
            \mathrm{F}_1\Delta_X\psi 
            -
            \varepsilon \nabla_X\cdot\big( \dfrac{\zeta}{h_b}\mathrm{F}_1\nabla_X\psi \big) 
            -
            \beta\nabla_X\cdot \big( \mathcal{L}_{1}^{\mu}[\beta b]\nabla_X \psi \big) 
            -
            \varepsilon\beta \nabla_X\cdot \big(\dfrac{\zeta}{h_b} \mathcal{L}_{1}^{\mu}[\beta b]\nabla_X \psi \big),
        \end{align*}
        and we use \eqref{L approx} to get the approximation
        \begin{align*}
            \mathcal{L}_{1}^{\mu}[\beta b]\nabla_X \psi = -\beta b \nabla_X\psi + \mu R.
        \end{align*}
        Then we obtain that
        \begin{align*}
            \mathrm{RHS}_1
            &= 
            -
            \mathrm{F}_1\Delta_X\psi  
            -
            \beta \nabla_X\cdot \big( \mathcal{L}_{1}^{\mu}[\beta b]\nabla_X \psi \big) 
            -
            \varepsilon \nabla_X\cdot\big( \dfrac{\zeta}{h_b}\mathrm{F}_1\nabla_X\psi \big) 
            +
            \varepsilon \nabla_X\cdot \big(\dfrac{\zeta}{h_b} \beta b \nabla_X \psi \big) + \mu\varepsilon R
            \\
             &=
             -
             \mathrm{F}_1\Delta_X \psi  
             -
             \beta\nabla_X\cdot \big(\mathcal{L}_{1}^{\mu}[\beta b]\nabla_X \psi \big) 
             -
             \varepsilon \nabla_X\cdot \big( \zeta\mathrm{F}_1 \nabla_X\psi \big) +\mu \ve R.
        \end{align*} 
        For the remaining three terms, we first note that
        \begin{align*}
            \mathrm{RHS}_2
            &: = 
            -\frac{\mu\beta}{2}\nabla_X\cdot(h\mathrm{F}_4\nabla_X G) 
            \\
            & = 
            -
            \frac{\mu \beta}{2}  \mathrm{F}_4 \Delta_X G
            +
            \frac{\mu  \beta^2 }{2}\nabla_X\cdot(b \mathrm{F}_4\nabla_X G)
            +
            \frac{\mu \ve\beta }{2} R_1,
        \end{align*}
        where $R_1$ is given by
        \begin{align*}
            R_1 = 
            -\nabla_X\cdot(\zeta \mathrm{F}_4 \nabla_X G),
        \end{align*}
        Using the estimates in Proposition \ref{Simple est} and \eqref{L approx} allows us to put $R_1$ in the rest $R$ satisfying  \eqref{Rest G0}. Moreover, since $G = \nabla_X \cdot \big(\mathcal{L}_1^{\mu}[\beta b] \nabla_X \psi\big)$, we obtain
        \begin{align*}
            \mathrm{RHS}_2 
             & = 
             -
            \frac{\mu \beta }{2}\mathrm{F}_4 \Delta_X  \nabla_X \cdot \big(\mathcal{L}_1^{\mu}[\beta b] \nabla_X \psi \big) 
            +
            \frac{\mu  \beta^2}{2} \nabla_X\cdot\big(b \mathrm{F}_4\nabla_X(\nabla_X\cdot(b\nabla_X \psi)) \big)
            \\ 
            &
            \hspace{0.5cm}
            +
            (\mu \ve \beta + \mu^2 \beta^2) R.
        \end{align*}
        To conclude, we identify the remaining terms with the ones in $\nabla_X\cdot\big(\mathcal{B}[\beta b]\nabla_X \psi\big)$ by \eqref{B bathymetry}, and we conclude by \eqref{V0 est} that
        \begin{align*}
            \dfrac{1}{\mu}|\mathcal{G}^{\mu} \psi 
         - \mathcal{G}_0 \psi|_{H^{s}} 
            & = |\nabla_X \cdot (h(\overline V - \overline{V}_0))|_{H^s}
            \\ 
            & 
            \leq M(s+3)|\nabla_X \psi|_{H^{s+5}}.
        \end{align*}

        The proof of  inequality \eqref{G1} is similar, where we first use formula \eqref{V app} to get that
        \begin{align*}
            \frac{1}{\mu} \mathcal{G}_1\psi 
            & =
            -\nabla_X\cdot(h\overline{V}_{\mathrm{app}})  
            \\ 
            & = 
            -\nabla_X\cdot(h\nabla_X \psi )
           -
           \mu\Delta_X
           \Big{(}
            \frac{h^3}{h_b^3} \mathrm{F}_2 \psi
           \Big{)}
           -
           \mu\beta 
           \Delta_X\Big{(} \frac{h^3}{h_b^3} \mathcal{L}_2^{\mu}[\beta b] \psi\Big{)}
           \\ \notag
           & 
           \hspace{0.5cm}
           -
           \frac{\mu \beta}{2} \nabla_X\cdot\big(h\nabla_X\mathrm{F}_4\nabla_X \cdot \big(\mathcal{L}_1^{\mu}[\beta b] \nabla_X \psi\big)\big)
           \\ \notag
           & 
           \hspace{0.5cm}
             +
             \frac{\mu \beta^2}{2}\nabla_X \cdot\Big(h  (\nabla_X \big{(}b\mathrm{F}_4\nabla_X\cdot(b\nabla_X \psi)\big{)}
             +
             2(\nabla_Xb)\mathrm{F}_1\nabla_X\cdot(b\nabla_X \psi))\Big{)}.
        \end{align*}
        Then using the same arguments as for $\mathcal{G}_0$, for the last three terms, we know there is a function $R$ such that
        \begin{align*}
            \frac{1}{\mu} \mathcal{G}_1\psi 
            & =
            -\nabla_X\cdot(h\nabla_X \psi )
           -
           \mu\Delta_X
           \Big{(}
            \frac{h^3}{h_b^3} \mathrm{F}_2 \psi
           \Big{)}
           -
           \mu\beta 
           \Delta_X\Big{(} \frac{h^3}{h_b^3} \mathcal{L}_2^{\mu}[\beta b] \psi\Big{)}
           \\ \notag
           & 
           \hspace{0.5cm}
           -
           \frac{\mu \beta}{2} \mathrm{F}_4\Delta_X\nabla_X \cdot \big(\mathcal{L}_1^{\mu}[\beta b] \nabla_X \psi\big)
            +
            \frac{\mu  \beta^2}{2} \nabla_X\cdot\big(\mathcal{B}[\beta b] \nabla_X \psi\big) + (\mu \ve \beta + \mu^2 \beta^2)R,
        \end{align*}
        where $R$ satisfies \eqref{Rest G0}. Thus, we only use \eqref{id op 3} to say 
        \begin{equation}\label{Approx L2 beta}
        	\mu \beta \mathcal{L}_2^{\mu}[\beta b] =   \mu \beta R. 
        \end{equation}
        and combine it with the observation $\frac{h^3}{h_b^3}-1 = \ve R$, allowing us to neglect the term
        \begin{align*}
        	\mu \beta  \Delta_X  \Big{(} ( \frac{h^3}{h_b^3} -1)\mathcal{L}_{2}^{\mu}[\beta b]\Delta_X \psi\Big{)} = \mu \ve \beta R.
        \end{align*}
       By estimate \eqref{V app est} we conclude that \eqref{G1} holds true.

    \end{proof}

    \section{Derivation of Boussinesq type systems with bathymetry}\label{WH B} 

    In this section, we derive a family of weakly dispersive Boussinesq systems in the shallow water regime with precision $O(\mu\ve)$ and $O(\mu\ve+ \mu^2 \beta^2)$.
    
    \subsection{Derivation of a Boussinesq type system with precision $O(\mu\ve)$} 
    
    We will now derive a system with precision $O(\mu\ve)$. This system is defined implicitly through the solution of an elliptic problem on a fixed domain with solution $\phi$ which depends on time through the Dirichlet data $\psi$. 
    \begin{thm}
        Let $\mathcal{G}_b$ be defined by \eqref{def G[0,b]}. Then for any $\mu \in (0, 1]$, $\ve \in [0,1]$, and $\beta\in [0,1]$ the water waves equations \eqref{Water wave equations} are consistent, in the sense of Definition \ref{Consistency} with $n= 5$, at order $O(\mu\ve)$ with the Boussinesq type system:
        \begin{align}\label{Whitham boussinesq mu ve}
            \begin{cases}
                \partial_t \zeta - \frac{1}{\mu}\mathcal{G}_b \psi= 0\\
                \partial_t \psi + \zeta + \dfrac{\varepsilon}{2}|\nabla_X \psi|^2 = 0,
            \end{cases}
        \end{align}
    \end{thm}

    \begin{proof}
        For the first equation we use the approximation given  in Proposition \ref{Prop G[0,b]}. While for the second equation we simply use \eqref{G est 1} to replace the Dirichlet-Neumann operator by terms of order $O(\mu \ve)$.
    \end{proof}

    \subsection{Derivation of Boussinesq type systems with precision $O(\mu\ve + \mu^2 \beta^2)$}
    The system derived in this section will have the benefit of being explicit. This will reduce the computational cost from a numerical perspective, where the price we pay is given by an additional term of order $\mu^2 \beta^2$. However, the system has improved dispersive properties when compared to classical models. Moreover, since the precision is of higher order in $\beta$, these systems can handle larger amplitude topography variations. The first result of this section reads: \color{black}
    \begin{thm}
        Let $\mathrm{F}_1$ and $\mathrm{F}_4$ be the two Fourier multipliers given in Definition \ref{Fourier mult}, and let $\mathcal{L}_1^{\mu}$ be given in Definition \ref{Prop op}. Then for any $\mu \in (0, 1]$, $\ve \in [0,1]$, and $\beta\in [0,1]$ the water waves equations \eqref{Water wave equations} are consistent, in the sense of Definition \ref{Consistency} with $n=6$, at order $O(\mu\ve + \mu^2 \beta^2)$ with the Boussinesq type system:
        \begin{align}\label{Whitham boussinesq}
            \begin{cases}
                \partial_t \zeta + \mathrm{F}_1\Delta_X \psi +  \beta(1     +
                \frac{\mu }{2} \mathrm{F}_4\Delta_X) \nabla_X\cdot ( \mathcal{L}_{1}^{\mu}[\beta b]\nabla_X \psi) \\
                \hspace{4cm}+ \varepsilon \mathrm{G}_1 \nabla_X\cdot ( \zeta \mathrm{G}_2\nabla_X\psi) - \frac{\mu \beta^2}{2} \nabla_X\cdot\big(\mathcal{B}[\beta b] \nabla_X \psi\big) = 0\\
                \partial_t \psi + \zeta + \dfrac{\varepsilon}{2}(\mathrm{G}_1\nabla_X \psi)\cdot (\mathrm{G}_2\nabla_X \psi) = 0,
            \end{cases}
        \end{align}
        where 
        \begin{align*}
            \mathcal{B}[\beta b]\bullet &= 
            b \mathrm{F}_4\nabla_X(\nabla_X\cdot(b\bullet))
            +
            h_b
             \nabla_X \big{(}b\mathrm{F}_4\nabla_X\cdot(b\bullet))
             +
             2h_b(\nabla_X b)\mathrm{F}_1\nabla_X\cdot(b\bullet),
        \end{align*}
        and $\mathrm{G}_1, \mathrm{G}_2$ are any Fourier multipliers such that for any $s \geq 0$ and $u \in H^{s+2}(\mathbb{R}^d)$, we have
        \begin{align*}
            |(\mathrm{G}_j - 1)u|_{H^s} \lesssim \mu|u|_{H^{s+2}}.
        \end{align*}
    \end{thm}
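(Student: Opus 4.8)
The plan is to take an arbitrary sufficiently smooth solution $(\zeta,\psi)$ of the water waves equations \eqref{Water wave equations} on a time interval $[0,T/\ve]$, substitute it into the Whitham--Boussinesq system \eqref{Whitham boussinesq}, and check that it satisfies \eqref{Whitham boussinesq} up to remainders of size $O(\mu\ve+\mu^2\beta^2)$ controlled in $H^s$ by $N(s+5)$, which is precisely consistency in the sense of Definition \ref{Consistency} with $n=5$. The only substantial analytic ingredient is Proposition \ref{Cor G} (equivalently Proposition \ref{Prop G}); everything else is elementary manipulation using the mapping property $|(\mathrm G_j-1)u|_{H^s}\lesssim\mu|u|_{H^{s+2}}$ assumed on $\mathrm G_1,\mathrm G_2$, the analogous property $|(\mathrm F_1-1)u|_{H^s}\lesssim\mu|u|_{H^{s+2}}$ of the multiplier $\mathrm F_1$ (Proposition \ref{Simple est}), the non-cavitation bounds \eqref{non-cavitation 1}--\eqref{non-cavitation 2}, and tame product estimates in $H^s(\R^d)$ of the type \eqref{Classical prod est}--\eqref{prod est division}.

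\emph{First equation.} Along solutions of \eqref{Water wave equations} one has $\partial_t\zeta=\tfrac1\mu\mathcal G^\mu[\ve\zeta,\beta b]\psi$, so writing $\mathcal N_1$ for the spatial operator on the first line of \eqref{Whitham boussinesq} we decompose
\begin{align*}
 \partial_t\zeta+\mathcal N_1
 =\Bigl(\tfrac1\mu\mathcal G^\mu\psi-\tfrac1\mu\mathcal G_0\psi\Bigr)
 +\Bigl(\tfrac1\mu\mathcal G_0\psi+\mathcal N_1\Bigr).
\end{align*}
By \eqref{G0} the first bracket is $(\mu\ve+\mu^2\beta^2)$ times a function bounded in $H^s$ by $M(s+3)|\nabla_X\psi|_{H^{s+5}}$. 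Comparing $\tfrac1\mu\mathcal G_0\psi$ with $-\mathcal N_1$ term by term via the explicit formula in Proposition \ref{Cor G}, the $\mathrm F_1$-, the $\mathcal L_1^\mu$- and the $\mathcal B$-contributions cancel exactly; the only discrepancy is that $-\tfrac1\mu\mathcal G_0\psi$ carries $\ve\nabla_X\cdot(\zeta\mathrm F_1\nabla_X\psi)$ while $\mathcal N_1$ carries $\ve\mathrm G_1\nabla_X\cdot(\zeta\mathrm G_2\nabla_X\psi)$, and by the mapping properties of $\mathrm F_1-1$, $\mathrm G_1-1$, $\mathrm G_2-1$ together with the product estimates both equal $\ve\nabla_X\cdot(\zeta\nabla_X\psi)$ up to $\mu\ve$ times a function bounded in $H^s$ by $M(s+1)|\nabla_X\psi|_{H^{s+3}}$. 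Hence $\partial_t\zeta+\mathcal N_1=(\mu\ve+\mu^2\beta^2)R_1$ with $|R_1|_{H^s}\le N(s+5)$.

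\emph{Second equation.} Here $\mathcal T[\zeta,b]=\mathrm{Id}$, so with $\mathcal N_2=\zeta+\tfrac\ve2(\mathrm G_1\nabla_X\psi)\cdot(\mathrm G_2\nabla_X\psi)$, subtracting the second line of \eqref{Water wave equations} yields
\begin{align*}
 \partial_t(\mathcal T\psi)+\mathcal N_2
 =\frac{\mu\ve}{2}\,\frac{\bigl(\tfrac1\mu\mathcal G^\mu\psi+\ve\nabla_X\zeta\cdot\nabla_X\psi\bigr)^2}{1+\ve^2\mu|\nabla_X\zeta|^2}
 +\frac\ve2\Bigl((\mathrm G_1\nabla_X\psi)\cdot(\mathrm G_2\nabla_X\psi)-|\nabla_X\psi|^2\Bigr).
\end{align*}
The second term is $\mu\ve$ times a function bounded in $H^s$ by $N(s+2)$, by the hypothesis on $\mathrm G_1,\mathrm G_2$ and the tame product estimate. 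For the first term, \eqref{G0} together with the explicit form of $\mathcal G_0$ provides a bound $|\tfrac1\mu\mathcal G^\mu\psi|_{H^s}\le M(s+3)|\nabla_X\psi|_{H^{s+5}}$ uniform in $\mu,\ve,\beta$; combined with the denominator lower bound $1+\ve^2\mu|\nabla_X\zeta|^2\ge1$, the non-cavitation bounds, and the tame product estimate this shows the first term is $\mu\ve$ (hence also $(\mu\ve+\mu^2\beta^2)$) times a function bounded in $H^s$ by $N(s+5)$. Thus $\partial_t(\mathcal T\psi)+\mathcal N_2=(\mu\ve+\mu^2\beta^2)R_2$ with $|R_2|_{H^s}\le N(s+5)$, and taking $n=5$ completes the proof.

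\emph{Main difficulty.} All the real content is contained in Proposition \ref{Cor G}, namely the replacement of $\mathcal G^\mu$ by the explicit operator $\mathcal G_0$ with remainder $O(\mu\ve+\mu^2\beta^2)$ uniformly in $(\mu,\ve,\beta)$ and quantified in Sobolev norms. Granting that, the remaining obstacle is bookkeeping: tracking that the heaviest derivative loss is the five derivatives on $\nabla_X\psi$ and the $\max\{t_0+2,s+3\}$ derivatives on $\zeta$ forced by \eqref{G0}, so that each remainder fits inside $N(s+5)$, and using tame product estimates systematically in place of the $H^s$-algebra property so that the conclusion holds for every $s\ge0$, not merely $s>d/2$.
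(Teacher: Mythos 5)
Your proposal is correct and follows essentially the same route as the paper: substitute the expansion $\mathcal G_0$ of the Dirichlet--Neumann operator from Proposition \ref{Prop G} into the first equation (the $\mathrm F_1$-, $\mathcal L_1^{\mu}$- and $\mathcal B$-terms cancel exactly, and the $\ve$-terms are reconciled via the $O(\mu)$ properties of $\mathrm F_1-1$, $\mathrm G_j-1$), and absorb the $\mu\ve$-prefactored quadratic Dirichlet--Neumann term in the second equation into the remainder. The only cosmetic difference is that the paper invokes the uniform bound \eqref{G est 1} on $\tfrac1\mu\mathcal G^{\mu}\psi$ directly, whereas you recover an equivalent bound from \eqref{G0} plus the boundedness of the operators composing $\mathcal G_0$; both yield remainders controlled by $N(s+5)$.
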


    \begin{proof}
        To start,  we replace the Dirichlet-Neumann operator by \eqref{G est 1} and its expansion given by \eqref{G0} and discarding all the terms of order $O(\mu(\varepsilon +\mu \beta^2))$ in the water waves equations \eqref{Water wave equations} yields,
        \begin{align*}
            \begin{cases}
                \partial_t \zeta 
                +
        \mathrm{F}_1\Delta_X \psi  
        +\beta(1 +
        \frac{\mu}{2}\mathrm{F}_4 \Delta_X) 
        \nabla_X\cdot \big(\mathcal{L}_{1}^{\mu}[\beta b]\nabla_X \psi \big) 
        +
        \varepsilon \nabla_X\cdot \big( \zeta\mathrm{F}_1 \nabla_X\psi \big)   
                \\ 
                 \hspace{7cm}-  \frac{\mu  \beta^2}{2}\nabla_X\cdot\big(\mathcal{B}[\beta b] \nabla_X \psi\big) = (\mu \ve + \mu^2 \beta^2)R,
                \\
                \partial_t \psi + \zeta + \dfrac{\varepsilon}{2}|\nabla_X\psi|^2 = \mu \ve R.
            \end{cases}
        \end{align*}
        where we introduced a generic function $R$ such that
        \begin{equation}\label{Rest WB}
            |R|_{H^s} \leq M(s+3) |\nabla_X \psi|_{H^{s+5}}.
        \end{equation}
        To complete the proof, we use the assumption  on $\mathrm{G}_j$ whenever there is the appearance of an $\ve$. Then apply estimate \eqref{G0} up to the rest $R$ satisfying \eqref{Rest WB}. 
    \end{proof}

    The next result concerns a Boussinesq type system for which the first equation is exact and where the unknowns are given in terms of $(\zeta, \overline{V})$. 
    \begin{thm}
        Let $\mathrm{F}_1$ and $\mathrm{F}_4$ be the two Fourier multipliers given in Definition \ref{Fourier mult}, and let $\mathcal{L}_1^{\mu}$ be given in Definition \ref{Prop op}. Then for any $\mu \in (0, 1]$, $\ve \in [0,1]$, and $\beta\in [0,1]$ the water waves equations \eqref{Water wave equations} are consistent, in the sense of Definition \ref{Consistency} with $n=7$, at order $O(\mu\ve + \mu^2 \beta^2)$ with the Boussinesq type system:
        \begin{align}\label{Whitham boussinesq}
            \begin{cases}
                \partial_t \zeta + \nabla_X\cdot(h \overline{V}) = 0\\
                \partial_t \overline{V} + \mathcal{T}_0^{\mu}[\beta b, \ve \zeta] \nabla_X \zeta 
                +
                \frac{\ve }{2}\nabla_X|\overline{V}|^2
                = \mathbf{0},
            \end{cases}
        \end{align}
        where 
        \begin{align*}
            \mathcal{T}_0^{\mu}[\beta b, \ve \zeta] \bullet 
            & =
            \frac{1}{h}\Big{(}\mathrm{F}_1 \bullet  +  \beta \mathcal{L}_1^{\mu}[\beta b]\bullet  + \ve \zeta \mathrm{F}_1\bullet \Big{)}
            +
            \frac{\mu \beta}{2} \nabla_X\mathrm{F}_4\nabla_X \cdot \big(\mathcal{L}_1^{\mu}[\beta b] \bullet \big)
            \\ 
            & 
            \hspace{0.5cm}\notag
            -
            \frac{\mu\beta^2}{2}\nabla_X \big{(}b\mathrm{F}_4\nabla_X\cdot(b\bullet )\big{)}
            -
            \mu \beta^2 (\nabla_Xb)\mathrm{F}_1\nabla_X\cdot(b\bullet).
        \end{align*}
    \end{thm}
    \begin{proof}
        The first equation is exact by identity \eqref{Relation: D-N op}, and so we only work with the second equation of \eqref{Water wave equations}. However, using Theorem \ref{thm Whitham Boussinesq} we can work directly of on the second equation of \eqref{Whitham boussinesq} in the case $\mathrm{G}_1 = \mathrm{G}_2 = \mathrm{Id}$. Also, since we will take the gradient of $\psi$ we need to increase the regularity of our rest function. In particular, let $R$ be a generic function such that
        \begin{align*}
            |R|_{H^s} \leq M(s+3)|\nabla_X \psi|_{H^{s+6}}.
        \end{align*}
        Then by \eqref{V0} there holds
        \begin{align*}
            h\overline{V}  
            & =
            \frac{h}{h_b}\mathrm{F}_1 \nabla_X \psi 
            +
            \beta
            \frac{h}{h_b}\mathcal{L}_1^{\mu}[\beta b] \nabla_X \psi
            +
            \frac{\mu \beta}{2} h_b\nabla_X\mathrm{F}_4\nabla_X \cdot \mathcal{L}_1^{\mu}[\beta b] \nabla_X \psi
            \\ 
            & 
            \hspace{0.5cm}\notag
            -
            \frac{\mu\beta^2}{2}h_b\nabla_X \big{(}b\mathrm{F}_4\nabla_X\cdot(b\nabla_X \psi)\big{)}
            -
            \mu \beta^2 h_b(\nabla_Xb)\mathrm{F}_1\nabla_X\cdot(b\nabla_X \psi) 
            +
            (\mu \ve + \mu^2 \beta^2)R.
        \end{align*}
        Moreover, by \eqref{L approx} and \eqref{Simple est} we make the observation 
        \begin{align*}
            \frac{h}{h_b}\Big{(}\mathrm{F}_1 \nabla_X \psi 
            +
            \beta
            \mathcal{L}_1^{\mu}[\beta b] \nabla_X \psi\Big{)} =  \mathrm{F}_1 \nabla_X \psi +  \beta \mathcal{L}_1^{\mu}[\beta b] \nabla_X \psi + \ve \zeta \mathrm{F}_1\nabla_X \psi + \mu \ve  R,
        \end{align*}
        so that
        \begin{align*}
            h\overline{V} & = h\mathcal{T}_0^{\mu}[\beta b, \ve \zeta] \nabla_X \psi + (\mu \ve + \mu^2 \beta^2) R.
        \end{align*}
        From this expression, we can use the first equation to see that $\partial_t h = - \ve \nabla_X \cdot (h\overline{V})$, and the estimates \eqref{Simple est} together with the relation $\nabla_X \psi = \overline{V} + \mu R$ to get that:
        \begin{align*}
            h \partial_t \overline{V} 
            & =
            (\partial_t h)\big(\mathrm{F}_1 \nabla_X \psi - \overline{V}) +  h\mathcal{T}_0^{\mu}[\beta b, \ve \zeta] \nabla_X \partial_t\psi
            \\ 
            & = 
            h \mathcal{T}_0^{\mu}[\beta b, \ve \zeta] \nabla_X \partial_t\psi.
        \end{align*}
        We may now use this relation in the second equation of \eqref{Whitham boussinesq} where we apply the gradient and $\mathcal{T}_0[\beta b, \ve \zeta]$ to obtain that
        \begin{align*}
            h\partial_t \overline{V} + h\mathcal{T}_0^{\mu}[\beta b, \ve \zeta] \nabla_X \zeta + \frac{\ve}{2}h\mathcal{T}_0^{\mu}[\beta b, \ve \zeta] \nabla_X |\overline{V}|^2 = (\mu \ve + \mu^2 \beta^2)R.
        \end{align*}
        Then we conclude from the fact that $\mathcal{T}_0^{\mu}[\beta b, \ve \zeta] =  \mathrm{Id} + \mu R$.
        
    \end{proof}

    \subsubsection{Hamiltonian structure} We end this section by briefly commenting on the Hamiltonian structure of Boussinesq type systems with bathymetry.
    To do so, we recall the Hamiltonian of the water waves equations \eqref{Water wave equations} \cite{Zakharov68}:
    \begin{align}\label{Hamiltonian}
        H(\zeta, \psi) = \frac{1}{2} \int_{\R^d} \zeta^2 \: \mathrm{d}X + \frac{1}{2\mu} \int_{\R^d}\psi \mathcal{G}^{\mu}\psi \: \mathrm{d}X,
    \end{align}
    with $H(\zeta,\psi)$ satisfying the system
    \begin{equation}\label{System Hamiltonian}
        \begin{cases}
            \partial_t \zeta & = \delta_{\psi}H
            \\
            \partial_t \psi & = -\delta_{\zeta}H,
        \end{cases}
    \end{equation}
    where $\delta_{\psi}$ and $\delta_{\zeta}$ are functional derivatives. Then replacing the Dirichlet-Neumann operator in \eqref{Hamiltonian} with its approximation  its approximation \eqref{G0} we obtain
    \begin{align}\label{V bar hamiltonian}
        H(\zeta, \psi) 
        & =
        \frac{1}{2} \int_{\R^d} \zeta^2 \: \mathrm{d}X 
        +
        \frac{1}{2} \int_{\R^d} \mathrm{F}_1\nabla_X\psi\cdot \nabla_X \psi \: \mathrm{d}X 
        \\
        & \notag
        \hspace{0.2cm}
        +
        \frac{\ve}{2}\int_{\R^d} \zeta \mathrm{G}\nabla_X \psi \cdot \mathrm{G}\nabla_X \psi \: \mathrm{d}X 
        +
        \frac{\beta}{2} \int_{\R^d} \mathcal{L}^{\mu}_1[\beta b]\nabla_X\psi\cdot \nabla_X \psi \: \mathrm{d}X
        \\
        & \notag
        \hspace{0.2cm}
        + \frac{\mu\beta}{4} \int_{\R^d} \mathrm{F}_4 \Delta_X\mathcal{L}^{\mu}_1[\beta b]\nabla_X\psi\cdot \nabla_X \psi \: \mathrm{d}X -
        \frac{\mu\beta^2}{4} \int_{\R^d} \nabla_X\psi \cdot \mathcal{B}[\beta b]\nabla_X\psi \: \mathrm{d}X
        \\
        &
        \hspace{0.5cm}
        +\notag
        O(\mu \ve+\mu^2\beta^2),
    \end{align}
    for some Fourier multiplier $\mathrm{G}$ of the form $\mathrm{G} = 1 + O(\mu)$.
    
    Now, to compute the functional derivatives in system \eqref{System Hamiltonian} we note that the Fourier multipliers that appear are self-adjoint.  While for the pseudo-differential operator of order zero, $\mathcal{L}_1^{\mu}$, one can use the fact that there exists an adjoint. However, a simpler approach is to approximate it by \eqref{L1 approx next order} and gives
    \begin{equation*}
        \mathcal{L}_1^{\mu}[\beta b] = - b \mathrm{F}_3 - \frac{\mu\beta^2}{6}b^3|\mathrm{D}|^2\mathrm{F}_3+ O(\mu^2 \beta^4).
    \end{equation*}
    Using this relation implies
    \begin{align*}
        \mathrm{RHS}_1 : & =
        \frac{\beta}{2} \int_{\R^d} \mathcal{L}^{\mu}_1[\beta b]\nabla_X\psi\cdot \nabla_X \psi \: \mathrm{d}X
        \\ 
        & = 
        -\frac{\beta}{2} \int_{\R^d} b\mathrm{F}_3 \nabla_X \psi \cdot \nabla_X \psi \: \mathrm{d}X
        + \frac{\mu\beta^3}{12} \int_{\R^d} b^3\Delta_X\mathrm{F}_3 \nabla_X \psi \cdot \nabla_X \psi \: \mathrm{d}X
        +
        O(\mu^2 \beta^5),
    \end{align*}
    and 
    \begin{align*}
        \mathrm{RHS}_2 : & = \frac{\mu\beta}{4} \int_{\R^d} \mathrm{F}_4 \Delta_X\mathcal{L}^{\mu}_1[\beta b]\nabla_X\psi\cdot \nabla_X \psi \: \mathrm{d}X\\
        & = - \frac{\mu\beta}{4} \int_{\R^d} \mathrm{F}_4 \Delta_X(b\nabla_X\psi)\cdot \nabla_X \psi \: \mathrm{d}X + O(\mu^2\beta^3).
    \end{align*}
    In particular, the first equation in \eqref{System Hamiltonian} is given by
    \begin{align*}
        \delta_{\psi}H & =
        - 
        \mathrm{F}_1 \Delta_X \psi 
        +
        \nabla_X\cdot(\mathcal{A}^{\mu}[\beta b] \nabla_X \psi)
        - 
        \ve \mathrm{G}\nabla_X\cdot (\zeta \mathrm{G}\nabla_X \psi) + \frac{\mu\beta^2}{4}\nabla_X\cdot\Big(\big(\mathcal{B}[\beta b] + \mathcal{B}[\beta b]^*\big)\nabla_X\psi\Big),
    \end{align*}
    where
    \begin{align*}
        \mathcal{A}^{\mu}[\beta b]\bullet
         =
        \frac{\beta}{2} \big(\mathrm{F}_3(b\bullet) + b\mathrm{F}_3\bullet\big)
        + \frac{\mu\beta}{2} \big( \mathrm{F}_4 \Delta_X(b\bullet) + b\mathrm{F}_4\Delta_X\bullet\big)- \frac{\mu\beta^3}{12} \big( b^3\Delta_X\mathrm{F}_3\bullet + \Delta_X\mathrm{F}_3(b^3\bullet)\Big)
        ,
    \end{align*}
    and where $\mathcal{B}[\beta b]^*$ stands for the adjoint of $\mathcal{B}[\beta b]$ and reads
    \begin{align*}
        \mathcal{B}[\beta b]^*\nabla_X\psi = b\mathrm{F}_4\nabla_X \big(\nabla_X\cdot(b\nabla_X\psi)\big) + b \mathrm{F}_4\nabla_X(b \nabla_X\cdot(h_b\nabla_X\psi)) + 2b\mathrm{F}_1\nabla_X(h_b\nabla_X b\cdot\nabla_X\psi).
    \end{align*}
    Similarly for the second equation:
    \begin{align*}
        \delta_{\zeta}H & = -\zeta  - \frac{\ve}{2}|\mathrm{G}\nabla_X \psi|^2.
    \end{align*}
    Then using \eqref{System Hamiltonian}, we will arrive at the following system
    \begin{align}\label{Hamiltonian WB}
        \begin{cases}
            \partial_t \zeta 
            +
            \mathrm{F}_1 \Delta_X \psi 
            -
            \nabla_X \cdot \big(\mathcal{A}^{\mu}[\beta b] \nabla_X \psi\big) 
            + 
            \ve \mathrm{G}\nabla_X\cdot (\zeta \mathrm{G}\nabla_X \psi)
            \\\hspace{5cm}- \frac{\mu\beta^2}{4}\nabla_X\cdot\Big(\big(\mathcal{B}[\beta b]
            + \mathcal{B}[\beta b]^*\big)\nabla_X\psi\Big) = 0
            \\
            \partial_t \psi +\zeta  + \frac{\ve}{2}|\mathrm{G}\nabla_X \psi|^2 = 0,
        \end{cases}
    \end{align}
    where its Hamiltonian reads:
     \begin{align*}
        H(\zeta, \psi) 
        & =
        \frac{1}{2} \int_{\R^d} \zeta^2 \: \mathrm{d}X 
        +
        \frac{1}{2} \int_{\R^d} \mathrm{F}_1\nabla_X\psi\cdot \nabla_X \psi \: \mathrm{d}X 
        \\
        & \notag
        \hspace{0.5cm}
        +
        \frac{\ve}{2}\int_{\R^d} \zeta \mathrm{G}\nabla_X \psi \cdot \mathrm{G}\nabla_X \psi \: \mathrm{d}X 
        -\frac{\beta}{2} \int_{\R^d} (1 +  \frac{\mu }{2}\mathrm{F}_4 \Delta_X)b\mathrm{F}_3 \nabla_X \psi\cdot \nabla_X \psi \: \mathrm{d}X
        \\
        &\notag
        \hspace{0.5cm}
        -
        \frac{\mu\beta^2}{4} \int_{\R^d} \mathcal{B}[\beta b]\nabla_X\psi \cdot \nabla_X\psi \: \mathrm{d}X,
    \end{align*}
    and is preserved by smooth solutions of \eqref{Hamiltonian WB}.
    \begin{remark}
        If we neglect terms of order $O(\mu \ve+\mu\beta)$, using $\mathrm{F}_3 = 1+ O(\mu)$, we obtain the system derived in \cite{DucheneMMWW21}. 
    \end{remark}

    \color{black}
    \section{Derivation of Green-Naghdi type systems with bathymetry}\label{Derivation Whitham-Green-Naghdi systems with bathymetry}
    
    In this section, we derive weakly dispersive Green-Naghdi systems in the shallow water regime with precision $O(\mu^2\ve + \mu\ve \beta + \mu^2 \beta^2)$. The following Green-Naghdi type system may be derived from the water waves equations: 
    \begin{thm}
         Let $\mathrm{F}_2$ and $\mathrm{F}_4$ be the two Fourier multipliers given in Definition \ref{Fourier mult}, and let $\mathcal{L}_2^{\mu}$ be given in Definition \ref{Prop op}. Then for any $\mu \in (0, 1]$, $\ve \in [0,1]$, and $\beta \in [0,1]$ the water waves equations \eqref{Water wave equations} are consistent, in the sense of Definition \ref{Consistency} with $n=6$, at order $O(\mu^2\ve + \mu\ve \beta + \mu^2 \beta^2)$ with the Green-Naghdi type system:
        \begin{align}\label{Good-Whitham-Green-Naghdi}
            \begin{cases}
                \partial_t \zeta 
                +
                \nabla_X \cdot (h\mathcal{T}_1^{\mu}[\beta b, \ve \zeta]\nabla_X \psi)
                - \frac{\mu \beta^2}{2} \nabla_X\cdot\big(\mathcal{B}[\beta b]  \nabla_X \psi\big) = 0
                \\
                \partial_t \psi + \zeta + \dfrac{\varepsilon}{2}|\nabla_X\psi|^2 - \dfrac{\mu\varepsilon}{2}h^2(\sqrt{\mathrm{F}_2}\Delta_X\psi)^2 = 0,
            \end{cases}
    \end{align}
    where 
    \begin{align*}
            \mathcal{B}[\beta b]\bullet &= 
            b \mathrm{F}_4\nabla_X(\nabla_X\cdot(b\bullet))
            +
            h_b
             \nabla_X \big{(}b\mathrm{F}_4\nabla_X\cdot(b\bullet)\big{)}
             +
             2h_b(\nabla_Xb)\mathrm{F}_1\nabla_X\cdot(b\bullet),
        \end{align*}
    and
    \begin{align*}
        \mathcal{T}_1^{\mu}[\beta b, \ve \zeta]\bullet
               &  =
                \mathrm{Id}
                +
                \dfrac{\mu}{3h} \nabla_X\sqrt{\mathrm{F}_2} \Big(\dfrac{h^3}{h_b^3}\sqrt{\mathrm{F}_2}\nabla_X \cdot \bullet  \Big) 
                +
                \frac{\mu \beta }{h}\nabla_X \Big{(}  \mathcal{L}_{2}^{\mu}[\beta b]\nabla_X \cdot \bullet \Big{)} 
                \\ 
                & 
                \hspace{0.5cm}
                +
                \frac{\mu \beta }{2h}\mathrm{F}_4\nabla_X \nabla_X \cdot \big(\mathcal{L}_1^{\mu}[\beta b] \bullet\big),
    \end{align*}
    and $\sqrt{\mathrm{F}_2}$ is the square root of  $\mathrm{F}_2$.
    \end{thm}

    \begin{proof}
        We see that the first equation can be deduced by trading the Dirichlet-Neumann operator with its approximation \eqref{G1}. Indeed, we obtain that
        \begin{align*}
        	\partial_t \zeta 
        	+
        	\nabla_X\cdot (h \nabla_X\psi) 
        	&
                +
        	\dfrac{\mu}{3}\Delta_X \Big(\dfrac{h^3}{h_b^3}\mathrm{F}_2 \Delta_X 
        	\psi \Big)  
                +
        	\mu \beta \Delta_X \big{(}  \mathcal{L}_{2}^{\mu}[\beta b]\Delta_X \psi\big{)} 
             \\ 
                 &
                +\frac{\mu \beta}{2} \mathrm{F}_4\Delta_X \nabla_X \cdot \mathcal{L}_1^{\mu}[\beta b] \nabla_X \psi
                - \frac{ \mu \beta^2}{2} \mathcal{B}[\beta b]  \nabla_X \psi
        	= (\mu^2 \ve + \mu \ve \beta + \mu^2 \beta^2)R,
        \end{align*}
        where we introduce a generic function $R$ such that
        \begin{equation}\label{Rest WGN}
            |R|_{H^s} \leq M(s+3) |\nabla_X \psi|_{H^{s+5}}.
        \end{equation}
        Therefore we need to approximate the term
        \begin{align*}
                \dfrac{\mu}{3}\Delta_X\Big(\dfrac{h^3}{h_b^3}\mathrm{F}_2 \Delta_X 
                \psi \Big) 
        \end{align*}
        at order $O(\mu^2 \ve + \mu \ve \beta)$. Indeed, using \eqref{Simple est} to say $\mathrm{F}_2 = 1 + \mu R$ and  $\sqrt{\mathrm{F}_2} = 1 + \mu R$, we obtain
        \begin{align*}
             \dfrac{\mu}{3}\Delta_X \Big((\dfrac{h^3}{h_b^3}-1)\mathrm{F}_2 \Delta_X \psi \Big) =\dfrac{\mu}{3}\Delta_X\sqrt{\mathrm{F}_2} \Big((\dfrac{h^3}{h_b^3}-1)\sqrt{\mathrm{F}_2} \Delta_X 
                \psi \Big) + \mu^2 \ve R.
        \end{align*}
        Gathering these observations yields
        \begin{align}\label{from F2 to sqrt F2}
            \dfrac{\mu}{3}\Delta_X\Big(\dfrac{h^3}{h_b^3}\mathrm{F}_2 \Delta_X 
                \psi \Big) 
             =  
                \dfrac{\mu}{3}\Delta_X\sqrt{\mathrm{F}_2} \Big(\dfrac{h^3}{h_b^3}\sqrt{\mathrm{F}_2} \Delta_X 
                \psi \Big) 
                +
                \mu^2 \ve R.
        \end{align}

        For the second equation, we use \eqref{G est 1} to make the observation
        \begin{align*}
            & \frac{(\frac{1}{\mu} \mathcal{G}^{\mu}[\ve \zeta, \beta b]\psi + \ve \nabla_X \zeta \cdot \nabla_X \psi)^2}{1+ \ve^2 \mu |\nabla_X \zeta|^2} 
            -
            \Big{(}\frac{1}{\mu} \mathcal{G}^{\mu}[\ve \zeta, \beta b]\psi + \ve \nabla_X \zeta \cdot \nabla_X \psi\Big{)}^2
            \\
            &\hspace{0.5cm} 
            =
            \frac{\mu \ve^2 |\nabla_X \zeta|^2 (\frac{1}{\mu} \mathcal{G}^{\mu}[\ve \zeta, \beta b]\psi + \ve \nabla_X \zeta \cdot \nabla_X \psi)^2 }{1+ \ve^2 \mu |\nabla_X \zeta|^2}
            \\
            & 
            \hspace{0.5cm} = \mu \ve^2 R.
        \end{align*}
        Meaning that we only need to make an approximation of
        \begin{equation*}
            \Big{(}\frac{1}{\mu} \mathcal{G}^{\mu}[\ve \zeta, \beta b]\psi + \ve \nabla_X \zeta \cdot \nabla_X \psi\Big{)}^2,
        \end{equation*}
        at order $O(\mu)$. In particular, we use  \eqref{G est 1} to simplify the second equation in the water waves equations \eqref{Water wave equations} to get that
        \begin{align}\label{Equation to be simplified}
            \partial_t \psi + \zeta + \frac{\ve}{2}|\nabla_X \psi|^2 - \frac{\mu \ve}{2}\Big{(} \frac{1}{\mu} \mathcal{G}^{\mu}[\ve \zeta, \beta b]\psi + \ve \nabla_X \zeta \cdot \nabla_X \psi\Big{)}^2 = \mu^2 \ve R.
        \end{align}
        Then using \eqref{G est 1} we have that
        \begin{align*}
            \frac{1}{\mu} \mathcal{G}^{\mu}[\ve \zeta, \beta b]\psi 
            & = 
            - \nabla_{X} \cdot(h\nabla_{X} \psi) + \mu R
            \\
            & = 
            - h\Delta_{X} \psi - \ve \nabla_X \zeta \cdot \nabla_X \psi + (\mu + \beta) R,
        \end{align*}
        and we may use this expression to simplify \eqref{Equation to be simplified} where we again use that $\sqrt{\mathrm{F}_2} = 1 + \mu R$.  Thus, we conclude the proof of this theorem with estimate \eqref{G est 1} up to a rest $R$ satisfying \eqref{Rest WGN}.

    \end{proof}

    One may also derive a system with unknowns $(\zeta,\overline{V})$ instead of $(\zeta,\psi)$, for which the first equation is exact.  The new system reads:
    \begin{thm}
        Let $\mathrm{F}_2$ and $\mathrm{F}_4$ be the two the Fourier multipliers given in Definition \ref{Fourier mult}, let $\mathcal{L}_1^{\mu}$ and $\mathcal{L}_2^{\mu}$ be given in Definition \ref{Prop op}. Then for any $\mu \in (0, 1]$, $\ve \in [0,1]$, and $\beta \in [0,1]$ the water waves equations \eqref{Water wave equations} are consistent, in the sense of Definition \ref{Consistency} with $n=7$, at order $O(\mu^2 \ve + \mu \ve \beta + \mu^2 \beta^2)$ with the Green-Naghdi type system:
        \begin{align}\label{Whitham-Green-Naghdi_Vbar}
        \begin{cases}
            \partial_t \zeta + \nabla_X \cdot(h\overline{V}) = 0, \\
            \partial_t(\mathcal{I}^{\mu}[h]\overline{V}) + \mathcal{I}[h]\mathcal{T}_2^{\mu}[\beta b, h]\nabla_X\zeta + \frac{\ve}{2}  \nabla_X \big( |\overline{V}|^2\big) + \mu\ve \nabla_X \mathcal{R}_1^{\mu}[\beta b, h, \overline{V}] = \mathbf{0},
        \end{cases}
        \end{align}
	where $\overline{V}$ defined by \eqref{V bar},
    \begin{equation}\label{operator I}
		\mathcal{I}^{\mu}[h]\bullet=  \mathrm{Id} - \frac{\mu}{3h}\sqrt{\mathrm{F}_2} \nabla_X \Big(h^3  \sqrt{\mathrm{F}_2} \nabla_X\cdot \bullet  \Big),
	\end{equation}
	\begin{align*}\label{T1}
		\mathcal{T}_2^{\mu}[\beta b, \ve \zeta]\bullet
            & =
            \mathrm{Id} + \frac{\mu}{3h}\sqrt{\mathrm{F}_2} \nabla_X \Big(\frac{h^3}{h_b^3}  \sqrt{\mathrm{F}_2} \nabla_X\cdot \bullet  \Big) 
            +
            \frac{\mu\beta }{h} \nabla_X \Big(\mathcal{L}_2^{\mu}[\beta b] \nabla_X\cdot \bullet \Big)
            \\ 
            & 
            +
            \frac{\mu \beta h_b}{2h}\nabla_X \mathrm{F}_4 \nabla_X \cdot \big(\mathcal{L}_1^{\mu}[\beta b] \bullet\big)
            -
            \frac{\mu\beta^2h_b}{2h}\nabla_X \big{(}b\mathrm{F}_4\nabla_X\cdot(b\bullet)\big{)}
            \\
            &
            - 
            \frac{\mu \beta^2 h_b}{h} (\nabla_Xb)\mathrm{F}_1\nabla_X\cdot(b\bullet),
	\end{align*}
	and
	\begin{equation}\label{R1}
		\mathcal{R}_1^{\mu}[\beta b, h, \overline{V}] 
            = - \frac{h^2}{2}  (\nabla_X \cdot \overline{V})^2
            -
            \frac{1}{3h}\big{(}\nabla_X(h^3 \nabla_X \cdot \overline{V})\big{)} \cdot \overline{V}
            - \frac{1}{2}h^3\Delta_X(|\overline{V}|^2) 
            +
            \frac{1}{6h} h^3 \Delta_X (|\overline{V}|^2).
	\end{equation}
        
    \end{thm}
    \begin{proof}
        The first equation is exact so we only need to work on the second equation. Also, since $\overline{V}$ is related to the gradient of $\psi$ we need to increase the regularity of the rest function $R$. In particular, we introduce a generic function $R$ satisfying
        \begin{align}
            |R|_{H^s} \leq M(s+3)|\nabla_X \psi|_{H^{s+6}}.
        \end{align}
        Then from the estimate \eqref{V app}, \eqref{Approx L2 beta} and the argument in the previous proof, we know that
        \begin{align}\label{relation Vbar psi}
            h\overline{V} 
            & =
            h \nabla_X\psi + \frac{\mu}{3}\nabla_X\sqrt{\mathrm{F}_2} \Big(\frac{h^3}{h_b^3} \sqrt{\mathrm{F}_2} \Delta_X\psi \Big)  + \mu \beta \nabla_X \Big(  \mathcal{L}_2^{\mu}[\beta b] \Delta_X \psi \Big) 
            \\ 
            & 
            \hspace{0.5cm }\notag
            +
           \frac{\mu \beta h_b}{2} \nabla_X\mathrm{F}_4\nabla_X \cdot \big(\mathcal{L}_1^{\mu}[\beta b] \nabla_X \psi\big)
            -
            \frac{\mu\beta^2h_b}{2}\nabla_X \big{(}b\mathrm{F}_4\nabla_X\cdot(b\nabla_X \psi)\big{)}
            \\ 
            & 
            \hspace{0.5cm}\notag
            -
            \mu \beta^2h_b (\nabla_Xb)\mathrm{F}_1\nabla_X\cdot(b\nabla_X \psi)
            +
            (\mu^2\ve + \mu \ve \beta + \mu^2\beta^2)R.
        \end{align}
    	Deriving this equality in time and using the definition of $\mathcal{T}_2[\beta b, \ve \zeta] \bullet$ we obtain the relation
    	\begin{align}\label{derivation in time of V app}
        h \partial_t\overline{V}  
        &=
        \partial_t h(\nabla_X\psi - \overline{V}) 
        +
         h\mathcal{T}_2^{\mu}[\beta b, \ve \zeta]\nabla_X\partial_t \psi 
        +
         \frac{\mu}{3}\nabla_X\sqrt{\mathrm{F}_2}\Big(\partial_t\Big(\frac{h^3}{h_b^3}\Big) \sqrt{\mathrm{F}_2} \Delta_X\psi \Big) 
         \\ 
         & 
         \hspace{0.5cm}\notag
         +
         (\mu^2\ve + \mu \ve \beta + \mu^2\beta^2)R.
    \end{align}
    Moreover, noting that $\dfrac{1}{h_b} = 1 + \beta R$ we can deduce that
    \begin{equation*}
    	\frac{\mu}{3}\nabla_X\sqrt{\mathrm{F}_2}\Big(\partial_t\Big(\frac{h^3}{h_b^3}\Big) \sqrt{\mathrm{F}_2} \Delta_X\psi \Big) = -\mu \ve \nabla_X \sqrt{\mathrm{F}_2}\big( h^2\big{(}\nabla_X\cdot(h\overline{V})\big{)} \sqrt{\mathrm{F}_2} \Delta_X \psi\big) + \mu\ve\beta R,
    \end{equation*}
    and  using the first equation of system \eqref{Whitham-Green-Naghdi_Vbar} we have that \eqref{derivation in time of V app} is approximated by
    \begin{align}\label{derivation in time of V app 2}
    	h\mathcal{T}_2^{\mu}[\beta b, \ve \zeta]\nabla_X\partial_t \psi	
    	&=
    	h \partial_t\overline{V}  
    	+
    	\ve \big{(} \nabla_X \cdot (h \overline{V})\big{)}(\nabla_X\psi - \overline{V}) 
    	\\
    	& 
    	\hspace{0.5cm}\notag
    	+
    	\mu \ve \nabla_X \sqrt{\mathrm{F}_2}\big( h^2\big{(}\nabla_X\cdot(h\overline{V})\big{)}\sqrt{\mathrm{F}_2}\Delta_X \psi\big) +  (\mu^2\ve + \mu \ve \beta + \mu^2\beta^2)R.
    \end{align}
   To conclude we simply need to approximate $\nabla_X \psi$ by $\overline{V}$ where we use  \eqref{V app} to get the classical approximation:
    \begin{align}\label{classical approximations nabla psi in terms of Vbar}
        \begin{cases}
            \nabla_X\psi = \overline{V} + \mu R\\
            \nabla_X\psi = \overline{V} - \frac{\mu}{3h}\nabla_X(h^3\nabla_X\cdot \overline{V}) + \mu^2R.
        \end{cases} 
    \end{align}
   Furthermore, using \eqref{classical approximations nabla psi in terms of Vbar} and \eqref{derivation in time of V app 2}  we obtain that
    \begin{align}\label{Approximation nabla psi in terms of Vbar}
        h \mathcal{T}_2^{\mu}[\beta b, \ve \zeta] \partial_t \nabla_X\psi
        &= h \partial_t\overline{V} - \frac{\mu\ve}{3h}\big{(}\nabla_X\cdot(h\overline{V})\big{)}\nabla_X(h^3\nabla_X\cdot\overline{V})\\\notag
        &\hspace{0.5cm} + \mu \ve \nabla_X \sqrt{\mathrm{F}_2}\big( h^2\big{(}\nabla_X\cdot(h\overline{V})\big{)}\sqrt{\mathrm{F}_2} \nabla_X\cdot \overline{V}\big) 
        \\ 
         & 
         \hspace{0.5cm}\notag
         +
         (\mu^2\ve + \mu \ve \beta + \mu^2\beta^2)R.
    \end{align}
    We will now simplify the second equation of the water waves system \eqref{Water wave equations} at order $O(\mu^2\ve + \mu \ve \beta + \mu^2\beta^2)$. Using Theorem \ref{Thm good WGN} allows us to work with the second equation of \eqref{Good-Whitham-Green-Naghdi}. First use  \eqref{classical approximations nabla psi in terms of Vbar} to deduce that
    \begin{equation*}
    	|\nabla_X \psi|^2 = |\overline{V}|^2 - \frac{2\mu}{3h} \big{(}\nabla_X(h^3 \nabla_X \cdot \overline{V})\big{)} \cdot \overline{V} + \mu^2 R.  
    \end{equation*}
    With this relation, we may apply the gradient to the second equation of \eqref{Good-Whitham-Green-Naghdi}, and  then apply the operator $\mathcal{T}_2^{\mu}[\beta b, \ve \zeta ] \bullet$, using the approximation \eqref{classical approximations nabla psi in terms of Vbar}, and discarding all the terms of order $O(\mu^2\ve + \mu \ve \beta + \mu^2\beta^2)$ to get
    \begin{align*}
    	\mathcal{T}_2^{\mu}[\beta b, \ve \zeta]\nabla_X\partial_t \psi 
    	+&
    	\mathcal{T}_2^{\mu}[\beta b, \ve \zeta]\nabla_X \zeta 
    	+ \dfrac{\varepsilon}{2}\mathcal{T}_2^{\mu}[\beta b, \ve \zeta]\Big{(}\nabla_X(|\overline{V}|^2 
    	-
    	\frac{2\mu}{3h} \big{(}\nabla_X(h^3 \nabla_X \cdot \overline{V})\big{)} \cdot \overline{V} )\Big{)}
    	\\
    	& 
    	\hspace{5cm}
    	-
    	\dfrac{\mu\varepsilon}{2}h^2|\nabla_X \overline{V}|^2 =(\mu^2\ve + \mu \ve \beta + \mu^2\beta^2)R.
    \end{align*}
    Then we apply \eqref{Approximation nabla psi in terms of Vbar}, neglecting $\mathrm{F}_2$ whenever there are terms with $\mu\ve$ and together with the observation
    \begin{equation*}
    	\mathcal{T}_2^{\mu}[\beta b, \ve \zeta]\bullet =  \mathrm{Id} + \frac{\mu}{3h}\nabla_X \Big(h^3 \nabla_X\cdot \bullet  \Big) + \mu \beta R,
    \end{equation*}
	to deduce that
	\begin{align}\label{intermediar second equation Wh GN}
		\partial_t\overline{V} 
		+ 
		& 
		\mathcal{T}_2^{\mu}[\beta b, \ve \zeta]\nabla_X \zeta 
		+
		\frac{\ve}{2}\nabla_X (|\overline{V}|^2)
		- \frac{\mu\ve}{3h^2}\big{(}\nabla_X\cdot(h\overline{V})\big{)}\nabla_X(h^3\nabla_X\cdot\overline{V})
		\\
		& \notag
		\hspace{0cm}
		+
		\frac{\mu \ve}{h} \nabla_X \big( h^2\big{(}\nabla_X\cdot(h\overline{V})\big{)}\nabla_X\cdot \overline{V}\big)
		+
		\frac{\mu \ve}{3h}\nabla_X \big{(} h^3 \Delta_X(|\overline{V}|^2)\big{)}
		\\
		& \notag
		\hspace{0cm}
		-
		 \frac{\mu\ve }{3} \nabla_X\Big{(}\frac{1}{h}\big{(}\nabla_X(h^3 \nabla_X \cdot \overline{V})\big{)} \cdot \overline{V}\Big{)}
		-
		\frac{\mu \ve}{2}\nabla_X\big{(} h^2 |\nabla_X \overline{V}|^2\big{)} 
        \\ 
        &  =  (\mu^2\ve + \mu \ve \beta + \mu \beta^2)R.\notag
    \end{align}
    Now, using $\sqrt{\mathrm{F}_2} = 1 + \mu R$ and $\partial_t h = \ve \nabla_X\cdot(h\overline{V})$ remark that
        \begin{align*}
            & \partial_t(\overline{V} -\frac{\mu}{3h}\sqrt{\mathrm{F}_2}\nabla_X(h^3\sqrt{\mathrm{F}_2}\nabla_X\cdot\overline{V})) \\
            & = \partial_t \overline{V} - \frac{\mu}{3h}\sqrt{\mathrm{F}_2}\nabla_X(h^3\sqrt{\mathrm{F}_2}\nabla_X\cdot \partial_t\overline{V}) - \frac{\mu\ve}{3h^2}\nabla_X\cdot(h\overline{V})\nabla_X(h^3\nabla_X\cdot\overline{V}) 
            \\
            &\hspace{0.5cm}+ \frac{\mu\ve}{h} \nabla_X \big( h^2\nabla_X\cdot(h\overline{V}) F_2 \nabla_X\cdot \overline{V}\big)
        \end{align*}
        So that from \eqref{intermediar second equation Wh GN}, and discarding all the terms of order $ O(\mu^2\ve + \mu \ve \beta + \mu \beta^2)$, we get
        \begin{align*}
            &\partial_t(\overline{V} -\frac{\mu}{3h}\sqrt{\mathrm{F}_2}\nabla_X(h^3\sqrt{\mathrm{F}_2}\nabla_X\cdot\overline{V})) \\
            = &-\mathcal{T}_2^{\mu}[\beta b, h] \nabla_X \zeta - \frac{\ve}{2}  \nabla_X \big( |\overline{V}|^2\big) - \frac{\mu\ve}{3h} \nabla_X \Big( h^3 \Delta_X (|\overline{V}|^2) \Big) + \frac{\mu\ve}{2} \nabla_X(h^2 (\nabla_X \cdot \overline{V})^2)\\
            & + \frac{\mu\ve }{3} \nabla_X\Big{(}\frac{1}{h}\big{(}\nabla_X(h^3 \nabla_X \cdot \overline{V})\big{)} \cdot \overline{V}\Big{)}+ \frac{\mu}{3h}\nabla_X(h^3\nabla_X\cdot(\mathcal{T}_2[\beta b, h]\nabla_X\zeta)) 
            \\ 
            & 
            + 
            \frac{\mu\ve}{2}\nabla_X(h^3\Delta_X(|\overline{V}|^2))
             + (\mu^2\ve + \mu \ve \beta + \mu \beta^2)R,
        \end{align*}
        which at the end gives:
        \begin{align*}
            &\partial_t(\mathcal{I}^{\mu}[h]\overline{V}) + \mathcal{I}[h]\mathcal{T}_2^{\mu}[\beta b, h]\nabla_X\zeta + \frac{\ve}{2}\nabla_X \big( |\overline{V}|^2\big) + \mu\ve \nabla_X\mathcal{R}_1^{\mu}[\beta b, h, \overline{V}] = (\mu^2\ve + \mu \ve \beta + \mu \beta^2)R.
        \end{align*}

    \end{proof} 

    \subsubsection{Hamiltonian structure} We end this section by briefly commenting on the Hamiltonian structure of the Green-Naghdi type systems with bathymetry. Starting from the Hamiltonian of the water waves equations \eqref{Hamiltonian} and replacing the Dirichlet-Neumann operator by the approximation \eqref{second approximation of the DN operator}, using also \eqref{from F2 to sqrt F2}, we get
    \begin{align*}
        H(\zeta,\psi) = \frac{1}{2} \int_{\mathbb{R}^d} \zeta^2 \: \mathrm{d}X + \frac{1}{2}\int_{\mathbb{R}^d} h |\nabla_X\psi|^2 \: \mathrm{d}X - \frac{\mu}{6} \int_{\mathbb{R}^d}  \psi \Delta_X\sqrt{\mathrm{F}_2}\big(\frac{h^3}{h_b^3}\sqrt{\mathrm{F}_2}\Delta_X\psi\big) \: \mathrm{d}X \\
        - \frac{\mu\beta}{2}\int_{\mathbb{R}^d} (\Delta_X\psi) \mathcal{L}_2^{\mu}[\beta b]\Delta_X\psi \: \mathrm{d}X + \frac{\mu\beta}{4}\int_{\mathbb{R}^d} \nabla_X\psi \cdot \mathrm{F}_4 \Delta_X\big(\mathcal{L}_1^{\mu}[\beta b]\nabla_X\psi\big) \: \mathrm{d}X \\
        + \frac{\mu\beta^2}{4} \int_{\mathbb{R}^d} \psi \nabla_X\cdot\big(\mathcal{B}[\beta b]\nabla_X\psi) \: \mathrm{d}X + O(\mu^2\varepsilon + \mu\ve\beta + \mu^2\beta^2).
    \end{align*}
    Then, we make use of the two expansions given by \eqref{L1 approx next order} and \eqref{L2 next order}
    \begin{align*}
        \mathcal{L}_1^{\mu}[\beta b] = -b \mathrm{F_3} + O(\mu\beta^2), \quad \mathcal{L}_2^{\mu}[\beta b] = -\frac{1}{2}b\mathrm{F}_4 + \frac{\beta^2}{6}b^3 \mathrm{F}_3 + O(\mu\beta^4),
    \end{align*}
    and write
    \begin{align*}
        \mathrm{RHS}_1 := &- \frac{\mu\beta}{2}\int_{\mathbb{R}^d} (\Delta_X\psi) \mathcal{L}_2^{\mu}[\beta b]\Delta_X\psi \: \mathrm{d}X \\
        = \hspace{0.2cm} &\frac{\mu\beta}{4}\int_{\mathbb{R}^d}b (\Delta_X\psi) \mathrm{F}_3\Delta_X\psi \: \mathrm{d}X - \frac{\mu\beta^3}{12}\int_{\mathbb{R}^d}b^3 (\Delta_X\psi) \mathrm{F}_3\Delta_X\psi \: \mathrm{d}X +O(\mu^2\beta^5)\\
        = \hspace{0.2cm} &\frac{\mu\beta}{4}\int_{\mathbb{R}^d}b (\Delta_X\psi) \mathrm{F}_3\Delta_X\psi \: \mathrm{d}X - \frac{\mu\beta^3}{12}\int_{\mathbb{R}^d}b^3 \big(\sqrt{\mathrm{F}_3}\Delta_X\psi\big)^2 \: \mathrm{d}X +O(\mu^2\beta^5),
    \end{align*}
    where for the last equality, we used $\mathrm{F}_2 = \sqrt{\mathrm{F}_2} + O(\mu)$, and 
    \begin{align*}
        \mathrm{RHS}_2 := & \frac{\mu\beta}{4}\int_{\mathbb{R}^d} \nabla_X\psi \cdot \mathrm{F}_4 \Delta_X\big(\mathcal{L}_1^{\mu}[\beta b]\nabla_X\psi\big) \: \mathrm{d}X\\
        = \hspace{0.2cm} & - \frac{\mu\beta}{4}\int_{\mathbb{R}^d} \nabla_X\psi \cdot \mathrm{F}_4 \Delta_X\big(b\nabla_X\psi\big) \: \mathrm{d}X + O(\mu^2\beta^3).
        \end{align*}
    
    Deriving the equations associated to this approximated Hamiltonian thus obtained, we get the Green-Naghdi type system
    \begin{align*}
        \begin{cases}
            \partial_t \zeta 
                +
                \nabla_X\cdot(h\nabla_X\psi) + \frac{\mu}{3}\Delta_X\sqrt{\mathrm{F}_2}\big(\frac{h^3}{h_b^3} \sqrt{\mathrm{F}_2}\Delta_X\psi \big) - \frac{\mu\beta}{4}\nabla_X\cdot\big(\mathcal{Q}^{\mu}[b]\nabla_X\psi\big) \\
                \hspace{3cm}- \frac{\mu\beta^2}{4}\nabla_X\cdot\Big(\big(\mathcal{B}[\beta b] + \mathcal{B}[\beta b]^*\big)\nabla_X\psi\Big)+ \frac{\mu\beta^3}{6}\Delta_X\sqrt{\mathrm{F}_3}(b^3\sqrt{\mathrm{F}_3}\Delta_X\psi) = 0
                \\
                \partial_t \psi + \zeta + \dfrac{\varepsilon}{2}|\nabla_X\psi|^2 - \dfrac{\mu\varepsilon}{2}h^2(\sqrt{\mathrm{F}_2}\Delta_X\psi)^2 = 0
        \end{cases}
    \end{align*}
    where
    \begin{align*}
        \mathcal{Q}^{\mu}[b]\bullet = \Big(\mathrm{F}_3\nabla_X(b\nabla_X\cdot\bullet) + \nabla_X(b\mathrm{F}_3\nabla_X\cdot\bullet)\Big) + \Big(\mathrm{F}_4\nabla_X\nabla_X\cdot\big(b\bullet\big) + b \mathrm{F}_4\Delta_X\bullet\Big),
    \end{align*}
    and
    \begin{align*}
        \mathcal{B}[\beta b]\bullet &= 
            b \mathrm{F}_4\nabla_X(\nabla_X\cdot(b\bullet))
            +
            h_b
             \nabla_X \big{(}b\mathrm{F}_4\nabla_X\cdot(b\bullet)\big{)}
             +
             2h_b(\nabla_Xb)\mathrm{F}_1\nabla_X\cdot(b\bullet).
    \end{align*}

    \appendix
	
    \section{}

    \subsection{On the properties of  pseudo-differential operators}\label{A1} 
    In this section, we will give a rigorous meaning to the pseudo-differential operators given in Proposition \ref{Prop op}. Before turning to the proof, we recall the definition of a symbol.
    \begin{Def}\label{Pseudo diff}
        Let $d=1,2$ and $m\in\R$. We say $L \in S^m$ is a symbol of order $m$ if $L(X,\xi)$ is $C^{\infty}(\R^d \times \R^d)$ and satisfies
        \begin{equation*}
            \quad\forall \alpha \in \mathbb{N}^d, \quad \forall \gamma \in \mathbb{N}^d, \quad \langle \xi \rangle^{-(m -|\gamma|)}|\partial_X^{\alpha} \partial_{\xi}^{\gamma} L(X,\xi)| < \infty.
        \end{equation*}
        We also introduce the seminorm
        \begin{equation}\label{Semi norm}
            \mathcal{M}_m(L)  
            =
            \sup\limits_{|\alpha|\leq \lceil \frac{d}{2} \rceil + 1}\sup \limits_{|\gamma|\leq \lceil \frac{d}{2} \rceil + 1}   \sup \limits_{(X,\xi)\in \R^d\times \R^d } \Big{\{} \langle \xi \rangle^{-(m -|\gamma|)}|\partial_X^{\alpha} \partial_{\xi}^{\gamma} L(X,\xi)| \Big{\}}.
        \end{equation}
    \end{Def}
    \noindent
    Moreover, we recall the main tool we will use to justify the pseudo-differential operators in Sobolev spaces:
    %
    %
    \begin{thm}\label{C-V thm}
        Let $d = 1,2$, $s \geq 0$, and $L \in S^m$. Then formula \eqref{pseudo diff op} defines a bounded pseudo-differential operator from $H^{s+m}(\R^d)$ to $H^{s}(\R)$ and satisfies
        \begin{equation}\label{est Stein}
            |\mathcal{L}[X,D]u|_{H^{s}} \leq \mathcal{M}_m(L)  |u|_{H^{s+m}}.
        \end{equation}
    \end{thm}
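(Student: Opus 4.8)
The plan is to prove Theorem \ref{C-V thm} in three stages: first reduce the order-$m$ case to the order-$0$ case; then prove the $L^2\to L^2$ bound for order-$0$ symbols, which is the heart of the matter (a Calderón--Vaillancourt type estimate); and finally bootstrap from $L^2$ to $H^s$. For the reduction, set $\tilde L(X,\xi):=\langle\xi\rangle^{-m}L(X,\xi)$. Using the Leibniz rule in $\xi$ together with $|\partial_\xi^{\gamma'}\langle\xi\rangle^{-m}|\lesssim\langle\xi\rangle^{-m-|\gamma'|}$, one checks $\tilde L\in S^0$ with $\mathcal{M}_0(\tilde L)\lesssim\mathcal{M}_m(L)$. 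Since $\langle\mathrm{D}\rangle^m$ is a Fourier multiplier it acts by multiplication on the frequency side, so $\mathcal{L}[X,\mathrm{D}]=\mathcal{L}_{\tilde L}[X,\mathrm{D}]\circ\langle\mathrm{D}\rangle^m$; because $\langle\mathrm{D}\rangle^m\colon H^{s+m}\to H^{s}$ is an isometry for the norms $|f|_{H^s}=|\mathrm{J}^sf|_{L^2}$ used in the paper, it suffices to establish $|\mathcal{L}_{\tilde L}[X,\mathrm{D}]v|_{H^s}\lesssim\mathcal{M}_0(\tilde L)\,|v|_{H^s}$.

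For the $L^2$ bound, write $a\in S^0$ for the symbol. I would introduce a partition of unity $1=\sum_{k\in\mathbb{Z}^d}\psi(\xi-k)^2$ in frequency, set $a_k(X,\xi)=\psi(\xi-k)^2a(X,\xi)$, so that $a(X,\mathrm{D})=\sum_k a_k(X,\mathrm{D})$ with the summands carrying essentially disjoint frequency supports (their pairwise products decaying rapidly in $|k-k'|$ thanks to the $\xi$-localization and the $X$-smoothness). For each fixed $k$ one expands $a_k(X,\cdot)$ in a Fourier series over a cube containing its $\xi$-support, $a_k(X,\xi)=\sum_{l\in\mathbb{Z}^d}c_{k,l}(X)\,e^{il\cdot\xi}\,\psi(\xi-k)$; by Parseval the coefficients satisfy $\sum_l\langle l\rangle^{2N}|\partial_X^\alpha c_{k,l}(X)|^2\lesssim\sup_\xi\sum_{|\gamma|\le N}|\partial_X^\alpha\partial_\xi^\gamma a(X,\xi)|^2$. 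Each operator $c_{k,l}(X)e^{il\cdot\mathrm{D}}$ (restricted to frequencies near $k$) has $L^2\to L^2$ norm at most $\|c_{k,l}\|_{L^\infty}$, and summing over $l$ via Cauchy--Schwarz costs the factor $\big(\sum_l\langle l\rangle^{-2N}\big)^{1/2}$, finite when $N>d/2$, i.e.\ $N=\lceil d/2\rceil+1$ — this is exactly where $\lceil d/2\rceil+1$ $\xi$-derivatives enter. Controlling $\|c_{k,l}\|_{L^\infty_X}$ in turn requires the same Fourier-series device in the $X$-variable (after a partition of unity in $X$, using that all $\partial_X^\alpha a$ are bounded), which likewise consumes $\lceil d/2\rceil+1$ derivatives in $X$. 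The pieces $a_k(X,\mathrm{D})$ are finally recombined through the Cotlar--Stein almost-orthogonality lemma, giving $\|a(X,\mathrm{D})\|_{L^2\to L^2}\lesssim\mathcal{M}_0(a)$.

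To pass from $L^2$ to $H^s$ for $s\ge 0$, I would conjugate: $\langle\mathrm{D}\rangle^s\mathcal{L}_{\tilde L}[X,\mathrm{D}]\langle\mathrm{D}\rangle^{-s}$ is again an order-$0$ pseudo-differential operator whose symbol lies in $S^0$ with seminorm $\lesssim\mathcal{M}_0(\tilde L)$. This can be seen either from the symbolic calculus (whose remainder estimates rest on the $L^2$ bound just proved, applied to the operators produced by a Taylor expansion of $\langle\xi+\eta\rangle^s$), or, more self-containedly, by rerunning the Fourier-series argument above with the extra weight $\langle\xi\rangle^s$ carried along. Applying the $L^2$ estimate to the conjugated operator and using $|v|_{H^s}=|\langle\mathrm{D}\rangle^sv|_{L^2}$ yields \eqref{est Stein}; keeping track of the finitely many dimensional constants generated by the Sobolev-type sums produces the constant that, as noted after the statement, is only implicit in \cite{AlinhacGerard07}.

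The main obstacle is obtaining the \emph{sharp} derivative count $\lceil d/2\rceil+1$: a naive Cotlar--Stein argument using crude pointwise bounds for the Fourier coefficients would require $O(d)$ derivatives, and the improvement depends precisely on replacing those pointwise bounds by an $L^2$ (Parseval) estimate combined with a Cauchy--Schwarz pairing against $\sum_l\langle l\rangle^{-2N}$, carried out in both the $\xi$- and the $X$-variables. A secondary technical nuisance is that $S^0$ symbols do not decay in $X$, so the $X$-side Fourier expansion is only legitimate after a partition of unity in $X$, and the localized pieces must then be resummed — once more via Cotlar--Stein almost-orthogonality.
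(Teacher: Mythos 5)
The paper never proves Theorem \ref{C-V thm}: it is quoted from the literature, with the sentence following the statement referring the reader to \cite{AlinhacGerard07} (see also \cite{Metivier2008, Alazard21}) for the proof and the implicit constant. So there is no in-paper argument to compare yours with; what you propose is essentially the classical Calder\'on--Vaillancourt-type argument of those references. Your reduction to order zero is clean and correct: right-composition with the Fourier multiplier $\langle \mathrm{D}\rangle^{m}$ really does just multiply the symbol, $\langle \mathrm{D}\rangle^{m}:H^{s+m}\to H^{s}$ is an isometry for the norms $|f|_{H^s}=|\mathrm{J}^sf|_{L^2}$, and $\mathcal{M}_0(\tilde L)\lesssim \mathcal{M}_m(L)$ by Leibniz. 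The $L^2$ core (frequency partition $1=\sum_k\psi(\xi-k)^2$, Fourier expansion of the localized symbol in $\xi$, elementary bound for $c_{k,l}(X)e^{il\cdot \mathrm{D}}\tilde\psi(\mathrm{D}-k)$, resummation by almost orthogonality) is a viable plan; note that for the dimensions in the statement, $d=1,2$, the crude integration-by-parts bound $|c_{k,l}(X)|\lesssim\langle l\rangle^{-N}\mathcal{M}_0(\tilde L)$ with $N=\lceil d/2\rceil+1>d$ is already summable in $l$, so the Parseval/Cauchy--Schwarz refinement you emphasize is not the bottleneck there; the place that needs more care in your sketch is the Cotlar--Stein bookkeeping, since the decay of $\|A_k^*A_{k'}\|$ in $|k-k'|$ extracted from only $\lceil d/2\rceil+1$ derivatives in $X$ must be checked against the summability that the almost-orthogonality lemma actually requires.

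The genuine gap is in your last step. You assert that $\langle \mathrm{D}\rangle^{s}\mathcal{L}_{\tilde L}[X,\mathrm{D}]\langle \mathrm{D}\rangle^{-s}$ is again of order zero \emph{with seminorm} $\lesssim\mathcal{M}_0(\tilde L)$. This cannot be proved, because with the fixed seminorm $\mathcal{M}_m(L)$ (only $\lceil d/2\rceil+1$ derivatives in $X$) the estimate \eqref{est Stein} is false for large $s$, even allowing an implicit constant: take $d=1$, $m=0$, $L(X,\xi)=c(X)=\epsilon\sin(X/\sqrt{\epsilon})$ and a fixed bump $u$. Then $\mathcal{M}_0(L)=\sup_{|\alpha|\le 2}|\partial_X^{\alpha}c|\lesssim 1$ uniformly in $\epsilon$, while $\widehat{cu}$ is concentrated near $|\xi|\sim\epsilon^{-1/2}$, so $|cu|_{H^{s}}\sim\epsilon^{1-s/2}\to\infty$ as $\epsilon\to0$ for any $s>2$; multiplication on $H^s$ costs about $s$ derivatives of the symbol in $X$, not $\lceil d/2\rceil+1$. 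The loss is hidden exactly in your conjugation: the symbolic-calculus remainder (or, equivalently, rerunning the Fourier-series argument with the weight $\langle\xi\rangle^{s}$) consumes additional $X$-derivatives of the symbol, roughly $s$ of them, so the conjugated symbol's $S^0$ seminorm is \emph{not} controlled by $\mathcal{M}_0(\tilde L)$ alone. The statement, and your strategy, become correct once the right-hand side of \eqref{est Stein} is allowed to carry a constant and a seminorm whose order (number of $X$-derivatives) depends on $s$; this is how the result appears in the cited references and how the paper actually uses it, since all later applications are phrased with the $s$-dependent constants $M(s)$ and symbols built from $b\in C_c^{\infty}(\R^d)$. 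Concretely: keep your Steps 1--2, and in Step 3 quantify how many symbol derivatives the commutation with $\langle\mathrm{D}\rangle^{s}$ requires, stating the conclusion with that $s$-dependent seminorm.
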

    \noindent
    With this Theorem at hand, we can now give the proof.

    \begin{proof}[Proof of Proposition \ref{Prop op}]

    \noindent
    We will first prove that for $s \geq 0$ the operators $\mathcal{L}_i^{\mu}$ are a uniformly bounded on $H^s(\R^d)$. To prove this point we need to verify that the symbols: 
    \begin{align*}
        L_1^{\mu}(\beta b(X),\xi) & = -
        \frac{1}{\beta}\sinh{(\beta b(X) \sqrt{\mu}|\xi|)}\mathrm{sech}(\sqrt{\mu}|\xi|)\dfrac{1}{\sqrt{\mu}|\xi|}
        \\
        L_2^{\mu}(\beta b(X),\xi)  
        & = 
        \frac{1}{\beta}(\sinh{(\beta b(X) \sqrt{\mu}|\xi|)}\mathrm{sech}(\sqrt{\mu}|\xi|)\dfrac{1}{\sqrt{\mu}|\xi|} 
        -
        \beta b) \frac{1}{\mu |\xi|^2}
        \\
        L_3^{\mu}(\beta b(X),\xi) 
        & = - \big{(}\cosh(\beta b(X)\sqrt{\mu}|\xi|)\mathrm{sech}(\sqrt{\mu}|\xi|)-1\big{)}\frac{1}{\mu |\xi|^2}
    \end{align*}
    are elements of $S^0$ where the constants $\mathcal{M}_0(L_{i})$ are independent of $\mu$ and $\beta$. We treat each symbol separately.

    We start by proving that the symbol $L^{\mu}_1$ is in $S^0$. To do so, we will split the frequency domain into three regions. First, let $\beta \sqrt{\mu} |\xi|\leq 1$ and $\sqrt{\mu}|\xi| \leq 1$. Then since $L_1^{\mu}(\beta b(X), \xi) \in C^{\infty}(\R^d \times \R^d)$ and the Taylor expansion around $(X,0)$ gives us
    \begin{align*}
        |\partial_X^{\alpha} \partial_{\xi}^{\gamma}L_1^{\mu}(\beta b(X),\xi)| \lesssim 1.
    \end{align*}
    Next, consider the region $\beta \sqrt{\mu} |\xi| > 1$. Then we also have that $|\xi|\geq \sqrt{\mu} |\xi|>1$. With this in mind, we can prove the   necessary decay estimate. Indeed, since $b\in C^{\infty}_c(\R^d)$ satisfies \eqref{non-cavitation 2}, i.e. for $h_{b,\max}\in (0,1)$:
    $$0<h_{b,\min}\leq 1 - \beta  b(X),$$
    combined with $\mathrm{sech}(x)\sim e^{-x}$ and $\sinh(x)\sim e^x$ for $x\in \R$, we have that
    \begin{align*}
        \Big{|} \partial_X^{\alpha} \partial_{\xi}^{\gamma}\Big{(} \frac{\sinh(\beta b(X) \sqrt{\mu} |\xi|)}{\cosh(\sqrt{\mu}|\xi|)}\Big{)}\Big{|} 
        & \lesssim
        \mu^{\frac{|\gamma|}{2}}(1+ \sqrt{\mu}|\xi|)^{|\alpha|}e^{-h_{b,\min}\sqrt{\mu}|\xi|} 
        \\
        & \lesssim (\frac{2}{h_{b,\min}})^{|\alpha|}\mu^{\frac{|\gamma|}{2}}e^{-\frac{1}{2}h_{b,\min}\sqrt{\mu}|\xi|}.
    \end{align*}
    Additionally, there holds
    \begin{equation*}
        \mu^{\frac{|\gamma|}{2}} (1+|\xi|)^{|\gamma|}e^{-\sqrt{\mu}|\xi|} \lesssim 1,
    \end{equation*}
    and so we obtain the estimate
    \begin{align}\label{usefull stuff}
       \Big{|} \partial_X^{\alpha} \partial_{\xi}^{\gamma}\Big{(} \frac{\sinh(\beta b(X) \sqrt{\mu} |\xi|)}{\cosh(\sqrt{\mu}|\xi|)}\Big{)}\Big{|}  
        & \lesssim (1+|\xi|)^{-|\gamma|}.
    \end{align}
    Then combining this estimate with the Leibniz rule we obtain that
    \begin{align*}
        |\partial_X^{\alpha} \partial_{\xi}^{\gamma} L_1^{\mu}(\beta b(X),\xi)| 
        & \lesssim 
        \frac{1}{\beta}
        \underset{{\gamma_1\in \N^d \: : \: \gamma_1\leq \gamma}}{\sum}
        \Big{|} \partial_X^{\alpha} \partial_{\xi}^{\gamma_1}\Big{(} \frac{\sinh(\beta b(X) \sqrt{\mu} |\xi|)}{\cosh(\sqrt{\mu}|\xi|)}\Big{)}\Big{|} 
        \: 
        \big{|}  \partial_{\xi}^{\gamma-\gamma_1}\Big{(}(\sqrt{\mu}|\xi|)^{-1}\Big{)}\big{|}
        \\
        & \lesssim (\beta\sqrt{\mu}|\xi|)^{-1}
        \underset{{\gamma_1\in \N^d \: : \: \gamma_1\leq \gamma}}{\sum}
        (1+|\xi|)^{-|\gamma_1|}|\xi|^{|\gamma_1|-|\gamma|}
        \\
        & 
        \lesssim
        |\xi|^{-|\gamma|}.
    \end{align*}
    However, since we have that $|\xi|\geq \sqrt{\mu}|\xi|>1$, we obtain the desired result
    \begin{equation}\label{decay L1}
        |\partial_X^{\alpha} \partial_{\xi}^{\gamma} L_1^{\mu}(\beta b(X),\xi)| \lesssim \langle \xi\rangle^{-|\gamma|}.
    \end{equation}
    Lastly, let $\beta \sqrt{\mu} |\xi|\leq 1$ and $\sqrt{\mu}|\xi| \geq 1$. In this case, we expand $x\mapsto\sinh(x)$ to obtain
    \begin{align*}
        L_1^{\mu}(\beta b(X),\xi) & =  - b(X)\mathrm{sech}(\sqrt{\mu}|\xi|)  \\
       &
       -
       \frac{1}{6\beta} \Big(\beta^3 b(X)^3 \int_0^1  \cosh(t\beta b(X) \sqrt{\mu}|\xi|)  (1-t)^2 \: \mathrm{d}t\Big) (\sqrt{\mu}|\xi|)^2\mathrm{sech}(\sqrt{\mu}|\xi|). 
    \end{align*}
    To conclude, we observe that
    \begin{align*}
        |\partial_X^{\alpha} \partial_{\xi}^{\gamma}\big{(} b(X)\mathrm{sech}(\sqrt{\mu}|\xi|) \big{)}| 
        & \lesssim  
        \mu^{\frac{|\gamma|}{2}}e^{-\sqrt{\mu}|\xi|} 
        \lesssim \langle  \xi\rangle^{-|\gamma|} .
    \end{align*}
    For the second term, we let $t\in [0,1]$ and observe that we only need to consider  
    \begin{align*}
        \frac{\cosh(t \beta b(X) \sqrt{\mu}|\xi|)}{\cosh(\sqrt{\mu}|\xi|)}(\sqrt{\mu}|\xi|)^2,
    \end{align*}
    for which the decay estimate follows similarly to \eqref{usefull stuff}. Indeed, we first observe that
    \begin{align*}
        \Big{|} \partial_X^{\alpha} \partial_{\xi}^{\gamma}\Big{(} \frac{\cosh(t\beta b(X) \sqrt{\mu} |\xi|)}{\cosh(\sqrt{\mu}|\xi|)}\Big{)}\Big{|} 
        & \lesssim
        \mu^{\frac{|\gamma|}{2}}t^{|\alpha|}(1+ \sqrt{\mu}t|\xi|)^{|\alpha|}e^{-h_{b,\min}\sqrt{\mu}|\xi|} 
        \\
        & 
        \lesssim \mu^{\frac{|\gamma|}{2}} e^{-\frac{h_{b,\min}}{2}\sqrt{\mu}|\xi|},
    \end{align*}
    since $t\in [0,1]$. Then by the Leibniz rule, we get
    \begin{align*}
        \Big{|} \partial_X^{\alpha} \partial_{\xi}^{\gamma}\Big{(} \frac{\cosh(t\beta b(X) \sqrt{\mu} |\xi|)}{\cosh(\sqrt{\mu}|\xi|)} \mu|\xi|^2\Big{)}\Big{|} 
        & 
        \lesssim \underset{{\gamma_1\in \N^d \: : \: \gamma_1\leq \gamma}}{\sum}
        \mu^{\frac{|\gamma_1|}{2} +1}e^{-\frac{h_{b,\min}}{2}\sqrt{\mu}|\xi|} |\xi|^{2+|\gamma_1|-|\gamma|}
        \\
         & 
         \lesssim   |\xi|^{-|\gamma|},
    \end{align*}
    for $|\xi| \geq \sqrt{\mu}|\xi|>1$. Consequently, we can conclude this case. By Theorem \eqref{C-V thm} there holds,
    \begin{align*}
        |\mathcal{L}_1^{\mu}[\beta b]u|_{H^{s}} \leq M(s)|u|_{H^{s}}.
    \end{align*}

    For the symbol $L^{\mu}_2$,  we observe for $\beta \sqrt{\mu}|\xi|\leq 1$ and $\sqrt{\mu}|\xi|\leq 1$ and a Taylor expansion that it is smooth and bounded.   Moreover, for frequencies such that $\beta \sqrt{\mu}|\xi|> 1$, we can argue as we did for $L^{\mu}_1$ to get sufficient decay in the frequency variable at infinity. Lastly, in the case $\beta \sqrt{\mu} |\xi|\leq 1$ and $\sqrt{\mu}|\xi| \geq 1$ we use the following expansion
    \begin{align*}
        L_2^{\mu}(\beta b(X),\xi) & =  b(X) \big{(} \mathrm{sech}(\sqrt{\mu}|\xi|) -1 \big{)}\frac{1}{\mu |\xi|^2} \\
       &
       +
       \frac{1}{6\beta} \Big(\beta^3 b(X)^3 \int_0^1  \cosh(t\beta b(X) \sqrt{\mu}|\xi|)  (1-t)^2\mathrm{d}t\Big) \mathrm{sech}(\sqrt{\mu}|\xi|),
    \end{align*}
    and again argue as we did for $L^{\mu}_1$.  

    The estimates on $L_3^{\mu}$ is simpler since it does not depend on $\frac{1}{\beta}$. Thus, using similar arguments we can prove the necessary decay at infinity, and a Taylor series to prove the boundedness for small frequencies.

    The estimate \eqref{L approx} follows directly from the boundedness on $H^s(\mathbb{R}^d)$ of $\mathcal{L}_2^{\mu}[\beta b]$ since its symbol is in $S^0$ and that 
    $$L_1^{\mu}(\beta b(X), \xi) +   b(X) =- \mu L_2^{\mu}(\beta b(X),\xi)|\xi|^2.$$
    \noindent
    Indeed, the symbol $r_1(X,\xi) = L_2^{\mu}(\beta b(X),\xi)|\xi|^2$ is an element of $S^2$ and by Theorem \ref{C-V thm} we deduce that $\mathcal{R}_1[X, \mathrm{D}]u(X) = \mathcal{F}^{-1}(r_1(X,\xi)\hat{u}(\xi))(X)$ satisfies
    \begin{equation*}
        |\mathcal{R}_1[X, \mathrm{D}]u|_{H^s} \lesssim |u|_{H^{s+2}},
    \end{equation*}
    so that
    \begin{align*}
        |\mathcal{L}_1^{\mu}[\beta b] u + b u|_{H^s} 
        = \mu |\mathcal{R}[X,\mathrm{D}]u|_{H^s}
        \lesssim \mu |u|_{H^{s+2}}.
    \end{align*}

    The next estimate, given by \eqref{L1 approx next order}, is deduced from the Taylor expansion of the symbol $L_1^{\mu}$ given by:
    \begin{align*}
        L_1^{\mu}(\beta b(X),\xi) & =  - \Big{(}b(X) 
        +
       \frac{\mu \beta^2}{6}  b(X)^3 |\xi|^2 \Big{)}\mathrm{sech}(\sqrt{\mu}|\xi|) -
       \frac{\mu^2 \beta^4}{120}b(X)^5 |\xi|^4 r_2(X,\xi), 
    \end{align*}
    where the rest $r_2$ is given by
    \begin{equation*}
       r_2(X,\xi) = \int_0^1\cosh(t\beta b(X) \sqrt{\mu}|\xi|)  (1-t)^4 \mathrm{d}t\: \mathrm{sech}(\sqrt{\mu}|\xi|),
    \end{equation*}
    and is an element of $S^0$ by arguing as above. By extension, the symbol $b(X)^5|\xi|^4r_2(X,\xi) \in S^4$ and we conclude by Theorem \ref{C-V thm}.

    Lastly, we consider estimate \eqref{L2 next order}. Again by a Taylor series expansion, we observe that
    \begin{align*}
        L_2^{\mu}(\beta b(X),\xi) & =   \Big{(}b(X)(\mathrm{sech}(\sqrt{\mu}|\xi|)-1)\frac{1}{\mu|\xi|^2} 
        +
       \frac{ \beta^2}{6}  b(X)^3  \Big{)}\mathrm{sech}(\sqrt{\mu}|\xi|) 
       \\ 
       & 
       \hspace{0.5cm}
       +
       \frac{\mu \beta^4}{120}b(X)^5 |\xi|^2r_2(X,\xi),
    \end{align*}
    where $b(X)^5|\xi|^2r_2(X,\xi) \in S^2$, allowing us to conclude by Theorem \ref{C-V thm}.

    \end{proof}
    \begin{remark}
        We note that we could improve the estimates in the proof above. For instance, we can get $L_1 \in S^{-1}$. However, the constant $\mathcal{M}_{-1}(L_1)$ would be singular with respect to $\beta$ and $\mu$. 
    \end{remark}

    \subsection{Technical estimates}\label{A2} In this section we give a series of multiplier estimates. To start, we recall the Fourier multiplier depending on the transverse variable:
    \begin{equation*}
        \mathrm{F}_0u(X) = \mathcal{F}^{-1}\Big( \frac{\cosh((z+1)\sqrt{\mu}|\xi|)}{\cosh(\sqrt{\mu}|\xi|)} \hat{u}(\xi)\Big)(X).
    \end{equation*}
    Then the first result reads:
    \begin{prop}\label{Prop F0}
        Let $s\in \R$ and take $u \in \mathscr{S}(\R^d)$, then there holds
        \begin{align*}
            |\mathrm{F}_0u|_{H^s} & \lesssim |u|_{H^s}\hspace{0.3cm}
            \\
            |\partial_z\mathrm{F}_0u|_{H^s}  & \lesssim \mu|\nabla_Xu|_{H^{s+1}}
            \\
            |\partial_z^2\mathrm{F}_0u|  & \lesssim \mu|\nabla_Xu|_{H^{s+1}}.
        \end{align*}
        Moreover, for $k\in \N $ and under condition \eqref{non-cavitation 2} we have similar estimates on the domain $\mathcal{S}_b = \R^d \times[-1+ \beta b, 0]$:
        \begin{align*}
         \|\mathrm{F}_0u - u\|_{H^{k,0}(\mathcal{S}_b)} & \lesssim \mu | \nabla_X u |_{H^{k+1}}
        \\
         \|\partial_z \mathrm{F}_0u\|_{H^{k,0}(\mathcal{S}_b)}  & \lesssim \mu | \nabla_X u |_{H^{k+1}}
        \\
         \|\partial^2_{z}\mathrm{F}_0u\|_{H^{k,0}(\mathcal{S}_b)} & \lesssim \mu | \nabla_X u|_{H^{k+1}}.
    \end{align*}
    \end{prop}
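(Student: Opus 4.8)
The plan is to prove everything at the level of Fourier symbols, reducing each inequality to an elementary pointwise bound combined with Plancherel's theorem. Write $m(z,\xi) = \cosh((z+1)\sqrt{\mu}|\xi|)/\cosh(\sqrt{\mu}|\xi|)$ for the symbol of $\mathrm{F}_0$. The starting observation is that on the relevant $z$-ranges one has $|z+1|\le 1$: indeed $\mathrm{F}_0$ is defined for $z\in[-2,0]$ in Definition \ref{Fourier mult}, while on $\mathcal{S}_b$ the bottom variable satisfies $z\in[-1+\beta b,0]$ with $|\beta b|\le b_{\mathrm{max}}<1$ by \eqref{non-cavitation 2}, so again $z+1\in(-1,1]$. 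Since $\cosh$ is even and increasing on $[0,\infty)$, this gives $0<m(z,\xi)\le 1$, and hence $|\mathrm{F}_0 u|_{H^s}\le|u|_{H^s}$ immediately by Plancherel.

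For the $z$-derivatives I would compute $\partial_z m(z,\xi)=\sqrt{\mu}|\xi|\,\sinh((z+1)\sqrt{\mu}|\xi|)/\cosh(\sqrt{\mu}|\xi|)$ and $\partial_z^2 m(z,\xi)=\mu|\xi|^2 m(z,\xi)$. The key elementary inequality is $|\sinh(a)|\le|a|\cosh(a)$ (integrate $\cosh$ from $0$ to $a$); combined with $|z+1|\le1$ and monotonicity of $\cosh$ it yields $|\partial_z m(z,\xi)|\le\mu|\xi|^2$, and the bound $m\le1$ gives $|\partial_z^2 m(z,\xi)|\le\mu|\xi|^2$ as well. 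Since $\mu|\xi|^2\langle\xi\rangle^s\le\mu|\xi|\langle\xi\rangle^{s+1}$, Plancherel then delivers $|\partial_z\mathrm{F}_0 u|_{H^s}\lesssim\mu|\nabla_X u|_{H^{s+1}}$ and the analogous estimate for $\partial_z^2\mathrm{F}_0 u$. One could alternatively note that $\partial_z^2\mathrm{F}_0=\mu|\mathrm{D}|^2\mathrm{F}_0$, since the symbol of $\mathrm{F}_0$ is annihilated by $\partial_z^2-\mu|\xi|^2$.

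For the $H^{k,0}(\mathcal{S}_b)$ estimates the only additional ingredient is a bound on the symbol of $\mathrm{F}_0-\mathrm{Id}$. Writing $\cosh((z+1)\sqrt{\mu}|\xi|)-\cosh(\sqrt{\mu}|\xi|)=\int_{\sqrt{\mu}|\xi|}^{(z+1)\sqrt{\mu}|\xi|}\sinh(t)\,dt$, using $|\sinh(t)|\le|t|\cosh(t)\le\sqrt{\mu}|\xi|\cosh(\sqrt{\mu}|\xi|)$ on the interval of integration and that this interval has length $|z|\sqrt{\mu}|\xi|\le\sqrt{\mu}|\xi|$, one gets $|m(z,\xi)-1|\le\mu|\xi|^2$ uniformly in $z$. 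Then for each of the three quantities $\mathrm{F}_0 u-u$, $\partial_z\mathrm{F}_0 u$, $\partial_z^2\mathrm{F}_0 u$, I would fix $z$, apply Plancherel in $X$ to bound $\int_{\R^d}|\partial_X^\gamma(\cdot)|^2\,dX\lesssim\mu^2|\nabla_X u|_{H^{k+1}}^2$ for every $|\gamma|\le k$ using the symbol bounds above, then integrate over $z\in[-1+\beta b,0]$, an interval of length $h_b=1-\beta b\le1+b_{\mathrm{max}}<2$, and sum the finitely many multi-indices $\gamma$; taking square roots gives the claimed bounds.

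None of the steps is a serious obstacle; the only point needing attention is the bookkeeping of the two powers of $\mu^{1/2}$ that produce the overall factor $\mu$, which comes precisely from the inequality $|\sinh(a)|\le|a|\cosh(a)$ converting one hyperbolic-sine factor into an extra $\sqrt{\mu}|\xi|$. Uniformity in $\mu\in[0,1]$ and $\beta\in[0,1]$ is then automatic once one has checked $|z+1|\le1$ and that $h_b$ is bounded above.
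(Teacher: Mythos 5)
Your proposal is correct and follows essentially the same route as the paper: pointwise bounds on the symbol, uniform in $z$, combined with Plancherel's identity, with the $\mathcal{S}_b$ estimates reduced to the fixed strip $\R^d\times[-2,0]$ (the paper obtains the symbol bounds from the Taylor formula for $\cosh$ with integral remainder rather than from $|\sinh a|\le |a|\cosh a$, which is immaterial). The only slip is that on $\mathcal{S}_b$ one has $|z|\le 1+b_{\mathrm{max}}<2$ rather than $|z|\le 1$, so the interval of integration in your bound for $m(z,\xi)-1$ has length at most $2\sqrt{\mu}|\xi|$; this only changes the implicit constant.
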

    \begin{proof}
        The estimates on $H^s(\R^d)$ are a direct consequence of Plancherel's identity and the Taylor expansion formula for $x \in \R$:
        \begin{align*}
            \cosh(x) 
            & =
            1 + \frac{x^2}{2} \int_0^1\cosh(tx)(1-t) \:\mathrm{d}t.
        \end{align*}

        For the estimates on $\mathcal{S}_b$, we use that $-h_b(X)>-2$, by assumption \eqref{non-cavitation 2}, then extend the definition of $\mathrm{F}_0$ to the domain $\mathcal{S} := \R^d \times [-2,0]$. The first estimate on $\mathcal{S}_b$ is a consequence of 
        \begin{align*}
            \|\mathrm{F}_0u - u\|_{H^{k,0}(\mathcal{S}_b)} \leq \|\mathrm{F}_0u - u\|_{H^{k,0}(\mathcal{S})} & = \Big{\|}\dfrac{\cosh{((z+1)\sqrt{\mu}|\mathrm{D}|)}}{\cosh{(\sqrt{\mu}|\mathrm{D}|)}}u - u\Big{\|}_{H^{k,0}(\mathcal{S})} \lesssim \mu | \nabla_Xu |_{H^{k+1}}.
        \end{align*}
        The remaining estimates are proved similarly. 
    \end{proof}

    The next result concerns the following operators:
    \begin{equation*}
        T_1(z)[X,\mathrm{D}] u (X)= \mathcal{F}^{-1}\Big{(} \frac{\sinh(\frac{z}{h_b(X)}\sqrt{\mu}|\xi|)}{\cosh(\sqrt{\mu}|\xi|)} \hat{u}(\xi)\Big{)}(X),
    \end{equation*}
    and
    \begin{equation*}
        T_2(z)[X,\mathrm{D}] u(X) = \mathcal{F}^{-1}\Big{(} \frac{\cosh(\frac{z}{h_b(X)}\sqrt{\mu}|\xi|)}{\cosh(\sqrt{\mu}|\xi|)} \hat{u}(\xi)\Big{)}(X).
    \end{equation*}
    We should note that we will apply these operators to functions depending only on $X$, which makes the dependence in $z \in [-h_b,0]$ easier to deal with.

    \begin{prop}\label{Prop T1 and T2}
        Let $k \in \N$ and take $u \in \mathscr{S}(\R^d)$, then under condition
        \eqref{non-cavitation 2} we have 
        \begin{align*}
         \|T_1u \|_{H^{k,0}(\mathcal{S}_b)} & \leq  M(k)|  u |_{H^{k}}
        \\
         \|T_2u\|_{H^{k,0}(\mathcal{S}_b)}   & \leq  M(k) |   u |_{H^{k}}.
    \end{align*}
    \end{prop}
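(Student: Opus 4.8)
The plan is to reduce both operators to plain Fourier multipliers in $X$ by straightening the transverse variable, and then to conclude by Plancherel's identity. The key observation is that on $\mathcal{S}_b$ one has $-h_b(X)\le z\le 0$, so under the substitution $z=h_b(X)\tau$ with $\tau\in[-1,0]$ the argument $\tfrac{z}{h_b(X)}$ collapses to $\tau$ and the $X$-dependence of the symbols disappears entirely:
$(T_1u)(X,h_b(X)\tau)=\mathrm{Op}_X\!\big(n_1(\tau,\sqrt\mu|\xi|)\big)u(X)$ and $(T_2u)(X,h_b(X)\tau)=\mathrm{Op}_X\!\big(n_2(\tau,\sqrt\mu|\xi|)\big)u(X)$, where $n_1(\tau,\lambda)=\sinh(\tau\lambda)/\cosh\lambda$ and $n_2(\tau,\lambda)=\cosh(\tau\lambda)/\cosh\lambda$. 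Since $|\tau|\le1$, monotonicity of $\cosh$ on $[0,\infty)$ and $|\sinh|\le\cosh$ give $|n_1|,|n_2|\le1$ uniformly in $\tau$ and in $\mu\in(0,1]$. More generally, differentiating the symbol $l$ times in $z$ and then substituting produces $(\partial_z^l T_ju)(X,h_b(X)\tau)=h_b(X)^{-l}\,\mathrm{Op}_X\!\big((\sqrt\mu|\xi|)^l\,n_{j,l}(\tau,\sqrt\mu|\xi|)\big)u(X)$ with $|n_{j,l}|\le1$; the only surviving $X$-dependence is the scalar prefactor $h_b(X)^{-l}$, which together with all its $X$-derivatives is bounded by $M(k)$ thanks to \eqref{non cav 3}, $\beta|b|<b_{\max}<1$, and $b\in C^\infty_c(\R^d)$.

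With this in hand I would rewrite the norm using the substitution. For $|\gamma|\le k$,
$\int_{\R^d}\int_{-h_b(X)}^{0}|\partial_X^\gamma(T_ju)(X,z)|^2\,\mathrm dz\,\mathrm dX=\int_{\R^d}\int_{-1}^{0}|(\partial_X^\gamma T_ju)(X,h_b(X)\tau)|^2\,h_b(X)\,\mathrm d\tau\,\mathrm dX$, and by an induction on $|\gamma|$ (the Faà di Bruno formula applied to the composition $X\mapsto(T_ju)(X,h_b(X)\tau)$) one writes $(\partial_X^\gamma T_ju)(X,h_b(X)\tau)$ as a finite sum of terms $Q(X,\tau)\,\partial_X^{\gamma'}\!\big[(\partial_z^l T_ju)(X,h_b(X)\tau)\big]$ with $l+|\gamma'|\le|\gamma|$, where each $Q$ is a polynomial in $\tau$ and in the $X$-derivatives of $h_b$, hence an $L^\infty(\R^d)$ function with $L^\infty$-bounded $X$-derivatives, all controlled by $M(k)$. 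Expanding the prefactor $h_b^{-l}$ by the Leibniz rule, every term reduces to $M(k)$ times $\mathrm{Op}_X\!\big((i\xi)^{\gamma''}(\sqrt\mu|\xi|)^l\,n_{j,l}(\tau,\sqrt\mu|\xi|)\big)u$ with $|\gamma''|+l\le|\gamma|\le k$.

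The last step is a one-line Plancherel estimate of each such piece. Since $\big|(i\xi)^{\gamma''}(\sqrt\mu|\xi|)^l\,n_{j,l}(\tau,\sqrt\mu|\xi|)\big|\le|\xi|^{|\gamma''|+l}\le\langle\xi\rangle^{k}$, using $\mu\le1$ and $|n_{j,l}|\le1$, the corresponding $X$-Fourier multiplier maps $H^{k}(\R^d)$ to $L^2(\R^d)$ with operator norm $\le1$, so its $L^2_X$-norm is $\le|u|_{H^{k}}$, uniformly in $\tau\in[-1,0]$ and in $\mu$. Summing the finitely many terms, summing over $|\gamma|\le k$, and integrating in $\tau$ over an interval of length $1$ with $h_b(X)\le 1+b_{\max}<2$, yields $\|T_ju\|_{H^{k,0}(\mathcal S_b)}\le M(k)|u|_{H^{k}}$ for $j=1,2$.

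The main obstacle is purely organizational: one must carry out the chain-rule bookkeeping carefully so as to see that every transverse derivative $\partial_z^l$ spent in a correction term is exactly compensated by the factor $(\sqrt\mu|\xi|)^l$ it creates, so that the total order of the resulting $X$-Fourier multiplier never exceeds $|\gamma|\le k$; this is precisely what keeps the right-hand side at $|u|_{H^k}$ and not at a higher Sobolev norm. Secondarily, one checks that the scalar functions of $X$ generated by the change of variables — namely $h_b^{-l}$ and the polynomials $Q$ in $\partial_X h_b$ — are uniformly bounded together with all their derivatives, which is immediate from the non-cavitation bound \eqref{non cav 3} and $b\in C^\infty_c(\R^d)$.
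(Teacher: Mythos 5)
Your proof is correct and follows essentially the same route as the paper: the change of variable $z=h_b(X)\tau$ collapses the $X$-dependence of the symbols so that Plancherel gives the $k=0$ bound with the Jacobian $h_b$ absorbed into $M_0$, and higher derivatives are handled by the chain rule, each $\partial_z$ costing a factor $\sqrt{\mu}|\xi|/h_b$ that keeps the total multiplier order at most $k$. The paper states the chain-rule step in one line; you have simply made that bookkeeping explicit.
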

    \begin{proof}
        We first observe that $T_1$ is well-defined on $\mathscr{S}(\R^d)$. Indeed, for $t_0> \frac{d}{2}$ there holds
        \begin{align*}
            |T_1u(X)| 
            & \leq
            \sup\limits_{z \in [-h_b(X),0]} 
            \Big{|} \mathcal{F}^{-1}\Big{(} \frac{\sinh(\frac{z}{h_b(X)}\sqrt{\mu}|\xi|)}{\cosh(\sqrt{\mu}|\xi|)} \hat{u}(\xi)\Big{)}(X)\Big{|}
            \\
            & \leq  
            \sup\limits_{\xi \in \R^d}\:  \tanh(\sqrt{\mu}|\xi|)\langle \xi \rangle^{t_0}  
            |\hat{u}(\xi)|
            \int_{\R^d} \langle\xi\rangle^{-t_0} \: \mathrm{d}\xi
            \\
            &<\infty.           
         \end{align*}
         Moreover, using similar arguments one can prove $T_1u \in \mathscr{S}(\R^d)$. The same is true for $T_2$. 
         
         Next, we prove the estimates. To do so, we first let $k=0$ and use a change of variable, Hölder's inequality, the Sobolev embedding, and Plancherel's identity to make the observation:
         \begin{align*}
             \|T_1 u\|_{L^2(\mathcal{S}_b)}^2
             & = 
             \int_{\R^d}h_b(X)\int_{-1}^0 |\mathcal{F}^{-1}\Big{(}\frac{\sinh(z\sqrt{\mu}|\xi|)}{\cosh(\sqrt{\mu}|\xi|)} \hat{u}(\xi) \Big{)}(X)|^2\: \mathrm{d}z \mathrm{d}X
             \\
             & 
             \leq |h_b|_{L^{\infty}}
             \int_{-1}^0\int_{\R^d} |\mathcal{F}^{-1}\Big{(} \frac{\sinh(z\sqrt{\mu}|\xi|)}{\cosh(\sqrt{\mu}|\xi|)} \hat{u}(\xi) \Big{)}(X)|^2\:  \mathrm{d}X \mathrm{d}z
             \\
             & 
             \leq M_0|u|_{L^2}^2.
         \end{align*}
         For higher derivatives, the proof is the same after an application of the chain rule. The same is true for $T_2$.
         
    \end{proof}

    The next result is on the Dirichlet-Neumann operator (Theorem $3.15$ in \cite{WWP}): 

    \begin{prop}\label{Prop: D-N op}
        Let $s \geq 0$. Let $\zeta \in H^{s+3}(\R^d)$ be such that \eqref{non-cavitation 1} is satisfied, and take $\psi \in \dot{H}^{s+3}(\R^d)$. Then one has
        \begin{equation}\label{G est 1}
            \frac{1}{\mu}|\mathcal{G}^{\mu} \psi|_{H^{s+1}} \leq M(s+3)|\nabla_X \psi|_{H^{s+2}}.
        \end{equation}
        %
        %
        %
        %
        %
        %
        
    \end{prop}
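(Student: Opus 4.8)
This estimate is Theorem~3.15 in \cite{WWP}, so in the paper one may simply invoke that reference; below is the structure of a self-contained argument. The starting point is the reformulation \eqref{Relation: D-N op}, namely $\frac1\mu\mathcal{G}^\mu\psi=-\nabla_X\cdot(h\overline V)$. By the algebra property of $H^{s+1}(\R^d)$ together with the non-cavitation bound \eqref{non-cavitation 1} --- which for $\zeta\in H^{s+3}$ controls $|h|_{H^{s+2}}$ and $|1/h|_{H^{s+2}}$ by $M(s+3)$ --- the claim reduces to $|\overline V|_{H^{s+2}}\le M(s+3)\,|\nabla_X\psi|_{H^{s+2}}$. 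Next, by the straightened formula \eqref{New V bar}, the elementary bound $\big|\int_{-1+\beta b}^0 G(\cdot,z)\,\mathrm{d}z\big|_{H^k}\lesssim\|G\|_{H^{k,0}(\mathcal{S}_b)}$, the product/division estimates in $H^{s+2}$, and the control $|\nabla_X\sigma|_{H^{s+2}}\lesssim\ve\, M(s+3)$ from \eqref{grad sigma}, it suffices to establish the elliptic-regularity bound $\|\nabla^\mu_{X,z}\phi_b\|_{H^{s+2,0}(\mathcal{S}_b)}\le\sqrt\mu\,M(s+3)\,|\nabla_X\psi|_{H^{s+2}}$ for the solution $\phi_b$ of \eqref{Elliptic pb Sb} (the $\sqrt\mu$ on the right is exactly what compensates the $1/\sqrt\mu$ produced when $\nabla_X\phi_b$ is recovered from $\nabla^\mu_{X,z}\phi_b$).

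To get this bound, reduce to homogeneous Dirichlet data: write $\phi_b=\underline\psi+u$, where $\underline\psi$ is a regularizing extension of $\psi$ to $\mathcal{S}_b$ --- a $\mu$-dependent Fourier multiplier, schematically $\underline\psi(X,z)=\chi\big(z\sqrt\mu|\mathrm{D}|\big)\psi$, built so that $\|\nabla^\mu_{X,z}\underline\psi\|_{H^{k,0}(\mathcal{S}_b)}\lesssim\sqrt\mu\,M_0\,|\nabla_X\psi|_{H^{k}}$ with no loss of derivatives on $\psi$ and with constants uniform in $\mu$. Then $u$ solves a problem of the form \eqref{Elliptic est pb} with $u|_{z=0}=0$ and right-hand sides $f\in H^{k,k}(\mathcal{S}_b)$, $g\in H^k(\R^d)$ assembled from $\underline\psi$, each of size $O(\sqrt\mu)$ and bounded by $\sqrt\mu\,M(k+1)\,|\nabla_X\psi|_{H^k}$; Proposition~\ref{Prop elliptic est} then yields $\|\nabla^\mu_{X,z}u\|_{H^{k,0}(\mathcal{S}_b)}\le\sqrt\mu\,M(k+1)\,|\nabla_X\psi|_{H^k}$, and taking $k=s+2$ closes the argument. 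The base case $k=0$ of Proposition~\ref{Prop elliptic est} is itself the variational estimate: test the weak formulation of \eqref{Elliptic est pb} against $u$, integrate by parts, and use the coercivity \eqref{Coercivity} of $P(\Sigma_b)$ to control $\|\nabla^\mu_{X,z}u\|_{L^2(\mathcal{S}_b)}$; the higher-order bounds follow by induction, trading $\partial_z^2u$ for horizontal derivatives through the equation and using tangential difference quotients.

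The delicate point --- and the main obstacle to a fully self-contained write-up --- is the construction of $\underline\psi$ and the bookkeeping of the powers of $\mu$ (and of $\ve,\beta$): one must verify that $\underline\psi$ costs no derivative on $\psi$, that $f$ and $g$ carry the correct $\sqrt\mu$-weights so that the prefactor $\frac1\mu$ in $\frac1\mu\mathcal{G}^\mu$ is exactly absorbed by the $\sqrt\mu$'s hidden in $\nabla^\mu_{X,z}$, and hence that $M(s+3)$ stays bounded as $\mu,\ve,\beta\to0$, so that no spurious negative power of a parameter appears. This bookkeeping is precisely what is carried out in \cite{WWP}.
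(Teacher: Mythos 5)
The paper offers no proof of this proposition at all—it is stated with a bare citation of Theorem 3.15 in \cite{WWP}—and your proposal does exactly the same thing, with a sketch that mirrors the argument of that reference (reduction to $\overline{V}$ via \eqref{Relation: D-N op}, straightening, a regularizing lift of $\psi$, and $\mu$-uniform elliptic estimates), so the approaches coincide. The only caveat is that two of your intermediate claims are stated too crudely to close the bookkeeping as written—bounding $f=-\nabla^{\mu}_{X,z}\cdot P(\Sigma_b)\nabla^{\mu}_{X,z}\underline{\psi}$ by $|\nabla_X\psi|_{H^{k}}$ requires keeping the source in divergence form (otherwise one derivative is lost and the final estimate degrades to $|\nabla_X\psi|_{H^{s+3}}$), and the bound $\big|\int_{-1+\beta b}^{0}G\,\mathrm{d}z\big|_{H^{k}}\lesssim\|G\|_{H^{k,0}(\mathcal{S}_b)}$ ignores the $X$-dependent lower limit (compare \eqref{estimate for V bar}, which needs the extra $\partial_z^{j}$ terms)—but these are precisely the points you defer to \cite{WWP}, where they are handled.
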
 
    
    Lastly, we have the following estimates on the multipliers:
    \begin{align*}
            \mathrm{F}_1 = \dfrac{\tanh{(\sqrt{\mu}|\mathrm{D}|)}}{\sqrt{\mu}|\mathrm{D}|},\quad \mathrm{F}_2 = \frac{3}{\mu |\mathrm{D}|^2}(1- \mathrm{F}_1), \quad  \mathrm{F}_3 =\mathrm{sech}(\sqrt{\mu}|D|), \quad \mathrm{F}_4 =  \frac{2}{\mu |\mathrm{D}|^2}(1- \mathrm{F}_3).
    \end{align*}
    \begin{prop}\label{Simple est}
        Let $s\in \R$ and take $u \in \mathscr{S}(\R^d)$, then for $i\in \{1,2,3,4\}$ there holds
        \begin{align*}
            |(\mathrm{F}_i -1)u|_{H^s}  & \lesssim \mu |\nabla_Xu|_{H^{s+1}}.
        \end{align*}
    \end{prop}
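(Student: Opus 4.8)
The plan is to reduce everything to a single pointwise estimate on the Fourier symbols and then conclude by Plancherel's identity. Writing $\mathrm{F}_i = F_i(\mathrm{D})$ with
\begin{equation*}
F_1(\xi) = \frac{\tanh(\sqrt{\mu}|\xi|)}{\sqrt{\mu}|\xi|},\quad F_2(\xi) = \frac{3}{\mu|\xi|^2}\big(1-F_1(\xi)\big),\quad F_3(\xi) = \mathrm{sech}(\sqrt{\mu}|\xi|),\quad F_4(\xi) = \frac{2}{\mu|\xi|^2}\big(1-F_3(\xi)\big),
\end{equation*}
and using that $\widehat{(\mathrm{F}_i - 1)u}(\xi) = (F_i(\xi)-1)\hat u(\xi)$ while $\widehat{\nabla_X u}(\xi) = i\xi\,\hat u(\xi)$, I claim it suffices to prove the uniform symbol bound
\begin{equation*}
|F_i(\xi) - 1| \leq C\,\mu\,|\xi|^2 \qquad \text{for all } \xi \in \mathbb{R}^d,\ i = 1,2,3,4,
\end{equation*}
with $C$ independent of $\mu$ and $\xi$. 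Indeed, combining this with $|\xi| \leq \langle \xi \rangle$ and Plancherel gives
\begin{equation*}
|(\mathrm{F}_i - 1)u|_{H^s}^2 = \int_{\mathbb{R}^d} \langle\xi\rangle^{2s} |F_i(\xi) - 1|^2 |\hat u(\xi)|^2 \, \mathrm{d}\xi \leq C^2 \mu^2 \int_{\mathbb{R}^d} \langle\xi\rangle^{2(s+1)} |\xi|^2 |\hat u(\xi)|^2 \, \mathrm{d}\xi = C^2\mu^2 |\nabla_X u|_{H^{s+1}}^2,
\end{equation*}
which is the asserted inequality.

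To prove the symbol bound I would split the frequency space into $\sqrt{\mu}|\xi| \leq 1$ and $\sqrt{\mu}|\xi| \geq 1$. For the high frequencies one only needs that each $F_i$ takes values in $(0,1]$: $F_1 \leq 1$ because $\tanh x \leq x$ on $[0,\infty)$, $F_3 \leq 1$ because $\cosh \geq 1$, and $F_2, F_4 \leq 1$ because $1 - F_1 \leq \tfrac13(\sqrt{\mu}|\xi|)^2$ and $1 - F_3 \leq \tfrac12(\sqrt{\mu}|\xi|)^2$ (the first from $\tanh x \geq x - \tfrac{x^3}{3}$, the second from $\frac{\mathrm{d}}{\mathrm{d}x}(1 - \mathrm{sech}\,x) = \mathrm{sech}\,x\,\tanh x \leq x$); hence $|F_i(\xi) - 1| \leq 1 \leq \mu|\xi|^2$ there. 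For the low frequencies I would observe that each $F_i$ is an even function of $\sqrt{\mu}|\xi|$, so $F_i(\xi) = G_i(\mu|\xi|^2)$ for a function $G_i$ that is smooth near $0$ with $G_i(0) = 1$; for $F_1$ and $F_3$ this is immediate since $x \mapsto \tanh(x)/x$ and $x \mapsto \mathrm{sech}\,x$ are even and real-analytic, and for $F_2$ and $F_4$ it follows from the Taylor expansions already recorded in the paper (e.g. $\cosh x = 1 + \tfrac{x^2}{2}\int_0^1 \cosh(tx)(1-t)\,\mathrm{d}t$ and the companion expansion of $1/\cosh x$ in Observation \ref{obs second order}, together with $\tanh x = x - \tfrac{x^3}{3} + O(x^5)$), which exhibit the cancellation of the apparent singularity at $\xi = 0$. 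The mean value theorem applied to $G_i$ on $[0,1]$ then gives $|F_i(\xi) - 1| = |G_i(\mu|\xi|^2) - G_i(0)| \leq \big(\sup_{[0,1]}|G_i'|\big)\,\mu|\xi|^2$ whenever $\mu|\xi|^2 \leq 1$, and one can take $C = \max\{1,\ \max_i \sup_{[0,1]}|G_i'|\}$.

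I do not expect a genuine obstacle: the argument is entirely at the level of elementary one-variable inequalities for $\tanh$, $\cosh$ and their reciprocals. The only place that needs a moment of care is the low-frequency treatment of $F_2$ and $F_4$, where one must first isolate the quadratic term of $1 - \mathrm{F}_1$ (resp. $1 - \mathrm{F}_3$) before dividing by $\mu|\xi|^2$ — but this is precisely the computation underlying Proposition \ref{Prop F0}, and the relevant expansions are already available in the text. The uniformity of $C$ in $\mu$ is automatic since all the bounds involve only the dimensionless quantity $\sqrt{\mu}|\xi|$.
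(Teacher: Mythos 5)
Your proof is correct and follows essentially the same route as the paper: a pointwise bound on the symbols $F_i(\xi)-1$ obtained from Taylor-type expansions in the variable $\sqrt{\mu}|\xi|$, transferred to Sobolev norms via Plancherel. The only difference is that you treat the high-frequency region $\sqrt{\mu}|\xi|\geq 1$ explicitly through the bound $0<F_i\leq 1$, whereas the paper records only the low-frequency expansions (for $0\leq x\leq 1$) and leaves that regime implicit, so your write-up is if anything slightly more complete.
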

    \begin{proof}
        The estimates are a direct consequence of Plancherel's identity and the Taylor expansion formulas:
        \begin{align*}
            \cosh(x) 
            & =
            1 + \frac{x^2}{2} \int_0^1\cosh(tx)(1-t) \:\mathrm{d}t
            \\
            \sinh(x) 
            & =
            x + \frac{x^3}{6} \int_0^1\cosh(tx)(1-t)^2 \:\mathrm{d}t,
            \\ 
            \frac{1}{\cosh(x)} 
            & =
            1 - \frac{x^2}{2}  
            + 
            \frac{x^4}{24}
            \int_0^1 \Big(\mathrm{sech}(tx) - 20 \mathrm{sech}^3(tx) + 24 \mathrm{sech}^5(tx)\Big)(1-t)^3\:\mathrm{d}t
        \end{align*}
        for $0\leq x \leq 1$.

        \end{proof}
 
    \subsection{Classical estimates}\label{A3}
    In this section, we recall some classical estimates that will be used throughout the paper. Finally, we end the section with the proof of Proposition \ref{Prop elliptic est}.

    \begin{lemma} Let $\beta \in [0,1]$, $b \in C^{\infty}_c(\R^d)$, $h_b = 1-\beta b$,  $\mathcal{S}_b= (-h_b,0)\times \R^d$, and assume \eqref{non-cavitation 2} holds true. Then for $u \in H^1(\mathcal{S}_b)$ satisfying $u|_{z=0} = 0$, there holds
    \begin{equation}\label{Poincare 1}
        \|u\|_{L^2(\mathcal{S}_b)} \lesssim \|\nabla_{X,z}^{\mu} u\|_{L^2(\mathcal{S}_b)},
    \end{equation}
    and
    \begin{equation}\label{Poincare 2}
        |u|_{z=-h_b}|_{L^2} \lesssim \|\nabla_{X,z}^{\mu} u\|_{L^2(\mathcal{S}_b)}.
    \end{equation}
    Moreover, if we further suppose $u\in H^{k,k}(\mathcal{S}_b)$ then
    \begin{align}\label{estimate for V bar}
            \Big{|}\int_{-1+ \beta b(\cdot)}^0u(\cdot,z)\: \mathrm{d}z \Big{|}_{H^k}^2
            \leq M(k)
            \big{(}\|\nabla_{X,z}^{\mu}u\|^2_{H^{k,0}(\mathcal{S}_b)} + \sum\limits_{j=1}^k\|\partial_z^{j}u\|_{H^{k-j,0}(\mathcal{S}_b)}^2\big{)}.
    \end{align}
    \end{lemma}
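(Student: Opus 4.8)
The plan is to prove the three estimates in the order stated, extracting along the way the slightly stronger one–dimensional form $\|v\|_{L^2(\mathcal S_b)}\lesssim\|\partial_z v\|_{L^2(\mathcal S_b)}$, which is what \eqref{estimate for V bar} really uses.

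\textbf{Proof of \eqref{Poincare 1} and \eqref{Poincare 2}.} For a.e.\ $X$ the slice $u(X,\cdot)$ is absolutely continuous on $(-h_b(X),0)$ and, since $u|_{z=0}=0$, one has $u(X,z)=-\int_z^0\partial_s u(X,s)\,\mathrm{d}s$. By Cauchy--Schwarz and $h_b(X)\le 1+\beta|b|_{L^\infty}\le 2$ this gives $|u(X,z)|^2\le h_b(X)\int_{-h_b(X)}^0|\partial_s u(X,s)|^2\,\mathrm{d}s$. Integrating in $z\in(-h_b(X),0)$ and then in $X$ yields $\|u\|_{L^2(\mathcal S_b)}^2\le 4\|\partial_z u\|_{L^2(\mathcal S_b)}^2\le 4\|\nabla^{\mu}_{X,z}u\|_{L^2(\mathcal S_b)}^2$, which is \eqref{Poincare 1}; evaluating the same pointwise bound at $z=-h_b(X)$ and integrating in $X$ gives $|u|_{z=-h_b}|_{L^2}^2\le 2\|\partial_z u\|_{L^2(\mathcal S_b)}^2$, which is \eqref{Poincare 2}. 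Note the argument only uses $v,\partial_z v\in L^2(\mathcal S_b)$ and $v|_{z=0}=0$, and produces $\|v\|_{L^2(\mathcal S_b)}\lesssim\|\partial_z v\|_{L^2(\mathcal S_b)}$ and $|v|_{z=-h_b}|_{L^2}\lesssim\|\partial_z v\|_{L^2(\mathcal S_b)}$.

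\textbf{Proof of \eqref{estimate for V bar}.} We may assume the right–hand side is finite; in particular $\partial_X^\gamma\partial_z^l u\in L^2(\mathcal S_b)$ for all relevant $\gamma,l$, and from $u(X,z)=-\int_z^0\partial_s u(X,s)\,\mathrm{d}s$ one gets $(\partial_X^\gamma u)|_{z=0}=0$ for every $\gamma$. Set $w(X)=\int_{-h_b(X)}^0 u(X,z)\,\mathrm{d}z$. Iterating the identity $\partial_{x_i}\!\big(\int_{-h_b(X)}^0 f\,\mathrm{d}z\big)=\int_{-h_b(X)}^0\partial_{x_i}f\,\mathrm{d}z-\beta(\partial_{x_i}b)\,f(X,-h_b(X))$ yields, for $|\alpha|\le k$,
$$\partial_X^\alpha w=\int_{-h_b(X)}^0\partial_X^\alpha u\,\mathrm{d}z+\sum_{|\gamma|+l\le|\alpha|-1}c_{\alpha,\gamma,l}(X)\,(\partial_X^\gamma\partial_z^l u)(X,-h_b(X)),$$
where each $c_{\alpha,\gamma,l}$ is a universal polynomial in $\{\,\beta\,\partial_X^\delta b:|\delta|\le|\alpha|\,\}$, hence $|c_{\alpha,\gamma,l}|_{L^\infty}\le M_0$ by $b\in C^\infty_c(\R^d)$ and $\beta\le1$. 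For the interior term, Cauchy--Schwarz in $z$ and $h_b\le2$ give $\big|\int_{-h_b}^0\partial_X^\alpha u\,\mathrm{d}z\big|_{L^2(\R^d)}^2\lesssim\|\partial_X^\alpha u\|_{L^2(\mathcal S_b)}^2$, and since $(\partial_X^\alpha u)|_{z=0}=0$ the strong form of \eqref{Poincare 1} applied to $\partial_X^\alpha u$ gives $\|\partial_X^\alpha u\|_{L^2(\mathcal S_b)}\lesssim\|\partial_X^\alpha\partial_z u\|_{L^2(\mathcal S_b)}\le\|\nabla^{\mu}_{X,z}u\|_{H^{k,0}(\mathcal S_b)}$. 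For a boundary term with $l=0$, $|\gamma|\le k-1$, the strong form of \eqref{Poincare 2} applied to $\partial_X^\gamma u$ (again with vanishing $z=0$ trace) gives $|(\partial_X^\gamma u)|_{z=-h_b}|_{L^2}\lesssim\|\partial_X^\gamma\partial_z u\|_{L^2(\mathcal S_b)}\le\|\nabla^{\mu}_{X,z}u\|_{H^{k,0}(\mathcal S_b)}$. For a boundary term with $l\ge1$ we use the general trace inequality $|v|_{z=-h_b}|_{L^2(\R^d)}^2\lesssim\|v\|_{L^2(\mathcal S_b)}^2+\|v\|_{L^2(\mathcal S_b)}\|\partial_z v\|_{L^2(\mathcal S_b)}$ — obtained by writing $v(X,-h_b)^2=\frac{1}{h_b}\int_{-h_b}^0 v^2\,\mathrm{d}z-\frac{1}{h_b}\int_{-h_b}^0\!\int_{-h_b}^z 2v\partial_s v\,\mathrm{d}s\,\mathrm{d}z$, applying Cauchy--Schwarz, $h_b^{-1}\le h_{b,\min}^{-1}$, and integrating in $X$ — with $v=\partial_X^\gamma\partial_z^l u$: since $|\gamma|+l\le k-1$, $\|v\|_{L^2(\mathcal S_b)}\le\|\partial_z^l u\|_{H^{k-l,0}(\mathcal S_b)}$ and $\|\partial_z v\|_{L^2(\mathcal S_b)}\le\|\partial_z^{l+1}u\|_{H^{k-l-1,0}(\mathcal S_b)}$, both of which appear among the $\|\partial_z^j u\|_{H^{k-j,0}(\mathcal S_b)}$, $1\le j\le k$. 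Multiplying by $|c_{\alpha,\gamma,l}|_{L^\infty}\le M_0$, using Young's inequality, and summing the finitely many terms over $|\alpha|\le k$ gives \eqref{estimate for V bar} with a constant of the form $M(k)$.

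\textbf{Main obstacle.} The only delicate point is the bookkeeping produced by the $X$–dependent lower limit $z=-h_b(X)$: one must check that every trace $(\partial_X^\gamma\partial_z^l u)|_{z=-h_b}$ arising in the Leibniz expansion is absorbed by the right–hand side of \eqref{estimate for V bar} \emph{uniformly in $\mu$}. This works because the vertical component of $\nabla^{\mu}_{X,z}$ carries no $\sqrt\mu$ weight, so trading a horizontal derivative for a vertical one via \eqref{Poincare 1}--\eqref{Poincare 2} (legitimate precisely when $l=0$, where $\partial_X^\gamma u$ still vanishes at $z=0$) costs nothing in $\mu$, while for $l\ge1$ the vertical derivatives are already present on the right–hand side; keeping track of the index ranges so that $j=l$ and $j=l+1$ never exceed $k$ is what makes the argument close.
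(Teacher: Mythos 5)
Your proof is correct. For \eqref{Poincare 1} and \eqref{Poincare 2} your argument is essentially the paper's (your version of \eqref{Poincare 2} is in fact slightly more direct: you evaluate the pointwise fundamental-theorem bound at $z=-h_b$, whereas the paper integrates $\partial_z(u^2)$ and invokes Young plus \eqref{Poincare 1}). For \eqref{estimate for V bar} you take a genuinely different route: the paper first rescales the vertical variable, $z\mapsto zh_b(X)$, so that the integral becomes $\int_{-1}^0 u(X,zh_b(X))h_b(X)\,\mathrm{d}z$ over a fixed strip, then differentiates the composition (chain rule/Leibniz, no boundary terms) and changes variables back, finishing as you do by applying \eqref{Poincare 1} to $\partial_X^\gamma u$; you instead keep the moving lower limit and differentiate under the integral sign, which produces trace terms $(\partial_X^\gamma\partial_z^l u)|_{z=-h_b}$ that you absorb via the strong form of \eqref{Poincare 2} when $l=0$ (where the $z=0$ trace still vanishes) and via the standard trace inequality $|v|_{z=-h_b}|_{L^2}^2\lesssim \|v\|_{L^2}^2+\|v\|_{L^2}\|\partial_z v\|_{L^2}$ when $l\ge 1$; your index bookkeeping ($|\gamma|+l\le|\alpha|-1$, so $j=l$ and $j=l+1$ stay in $\{1,\dots,k\}$) is right, and your key observation — that trading horizontal for vertical derivatives through \eqref{Poincare 1}--\eqref{Poincare 2} costs nothing in $\mu$ because the vertical component of $\nabla^\mu_{X,z}$ is unweighted — is exactly what makes the bound uniform in $\mu$ in both proofs. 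What each approach buys: the paper's rescaling avoids all boundary terms at the price of Fa\`a di Bruno--type expansions of $u(X,zh_b(X))$, while yours avoids compositions at the price of tracking traces at $z=-h_b$. One cosmetic imprecision: your claim $|c_{\alpha,\gamma,l}|_{L^\infty}\le M_0$ is too strong, since $M_0$ only involves $|b|_{H^{t_0}}$ and the coefficients contain derivatives of $b$ up to order $|\alpha|\le k$; they are of course bounded by a constant of the type $M(k)$ (or, strictly, by finitely many Sobolev norms of the fixed $b\in C_c^\infty$), which is all the stated estimate requires — the paper's own constant conventions are equally loose here.
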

    \begin{proof}
        For the proof of \eqref{Poincare 1} we use assumption $u|_{z=0} = 0$ and the Fundamental Theorem of Calculus combined with Cauchy-Schwarz inequality  we get that
        \begin{align*}
           \int_{\R^d}\int_{-1+\beta b(X)}^0 |u(X,z)|^2\: \mathrm{d}z \mathrm{d}X
            & = \int_{\R^d}\int_{-1+\beta b(X)}^0 |\int_{z}^0 (\partial_z u)(X,z') \: \mathrm{d}z'|^2\:\mathrm{d}z \mathrm{d}X
            \\
            &\leq (1+ \beta |b|_{L^{\infty}})^2 \int_{\R^d}\int_{-1 + \beta b(X)}^0 |(\partial_z u)(X,z')|^2 \: \mathrm{d}z'\mathrm{d}X.
        \end{align*}

        For the proof of \eqref{Poincare 2}, we first use the assumption $u|_{z=0} = 0$ with the Fundamental Theorem of Calculus and Young's inequality to get that
        \begin{align*}
            \int_{\R} u(X,-h_b(X))^2 \: \mathrm{d}X 
            & =
            \int_{\R} \int_{-1 + \beta b(X)}^0\partial_z\big(u(X,z)^2\big) \: \mathrm{d}z\mathrm{d}X 
            \\
            & \leq
            \int_{\R} \int_{-1+ \beta b(X)}^0 \partial_zu(X,z)^2 \: \mathrm{d}z\mathrm{d}X 
            +
            \int_{\R} \int_{-1 + \beta b(X)}^0 u(X,z)^2\: \mathrm{d}z\mathrm{d}X.
        \end{align*}
        Then by \eqref{Poincare 1} we conclude that 
        \begin{equation*}
            |u|_{-h_b}|_{L^2} \lesssim \|\nabla_{X,z}^{\mu} u\|_{L^2(\mathcal{S}_b)}.
        \end{equation*}

       For the proof \eqref{estimate for V bar}, we first consider the estimate with  one derivative to fix the idea. In particular, we perform a change of variable and then use the chain rule and Hölder's inequality to get
        \begin{align*}
            \Big{|}\nabla_X\int_{-1 + \beta b(\cdot)}^0u(\cdot,z)\: \mathrm{d}z \Big{|}_{L^2}^2
            & = 
            \int_{\R^d} \big{|}\nabla_X \int_{-1}^0u(X,zh_b(X))h_b(X) \: \mathrm{d}z \big{|}^2 \: \mathrm{d}X
            \\
            & 
            \leq
            \int_{\R^d} \big{(}\int_{-1}^0|(\nabla_X u)(X,zh_b(X))| h_b(X)\: \mathrm{d}z \big{)}^2 \: \mathrm{d}X
            \\
            &
            \hspace{0.5cm}
            +
            \int_{\R^d} \big{(} \int_{-1}^0|z|\beta|\nabla_Xb|\:|(\partial_zu)(X,zh_b(X))| h_b(X) \: \mathrm{d}z \big{)}^2 \: \mathrm{d}X
            \\
            & 
            \hspace{0.5cm} 
            +
            \beta |\nabla_X b|_{L^{\infty}}
            \int_{\R^d} \big{(}\int_{-1}^0| u(X,zh_b(X))|\: \mathrm{d}z \big{)}^2 \: \mathrm{d}X.
        \end{align*}
        Next, we can transform the integral back to its original domain using \eqref{non-cavitation 2}, and then apply Cauchy-Schwarz and Hölder's inequality to obtain 
        \begin{align*}
            \Big{|}\nabla_X\int_{-1 + \beta b(\cdot)}^0u(\cdot,z)\: \mathrm{d}z \Big{|}_{L^2}^2
            \leq M(k+1)
            (\|u\|_{H^{1,0}(\mathcal{S}_b)}^2 
            +
            \|\partial_z u\|_{L^2(\mathcal{S}_b)}^2 )
        \end{align*}
        Repeating this process for any $k\in \N$, using the Leibniz rule, gives us
        \begin{align*}
            \Big{|}\int_{-1 + \beta b(\cdot)}^0u(\cdot,z)\: \mathrm{d}z \Big{|}_{H^k}^2
            & =
            \sum\limits_{\gamma \in \N^d \: : \: |\gamma|\leq k}
            \int_{\R^d} \big{|}\partial_X^{\gamma}\int_{-1}^0 \big{(}u(X,zh_b(X)) h_b(X)\big{)} \: \mathrm{d}z\big{|}^2  \: \mathrm{d}X
            \\
            & \leq
            M(k)(\|u\|_{H^{k,0}(\mathcal{S}_b)}^2
            +
            \sum\limits_{j=0}^k\|\partial_z^{j}u\|_{H^{k-j,0}(\mathcal{S}_b)}^2).
        \end{align*}
        To conclude our observation, we use the assumption $u|_{z=0} =0$ to apply the Poincaré inequality \eqref{Poincare 1} on the first terms.
    \end{proof}
    Before proving the main result, we need some classical estimates (see Proposition $B.2$ and Proposition $B.4$ in \cite{WWP}).
    \begin{lemma} Let $t_0 \geq \frac{d}{2}$, $s\geq - t_0$, , $f \in H^{\max\{t_0,s\}}(\R^d)$, and take $g\in H^{s}(\R^d)$ then
    \begin{equation}\label{Classical prod est}
        |fg|_{H^s} \lesssim |f|_{H^{\max\{t_0,s\}}}|g|_{H^s}.
    \end{equation}
    Moreover, if there exist $c_0>0$ and $1+g \geq c_0$ then
    \begin{equation}\label{prod est division}
        \Big{|}\frac{f}{1+g}\Big{|}_{H^s} \lesssim C(c_0,|g|_{L^{\infty}})(1+|f|_{H^s})|g|_{H^s}.
    \end{equation}
    \end{lemma}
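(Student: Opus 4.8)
The plan is to prove the product estimate \eqref{Classical prod est} by a Fourier‑space argument and to deduce the quotient estimate \eqref{prod est division} from it; I would treat the case $s\ge 0$ directly and reduce the case $-t_0\le s<0$ to it by duality. Throughout I would use that $t_0>\tfrac d2$, so that $\langle\cdot\rangle^{-t_0}\in L^2(\R^d)$ and hence $\|\widehat h\|_{L^1}\lesssim|h|_{H^{t_0}}$ for $h\in H^{t_0}$ — the range of exponents used elsewhere in the paper.

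For $s\ge 0$ I would start from $\widehat{fg}(\xi)=c_d\int\widehat f(\xi-\eta)\widehat g(\eta)\,d\eta$ and split the $\eta$–integral according to whether $|\xi-\eta|\le|\eta|$ (the factor $g$ carries the higher frequency) or $|\xi-\eta|>|\eta|$ ($f$ does). On $\{|\xi-\eta|\le|\eta|\}$ one has $\langle\xi\rangle\le 2\langle\eta\rangle$, so the contribution to $\langle\xi\rangle^s|\widehat{fg}(\xi)|$ is dominated by $\big(|\widehat f|\ast\langle\cdot\rangle^s|\widehat g|\big)(\xi)$; by Young's inequality $\|a\ast b\|_{L^2}\le\|a\|_{L^1}\|b\|_{L^2}$ this has $L^2$ norm $\lesssim\|\widehat f\|_{L^1}|g|_{H^s}\lesssim|f|_{H^{t_0}}|g|_{H^s}\le|f|_{H^{\max\{t_0,s\}}}|g|_{H^s}$. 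On $\{|\xi-\eta|>|\eta|\}$ one has $\langle\xi\rangle\le 2\langle\xi-\eta\rangle$ and $\langle\eta\rangle\le\langle\xi-\eta\rangle$; when $s\le t_0$ the decisive step is the weight transfer $\langle\xi-\eta\rangle^s\le\langle\eta\rangle^{s-t_0}\langle\xi-\eta\rangle^{t_0}$ (valid since $s-t_0\le0$ and $\langle\eta\rangle\le\langle\xi-\eta\rangle$), which moves the derivative deficit $t_0-s$ off $f$ and onto $g$ as a negative power, so that the contribution is at most $\big(A\ast B\big)(\xi)$ with $A=\langle\cdot\rangle^{t_0}|\widehat f|\in L^2(\R^d)$, $\|A\|_{L^2}=|f|_{H^{t_0}}$, and $B=\langle\cdot\rangle^{s-t_0}|\widehat g|\in L^1(\R^d)$, $\|B\|_{L^1}\le\|\langle\cdot\rangle^{-t_0}\|_{L^2}|g|_{H^s}$, whence $\lesssim|f|_{H^{t_0}}|g|_{H^s}=|f|_{H^{\max\{t_0,s\}}}|g|_{H^s}$; when $s>t_0$ this region is handled directly using $\langle\xi\rangle^s\lesssim\langle\xi-\eta\rangle^s$ together with $\|\widehat g\|_{L^1}\lesssim|g|_{H^{t_0}}\le|g|_{H^s}$.

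For $-t_0\le s<0$ (so $\max\{t_0,s\}=t_0$) I would argue by duality: for $\varphi\in\mathscr S(\R^d)$, since $0<-s\le t_0$ the previous case applies with exponent $-s$ and gives $|\overline f\varphi|_{H^{-s}}\lesssim|f|_{H^{t_0}}|\varphi|_{H^{-s}}$; then $|\langle fg,\varphi\rangle|=|\langle g,\overline f\varphi\rangle|\le|g|_{H^s}|\overline f\varphi|_{H^{-s}}\lesssim|f|_{H^{t_0}}|g|_{H^s}|\varphi|_{H^{-s}}$, and taking the supremum over $|\varphi|_{H^{-s}}\le1$ yields \eqref{Classical prod est} (rigorously, first for a mollification of $g$, then passing to the limit).

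Finally, for \eqref{prod est division} I would note that $1+g\ge c_0$ forces $g$ into a fixed compact subset of $(-1,\infty)$ determined by $c_0$ and $|g|_{L^\infty}$, write $u=\tfrac{f}{1+g}$ together with the algebraic identity $u=f-ug$, and conclude from \eqref{Classical prod est} by a standard device — a (mollified) Neumann series / contraction in the Sobolev algebra, or a composition estimate for the smooth function $t\mapsto\tfrac1{1+t}-1$ vanishing at $0$ — so that the resulting constant depends only on $c_0$, $|g|_{L^\infty}$ and $d,t_0,s$. I expect the main obstacle to be the low–regularity regime $0\le s<t_0$ (and, through the duality step, $s<0$): there the crude subadditivity bound $\langle\xi\rangle^s\lesssim\langle\xi-\eta\rangle^s+\langle\eta\rangle^s$ is by itself lossy, since it would force $g\in H^{t_0}$ rather than merely $g\in H^s$, and the frequency‑localized weight transfer above is exactly what keeps $g$ measured only in $H^s$; ensuring that the derivation of \eqref{prod est division} does not circularly invoke \eqref{Classical prod est} for such $s$ is the remaining delicate point.
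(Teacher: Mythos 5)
There is no in-paper argument to compare against here: the paper states this lemma as a classical estimate and refers to Propositions B.2 and B.4 of \cite{WWP} without proof. Your proof of the product estimate \eqref{Classical prod est} is the standard one (essentially the proof in the cited appendix): split $\widehat{fg}$ according to which factor carries the dominant frequency, use $\|\widehat h\|_{L^1}\lesssim |h|_{H^{t_0}}$ (which requires $t_0>\frac d2$, as you assume; the lemma's ``$t_0\ge\frac d2$'' is a typo, since the paper takes $t_0>\frac d2$ everywhere else), transfer the weight via $\langle\xi-\eta\rangle^{s-t_0}\le\langle\eta\rangle^{s-t_0}$ on the region where $f$ is at high frequency, apply Young's inequality, and handle $-t_0\le s<0$ by duality. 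That part is complete and correct.

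The quotient estimate is where your sketch has a real gap. From $u=f-ug$ and \eqref{Classical prod est} you only get $|u|_{H^s}\le |f|_{H^s}+C\,|g|_{H^{\max\{t_0,s\}}}|u|_{H^s}$ (or a bound involving the stronger norm $|u|_{H^{\max\{t_0,s\}}}$ of the unknown), so the Neumann series/absorption device needs smallness of $g$ in a Sobolev norm, which is not available and which mollification does not restore. The workable route is the one you mention second, the composition (Moser) estimate for $F(t)=\frac1{1+t}-1$: for $s\ge 0$ it gives $|F(g)|_{H^s}\le C(c_0,|g|_{L^\infty})|g|_{H^s}$, hence $\big|\frac{f}{1+g}\big|_{H^s}\le |f|_{H^s}+C(c_0,|g|_{L^\infty})\,|f|_{H^{\max\{t_0,s\}}}|g|_{H^s}$. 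Note that this is not literally \eqref{prod est division}; in fact \eqref{prod est division} as printed cannot hold (take $g=0$: the left side is $|f|_{H^s}$, the right side vanishes) and is a misquotation of \cite{WWP}, Prop.~B.4, whose correct form is $\big|\frac{f}{1+g}\big|_{H^s}\le C(c_0^{-1},|g|_{H^{\max\{t_0,s\}}})\,|f|_{H^s}$, so you should not expect your argument to reproduce the stated right-hand side. Moreover, for $-t_0\le s<0$ neither of your devices applies as written: Moser-type composition estimates require $s\ge 0$, and placing $F(g)$ in the $H^{\max\{t_0,s\}}=H^{t_0}$ slot of the product estimate requires $g\in H^{t_0}$, which is more than the stated $g\in H^s$. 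A clean fix is to prove the corrected statement under $g\in H^{\max\{t_0,s\}}$, treating $s\ge 0$ by composition plus \eqref{Classical prod est} and $s<0$ by the same duality trick you already used (pairing $\frac{f}{1+g}$ against $\varphi$ and applying the nonnegative case to $\frac{\varphi}{1+g}$), or simply to cite \cite{WWP} as the paper does.
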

    \noindent

    Lastly, we will prove the main result of this section:

    \begin{proof}[Proof of Proposition \ref{Prop elliptic est}]
    We first establish the existence and uniqueness of variational solutions to \eqref{Elliptic est pb}. Here the variational formulation associated with \eqref{Elliptic est pb} is given by
    \begin{align}\label{var form}
        \int_{\mathcal{S}_b}  P (\Sigma_b) \nabla^{\mu}_{X,z} u \cdot  \nabla_{X,z}^{\mu}\varphi \: \mathrm{d}z \mathrm{d}X 
        & =
        \int_{\mathcal{S}_b} f  \varphi \: \mathrm{d}z \mathrm{d}X
        +
        \int_{\R^d}  g \: \varphi|_{z=-h_b} \:\mathrm{d}X,
    \end{align}
    for $\varphi \in H^1(\mathcal{S}_b)$. Then using the coercivity estimate \eqref{Coercivity} and the Poincaré inequality \eqref{Poincare 1} to get that
    \begin{align*}
        c\|\varphi \|_{H^1}\leq \int_{\mathcal{S}_b}  P (\Sigma_b) \nabla^{\mu}_{X,z} \varphi \cdot  \nabla_{X,z}^{\mu} \varphi\: \mathrm{d}z \mathrm{d}X.
    \end{align*}
    While the right-hand side of \eqref{var form} is continuous by Cauchy-Schwarz and the trace inequality \eqref{Poincare 2}. As a result,  by Riesz representation Theorem, there exists a unique variational solution  $u \in H^{1,0}(\mathcal{S}_b)$.

    Next, we will prove that $u \in H^{k,0}(\mathcal{S}_b)$ by considering the problem on the fixed strip $\mathcal{S} = \R^d \times [-1,0]$, where we define
    \begin{align*}
        \Sigma(X,z) = (X, hz + \ve \zeta).
    \end{align*}
    Then we have that
    \begin{align*}
        (u\circ \Sigma_b^{-1})\circ \Sigma(X,z) = u(X,zh_b) : = \tilde u(X,z),
    \end{align*}
    and through a change of variable, we obtain the equation
     \begin{align}
        \int_{\mathcal{S}}   \tilde{P}(\Sigma) \nabla^{\mu}_{X,z} \tilde u \cdot  \nabla_{X,z}^{\mu}\tilde \varphi \: \mathrm{d}z \mathrm{d}X 
        & =
        \int_{\mathcal{S}} \tilde f  \tilde \varphi \: \mathrm{d}z \mathrm{d}X
        +
        \int_{\R^d}  g \: \tilde \varphi|_{z=-1} \:\mathrm{d}X,
    \end{align}
    where $\tilde{f}(X,z) = f(X,zh_b(X))$, $\tilde{\varphi}(X,z) =\varphi(X,zh_b(X))$, and $\tilde{P}(\Sigma)$ is an elliptic matrix given by
    \begin{equation*}
        \tilde{P}(\Sigma) 
	=
	\begin{pmatrix} 
		(1+\partial_z \theta )\mathrm{Id} & -\sqrt{\mu} \nabla_X \theta 
		\\
		-\sqrt{\mu}(\nabla_X \theta)^T & \dfrac{1+ \mu |\nabla_X\theta|^2}{1+\partial_z \theta} 
	\end{pmatrix}.
    \end{equation*}
    with $\theta(X,z) = (\ve \zeta - \beta b)z + \ve \zeta $. At this point, the problem is classical, and we refer to Proposition $4.5$ in \cite{DucheneMMWW21} to deduce that  $\nabla_{X,z}^{\mu}\tilde u  \in H^{k,0}(\mathcal{S})$ for $k \in \N$ and satisfying
    \begin{align*}
        \| \nabla_{X,z}^{\mu} \tilde u \|_{H^{k,0}(\mathcal{S})} 
        & \leq
        M(k+1) ( |g|_{H^k}+\|\tilde{f}\|_{H^{k,0}})
        \\
        &
        \leq 
        M(k+1) (|g|_{H^k} + \sum\limits_{j=0}^k\|\partial_z^j f\|_{H^{k-j,0}}).
    \end{align*}
    In the last inequality, we used the chain rule and the product estimate \eqref{Classical prod est}. Moreover, for $k\geq 1 + t_0$ we have $\tilde u  \in C^2(\mathcal{S})$ and is a classical solution of 
    \begin{equation*}
        \begin{cases}
            \nabla_{X,z}^{\mu} \tilde{P}(\Sigma) \nabla_{X,z}^{\mu} \tilde u  = \tilde{f} 
            \\ 
            v|_{z=0} = 0 , \quad \partial_n^{\tilde{P}}\tilde u |_{z = -1} = g.
        \end{cases}
    \end{equation*}
    Then using the equation, we can control the partial derivatives in $z$ by the derivatives in $X$ through
    \begin{align*}
        \frac{1 + |\nabla_X \theta|^2 }{h} \partial_z^2 \tilde u  =  \tilde{f} - \mu \nabla_X\cdot(h \nabla_X \tilde u ) + \mu \nabla_X\cdot (\nabla_X \theta \partial_z \tilde u )+ \mu \partial_z(\nabla_X \theta \cdot \nabla_X \tilde u ) - \frac{ (\partial_z|\nabla_X \theta|^2)}{h} \partial_z \tilde u ,
    \end{align*}
    and the regularity and positivity of $\frac{1 + |\nabla_X \theta|^2 }{h}$. Indeed, there holds,
    \begin{align*}
         \|  \partial_z^k \tilde u  \|_{L^2(\mathcal{S})} 
         \leq 
         M(k+1) (\| \tilde u  \|_{H^{k,0}(\mathcal{S})} +  \| \partial_z \tilde u  \|_{H^{k,0}(\mathcal{S})} + \| \partial_z^k\tilde{f}\|_{L^2(\mathcal{S}_b)}).
    \end{align*}
    Having the desired regularity, we may relate these observations with the original problem $u$ on $\mathcal{S}_b$. In particular, by \eqref{non-cavitation 2} we have that 
    \begin{equation*}
        \nabla_{X,z}^{\mu}u(X,z) = \nabla_{X,z}^{\mu }\big(\tilde u (X, \frac{z}{h_b})\big) \in H^{k,0}(\mathcal{S}_b),
    \end{equation*}
    using the chain rule, the regularity of $h_b$, and a change of variable to get that
    \begin{align*}
        \|  \nabla_{X,z}^{\mu}u \|_{H^{k,0}(\mathcal{S}_b)} 
        & \leq M(k+1)(\| \nabla_{X,z}^{\mu} \tilde u \|_{H^{k,0}(\mathcal{S})} + \sum \limits_{j=0}^{k} \| \partial_z^j \tilde u\|_{H^{k,0}(\mathcal{S})})
        \\
        & 
        \leq M(k+1)(|g|_{H^k} +  \sum \limits_{j=0}^{k} \|\partial_z^jf\|_{H^{k-j,0}(\mathcal{S}_b)}).
    \end{align*}

    \end{proof}

    \section*{Acknowledgements}
 
    This research was supported by a Trond Mohn Foundation grant. It was also supported by the Faculty Development Competitive Research Grants Program 2022-2024 of Nazarbayev University: Nonlinear Partial Differential Equations in Material Science, Ref. 11022021FD2929. 

    The authors would also like to thank Vincent Duchêne and David Lannes for providing helpful remarks.

    \bibliographystyle{plain}
    \bibliography{Bibli.bib}

\end{document}